\newtheorem{thm}{Theorem}[section]
\newtheorem{prop}[thm]{Proposition}
\newtheorem{cor}[thm]{Corollary}
\newtheorem{lem}[thm]{Lemma}
\theoremstyle{definition}
\newtheorem{dfn}[thm]{Definition}
\newtheorem{rmk}[thm]{Remark}
\newtheorem{rem}[thm]{Remark}
\newtheorem*{acknow}{Acknowledgement}
\numberwithin{equation}{section}
\newtheorem{introtheorem}{Theorem}
\newtheorem{introcorollary}[introtheorem]{Corollary}
\newtheorem{introproblem}[introtheorem]{Problem}
\numberwithin{equation}{section}
\newcommand{\cK}{\mathcal{K}}
\newcommand{\cH}{\mathcal{H}}
\newcommand{\Id}{\textrm{Id}}
\newcommand{\N}{\mathbb{N}}
\newcommand{\R}{\mathbb{R}}
\newcommand{\Pol}{{\rm Pol}}
\newcommand{\bj}{{\bf{j}}}
\newcommand{\cS}{\mathcal{S}}
\begin{document}
\title[Relative Haagerup property for arbitrary von Neumann algebras]{Relative Haagerup property for arbitrary von Neumann algebras}
\begin{abstract}
	We introduce the relative Haagerup approximation property for a unital, expected inclusion of arbitrary von Neumann algebras and show that if the smaller algebra is finite then the notion only depends on the inclusion itself, and not on the choice of the conditional expectation.
	Several variations of the definition are shown to be equivalent in this case, and in particular the approximating maps can be chosen to be unital and preserving the reference state.
	The concept is then applied to amalgamated free products of von Neumann algebras and used to deduce that the standard Haagerup property for a von Neumann algebra is stable under taking free products with amalgamation over finite-dimensional subalgebras. The general results are illustrated by examples coming from $q$-deformed Hecke-von Neumann algebras and von Neumann algebras of quantum orthogonal groups.
	
\end{abstract}

\date{\noindent \today}
\keywords{relative Haagerup property, von Neumann algebra, amalgamated free product}
\subjclass[2020]{Primary 46L10}

\author{Martijn Caspers}
\author{Mario Klisse}
\author{Adam Skalski}
\author{Gerrit Vos}
\author{Mateusz Wasilewski}

\address{TU Delft, EWI/DIAM,
	P.O.Box 5031,
	2600 GA Delft,
	The Netherlands}

\email{m.p.t.caspers@tudelft.nl}

\email{m.klisse@tudelft.nl, mario.klisse@kuleuven.be }

\curraddr{KU Leuven, Celestijnenlaan 200B, 3001 Leuven, Belgium}

\email{g.m.vos@tudelft.nl}

\address{Institute of Mathematics of the Polish Academy of Sciences, ul.\ \'{S}niadeckich 8, 00--656
Warszawa, Poland}

\email{a.skalski@impan.pl}

\address{KU Leuven, Celestijnenlaan 200B, 3001 Leuven, Belgium}
\curraddr{Institute of Mathematics of the Polish Academy of Sciences, ul.\ \'{S}niadeckich 8, 00--656
	Warszawa, Poland}

\email{mateusz.wasilewski@kuleuven.be, mwasilewski@impan.pl }

\date{}

\maketitle


\section{Introduction}

The story of the group-theoretic Haagerup property began in a celebrated article \cite{HaaFree} by Haagerup, who noted that the free group admits a sequence of positive-definite functions vanishing at infinity which pointwise converges to a constant function equal to $1$; in other words, the free group von Neumann algebra admits a sequence of unital completely positive Herz-Schur multipliers which are in a sense `small' and yet converge to the identity operator. Soon after that Choda gave in \cite{Choda} a definition of the Haagerup property for a von Neumann algebra $M$  equipped with a faithful normal trace in terms of existence of abstract approximating maps on $M$, which behave well with respect to the trace in question. Later Jolissaint proved in \cite{Jolissaint}  that the property does not depend in fact on the choice of a trace as above, and Bannon and Fang showed in \cite{BannonFang} that some of the requirements concerning the approximating maps can be weakened. For several years the study focused on finite von Neumann algebras, mainly as the motivating examples came from discrete groups. This changed with the articles \cite{Brannan},   \cite{CFY}, which established the Haagerup property for the von Neumann algebras of certain discrete quantum groups, and the paper \cite{DFSW}, which introduced and studied the analogous property for quantum groups themselves. Soon after that Okayasu and Tomatsu on one hand, and two authors of this paper on another, gave a definition of the Haagerup property for an arbitrary von Neumann algebra equipped with a faithful normal semifinite weight and proved that in fact the notion does not depend on the choice of the weight in question (see \cite{CS-IMRN}, \cite{CS-CMP}, \cite{OkayasuTomatsu} as well as \cite{COST} and references therein). In all the cases above the Haagerup property should be thought of as a natural weakening of amenability/injectivity, which permits applying several approximation ideas and techniques beyond the class of amenable groups or algebras. The class of discrete groups enjoying the Haagerup property has good permanence properties, among which we would like to stress the fact that it is closed under taking free products amalgamated over finite subgroups (\cite[Section 6]{HaagerupProperty}).

In several group-theoretic and operator algebraic contexts it is important to consider also \emph{relative} properties; for example relative Property (T) is key to showing that $\mathbb{Z}^2 \rtimes SL_2(\mathbb{Z})$ does not have the Haagerup property -- which in turn has several von Neumann algebraic consequences, studied among other places in \cite{YongleAdam}. In the context of finite von Neumann algebras the relative Haagerup property appeared first in \cite{Boca} in the study of Jones' towers associated with irreducible finite index subfactors, and was later applied in \cite{Popa06} as a key tool to obtain deep structural results about algebras admitting a certain type of Cartan inclusion  (i.e.\ maximal abelian subalgebras with a `sufficiently rich' normalizer). Such Cartan inclusions are deeply related to von Neumann algebras of equivalence relations by the celebrated results of Feldman and Moore in \cite{FM}.  The case of Cartan subalgebras was also the first in which a definition of a relative Haagerup property was proposed  beyond finite von Neumann algebras \cite{Ueda}, \cite{ClaireEquiv}.  Notably the latter developments took place even before the usual Haagerup property for arbitrary von Neumann algebras was well understood.

The study in \cite{Ueda} and \cite{ClaireEquiv} was focused on the special case of Cartan subalgebras. In this paper we undertake a systematic investigation of a von Neumann algebraic relative Haagerup property for a unital inclusion $N \subseteq M$ equipped with a faithful normal conditional expectation $\mathbb{E}_N:M \to N$. Again we first define it in terms of a fixed faithful normal state (preserved by $\mathbb{E}_N$) but then quickly show that it depends only on the conditional expectation in question. Much more can be said in the case where $N$ is assumed to be finite; here we obtain the following theorem, which is one of the main results of the paper. Relevant definitions can be found in Section 2; essentially for a triple $(M, N, \mathbb{E}_N)$ to have the relative Haagerup property we require the existence of completely positive, normal, $N$-bimodular maps on $M$ which are $\mathbb{E}_N$-decreasing, $L^2$-compact (in the sense determined by the conditional expectation $\mathbb{E}_N$), uniformly bounded and converge point-strongly to the identity, see Definition \ref{Dfn=RHAPState}.

\begin{introtheorem} \label{thm:A}
Suppose that $N \subseteq M$ is a unital, expected inclusion of  von Neumann algebras and assume that $N$ is finite. Then the relative Haagerup property of the triple $(M, N, \mathbb{E}_N)$ does not depend on the choice of a faithful normal conditional expectation $\mathbb{E}_N:M \to N$. 	
	Moreover if $(M, N, \mathbb{E}_N)$ has the relative Haagerup property and we are given a fixed state $\tau \in N_*$ we can always assume that the approximating maps are unital and $\tau \circ \mathbb{E}_N$-preserving.
	\end{introtheorem}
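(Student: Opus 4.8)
The plan is to reduce the ``conditional expectation independence'' statement to a transfer argument between two expectations $\mathbb{E}_N$ and $\mathbb{E}_N'$ onto the same finite subalgebra $N$, and then to upgrade the approximating maps to unital, state-preserving ones by a standard correction procedure. First I would exploit the finiteness of $N$: fixing a faithful normal tracial state on $N$ (which exists after cutting by central projections, but let us assume $N$ has a faithful normal trace $\tau$ for clarity) one knows that the space of faithful normal conditional expectations $M \to N$ is an affine-type object controlled by the unitary group of $N$ — more precisely, any two such expectations differ by a Radon--Nikodym-type cocycle valued in $N$. The key point is that the GNS Hilbert spaces $L^2(M, \tau \circ \mathbb{E}_N)$ and $L^2(M, \tau \circ \mathbb{E}_N')$ are canonically isomorphic as $M$-$N$-bimodules, via an isometry implemented by the relevant Radon--Nikodym derivative in $N$ (here finiteness of $N$ is what makes this derivative a bounded, nicely-behaved element). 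Under this identification, $L^2$-compactness with respect to $\mathbb{E}_N$ corresponds to $L^2$-compactness with respect to $\mathbb{E}_N'$, because the compact operators on the two Hilbert spaces are identified, and $N$-bimodularity is preserved. The $\mathbb{E}_N$-decreasing condition transfers similarly once one checks the interaction of the cocycle with the bimodule maps.

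Second, given approximating maps $(\Phi_i)$ that are completely positive, normal, $N$-bimodular, $\mathbb{E}_N$-decreasing, $L^2$-compact, uniformly bounded and point-strongly convergent to the identity, I would perform the usual two-step normalization. The first step handles unitality: set $a_i = \Phi_i(1) \in N$ (it lies in $N$ by $N$-bimodularity, and $0 \le a_i \le 1$ by $\mathbb{E}_N$-decreasing together with complete positivity), note $a_i \to 1$ strongly, and since the $\Phi_i$ are $N$-bimodular the maps $\Psi_i(x) = a_i^{-1/2 + \delta} \Phi_i(x) a_i^{-1/2 + \delta}$ or a spectral cutoff of $a_i$ can be used to produce genuinely unital completely positive maps; one must be slightly careful where $a_i$ is not bounded below, so in practice one replaces $a_i$ by $a_i + \varepsilon_i$ and lets $\varepsilon_i \to 0$, or cuts with spectral projections $e_i = 1_{[\varepsilon_i, 1]}(a_i)$ which tend strongly to $1$. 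Because $a_i \in N$, conjugation by functions of $a_i$ preserves $N$-bimodularity, complete positivity, normality, $L^2$-compactness (conjugation by a fixed element of $N$ is bounded on the $L^2$-space), and the $\mathbb{E}_N$-decreasing property up to a controlled error, and point-strong convergence to the identity is retained since $a_i \to 1$.

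The second normalization step makes the maps preserve $\tau \circ \mathbb{E}_N$. Here I would use the adjoint trick: for a unital $N$-bimodular normal c.p.\ map $\Psi$, $\mathbb{E}_N$-decreasing means $\mathbb{E}_N \circ \Psi \le \mathbb{E}_N$, hence $\tau \circ \mathbb{E}_N \circ \Psi \le \tau \circ \mathbb{E}_N$, and one forms the $L^2(\tau \circ \mathbb{E}_N)$-adjoint $\Psi^*$, which is again completely positive, normal, $N$-bimodular, $L^2$-compact and $\mathbb{E}_N$-decreasing; then $\frac12(\Psi + \Psi^*)$ or, better, the composition $\Psi^* \circ \Psi$ followed by a further rescaling is automatically $\tau \circ \mathbb{E}_N$-preserving once it is unital, because a unital c.p.\ map with $\Psi(1) = 1$ that also satisfies $\tau \circ \mathbb{E}_N \circ \Psi = \tau \circ \mathbb{E}_N$ on a strongly dense set can be obtained by a Schwarz/Stinespring symmetrization. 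The cleanest route is: symmetrize to get self-adjointness on $L^2$, then the unital and $\mathbb{E}_N$-decreasing conditions force state-preservation. Throughout, $L^2$-compactness is stable because all the operations are compositions with fixed bounded maps and sums.

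The main obstacle I anticipate is the transfer-of-$L^2$-compactness step across different conditional expectations: one must carefully identify the GNS spaces and verify that the intertwining isometry conjugates the ideal of compact operators onto itself and is compatible with the (unbounded-in-general) implementation of the $\Phi_i$ on the $L^2$-level. This is exactly where the finiteness hypothesis on $N$ is essential — it guarantees that the Radon--Nikodym cocycle relating $\tau \circ \mathbb{E}_N$ and $\tau \circ \mathbb{E}_N'$ is inner in $N$ with bounded, bounded-inverse derivative, so that the bimodule isomorphism is spatially implemented by a bounded invertible operator rather than merely a closed one. Once that identification is in place, the remaining normalization steps are the standard finite-algebra manipulations and should go through essentially as in the tracial case treated in \cite{BannonFang}.
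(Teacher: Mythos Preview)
Your proposal has two genuine gaps that break the argument.

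First, in the unitality step you assert that $a_i = \Phi_i(1)$ lies in $N$ ``by $N$-bimodularity''. This is false: $N$-bimodularity only gives $n\Phi_i(1) = \Phi_i(n) = \Phi_i(1)n$ for $n \in N$, so $\Phi_i(1) \in N' \cap M$, not $N$ (compare Proposition~\ref{Prop=BannonFang} in the paper). Since $a_i$ need not be in $N$, conjugating by $a_i^{-1/2}$ does \emph{not} preserve $N$-bimodularity, nor does it interact well with $\mathbb{E}_N$ or the relative compact ideal $\mathcal{K}(M,N,\varphi)$. The correction term the paper constructs is of the form $a\,\mathbb{E}_N(b^* \cdot\, b)$ with carefully chosen $a,b \in N'\cap M$, and even then one needs every element of $M$ to be analytic for $\sigma^\varphi$ in order to check that the correction is relatively compact (Lemma~\ref{Lem=CompactMaps}, Theorem~\ref{BannonFangApproach}). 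That analyticity hypothesis is exactly what fails in general and is what forces the passage to a semifinite setting.

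Second, your transfer argument between expectations rests on the claim that the Radon--Nikodym derivative relating $\tau\circ\mathbb{E}_N$ and $\tau\circ\mathbb{E}_N'$ is a bounded element of $N$ with bounded inverse, because $N$ is finite. This is not true: the Connes cocycle $(D\varphi/D\psi)_t$ lives in $M$, not in $N$, and the associated spatial derivative need not be bounded or boundedly invertible. Finiteness of $N$ does not control the relationship between two expectations onto $N$ from a possibly type~III algebra $M$. The paper therefore does \emph{not} attempt a direct bimodule comparison. Instead it passes to the crossed product $M\rtimes_{\sigma^\varphi}\mathbb{R}$, cuts down by spectral projections $p_k$ of the generator to land in \emph{finite} corners $p_k(M\rtimes_{\sigma^\varphi}\mathbb{R})p_k$ with a canonical trace $\tau_\rtimes$, and only then compares the two expectations via the Connes-cocycle isomorphism $\rho: M\rtimes_{\sigma^\psi}\mathbb{R}\to M\rtimes_{\sigma^\varphi}\mathbb{R}$, which is trace-preserving (Proposition~\ref{Prop=CornerToCorner}, Theorem~\ref{thm:condexpindep}). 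The Bannon--Fang normalization is likewise carried out in those finite corners, where the modular group is inner with bounded generator, and then lifted back (Theorem~\ref{Unitality+State-Preservation}).
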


The key idea of the proof once again, as in \cite{CS-IMRN}, uses crossed products by modular actions and the passage to the semifinite setting that (Takai-)Takesaki duality permits. However, the relative context makes the technical details much more demanding and makes adapting the earlier methods -- including those developed in \cite{BannonFang} -- significantly more complicated. On the other hand, allowing non-trivial inclusions allows us to significantly broaden the class of examples fitting into our framework and yields certain facts which are new even in the context of the standard Haagerup property of finite von Neumann algebras. This is exemplified by the next key result of this work and its corollary (which also requires proving a general theorem on the relative Haagerup property of amalgamated free products).

\begin{introtheorem} \label{thm:B}
 	Suppose that $N \subseteq M$ is a unital inclusion of von Neumann algebras equipped with a faithful normal conditional expectation $\mathbb{E}_N:M \to N$ and assume that $N$ is finite-dimensional. Then the relative Haagerup property of the triple $(M, N, \mathbb{E}_N)$ is equivalent to the usual Haagerup property of $M$.
\end{introtheorem}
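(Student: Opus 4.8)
The plan is to deduce Theorem~B from Theorem~A and from the general amalgamated free product result alluded to in the introduction, together with the known fact that the usual Haagerup property for von Neumann algebras is independent of the choice of weight. One direction is essentially formal: if $(M,N,\mathbb{E}_N)$ has the relative Haagerup property, we should be able to compose the relative approximating maps with the standard Haagerup approximating maps on the finite-dimensional algebra $N$. Since $N$ is finite-dimensional, it is automatically injective and hence has the Haagerup property; moreover one can take the approximating maps on $N$ to be finite-rank (indeed $\dim N < \infty$ makes the identity itself already ``compact''), unital and state-preserving. I would then argue that the maps $\Phi_i = \Psi_j \circ T_i$, where $T_i: M \to M$ are the $N$-bimodular relative maps and $\Psi_j: N \to N$ are viewed as maps $M \to M$ via the conditional expectation and the inclusion (more precisely, using that $T_i$ is $N$-bimodular one checks $\Psi_j \circ \mathbb{E}_N$ interacts well), give the requisite $L^2$-compact, unital, $\tau\circ\mathbb{E}_N$-preserving approximating maps witnessing the Haagerup property of $M$ with respect to the state $\tau \circ \mathbb{E}_N$. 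By the weight-independence of the Haagerup property for arbitrary von Neumann algebras (\cite{CS-IMRN}, \cite{CS-CMP}, \cite{OkayasuTomatsu}), this yields the Haagerup property of $M$.

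For the converse — the Haagerup property of $M$ implies the relative Haagerup property of $(M,N,\mathbb{E}_N)$ — the natural approach is to realize $M$ with a distinguished finite-dimensional subalgebra as (a corner of, or directly as) an amalgamated free product over $N$. The cleanest route is: first treat the basic building block $N \subseteq N \rtimes \mathbb{Z}$ or, more to the point, observe that when $N$ is finite-dimensional any inclusion can be handled by combining Theorem~A (to reduce to a convenient conditional expectation, e.g.\ the canonical trace-preserving one, since $N$ finite-dimensional is in particular finite) with the stability of the Haagerup property under free products amalgamated over finite-dimensional subalgebras. Concretely, I would use that $M$ is a ``sub-amalgamated-free-product'': one can write an auxiliary algebra $\widetilde{M} = M *_N (N \otimes L(\mathbb{F}_\infty))$ or similar, but in fact the honest argument should instead produce the relative approximating maps directly from the global ones. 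Given global Haagerup approximating maps $S_i : M \to M$ (unital, $\omega$-preserving for $\omega = \tau\circ\mathbb{E}_N$, $L^2$-compact, point-strongly convergent to $\id$), I would average over the action of the (finite or compact) ``automorphism data'' of $N$ — here crucially $N$ finite-dimensional means its unitary group, or rather a suitable finite subgroup generating it together with $\mathbb{E}_N$, gives a conditional expectation onto the $N$-bimodular maps — to replace $S_i$ by $N$-bimodular maps $T_i$, and then check that averaging preserves complete positivity, normality, the uniform bound, the $L^2$-compactness (finite averages of compact operators on $L^2$ are compact) and the point-strong convergence. The $\mathbb{E}_N$-decreasing condition should follow from $N$-bimodularity together with unitality once one has $T_i(1) \le 1$; and then Theorem~A lets us upgrade to unital, state-preserving maps if desired.

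The main obstacle I expect is the passage from general (state-preserving, $L^2$-compact) Haagerup approximating maps on $M$ to $N$-bimodular ones without destroying $L^2$-compactness, and simultaneously keeping the right relation between the two $L^2$-spaces — the ambient $L^2(M)$ versus the relative Hilbert module structure determined by $\mathbb{E}_N$. Because $N$ is only finite-dimensional (not a factor) one must be a little careful that ``averaging over the unitary group of $N$'' really lands one in the $N$-bimodular maps and respects the decomposition $L^2(M) = \bigoplus$ of isotypic components; the key point that makes this work is precisely that $N$ is finite-dimensional, so the group one averages over is compact (indeed one can take a finite group) and all the relevant projections are of finite rank over $N$. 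A secondary technical point is verifying that the $L^2$-compactness notion relative to $\mathbb{E}_N$ is, for finite-dimensional $N$, equivalent to ordinary $L^2$-compactness with respect to $\omega = \tau \circ \mathbb{E}_N$; this should be a short lemma comparing $L^2(M)$ as an $N$-module with $L^2(M,\omega)$, using that $N$ finite-dimensional forces the module to be finitely generated, so ``compact over $N$'' and ``compact over $\mathbb{C}$'' coincide up to the finite bookkeeping of a basis of $N$. Once these comparison statements are in place, the equivalence of the two Haagerup properties is a matter of transporting approximating maps back and forth, invoking Theorem~A for the freedom in the choice of $\mathbb{E}_N$.
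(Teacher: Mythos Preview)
Your proposal has genuine gaps in both directions.

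For the direction (rHAP) $\Rightarrow$ (HAP), the maps $\Phi_i = \Psi_j \circ T_i$ you describe, with $\Psi_j$ ``viewed as maps $M\to M$ via the conditional expectation and the inclusion'', factor through $N$ and hence have range in $N$; they cannot converge to $\id_M$. The correct argument is much simpler and does not require any composition: since $N$ is finite-dimensional the Jones projection $e_N$ has finite rank, so every $a e_N b$ is a finite-rank operator on $L^2(M,\varphi)$, whence $\mathcal{K}(M,N,\varphi)\subseteq \mathcal{K}(L^2(M,\varphi))$. The \emph{same} net $(T_i)_{i\in I}$ witnessing (rHAP) therefore already witnesses the ordinary Haagerup property of $(M,\varphi)$.

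For the direction (HAP) $\Rightarrow$ (rHAP), your averaging idea runs into a real obstacle. To force $N$-bimodularity one would need to average over $U(N)\times U(N)$ acting by $(u,v)\cdot S = u^*S(u\,\cdot\,v)v^*$, but for $v\neq u^*$ the map $x\mapsto u^*S(uxv)v^*$ is not completely positive, and the average need not be either. Averaging only over the diagonal, $T(x)=\int_{U(N)} u^*S(uxu^*)u\,du$, stays cp but yields merely $T(vxv^*)=vT(x)v^*$, i.e.\ equivariance for the adjoint action, which is strictly weaker than $N$-bimodularity once $N$ is noncommutative. (For $N$ abelian there is a Schur-type averaging $T(x)=\sum_{i,j}p_iS(p_ixp_j)p_j$ that does work, but this does not give full $N$-bimodularity when $N$ contains a matrix block.) One also cannot simply arrange $S_i|_N=\id_N$ to put $N$ in the multiplicative domain: the approximating maps only satisfy $S_i|_N\to\id_N$ asymptotically.

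The paper avoids this entirely. It sets $p=\sum_k E_1^k$ (one minimal projection from each simple summand of $N$), uses that HAP passes to the corner $pMp$, and then lifts each approximating map $\Phi:pMp\to pMp$ to $\widetilde{\Phi}:M\to M$ by the explicit matrix-unit formula
\[
\widetilde{\Phi}(E_i^k x E_j^l)=E_{i,1}^k\,\Phi(E_{1,i}^k x E_{j,1}^l)\,E_{1,j}^l.
\]
A short Stinespring-type identity shows $\widetilde{\Phi}$ is normal and cp; $N$-bimodularity is immediate from the formula; and one checks by hand that $\widetilde{\Phi}^{(2)}\in\mathcal{K}(M,N,\varphi)$ by reducing to rank-one maps on $pMp$ and using that $\widetilde{\mathrm{Diag}}=\mathbb{E}_N$. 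No averaging is involved.

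Finally, your lemma ``compact over $N$ $=$ compact over $\mathbb{C}$'' is only half of the story: the inclusion $\mathcal{K}(M,N,\varphi)\subseteq \mathcal{K}(L^2(M,\varphi))$ is what is needed (and trivially true) for the easy direction, but the reverse inclusion is false, since compact operators need not lie in $\langle M,N\rangle=(JNJ)'$. What is true for finite-dimensional $N$ is that compacts lying in $\langle M,N\rangle$ coincide with $\mathcal{K}(M,N,\varphi)$; this would be the statement you need, but the paper bypasses it by verifying relative compactness of $\widetilde{\Phi}^{(2)}$ directly.
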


\begin{introcorollary} \label{cor:C}
	Suppose that $N \subset M_1$ and $N \subset M_2$ are  unital inclusions of  von Neumann algebras equipped with respective faithful normal conditional expectations. If $N$ is finite-dimensional and both $M_1, M_2$ have the Haagerup property, then the amalgamated free product $M_1 \ast_N M_2$ also has the Haagerup property.
\end{introcorollary}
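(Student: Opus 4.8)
The plan is to deduce the corollary by sandwiching the amalgamated free product between two applications of Theorem~\ref{thm:B}. First fix a faithful normal state $\tau$ on the finite-dimensional algebra $N$ (one may even take $\tau$ a trace), let $\mathbb{E}_1\colon M_1\to N$ and $\mathbb{E}_2\colon M_2\to N$ be the given conditional expectations, and set $\varphi_i:=\tau\circ\mathbb{E}_i$, a faithful normal state on $M_i$ restricting to $\tau$ on $N$. I would then form the $N$-amalgamated free product $(M,\mathbb{E}_N):=(M_1,\mathbb{E}_1)\ast_N(M_2,\mathbb{E}_2)$, carrying a faithful normal conditional expectation $\mathbb{E}_N\colon M\to N$ and reference state $\varphi:=\tau\circ\mathbb{E}_N$, and use the standard decomposition of its GNS space into the free-product Hilbert space
\[
L^2(M)\;=\;L^2(N)\ \oplus\ \bigoplus_{k\ge 1}\ \bigoplus_{i_1\neq i_2\neq\cdots\neq i_k}\ \mathring H_{i_1}\otimes_N\mathring H_{i_2}\otimes_N\cdots\otimes_N\mathring H_{i_k},\qquad \mathring H_i:=L^2(M_i)\ominus L^2(N).
\]
Since $N$ is finite-dimensional, Theorem~\ref{thm:B} applied to the inclusions $N\subseteq M_i$ gives that each $(M_i,N,\mathbb{E}_i)$ has the relative Haagerup property, and by the last assertion of Theorem~\ref{thm:A} the approximating maps $\Phi^{(n)}_i\colon M_i\to M_i$ can be chosen unital, normal, completely positive, $N$-bimodular, $\varphi_i$-preserving, $L^2$-compact, and point-strongly convergent to $\id_{M_i}$.

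The central input is a general stability statement (which the paper proves separately): the relative Haagerup property of $(M_1,N,\mathbb{E}_1)$ and $(M_2,N,\mathbb{E}_2)$ is inherited by $(M,N,\mathbb{E}_N)$. To prove it I would take the amalgamated free product maps $\Phi^{(n)}:=\Phi^{(n)}_1\ast_N\Phi^{(n)}_2\colon M\to M$; these are normal, unital, completely positive, $N$-bimodular and $\varphi$-preserving by the Boca/Blanchard--Dykema-type construction of free products of $N$-bimodular unital completely positive maps (which applies because $N$-bimodularity together with $\varphi_i$-preservation forces $\mathbb{E}_i\circ\Phi^{(n)}_i=\mathbb{E}_i$, so $\Phi^{(n)}_i$ maps $\ker\mathbb{E}_i$ into itself). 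Point-strong convergence $\Phi^{(n)}\to\id_M$ then follows because the linear span of reduced words is $\sigma$-strongly dense and $\Phi^{(n)}$ acts on a reduced word factor by factor. The delicate point is $L^2$-compactness: the $L^2$-implementation $T^{(n)}$ of $\Phi^{(n)}$ is the identity on $L^2(N)$ and, on the length-$k$ summand above, decomposes in terms of the relative tensor products $\mathring T^{(n)}_{i_1}\otimes_N\cdots\otimes_N\mathring T^{(n)}_{i_k}$ of the compressions $\mathring T^{(n)}_{i}$ to $\mathring H_i$ of the $L^2$-implementations of the $\Phi^{(n)}_i$. Each such operator is compact (in the $N$-relative sense of Definition~\ref{Dfn=RHAPState}), but the orthogonal sum over all word lengths need not be, unless the operator norms $\|\mathring T^{(n)}_{i_1}\otimes_N\cdots\otimes_N\mathring T^{(n)}_{i_k}\|$ tend to $0$ as $k\to\infty$.

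This uniform tail control is the main obstacle, and I expect it to be handled exactly as for discrete groups: before passing to free products, replace each $\Phi^{(n)}_i$ by a convex combination $t_n\Phi^{(n)}_i+(1-t_n)\mathbb{E}_i$ with $t_n\uparrow 1$. Such a map is still unital, completely positive, $N$-bimodular, $\varphi_i$-preserving, $L^2$-compact, and still converges point-strongly to $\id_{M_i}$, but now, since $\mathbb{E}_i$ annihilates $\mathring H_i$ while the $\Phi^{(n)}_i$-implementation is a contraction, the reduced implementation contracts by a factor at most $t_n$ per letter; hence the norm of $\mathring T^{(n)}_{i_1}\otimes_N\cdots\otimes_N\mathring T^{(n)}_{i_k}$ is at most $t_n^{\,k}\to 0$, so $T^{(n)}$ is a norm-convergent sum of (relatively) compact operators and is therefore (relatively) compact. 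With the stability theorem in hand, $(M,N,\mathbb{E}_N)$ has the relative Haagerup property, and since $N$ is finite-dimensional a final application of Theorem~\ref{thm:B} to the inclusion $N\subseteq M$ yields that $M=M_1\ast_N M_2$ has the usual Haagerup property. The steps that will require genuine care are: matching the $L^2$-compactness notion of Definition~\ref{Dfn=RHAPState} with the free-product Hilbert-space decomposition, verifying the factorwise action of $T^{(n)}$ on reduced words together with the precise effect of $\mathbb{E}_i$ there, and checking that the convex-combination modification preserves all the structural properties in the (possibly non-tracial) relative setting.
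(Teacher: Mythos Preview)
Your proposal is correct and follows essentially the same route as the paper: the paper also sandwiches the amalgamated-free-product stability result (its Theorem~\ref{Thm=amalgamated}) between two applications of Theorem~\ref{thm:B}, and proves that stability result in precisely the way you outline---forming the Boca/Blanchard--Dykema free product of unital state-preserving $N$-bimodular approximants, and enforcing tail decay by first replacing each $\Phi_i^{(n)}$ with the convex combination $\frac{1}{1+\epsilon_n}(\Phi_i^{(n)}+\epsilon_n\mathbb{E}_i)$, which is exactly your $t_n\Phi_i^{(n)}+(1-t_n)\mathbb{E}_i$ with $t_n=(1+\epsilon_n)^{-1}$. The only cosmetic difference is that the paper verifies relative compactness on each length-$n$ block by writing down explicit finite-rank approximants $\sum a_{k_1,l_1}^{(i,j_1)}\cdots a_{k_n,l_n}^{(i,j_n)}e_N b_{k_n,l_n}^{(i,j_n)}\cdots b_{k_1,l_1}^{(i,j_1)}$ and proving norm convergence by induction, rather than invoking an abstract $N$-relative tensor product of the compressed $L^2$-maps; your formulation is morally the same but the paper's concrete version sidesteps having to match the compactness notion of Definition~\ref{Dfn=RHAPState} with a relative-tensor-product picture.
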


We illustrate our results with examples coming on one hand from the class of $q$-deformed Hecke-von Neumann algebras of (virtually free) Coxeter groups, and on the other hand from discrete quantum groups.

Of particular interest is also the elementary case of $M = \mathcal{B}(\mathcal{H})$ which provides us with both triples $(\mathcal{B}(\mathcal{H}), N, \mathbb{E}_N)$ that do and do not have the relative Haagerup property. This  is rather surprising as it gives examples of co-amenable inclusions of von Neumann algebras in the sense of \cite{PopaCorr} (see also \cite{BannonMarrakchiOzawa}) without the relative Haagerup property. 

\vspace{0.3cm}

The detailed plan of the paper is as follows. After this introduction we recall some facts regarding von Neumann algebras, their modular theory and completely positive approximations in Section 2, and introduce (certain variants of) the definition of the relative Haagerup property in Section 3. The latter section also contains the initial discussion of the independence of our notion of various ingredients, mainly in the semifinite setting.
Section 4, the most technical part of the paper, introduces the crossed product arguments allowing us to reduce the general problem to the semifinite case. Section 5 contains the main general results of the paper; in particular the above  Theorem \ref{thm:A} is a combination of Theorem \ref{thm:condexpindep} and Theorem \ref{Unitality+State-Preservation}. In Section \ref{Sect=Examples} we briefly describe the known examples of Haagerup inclusions related to Cartan subalgebras. Here we also study the case $M = \mathcal{B}(\mathcal{H})$ which leads to very interesting counterexamples. In Section \ref{Sect=FD} we show that in the case of a finite-dimensional subalgebra the relative Haagerup property is equivalent to the Haagerup property of the larger algebra and prove Theorem \ref{thm:B} above (which is  Theorem \ref{Thm=HAPrelHAP}). In Section \ref{amfreeprod} we discuss the behaviour of the relative Haagerup property with respect to the amalgamated free product construction and show Corollary \ref{cor:C}  (i.e.\ Corollary \ref{Cor=amalgamated}). In Section \ref{amfreeprod} we also discuss briefly a consequence of these results for the Hecke-von Neumann algebras associated to virtually free Coxeter groups.
Finally in a short Section \ref{Sect=Quantum} we present an example of a Haagerup inclusion coming from quantum groups, which in fact is even strongly of finite index.

\begin{acknow}
MC and GV are supported by the NWO Vidi grant `Non-commutative harmonic analysis and rigidity of operator algebras', VI.Vidi.192.018.  MK was supported by the NWO project `The structure of Hecke-von Neumann algebras', 613.009.125.  MW was supported by the Research Foundation – Flanders (FWO) through a Postdoctoral Fellowship and by long term structural funding - Methusalem grant of the Flemish Government. AS was  partially supported by the National Science Center (NCN) grant no. 2020/39/I/ST1/01566.
\end{acknow}

\section{Preliminaries and notation}

\subsection{General notation}

We write $\mathbb{N}:=\{1,2,...\}$, 
$C(\mathbb{R})$ denotes the continuous functions $\mathbb{R} \rightarrow \mathbb{C}$ and let $C_c(\mathbb{R})$ denote the functions in $C(\mathbb{R})$ with compact support.  For $f,g \in L^1(\mathbb{R})$ we define the convolution and involution by
\[
(f \ast g)(s) = \int_\mathbb{R} f(t) g(s-t) dt, \qquad f^\ast(s) = \overline{f(s^{-1})}, \qquad s \in \mathbb{R}.
\]

 The bounded operators on a Hilbert space $\mathcal{H}$ will be denoted by $\mathcal{B}(\mathcal{H})$ and $\otimes$
is the von Neumann algebraic tensor product or the tensor product of Hilbert spaces, as will be clear from the context.
Hilbert space inner products are linear on the left side.

\subsection{General von Neumann algebra theory}
For standard results on the theory von Neumann algebras we refer to  \cite{StratilaZsido}, \cite{Takesaki1}, \cite{Takesaki2}, \cite{PopaDelarocheBook}. For the theory of operator spaces and completely bounded maps we refer to \cite{EffrosRuan}, \cite{PisierBook}. For several results on approximation properties and the Haagerup property we refer to \cite{NateTaka}, \cite{HaagerupProperty}.

\vspace{0.3cm}

\noindent {\bf Assumption:} We will assume throughout that the von Neumann algebras we study are $\sigma$-finite, i.e. they admit faithful normal states.

\vspace{0.3cm}

Let $M\subseteq\mathcal{B}(\mathcal{H})$ be a von Neumann algebra.
A self-adjoint (possibly unbounded) operator $h$ on $\mathcal{H}$
is said to be \emph{affiliated with $M$} if for all $k\in\mathbb{N}$
the corresponding spectral projection $E_{\left[-k,k\right]}(h)$
is an element of $M$. Equivalently, $h$ is affiliated with $M$
if and only if $h$ commutes with all unitaries in the commutant $M^{\prime}\subseteq\mathcal{B}(\mathcal{H})$.

We will always assume inclusions of von Neumann algebras $N\subseteq M$
to be unital in the sense that $1_{M}\in N$, and conditional expectations
to be faithful and normal. We will usually repeat these conditions throughout
the text. For a functional $\varphi\in M_{\ast}$ in the predual of
$M$ and elements $a,b\in M$ we denote by $a\varphi b\in M_{\ast}$
the normal functional given by $(a\varphi b)(x):=\varphi(bax)$, $x\in M$, and further write $a\varphi$ for $a\varphi1$ and $\varphi b$ for $1\varphi b$.
If $\varphi\in M_{\ast}$ is faithful, normal and positive, we write $L^{2}(M,\varphi)$
for the GNS Hilbert space associated with $\varphi$ and $\Omega_{\varphi}$
for the corresponding cyclic vector. We will usually identify $M$
with its image under the GNS representation, so  $M\subseteq\mathcal{B}(L^{2}(M, \varphi))$.
We further write $\left\Vert x\right\Vert _{2,\varphi}:=\varphi(x^{\ast}x)^{1/2}$
for $x\in M$ or, if $\varphi$ is clear from the context, $\Vert x \Vert_2 := \Vert x \Vert_{2,\varphi}$.

An \emph{action $\mathbb{R}\overset{\alpha}{\curvearrowright}M$}
of the group $\mathbb{R}$ on a von Neumann algebra $M$ is a group homomorphism
$\alpha:\mathbb{R}\rightarrow\text{Aut}(M)$, $t\mapsto\alpha_{t}$
such that for every $x\in M$ the map $t\mapsto\alpha_{t}(x)$ is
strongly continuous. We denote the corresponding \emph{crossed product
von Neumann algebra} by $M\rtimes_{\alpha}\mathbb{R}$.\\

The following lemma is standard.


\begin{lem} \label{Lem=SOTversusL2} Let $\varphi$ be a faithful
normal state on a von Neumann algebra $M$. Then, on bounded subsets
of $M$ the strong topology coincides with the topology induced by
the norm $\Vert x\Vert_{2,\varphi}=\varphi(x^{\ast}x)^{1/2}$, $x\in M$.
\end{lem}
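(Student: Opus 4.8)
The plan is to realize $M$ on its GNS Hilbert space $L^{2}(M,\varphi)$ with cyclic and separating vector $\Omega_{\varphi}$, so that by definition $\Vert x\Vert_{2,\varphi}=\Vert x\Omega_{\varphi}\Vert_{L^{2}(M,\varphi)}$, and then to compare the two topologies via the action on a single vector, respectively on a dense set of vectors. One inclusion is immediate and needs no boundedness hypothesis: if a net $(x_{i})$ converges strongly to $x$, then in particular $x_{i}\Omega_{\varphi}\to x\Omega_{\varphi}$ in $L^{2}(M,\varphi)$, which says precisely that $\Vert x_{i}-x\Vert_{2,\varphi}\to 0$. Hence the strong topology is always finer than the one induced by $\Vert\cdot\Vert_{2,\varphi}$.

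For the converse — which is where boundedness enters — suppose $(x_{i})$ lies in the ball of radius $R$ in $M$ and $\Vert x_{i}-x\Vert_{2,\varphi}\to 0$; enlarging $R$ we may assume $x$ lies in the same ball, and it is convenient to work with the bounded net $y_{i}:=x_{i}-x$, so $\Vert y_{i}\Vert\le 2R$ and $\Vert y_{i}\Omega_{\varphi}\Vert\to 0$. First, for any $a'\in M'$ one has $y_{i}a'\Omega_{\varphi}=a'y_{i}\Omega_{\varphi}$, whence $\Vert y_{i}a'\Omega_{\varphi}\Vert\le\Vert a'\Vert\,\Vert y_{i}\Omega_{\varphi}\Vert\to 0$; thus $y_{i}\to 0$ strongly on the subspace $M'\Omega_{\varphi}$. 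Since $\Omega_{\varphi}$ is separating for $M$ it is cyclic for $M'$, so $M'\Omega_{\varphi}$ is dense in $L^{2}(M,\varphi)$.

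A standard approximation argument then upgrades strong convergence on this dense set to strong convergence everywhere, using the uniform norm bound: given $\xi\in L^{2}(M,\varphi)$ and $\varepsilon>0$, choose $\eta\in M'\Omega_{\varphi}$ with $\Vert\xi-\eta\Vert<\varepsilon$ and estimate
\[
\Vert y_{i}\xi\Vert\le\Vert y_{i}(\xi-\eta)\Vert+\Vert y_{i}\eta\Vert\le 2R\,\varepsilon+\Vert y_{i}\eta\Vert .
\]
Passing to the limit in $i$ gives $\limsup_{i}\Vert y_{i}\xi\Vert\le 2R\varepsilon$, and since $\varepsilon$ was arbitrary, $y_{i}\xi\to 0$, i.e.\ $x_{i}\to x$ strongly. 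Combining the two directions shows that the strong topology and the $\Vert\cdot\Vert_{2,\varphi}$-topology agree on bounded subsets of $M$. (As $M$ is $\sigma$-finite one may, if preferred, replace nets by sequences throughout, bounded strongly closed subsets being metrizable.)

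I do not expect a genuine obstacle here: the only points requiring a little care are recording that $\Omega_{\varphi}$ is cyclic and separating — so that $M'\Omega_{\varphi}$ is dense — and noticing that the nontrivial direction really does use the uniform bound $\Vert y_{i}\Vert\le 2R$ in the final $\varepsilon$-estimate, whereas the other direction is automatic; everything else is routine bookkeeping.
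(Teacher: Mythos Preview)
Your proof is correct. The approach differs from the paper's in the nontrivial direction: you pass to the commutant, using that the separating vector $\Omega_{\varphi}$ is cyclic for $M'$ so that $M'\Omega_{\varphi}$ is dense, and then the commutation $y_{i}a'\Omega_{\varphi}=a'y_{i}\Omega_{\varphi}$ gives the estimate directly. The paper instead stays inside $M$ and uses modular theory: for $a\in M$ analytic for $\sigma^{\varphi}$ one has $\Vert (x_{i}-x)a\Omega_{\varphi}\Vert\le\Vert\sigma_{i/2}^{\varphi}(a)\Vert\,\Vert(x_{i}-x)\Omega_{\varphi}\Vert$ (this is \cite[Lemma 3.18(i)]{Takesaki2}), and such $a\Omega_{\varphi}$ are dense. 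Your route is more elementary in that it avoids any appeal to the modular automorphism group; the paper's route has the mild advantage of fitting the modular-theoretic toolkit used throughout the article, but for this lemma your argument is cleaner.
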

\begin{proof}
Recall that by \cite{Takesaki1} we may assume $M\subseteq\mathcal{B}(L^{2}(M, \varphi))$ and that on bounded sets the strong topology of $M$ does not change under the choice of a faithful representation. Now if $(x_i)_{i \in I}$ is a  net in $M$ converging strongly to $x$, then $\Vert x_i - x \Vert_{2,\varphi} =  \Vert (x_i  - x) \Omega_\varphi \Vert \rightarrow 0$. Conversely, suppose that  $(x_i)_{i \in I}$ is a bounded net in $M$ such that $\Vert x_i - x \Vert_{2,\varphi} = \Vert (x_i - x) \Omega_\varphi \Vert \rightarrow 0$. Then  for $a \in M$ analytic for the modular group $\sigma^\varphi$ (see Section \ref{Sect=Modular}) we have by \cite[Lemma 3.18 (i)]{Takesaki2} that
\[
\Vert (x_i - x) a \Omega_\varphi \Vert \leq \Vert \sigma_{i/2}^\varphi(a) \Vert  \Vert (x_i - x) \Omega_\varphi \Vert \rightarrow 0.
\]
 Since by \cite[Lemma VIII.2.3]{Takesaki2} such elements $a \Omega_\varphi$, $a \in M$ are dense in $L^{2}(M, \varphi)$ and $(x_i)_{i \in I}$ is bounded, we conclude by a $2\epsilon$-estimate that $x_i \rightarrow x$ strongly.
\end{proof}

Note that one implication above naturally does not require the uniform boundedness assumption, provided that we assume that $M$ is represented on its standard Hilbert space $L^{2}(M, \varphi)$.

\subsection{Tomita-Takesaki modular theory}\label{Sect=Modular}
Let $M$ be a von Neumann algebra with a faithful normal
positive functional $\varphi\in M_{\ast}$. We let $S_\varphi$ be the closure of the operator
\[
L^2(M, \varphi) \rightarrow L^2(M, \varphi): x \Omega_\varphi \mapsto x^\ast \Omega_\varphi, \qquad x \in M.
\]
Let $S_\varphi = J_{\varphi}\Delta_{\varphi}^{1/2}$ be the (anti-linear) polar decomposition where $J_\varphi$ is the modular conjugation and  $\Delta_{\varphi}$ the modular operator. We have the modular automorphism group $\sigma^\varphi_t(x) = \Delta_\varphi^{it} x \Delta_\varphi^{-it}$.
  Then $(M,L^{2}(M,\varphi),J_{\varphi},L^{2}(M,\varphi)^{+})$
is the standard form of $M$, where the positive cone is given by
\[
L^{2}(M,\varphi)^{+}:=\overline{\{x(JxJ)\Omega_{\varphi}\mid x\in M\}}\subseteq L^{2}(M,\varphi)\text{.}
\]
The standard form is uniquely determined up to a unique (unitarily implemented) isomorphism.
For $x\in M$ and $\xi\in L^{2}(M,\varphi)$ we write $\xi x:=J_{\varphi}x^{\ast}J_{\varphi}\xi$.
    An element $x\in M$
is called \emph{analytic} for $\sigma^{\varphi}$ if the function
$\mathbb{R}\ni t\mapsto\sigma_{t}^{\varphi}(x)\in M$ extends to
a (necessarily unique) analytic function on the complex plane $\mathbb{C}$. In this case
we write $\sigma_{z}^{\varphi}(x)$ for the extension at $z\in\mathbb{C}$.

The \emph{centralizer} of a von Neumann algebra with respect to a faithful normal state $\varphi$ is the set $M^{\varphi}:=\{x \in M \mid \forall_{y \in M} \varphi(xy)= \varphi(yx)\}$, by \cite[Theorem VIII.2.6]{Takesaki2} equivalently described as $\{x \in M \mid \sigma^\varphi_t(x) = x, \;\; t \in \mathbb{R} \}$. We will often consider the situation where $N\subseteq M$ is a unital embedding, equipped with a faithful normal conditional expectation  $\mathbb{E}_N:M \to N$,   $\tau \in N_*$ is a faithful tracial state, and $\varphi = \tau \circ \mathbb{E}_N$. Then an easy computation shows that $N \subseteq M^\varphi$.


\subsection{Completely positive maps}

Let $A,B$ be von Neumann algebras with faithful normal positive functionals
$\varphi$ and $\psi$ respectively. For a linear map $\Phi:A\rightarrow B$
we say that its \emph{$L^{2}$-implementation} $\Phi^{(2)}$ (with
respect to $\varphi$ and $\psi$) exists if the map $x\Omega_{\varphi}\mapsto\Phi(x)\Omega_{\psi}$
extends to a bounded operator $\Phi^{(2)}:L^{2}(A,\varphi)\rightarrow L^{2}(B,\psi)$.
This is the case if and only if there exists a constant $C>0$ such
that for all $x\in A$,
\[
\psi(\Phi(x)^{\ast}\Phi(x))\leq C\varphi(x^{\ast}x).
\]
In particular, if $\Phi$ is $2$-positive with $\psi\circ\Phi\leq\varphi$,
the Kadison-Schwarz inequality implies that $\Phi^{(2)}$ exists with
$\Vert\Phi^{(2)}\Vert\leq\Vert\Phi(1)\Vert^{1/2}$. Indeed, for $x\in A$
\begin{eqnarray*}
\Vert\Phi(x)\Omega_{\psi}\Vert_{2}^{2} & = & \psi(\Phi(x)^{\ast}\Phi(x))\leq\Vert\Phi(1)\Vert\psi(\Phi(x^{\ast}x))\\
 & \leq & \Vert\Phi(1)\Vert\varphi(x^{\ast}x)=\Vert\Phi(1)\Vert\Vert x\Omega_{\varphi}\Vert_{2}^{2}.
\end{eqnarray*}
 This implies that if $\Phi$ is contractive then so is $\Phi^{(2)}$.

The general principle of the following lemma was used as part of a
proof in \cite{CS-IMRN} and \cite{Jolissaint} a number of times.
Here we present it separately. We will also need several straightforward
variations of this lemma. Because they can be proved in a very similar
way, we shall not state them here. The essence of the result is that,
given two nets of maps with suitable properties that strongly converge
to the identity, the composition of these maps gives rise to a net
that also converges to the identity in the strong operator topology.

\begin{lem} \label{Lem=ApproxSOT}
Let $(A,\varphi)$
and $(B,\varphi_{j})$, $j\in\mathbb{N}$ be pairs of von Neumann
algebras equipped with faithful normal states. Consider a normal completely
positive map $\pi:A\rightarrow B$, a bounded sequence of normal completely
positive maps $(\Psi_{j}:B\rightarrow A)_{j\in\mathbb{N}}$ and for
every $j\in\mathbb{N}$ a bounded net of  completely positive maps $\ensuremath{(\Phi_{j,k}:B\rightarrow B})_{k\in K_{j}}$.
Assume that for all $j\in\mathbb{N}$, $k\in K_{j}$ the inequalities
$\varphi_{j}\circ\pi\leq\varphi$, $\varphi\circ\Psi_{j}\leq\varphi_{j}$
and $\varphi_{j}\circ\Phi_{j,k}\leq\varphi_{j}$ hold, that $\Psi_{j}\circ\pi(x)\rightarrow x$
strongly in $j$ for every $x\in A$ and that for every $j\in\mathbb{N}$, $x \in B$
we have $\Phi_{j,k}(x)\rightarrow x$ strongly in $k$. Then there
exists a directed set $\mathcal{F}$ and a function $(\widetilde{j},\widetilde{k})\text{: }\mathcal{F}\rightarrow\{(j,k)\mid j\in \mathbb{N},k\in K_{j}\}\text{, }F\mapsto(\widetilde{j}(F),\widetilde{k}(F))$
such that $\Psi_{\widetilde{j}(F)}\circ\Phi_{\widetilde{j}(F),\widetilde{k}(F)}\circ\pi(x)\rightarrow x$
strongly in $F$ for every $x\in A$.
\end{lem}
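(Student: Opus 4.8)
The plan is to transport the statement to the GNS Hilbert spaces, where it reduces to two elementary limits followed by a routine diagonalisation. Set $C:=\sup_{j\in\mathbb{N}}\|\Psi_j\|<\infty$ (finite by hypothesis, and equal to $\sup_j\|\Psi_j(1)\|$ since the $\Psi_j$ are positive); to keep the argument transparent I will also assume the family $(\Phi_{j,k})_{j,k}$ is uniformly bounded, $C':=\sup_{j,k}\|\Phi_{j,k}\|<\infty$. For $j\in\mathbb{N}$ and $k\in K_j$ put $S_j:=\Psi_j\circ\pi\colon A\to A$ and $T_{j,k}:=\Psi_j\circ\Phi_{j,k}\circ\pi\colon A\to A$; these are normal, completely positive, and uniformly bounded (by $C\|\pi\|$ and $CC'\|\pi\|$ respectively). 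Combining the three hypothesised inequalities on positive elements gives $\varphi\circ S_j\le\varphi$ and $\varphi\circ T_{j,k}\le\varphi$, so by the $L^2$-implementation estimate recalled above (Kadison--Schwarz) all of the maps occurring here have bounded $L^2$-implementations; in particular $\|\Psi_j(b)\|_{2,\varphi}\le\|\Psi_j(1)\|^{1/2}\|b\|_{2,\varphi_j}\le C^{1/2}\|b\|_{2,\varphi_j}$ for every $b\in B$.

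Next I would record, for a fixed $x\in A$, two limits, using Lemma \ref{Lem=SOTversusL2} to pass between the strong topology and the corresponding $\|\cdot\|_{2}$-norms on bounded subsets of $A$ or $B$. By hypothesis $S_j(x)\to x$ strongly in $j$, and as $(S_j(x))_j$ is bounded this means $\|S_j(x)-x\|_{2,\varphi}\to 0$ in $j$. Also, for each fixed $j$, the element $\pi(x)$ lies in $B$, so $\Phi_{j,k}(\pi(x))\to\pi(x)$ strongly in $k$ by hypothesis; since for fixed $j$ the net $(\Phi_{j,k}(\pi(x)))_k$ is bounded, this gives $\|\Phi_{j,k}(\pi(x))-\pi(x)\|_{2,\varphi_j}\to 0$ in $k$, whence by the displayed inequality
\[
\|\Psi_j(\Phi_{j,k}(\pi(x))-\pi(x))\|_{2,\varphi}\ \le\ C^{1/2}\,\|\Phi_{j,k}(\pi(x))-\pi(x)\|_{2,\varphi_j}\ \longrightarrow\ 0
\]
as $k$ runs over $K_j$, for each fixed $j$. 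Together with the identity $T_{j,k}(x)-x=\Psi_j(\Phi_{j,k}(\pi(x))-\pi(x))+(S_j(x)-x)$ this shows that $T_{j,k}(x)\to S_j(x)$ in $\|\cdot\|_{2,\varphi}$ along $K_j$ for each $j$, while $S_j(x)\to x$ in $\|\cdot\|_{2,\varphi}$ as $j\to\infty$; it remains to diagonalise.

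For this I would take $\mathcal{F}$ to be the set of triples $(n,E,m)$ with $n,m\in\mathbb{N}$ and $E\subseteq A$ finite, directed by $(n,E,m)\le(n',E',m')$ if and only if $n\le n'$, $E\subseteq E'$ and $m\le m'$ (joins being componentwise). Given $F=(n,E,m)$, the first limit lets me pick $\widetilde j(F)\ge n$ with $\|S_{\widetilde j(F)}(x)-x\|_{2,\varphi}<\frac{1}{2m}$ for all $x\in E$; then, with $j:=\widetilde j(F)$ now fixed, the second limit together with the directedness of $K_j$ lets me pick $\widetilde k(F)\in K_j$ with $\|\Psi_j(\Phi_{j,\widetilde k(F)}(\pi(x))-\pi(x))\|_{2,\varphi}<\frac{1}{2m}$ for all $x\in E$ (finitely many conditions in each case). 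The triangle inequality then gives $\|T_{\widetilde j(F),\widetilde k(F)}(x)-x\|_{2,\varphi}<\frac1m$ for all $x\in E$. Hence, given any $x\in A$ and $\varepsilon>0$, choosing $m>1/\varepsilon$ and $F_0:=(1,\{x\},m)$, every $F\ge F_0$ satisfies $\|T_{\widetilde j(F),\widetilde k(F)}(x)-x\|_{2,\varphi}<\varepsilon$; thus $T_{\widetilde j(F),\widetilde k(F)}(x)\to x$ in $\|\cdot\|_{2,\varphi}$, and since this net lies in $\{y\in A:\|y\|\le CC'\|\pi\|\,\|x\|\}$, Lemma \ref{Lem=SOTversusL2} promotes the convergence to the strong topology. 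This is precisely the assertion with the function $F\mapsto(\widetilde j(F),\widetilde k(F))$ just constructed.

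The step I expect to be the real work is this last one: arranging the two successive choices ($\widetilde j(F)$, then $\widetilde k(F)\in K_{\widetilde j(F)}$) so that they are consistent along a single directed set, and — more delicately — keeping the resulting net uniformly bounded so that $\|\cdot\|_{2}$-convergence can be upgraded to strong convergence via Lemma \ref{Lem=SOTversusL2}; this is where the boundedness of the families $(\Psi_j)$ and $(\Phi_{j,k})$ enters. If one only knows that each net $(\Phi_{j,k})_k$ is bounded, with no uniformity in $j$, this difficulty can be bypassed by enlarging $\mathcal{F}$ to carry in addition a finite set of vectors $\xi_1,\dots,\xi_r\in L^2(A,\varphi)$ and a tolerance, and using that each individual $\Psi_j$ is strongly continuous on bounded sets (a consequence of normality together with Kadison--Schwarz): the selections of $\widetilde j(F)$ and $\widetilde k(F)$ can then be made to control $\|(T_{\widetilde j(F),\widetilde k(F)}(x)-x)\xi_l\|$ directly, with no boundedness of the final net required.
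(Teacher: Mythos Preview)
Your proof is correct and follows essentially the same approach as the paper: pass to $L^2$-implementations via Kadison--Schwarz, split $T_{j,k}(x)-x$ by the triangle inequality, and diagonalise over finite sets, invoking Lemma~\ref{Lem=SOTversusL2} at both ends. The only cosmetic differences are that the paper takes $\mathcal{F}$ to be the finite subsets of $L^2(A,\varphi)$ (with tolerance $|F|^{-1}$) rather than your triples $(n,E,m)$ with $E\subseteq A$, and that the paper applies Lemma~\ref{Lem=SOTversusL2} at the end without commenting on the uniform boundedness issue you raise in your final paragraph.
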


\begin{proof}
For $j\in\mathbb{N}$ and $k\in K_{j}$ write
\begin{eqnarray}
\nonumber
\pi_{j}^{(2)}&:& L^{2}(A,\varphi)\rightarrow L^{2}(B,\varphi_{j}),x\Omega_{\varphi}\mapsto\pi(x)\Omega_{\varphi_j},\\
\nonumber
\Psi_{j}^{(2)}&:&L^{2}(B,\varphi_{j})\rightarrow L^{2}(A,\varphi),x\Omega_{\varphi_{j}}\mapsto\Phi_{j,k}(x)\Omega_{\varphi},\\
\nonumber
\Phi_{j,k}^{(2)}&:&L^{2}(B,\varphi_{j})\rightarrow L^{2}(B,\varphi_{j}),x\Omega_{\varphi_{j}}\mapsto\Phi_{j,k}(x)\Omega_{\varphi_{j}}
\end{eqnarray}
for the corresponding $L^2$-implementations with respect to $\varphi$ and $\varphi_{j}$.
Let $C\geq1$ be a bound for the norms of $(\Psi_{j})_{j\in\mathbb{N}}$
and hence for the norms of $(\Psi_{j}^{(2)})_{j\in\mathbb{N}}$. We shall
make use of the fact that on bounded sets the strong topology coincides
with the $L^{2}$-topology determined by a state, see Lemma \ref{Lem=SOTversusL2}.
Therefore we have strong limits $\Psi_{j}^{(2)}\pi_{j}^{(2)}\rightarrow1$
in $\mathcal{B}(L^{2}(A,\varphi))$ and $\Phi_{j,k}^{(2)}\rightarrow1$
in $\mathcal{B}(L^{2}(B,\varphi_{j}))$. Now let $F\subseteq L^{2}(A,\varphi)$
be a finite subset. We may find $j=\widetilde{j}(F)\in\mathbb{N}$ such
that for all $\xi\in F$,
\[
\Vert\Psi_{j}^{(2)}\pi_{j}^{(2)}\xi-\xi\Vert_{2}<|F|^{-1}\text{.}
\]
In turn, we may find $k=\widetilde{k}(j,F)=\widetilde{k}(F)$ such that for
all $\xi\in F$,
\[
\Vert\Phi_{j,k}^{(2)}\pi_{j}^{(2)}\xi-\pi_{j}^{(2)}\xi\Vert_{2}<|F|^{-1}\text{.}
\]
From the triangle inequality and by using that the operator norm of
$\Psi_{j}^{(2)}$ is bounded by $C$,
\begin{eqnarray}
\nonumber
\Vert\Psi_{j}^{(2)}\Phi_{j,k}^{(2)}\pi_{j}^{(2)}\xi-\xi\Vert_{2} &\leq& \Vert\Psi_{j}^{(2)}\Phi_{j,k}^{(2)}\pi_{j}^{(2)}\xi-\Psi_{j}^{(2)}\pi_{j}^{(2)}\xi\Vert_{2}+\Vert\Psi_{j}^{(2)}\pi_{j}^{(2)}\xi-\xi\Vert_{2}\\
\nonumber
&\leq& \Vert\Psi_{j}^{(2)}\Vert\Vert\Phi_{j,k}^{(2)}\pi_{j}^{(2)}\xi-\pi_{j}^{(2)}\xi\Vert_{2}+\Vert\Psi_{j}^{(2)}\pi_{j}^{(2)}\xi-\xi\Vert_{2}\\
\nonumber
&<& (1+C)\vert F\vert^{-1}.
\end{eqnarray}
This implies that $\Psi_{\widetilde{j}(F)}^{(2)}\Phi_{\widetilde{j}(F),\widetilde{k}(F)}^{(2)}\pi_{\widetilde{j}(F)}^{(2)}\rightarrow1$
strongly in $\mathcal{B}(L^{2}(A,\varphi))$ where the net is indexed
by all finite subsets of $L^{2}(A,\varphi)$ partially ordered by
inclusion. Using once more Lemma \ref{Lem=SOTversusL2}, one sees that for $x\in A$
we have that $\Psi_{\widetilde{j}(F)}\circ\Phi_{\widetilde{j}(F), \widetilde{k}(F)}\circ\pi(x)\rightarrow x$
strongly. The claim follows.
\end{proof}

\vspace{1mm}


\section{Relative Haagerup property} \label{relhap}

In this section we introduce the relative Haagerup property for inclusions
of general $\sigma$-finite von Neumann algebras and consider natural
variations of the definition. For this, fix a triple $(M,N,\varphi)$
where $N\subseteq M$ is a unital inclusion of von Neumann
algebras and where $\varphi$ is a faithful normal positive functional
on $M$ whose corresponding modular automorphism group $(\sigma_{t}^{\varphi})_{t\in\mathbb{R}}$
satisfies $\sigma_{t}^{\varphi}(N)\subseteq N$ for all $t\in\mathbb{R}$.
To keep the notation short, we will often just write $(M,N,\varphi)$
and will implicitly assume that the triple satisfies the mentioned
conditions. By \cite[Theorem IX.4.2]{Takesaki2} the assumption $\sigma_{t}^{\varphi}(N)\subseteq N$,
$t\in\mathbb{R}$ is equivalent to the existence of a (uniquely determined)
$\varphi$-preserving (necessarily faithful) normal conditional expectation
$\mathbb{E}_{N}^{\varphi}:M\rightarrow N$. If the corresponding
functional $\varphi$ is clear, we will often just write $\mathbb{E}_{N}$
instead of $\mathbb{E}_{N}^{\varphi}$ (compare also with Subsection
\ref{dependence}).


\subsection{First definition of relative Haagerup property}

For a triple $(M,N,\varphi)$ as before define the \emph{Jones projection}
\[
e_{N}^{\varphi}:=\mathbb{E}_{N}^{(2)}:L^{2}(M,\varphi)\rightarrow L^{2}(M,\varphi)
\]
which is the orthogonal projection onto $L^{2}(N,\varphi)\subseteq L^{2}(M,\varphi)$
and let $\left\langle M,N\right\rangle \subseteq\mathcal{B}(L^{2}(M,\varphi))$
be the von Neumann subalgebra generated by $e_{N}^{\varphi}$ and
$M$. This is the \emph{Jones construction}. We will usually write
$e_{N}$ instead of $e_{N}^{\varphi}$ if there is no ambiguity. Further
set
\[
\mathcal{K}_{00}(M,N,\varphi):=\text{Span}\{xe_{N}y\mid x,y\in M\}\subseteq\mathcal{B}(L^{2}(M,\varphi))
\]
and
\[
\mathcal{K}(M,N,\varphi):=\overline{\mathcal{K}_{00}(M,N,\varphi)}\text{.}
\]
Then $\mathcal{K}_{00}(M,N,\varphi)$ is a (not necessarily closed)
two-sided ideal in $\left\langle M,N\right\rangle $ whose elements
are called the \emph{finite rank operators} relative to $N$. Similarly,
$\mathcal{K}(M,N,\varphi)$ is a closed two-sided ideal in $\left\langle M,N\right\rangle $
whose elements are called the \emph{compact operators} relative to
$N$. Note that if $N=\mathbb{C}1_{M}$, then $e_{N}$ is a rank one
projection and the operators in $\mathcal{K}(M,N,\varphi)$ are precisely
the compact operators on $L^{2}(M,\varphi)$.

\begin{rem}
In the following it is often convenient to identify
a finite rank operator $ae_{N}b\in\mathcal{K}_{00}(M,N,\varphi)$,
$a,b\in M$ with the map $a\mathbb{E}_{N}(b\:\cdot\:):M\rightarrow M$.
The latter does not depend on $\varphi$ (but only on the conditional
expectation $\mathbb{E}_{N}$), and the notation is naturally compatible with the inclusion $M \subset L^2(M, \varphi)$. We will often write $a\mathbb{E}_{N}b:=a\mathbb{E}_{N}(b\:\cdot\:)$.
\end{rem}

\begin{dfn} \label{Dfn=RHAPState}
Let $ N\subseteq M$
be a unital inclusion of von Neumann algebras and let $\varphi$ be
a faithful normal positive functional on $M$ with $\sigma_{t}^{\varphi}(N)\subseteq N$
for all $t\in\mathbb{R}$. We say that the triple $(M,N,\varphi)$
has the \emph{relative Haagerup property} (or just \emph{property (rHAP)})
if there exists a net $(\Phi_{i})_{i\in I}$ of   normal maps $\Phi_{i}:M\rightarrow M$
such that

\begin{enumerate}
\item  \label{Item=RHAP1} $\Phi_{i}$
is completely positive and $\sup_{i}\Vert\Phi_{i}\Vert<\infty$ for all $i \in I$;
\item  \label{Eqn=RHAP2} $\Phi_{i}$ is
an $N$-$N$-bimodule map for all $i \in I$;
\item  \label{Eqn=RHAP3} $\Phi_{i}(x)\rightarrow x$
strongly for every $x \in M$;
\item  \label{Eqn=RHAP4}  $\varphi\circ\Phi_{i}\leq\varphi$ for all $i \in I$;
\item  \label{Eqn=RHAP5} For every $i\in I$ the $L^{2}$-implementation
\[
\Phi_{i}^{(2)}:L^{2}(M,\varphi)\rightarrow L^{2}(M,\varphi), x\Omega_{\varphi}\mapsto\Phi_{i}(x)\Omega_{\varphi},
\]
 is contained in $\mathcal{K}(M,N,\varphi)$.
\end{enumerate}
\end{dfn}

\begin{rem} \label{Rmk=HAPforPosFunctionals} {(1)} In many applications
$\varphi$ will be a faithful normal state, but for notational convenience
we shall rather work in the more general setting. Note that we may
always normalize $\varphi$ to be a state and that the definition
of the relative Haagerup property does not change under this normalization.

\noindent
{(2)} Note that in \cite{Popa06} (see also \cite{JolissaintETDS}) a
different notion of relative compactness is used to define the relative
Haagerup property. It coincides with ours in case $N^{\prime}\cap M\subseteq N$.
However, the alternative notion is not very suitable beyond the tracial
situation since it requires the existence of finite projections; we will return to this issue in Subsection \ref{Subsec:Examples}.

\noindent
{(3)} In the case where $N=\mathbb{C}1_{M}$ Definition \ref{Dfn=RHAPState} recovers the usual definition of the (non-relative) Haagerup property, see \cite[Definition 3.1]{CS-IMRN}.
\end{rem}

There is a number of immediate variations of Definition \ref{Dfn=RHAPState}.
For instance, one may replace the condition \eqref{Item=RHAP1} by
one of the following stronger conditions:

\vspace{1mm}
\begin{itemize}
\item[(1$^{\prime}$)] For every $i\in I$ the map $\Phi_{i}$
is contractive completely positive.
\item[(1$^{\prime\prime}$)] For every $i\in I$ the map $\Phi_{i}$
is unital completely positive.
\end{itemize}

\noindent We may also replace the condition \eqref{Eqn=RHAP4} by the following
condition:

\begin{itemize}
\item[(4$^{\prime}$)] $\varphi\circ\Phi_{i}=\varphi$.
\end{itemize}
\vspace{1mm}
One of the results that we shall prove is that if the subalgebra $N$
is finite, then condition \eqref{Eqn=RHAP4} is redundant. We will
further prove that in this setting the approximating maps $\Phi_{i}$, $i\in I$ can
be chosen to be unital and state-preserving implying that all the
variations of the relative Haagerup property from above coincide.
To simplify the statements of the following sections, let us introduce
the following auxiliary notion, which is a priori weaker (see Section 2).

\begin{dfn}  \label{Dfn=RHAPState-}
Let $ N\subseteq M$
be a unital inclusion of von Neumann algebras and let $\varphi$ be
a faithful normal positive functional on $M$ with $\sigma_{t}^{\varphi}(N)\subseteq N$
for all $t\in\mathbb{R}$. We say that the triple $(M,N,\varphi)$
has \emph{property $\text{(rHAP)}^{-}$} if there exists a net $(\Phi_{i})_{i\in I}$
of normal maps $\Phi_{i}:M\rightarrow M$ such that

\begin{enumerate}
\item  \label{Item=RHAP1-} $\Phi_{i}$
is completely positive for all $i \in I$;
\item  \label{Eqn=RHAP2-} $\Phi_{i}$
is an $N$-$N$-bimodule map for all $i \in I$;
\item  \label{Eqn=RHAP3-} $\Vert\Phi_{i}(x)-x\Vert_{2,\varphi}\rightarrow0$ for every $x \in M$;
\item  \label{Eqn=RHAP4-} For every $i\in I$ the $L^{2}$-implementation
\[
\Phi_{i}^{(2)}\text{: }L^{2}(M,\varphi)\rightarrow L^{2}(M,\varphi), x\Omega_{\varphi}\mapsto\Phi_{i}(x)\Omega_{\varphi},
\]
exists and is contained in $\mathcal{K}(M,N,\varphi)$.
\end{enumerate}
\end{dfn}


\subsection{Dependence on the positive functional: reduction to the dependence
on the conditional expectation} \label{dependence}

Let $N\subseteq M$ be a unital inclusion of von Neumann
algebras which admits a faithful normal conditional expectation $\mathbb{E}_{N}:M\rightarrow N$.
Recall that for every faithful normal positive functional $\varphi$
on $M$ with $\varphi\circ\mathbb{E}_{N}=\varphi$ the corresponding
modular automorphism group $(\sigma_{t}^{\varphi})_{t\in\mathbb{R}}$
satisfies $\sigma_{t}^{\varphi}(N)\subseteq N$ for all $t\in\mathbb{R}$. Note that such a functional always exists, as it suffices to pick a faithful normal state $\omega\in N_*$ (which exists by our standing $\sigma$-finiteness assumption) and set $\varphi = \omega \circ \mathbb{E}_N$.
In this subsection we will examine the dependence of the relative
Haagerup property of $(M,N,\varphi)$ on the functional $\varphi$.
We shall prove that the property rather depends on the conditional
expectation $\mathbb{E}_{N}$ than on $\varphi$.

\begin{lem} \label{Lem=StatePreserving}
Let $\Phi:M\rightarrow M$
be $N$-$N$-bimodular. Then the following statements are equivalent:

\begin{enumerate}
\item \label{Item=Epreserving1} $\mathbb{E}_{N}\circ\Phi\leq\mathbb{E}_{N}$
(resp. $\mathbb{E}_{N}\circ\Phi=\mathbb{E}_{N}$).
\item \label{Item=Epreserving2} For all $\varphi\in M_{\ast}^{+}$
with $\varphi\circ\mathbb{E}_{N}=\varphi$ we have $\varphi\circ\Phi\leq\varphi$
(resp. $\varphi\circ\Phi=\varphi$).
\item \label{Item=Epreserving3} There exists a faithful functional
$\varphi\in M_{\ast}^{+}$ with $\varphi\circ\mathbb{E}_{N}=\varphi$
such that $\varphi\circ\Phi\leq\varphi$ (resp. $\varphi\circ\Phi=\varphi$).
\end{enumerate}
Further, the following statements are equivalent:
\begin{enumerate}\setcounter{enumi}{3}
\item \label{Item=Epreserving1-} There exists $C>0$
such that $\mathbb{E}_{N}(\Phi(x)^{\ast}\Phi(x))\leq C\mathbb{E}_{N}(x^{\ast}x)$
for all $x\in M$.
\item \label{Item=Epreserving2-} There exists $C>0$ such that for
all $\varphi\in M_{\ast}^{+}$ with $\varphi\circ\mathbb{E}_{N}=\varphi$
and $x\in M$ we have $\varphi(\Phi(x)^{\ast}\Phi(x))\leq C\varphi(x^{\ast}x)$.
\item \label{Item=Epreserving3-} There exists $C>0$ and a faithful functional
$\varphi\in M_{\ast}^{+}$ with $\varphi\circ\mathbb{E}_{N}=\varphi$
such that for all $x\in M$ we have $\varphi(\Phi(x)^{\ast}\Phi(x))\leq C\varphi(x^{\ast}x)$.
\end{enumerate}
In particular, if the $L^{2}$-implementation of $\Phi$ with respect
to $\varphi$ exists, then it exists with respect to any other $\psi$
with $\psi\circ\mathbb{E}_{N}=\psi$.
\end{lem}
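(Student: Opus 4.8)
The plan is to reduce the whole statement to two elementary facts. The first is that a positive normal functional $\varphi$ on $M$ satisfies $\varphi\circ\mathbb{E}_N=\varphi$ precisely when $\varphi=\omega\circ\mathbb{E}_N$ for $\omega:=\varphi|_N\in N_\ast^+$, and that such a functional which is moreover \emph{faithful} always exists: pick a faithful normal state $\omega_0$ on $N$ (available by $\sigma$-finiteness) and set $\varphi_0:=\omega_0\circ\mathbb{E}_N$, which is faithful because $\mathbb{E}_N$ is. The second fact is that $N$-$N$-bimodularity of $\Phi$, together with bimodularity of $\mathbb{E}_N$, lets one move elements of $N$ freely across both maps; for instance $\mathbb{E}_N(a^\ast\Phi(x)a)=a^\ast\mathbb{E}_N(\Phi(x))a=\mathbb{E}_N(\Phi(a^\ast x a))$ for $a\in N$, and likewise $\mathbb{E}_N(\Phi(x)a)^\ast(\Phi(x)a))=\mathbb{E}_N(\Phi(xa)^\ast\Phi(xa))$.

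For the first block of equivalences, the implications \eqref{Item=Epreserving1}$\Rightarrow$\eqref{Item=Epreserving2}$\Rightarrow$\eqref{Item=Epreserving3} require no faithfulness: writing $\varphi=\omega\circ\mathbb{E}_N$ and applying the positive (hence order-preserving) functional $\omega$ turns the operator inequality $\mathbb{E}_N\circ\Phi\leq\mathbb{E}_N$ (resp.\ the equality) into $\varphi\circ\Phi\leq\varphi$ (resp.\ equality), while \eqref{Item=Epreserving3} is just \eqref{Item=Epreserving2} specialized to $\varphi_0$. The substance is \eqref{Item=Epreserving3}$\Rightarrow$\eqref{Item=Epreserving1}: given a faithful $\varphi=\omega\circ\mathbb{E}_N$ with $\varphi\circ\Phi\leq\varphi$, fix $x\geq 0$ in $M$, set $y:=\mathbb{E}_N(x)-\mathbb{E}_N(\Phi(x))\in N$, and compute for every $a\in N$, using the moving-across identities,
\[
\omega(a^\ast y a)=\varphi(a^\ast x a)-\varphi\bigl(\Phi(a^\ast x a)\bigr)\geq 0,
\]
since $a^\ast x a\geq 0$. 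As $\omega$ is faithful, the standard argument (cyclicity of $\Omega_\omega$ in $L^2(N,\omega)$) forces $y\geq 0$, i.e.\ $\mathbb{E}_N\circ\Phi\leq\mathbb{E}_N$; in the equality case one gets $\omega(a^\ast y a)=0$ for all $a\in N$, hence $y=0$.

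The second block \eqref{Item=Epreserving1-}$\Leftrightarrow$\eqref{Item=Epreserving2-}$\Leftrightarrow$\eqref{Item=Epreserving3-} is proved identically: \eqref{Item=Epreserving1-}$\Rightarrow$\eqref{Item=Epreserving2-}$\Rightarrow$\eqref{Item=Epreserving3-} by applying $\omega$, and for \eqref{Item=Epreserving3-}$\Rightarrow$\eqref{Item=Epreserving1-} one fixes $x\in M$, sets $z:=C\,\mathbb{E}_N(x^\ast x)-\mathbb{E}_N(\Phi(x)^\ast\Phi(x))\in N$, and uses right $N$-modularity ($\Phi(xa)=\Phi(x)a$) to get $\omega(a^\ast z a)=C\,\varphi\bigl((xa)^\ast(xa)\bigr)-\varphi\bigl(\Phi(xa)^\ast\Phi(xa)\bigr)\geq 0$ for all $a\in N$; faithfulness of $\omega$ then gives $z\geq 0$. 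Finally, the ``in particular'' clause follows by combining this with the criterion from Section~2: the $L^2$-implementation of $\Phi$ with respect to $\varphi$ (source and target both $\varphi$) exists if and only if there is $C>0$ with $\varphi(\Phi(x)^\ast\Phi(x))\leq C\,\varphi(x^\ast x)$ for all $x\in M$, which is exactly condition \eqref{Item=Epreserving3-} for this $\varphi$; by the equivalence just shown this is the same as \eqref{Item=Epreserving2-}, which holds simultaneously for \emph{every} $\psi\in M_\ast^+$ with $\psi\circ\mathbb{E}_N=\psi$, so $\Phi^{(2)}$ with respect to each such $\psi$ exists (with norm at most $C^{1/2}$).

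I expect the only genuinely delicate point to be the backward passage ``$\omega(a^\ast y a)\geq 0$ for all $a\in N$ implies $y\geq 0$'': this is where faithfulness of the witnessing functional is used in an essential way, and it is precisely the reason that \eqref{Item=Epreserving3} and \eqref{Item=Epreserving3-} are stated with a faithful $\varphi$ while the forward implications need no such hypothesis. Everything else is bookkeeping with the two identities above, and the ``$\leq$'' and ``$=$'' variants run in parallel with no additional effort.
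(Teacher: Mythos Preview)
Your proof is correct and follows essentially the same route as the paper's. The only cosmetic difference is in the key backward step \eqref{Item=Epreserving3}$\Rightarrow$\eqref{Item=Epreserving1}: the paper phrases it as ``the functionals $x^{\ast}\varphi x$ restricted to $N$ are dense in $N_\ast^+$'', whereas you phrase the same fact as ``$\Omega_\omega$ is cyclic in $L^2(N,\omega)$''; both are equivalent expressions of the faithfulness of $\omega=\varphi|_N$, and the remaining bimodularity bookkeeping is identical.
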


\begin{proof}
We prove the statements for the inequalities; the
respective cases with equalities follow similarly. The implications \eqref{Item=Epreserving1} $\Leftrightarrow$
\eqref{Item=Epreserving2} $\Rightarrow$ \eqref{Item=Epreserving3}
of the first three statements are trivial. For the implication \eqref{Item=Epreserving3}
$\Rightarrow$ \eqref{Item=Epreserving1} take $\varphi$ as in \eqref{Item=Epreserving3}.
For $x\in N$ consider the positive functional $x^{\ast}\varphi x\in M_{\ast}^{+}$
which again satisfies $(x^{\ast}\varphi x)\circ\mathbb{E}_{N}=x^{\ast}\varphi x$.
Then, for $y\in M^{+}$
\[
(x^{\ast}\varphi x)\circ\mathbb{E}_{N}\circ\Phi(y)=\varphi\circ\mathbb{E}_{N}\circ\Phi(xyx^{\ast})\leq\varphi\circ\mathbb{E}_{N}(xyx^{\ast})=(x^{\ast}\varphi x)\circ\mathbb{E}_{N}(y)\text{.}
\]
 Since the restrictions of functionals $x^{\ast}\varphi x$, $x\in N$
to $N$ are dense in $N_{\ast}^{+}$ we conclude that $\mathbb{E}_{N}\circ\Phi\leq\mathbb{E}_{N}$.

The equivalence of the statements \eqref{Item=Epreserving1-}, \eqref{Item=Epreserving2-} and \eqref{Item=Epreserving3-} follows in a similar
way.
\end{proof}

The following lemma  shows that in good circumstances compactness of the $L^2$-implementations does not depend on the choice of the state.

\begin{lem} \label{Lem=StandardForm}
Let $\varphi,\psi\in M_{\ast}^{+}$ be faithful with
$\varphi\circ\mathbb{E}_{N}=\varphi$ and $\psi\circ\mathbb{E}_{N}=\psi$.
Let further $\Phi:M\rightarrow M$ be a completely positive $N$-$N$-bimodule
map whose $L^{2}$-implementation $\Phi_{\varphi}^{(2)}$ with respect
to $\varphi$ exists (hence, by Lemma \ref{Lem=StatePreserving},
the $L^{2}$-implementation $\Phi_{\psi}^{(2)}$ of $\Phi$ with respect
to $\psi$ exists as well). Then, $\Phi_{\varphi}^{(2)}\in\mathcal{K}(M,N,\varphi)$
if and only if $\Phi_{\psi}^{(2)}\in\mathcal{K}(M,N,\psi)$.
\end{lem}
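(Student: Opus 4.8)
The plan is to transport compactness of $L^2$-implementations along a natural unitary conjugation relating the two Jones constructions, exploiting the fact that both $\varphi$ and $\psi$ are $\mathbb{E}_N$-preserving, so that $L^2(N,\varphi)$ and $L^2(N,\psi)$ sit inside $L^2(M,\varphi)$ and $L^2(M,\psi)$ in a compatible way.

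First I would set up the comparison of the two GNS spaces. Since $\varphi,\psi \in M_*^+$ are faithful normal functionals, both $(M, L^2(M,\varphi), J_\varphi, L^2(M,\varphi)^+)$ and $(M, L^2(M,\psi), J_\psi, L^2(M,\psi)^+)$ are standard forms of $M$, hence there is a canonical unitary $U: L^2(M,\varphi) \to L^2(M,\psi)$ intertwining the $M$-actions, the modular conjugations, and the positive cones. The crucial point is the compatibility with the subalgebra: because $\varphi\circ\mathbb{E}_N = \varphi$ and $\psi\circ\mathbb{E}_N = \psi$, the GNS space $L^2(N,\varphi)$ (resp. $L^2(N,\psi)$) is exactly the closure of $N\Omega_\varphi$ (resp. $N\Omega_\psi$) and carries the standard form of $N$ induced by $\varphi|_N$ (resp. $\psi|_N$), with the same modular conjugation restricted. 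One then checks that $U$ restricts to the canonical unitary $L^2(N,\varphi)\to L^2(N,\psi)$ implementing the isomorphism of standard forms of $N$; in particular $U e_N^\varphi U^* = e_N^\psi$, since the Jones projection is just the orthogonal projection onto $L^2(N,\cdot)$ and $U$ carries one closed subspace onto the other. Consequently $U \langle M, N\rangle_\varphi U^* = \langle M, N\rangle_\psi$, and $U$ maps $\mathcal{K}_{00}(M,N,\varphi)$ onto $\mathcal{K}_{00}(M,N,\psi)$ (as $U x e_N^\varphi y U^* = x e_N^\psi y$ for $x,y \in M$), hence by continuity $U \mathcal{K}(M,N,\varphi) U^* = \mathcal{K}(M,N,\psi)$.

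Next I would relate the two $L^2$-implementations of $\Phi$ via $U$. The maps $\Phi_\varphi^{(2)}$ and $\Phi_\psi^{(2)}$ are in general \emph{not} conjugate by $U$ — indeed $U\Phi_\varphi^{(2)}U^*$ need not equal $\Phi_\psi^{(2)}$ because $U$ does not send $x\Omega_\varphi$ to $x\Omega_\psi$ — so the argument needs one more ingredient. The standard trick (used in the same spirit in \cite{CS-IMRN}) is to write $\Phi^{(2)}_\psi$ in terms of $\Phi_\varphi^{(2)}$ and the Connes cocycle / spatial derivative comparing $\psi$ and $\varphi$. Concretely, both implementations intertwine $\Phi$ with the canonical embeddings, and one shows that $U\Phi_\varphi^{(2)}U^*$ and $\Phi_\psi^{(2)}$ differ by multiplication by bounded operators in $\langle M,N\rangle_\psi$ on either side — more precisely one expresses one as a limit, in an appropriate topology, of operators of the form $a\, (U\Phi_\varphi^{(2)}U^*)\, b$ with $a,b$ in $\langle M,N\rangle_\psi$, or equivalently one uses that $\langle M,N\rangle$ acts on the $L^2$-space and the ideal $\mathcal{K}(M,N,\cdot)$ is invariant under left and right multiplication by $\langle M,N\rangle$. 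Since $U\Phi_\varphi^{(2)}U^* \in \mathcal{K}(M,N,\psi)$ by the previous paragraph whenever $\Phi_\varphi^{(2)}\in\mathcal{K}(M,N,\varphi)$, and $\mathcal{K}(M,N,\psi)$ is a two-sided ideal in $\langle M,N\rangle_\psi$, it follows that $\Phi_\psi^{(2)}\in\mathcal{K}(M,N,\psi)$; the reverse implication is symmetric.

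The main obstacle I anticipate is precisely this last step: showing that $\Phi_\psi^{(2)}$ lies in the ideal generated (under two-sided multiplication by $\langle M,N\rangle_\psi$) by $U\Phi_\varphi^{(2)}U^*$, i.e. controlling the discrepancy between the two GNS cyclic vectors $\Omega_\varphi, \Omega_\psi$ under $U$. This requires a careful argument with the relative modular data: approximating $\Omega_\psi$ by vectors $h_\varepsilon \Omega_\varphi$ (with $h_\varepsilon$ suitable positive analytic elements built from the Radon–Nikodym cocycle $[D\psi : D\varphi]_t$, which lie in $M$ and in fact, thanks to $\mathbb{E}_N$-invariance of both functionals, interact well with $e_N$), passing through the bimodularity of $\Phi$ to commute these elements past $\Phi^{(2)}_\varphi$, and taking limits while staying inside the closed ideal $\mathcal{K}(M,N,\psi)$. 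Once the bookkeeping with cocycles and analytic elements is in place, the conclusion is immediate from the ideal property. I would structure the write-up so that the standard-form comparison and the identity $Ue_N^\varphi U^* = e_N^\psi$ are isolated as a clean preliminary observation, and the cocycle estimate is the technical core.
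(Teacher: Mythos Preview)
Your setup in the first paragraph is exactly right and matches the paper: the canonical standard-form unitary $U$ satisfies $Ue_N^\varphi U^* = e_N^\psi$ (because $U$ restricts to the standard-form unitary for $N$), and hence conjugation by $U$ carries $\mathcal{K}_{00}(M,N,\varphi)$ onto $\mathcal{K}_{00}(M,N,\psi)$ and $\mathcal{K}(M,N,\varphi)$ onto $\mathcal{K}(M,N,\psi)$.

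The second part, however, is where your plan diverges from the paper and becomes vague. Your description --- expressing $\Phi_\psi^{(2)}$ as a limit of $a(U\Phi_\varphi^{(2)}U^*)b$ with $a,b\in\langle M,N\rangle_\psi$ via Connes cocycles and analytic approximants $h_\varepsilon$ --- is not obviously correct: you would need \emph{norm} convergence to remain in the closed ideal $\mathcal{K}$, and controlling this via unbounded Radon--Nikodym data is delicate. You also never use the hypothesis that $\Phi$ is $N$-$N$-bimodular, which is the whole point.

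The paper's argument is both simpler and cleaner, and avoids cocycles entirely. Pick finite-rank approximants $\Phi_k = \sum a_{i,k}\mathbb{E}_N b_{i,k}$ with $\Phi_{k,\varphi}^{(2)}\to\Phi_\varphi^{(2)}$ in norm; as you noted, $U\Phi_{k,\varphi}^{(2)}U^*$ is the $\psi$-implementation of the same map $\Phi_k$. The key estimate is this: since $\{x(\varphi|_N)x^*:x\in N\}$ is norm-dense in $N_*^+$, there is a net $(x_i)\subset N$ with $x_i\varphi x_i^*\to\psi$ in $M_*$. Then for $y\in M$,
\[
\|(\Phi(y)-\Phi_k(y))\Omega_\psi\|_{2,\psi}^2
=\lim_i \|(\Phi(y)-\Phi_k(y))x_i\Omega_\varphi\|_{2,\varphi}^2
=\lim_i \|(\Phi_\varphi^{(2)}-\Phi_{k,\varphi}^{(2)})(yx_i\Omega_\varphi)\|_{2,\varphi}^2,
\]
using right $N$-modularity of both $\Phi$ and $\Phi_k$ to pull $x_i\in N$ inside. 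This is bounded by $\|\Phi_\varphi^{(2)}-\Phi_{k,\varphi}^{(2)}\|^2\,\varphi(x_i^*y^*yx_i)\to\|\Phi_\varphi^{(2)}-\Phi_{k,\varphi}^{(2)}\|^2\,\psi(y^*y)$. Hence $U\Phi_{k,\varphi}^{(2)}U^*\to\Phi_\psi^{(2)}$ in norm, so $\Phi_\psi^{(2)}\in\mathcal{K}(M,N,\psi)$. The missing idea in your sketch is precisely this use of the $N$-bimodularity together with the elementary density of $x\varphi x^*$ in $N_*^+$, which replaces all the modular-theoretic machinery you anticipated needing.
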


\begin{proof}
Let $U$ be the unique unitary mapping the standard
form $(M,L^{2}(M,\varphi),J_{\varphi},P_{\varphi})$ to the standard
form $(M,L^{2}(M,\psi),J_{\psi},P_{\psi})$, see \cite[Theorem
2.3]{Haagerup75}. It restricts to the unique unitary
map between the standard forms $(N,L^{2}(N,\varphi),J_{\varphi|_{N}},P_{\varphi|_{N}})$
and $(N,L^{2}(N,\psi),J_{\psi|_{N}},P_{\psi|_{N}})$. Indeed,
for all $x\in M$, $\varphi(x)=\langle xU\Omega_{\varphi},U\Omega_{\varphi}\rangle$
and by \cite[Lemma 2.10]{Haagerup75}, $U\Omega_{\varphi}$
is the unique element in $L^{2}(M,\psi)$ satisfying this equation.
On the other hand, applying \cite[Lemma 2.10]{Haagerup75}
to $\varphi|_{N}$ implies the existence of a unique vector $\xi\in L^{2}(N,\psi)$
such that $\varphi(x)=\langle x\xi,\xi\rangle$ for all $x\in N$.
By approximating $\xi$ by elements in $N\Omega_{\psi}$ and by using
the assumptions $\varphi\circ\mathbb{E}_{N}=\varphi$ and $\psi\circ\mathbb{E}_{N}=\psi$
one deduces that for all $x\in M$,
\[
\varphi(x)=\varphi\circ\mathbb{E}_{N}(x)=\langle\mathbb{E}_{N}(x)\xi,\xi\rangle=\langle x\xi,\xi\rangle
\]
and hence $U\Omega_{\varphi}=\xi\in L^{2}(N,\psi)$. This implies
$U(N\Omega_{\varphi})\subseteq L^{2}(N,\psi)$ and therefore (by density
and symmetry) $U(L^{2}(N,\psi))=L^{2}(N,\psi)$. Finally, using that
$\sigma_{t}^{\varphi}(N)=N$ and hence $J_{\varphi|_{N}}=(J_{\varphi})|_{L^{2}(N,\varphi)}$
(and similarly $J_{\psi|_{N}}=(J_{\psi})|_{L^{2}(N,\psi)}$) it is
straightforward to check that the restriction of $U$ satisfies the
other properties of the unique unitary mapping between the standard
forms $(N,L^{2}(N,\varphi),J_{\varphi|_{N}},P_{\varphi|_{N}})$ and
$(N,L^{2}(N,\psi),J_{\psi|_{N}},P_{\psi|_{N}})$.

Since $e_{N}^{\varphi}=(\mathbb{E}_{N}^{\varphi})^{(2)}$ is the orthogonal
projection of $L^{2}(M,\varphi)$ onto $L^{2}(N,\varphi)$ and $e_{N}^{\psi}=(\mathbb{E}_{N}^{\psi})^{(2)}$
is the orthogonal projection of $L^{2}(M,\psi)$ onto $L^{2}(N,\psi)$,
we see that $U^{\ast}e_{N}^{\psi}U=e_{N}^{\varphi}$. Hence, for every
map $\Lambda$ of the form $\Lambda=a\mathbb{E}_{N}b$ with $a,b\in M$
the $L^{2}$-implementation $\Lambda_{\varphi}^{(2)}$ with respect
to $\varphi$ and the $L^{2}$-implementation $\Lambda_{\psi}^{(2)}$
with respect to $\psi$ exist with $\Lambda_{\varphi}^{(2)}=ae_{N}^{\varphi}b=U^{\ast}ae_{N}^{\psi}bU=U^{\ast}\Lambda_{\psi}^{(2)}U$.

 Now assume that $\Phi_{\varphi}^{(2)}\in\mathcal{K}(M,N,\varphi)$.
Then there exists a sequence $(\Phi_{k}:M\rightarrow M)_{k\in\mathbb{N}}$
of maps of the form $\Phi_{k}=\sum_{i=1}^{N_{k}}a_{i,k}\mathbb{E}_{N}b_{i,k}$
with $N_{k}\in\mathbb{N}$ and $a_{1,k},b_{1,k},...,a_{N_{k},k},b_{N_{k},k}\in M$
whose $L^{2}$-implementations $\Phi_{k,\varphi}^{(2)}\in\mathcal{K}_{00}(M,N,\varphi)$
(with respect to $\varphi$) norm-converge to $\Phi_{\varphi}^{(2)}$.
By the above, $U\Phi_{k,\varphi}^{(2)}U^{\ast}\in\mathcal{K}_{00}(M,N,\psi)$
is given by $x\Omega_{\psi}\mapsto\Phi_{k}(x)\Omega_{\psi}$ for $x\in M$.
We claim that the sequence $(U\Phi_{k,\varphi}^{(2)}U^{\ast})_{k\in\mathbb{N}}\subseteq\mathcal{K}_{00}(M,N,\psi)$
norm-converges to $\Phi_{\psi}^{(2)}$. Indeed, by the density of the set of all elements of the form
$x(\varphi|_{N})x^{\ast}$, $x \in N$ in $N_{\ast}^{+}$ we find
a net $(x_{i})_{i\in I}\subseteq N$ such that $x_{i}(\varphi|_{N})x_{i}^{\ast}\rightarrow\psi|_{N}$.
In combination with $\varphi\circ\mathbb{E}_{N}=\varphi$ and $\psi\circ\mathbb{E}_{N}=\psi$
this also implies $x_{i}\varphi x_{i}^{\ast}\rightarrow\psi$. For
$y\in M$ and $k \in \mathbb{N}$,
\begin{eqnarray}
\nonumber
\|(\Phi_{\psi}^{(2)}-U\Phi_{k,\varphi}^{(2)}U^{\ast})y\Omega_{\psi}\|_{2,\psi}^{2} &=& \Vert(\Phi(y)-\Phi_{k}(y))\Omega_{\psi}\Vert_{2,\psi}^{2} \\
\nonumber
&=& \lim_{i}\|(\Phi(y)-\Phi_{k}(y))x_{i}\Omega_{\varphi}\|_{2,\varphi}^{2}\\
\nonumber
 & = & \lim_{i}\|(\Phi_{\varphi}^{(2)}-\Phi_{k,\varphi}^{(2)})yx_{i}\Omega_{\varphi}\|_{2,\varphi}^{2} \\
\nonumber
&\leq& \lim_{i}\|\Phi_{\varphi}^{(2)}-\Phi_{k,\varphi}^{(2)}\|^{2}\varphi(x_{i}^{*}y^{*}yx_{i})\\
\nonumber
&=& \|\Phi_{\varphi}^{(2)}-\Phi_{k,\varphi}^{(2)}\|^2 \psi(y^{\ast}y),
\end{eqnarray}
where in the third step we used the $N$-$N$-bimodularity of $\Phi$ and the right $N$-modularity of $\Phi_k$. Now, $\Phi_{k,\varphi}^{(2)}\rightarrow\Phi_{\varphi}^{(2)}$ and
$(U\Phi_{k,\varphi}^{(2)}U^{\ast})_{k\in\mathbb{N}}$ is a Cauchy
sequence, hence the above inequality leads to $U\Phi_{k,\varphi}^{(2)}U^{\ast}\rightarrow\Phi_{\psi}^{(2)}$
as claimed. In particular, $\Phi_{\psi}^{(2)}\in\mathcal{K}(M,N,\psi)$ which finishes the proof.
\end{proof}

\begin{thm}  \label{StateIndependence}
Let $ N\subseteq M$
be a unital inclusion of von Neumann algebras which admits a faithful
normal conditional expectation $\mathbb{E}_{N}:M\rightarrow N$.
Further, let $\varphi,\psi\in M_{\ast}^{+}$ be faithful normal positive functionals with $\varphi\circ\mathbb{E}_{N}=\varphi$
and $\psi\circ\mathbb{E}_{N}=\psi$. Then the triple $(M,N,\varphi)$
has property $\text{(rHAP)}$ (resp. property $\text{(rHAP)}^{-}$)
if and only if the triple $(M,N,\psi)$ has property $\text{(rHAP)}$
(resp. property $\text{(rHAP)}^{-}$). In particular, property $\text{(rHAP)}$
(resp. property $\text{(rHAP)}^{-}$) only depends on the triple $(M,N,\mathbb{E}_{N})$.
\end{thm}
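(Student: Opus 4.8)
The plan is to show that a single net of approximating maps witnessing the property for $\varphi$ also witnesses it for $\psi$ — directly in the case of $(\mathrm{rHAP})$, and after a normalization step in the case of $(\mathrm{rHAP})^-$. The point is that among the conditions in Definitions \ref{Dfn=RHAPState} and \ref{Dfn=RHAPState-}, most do not refer to the functional at all, while the two that do are controlled by Lemmas \ref{Lem=StatePreserving} and \ref{Lem=StandardForm}. By symmetry it is enough to prove one implication; the concluding ``in particular'' is then merely the observation that this makes the property an invariant of the triple $(M,N,\mathbb{E}_N)$.

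I would treat $(\mathrm{rHAP})$ first. Let $(\Phi_i)_{i\in I}$ be a net as in Definition \ref{Dfn=RHAPState} for $\varphi$; I claim it also works for $\psi$. Conditions \eqref{Item=RHAP1} (complete positivity, $\sup_i\|\Phi_i\|<\infty$) and \eqref{Eqn=RHAP2} ($N$-$N$-bimodularity) make no mention of the functional. For \eqref{Eqn=RHAP3}, note that since $\sup_i\|\Phi_i\|<\infty$, for each fixed $x$ the net $(\Phi_i(x))_i$ is bounded, and on bounded subsets of $M$ strong convergence coincides with convergence in $\|\cdot\|_{2,\varphi}$ and in $\|\cdot\|_{2,\psi}$ (Lemma \ref{Lem=SOTversusL2}), so strong convergence $\Phi_i(x)\to x$ is unaffected by the change of functional. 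Condition \eqref{Eqn=RHAP4}, $\varphi\circ\Phi_i\le\varphi$, is equivalent by Lemma \ref{Lem=StatePreserving} to $\mathbb{E}_N\circ\Phi_i\le\mathbb{E}_N$, hence again by that lemma to $\psi\circ\Phi_i\le\psi$. Finally, $\Phi_i$ is completely positive with $\varphi\circ\Phi_i\le\varphi$, so its $L^2$-implementation exists with respect to $\varphi$, hence also with respect to $\psi$ (last sentence of Lemma \ref{Lem=StatePreserving}), and by Lemma \ref{Lem=StandardForm} membership of this $L^2$-implementation in $\mathcal{K}(M,N,\,\cdot\,)$ does not depend on the functional, which is \eqref{Eqn=RHAP5} for $\psi$. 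The same bookkeeping applies verbatim to the stronger variants $(1')$, $(1'')$, $(4')$: each of these is either functional-free (the unital/contractive requirements) or governed by Lemma \ref{Lem=StatePreserving} (as $\varphi\circ\Phi_i=\varphi \Leftrightarrow \mathbb{E}_N\circ\Phi_i=\mathbb{E}_N \Leftrightarrow \psi\circ\Phi_i=\psi$).

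For $(\mathrm{rHAP})^-$, conditions \eqref{Item=RHAP1-}, \eqref{Eqn=RHAP2-} and \eqref{Eqn=RHAP4-} transfer from $\varphi$ to $\psi$ by the arguments just given — Lemma \ref{Lem=StatePreserving} for existence of $\Phi_i^{(2)}$ and Lemma \ref{Lem=StandardForm} for its relative compactness. The delicate condition is \eqref{Eqn=RHAP3-}, the convergence $\|\Phi_i(x)-x\|_{2,\varphi}\to 0$: here one cannot simply keep the same net, since without a uniform bound on the $\|\Phi_i^{(2)}\|$ the norms $\|\cdot\|_{2,\varphi}$ and $\|\cdot\|_{2,\psi}$ fail to be comparable on the (possibly unbounded) net $(\Phi_i(x))_i$, and $\Phi_i^{(2)}$ is only known to approach the identity on the dense subspace $M\Omega_\varphi$. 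I would therefore first \emph{normalize} the net: replace each $\Phi_i$ by a suitable truncation so that $\Phi_i^{(2)}$ becomes a contraction, arranged to preserve complete positivity, $N$-$N$-bimodularity, relative compactness of $\Phi_i^{(2)}$, and the convergence $\|\Phi_i(x)-x\|_{2,\varphi}\to 0$. With this normalization, uniform boundedness together with convergence on the dense set $M\Omega_\varphi$ upgrades to $\Phi_i^{(2)}\to 1$ strongly on all of $L^2(M,\varphi)$; conjugating by the standard-form unitary $U\colon L^2(M,\varphi)\to L^2(M,\psi)$ of Lemma \ref{Lem=StandardForm} and using the identity $\Phi_{i,\psi}^{(2)}=U\Phi_{i,\varphi}^{(2)}U^{\ast}$ from its proof then gives $\Phi_{i,\psi}^{(2)}\to 1$ strongly on $L^2(M,\psi)$, in particular $\|\Phi_i(x)-x\|_{2,\psi}=\|(\Phi_{i,\psi}^{(2)}-1)(x\Omega_\psi)\|\to 0$. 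Together with the transfer of the other conditions this yields $(\mathrm{rHAP})^-$ for $(M,N,\psi)$.

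The main obstacle is exactly this normalization: unlike in the $(\mathrm{rHAP})$ case, where uniform boundedness is part of the definition, here it has to be produced by hand. Since $\Phi_i$ is $N$-$N$-bimodular one has $\Phi_i(1)\in N'\cap M$, so one can truncate using spectral projections of $\Phi_i(1)$ — these again lie in $N'\cap M$ and hence do not disturb $N$-$N$-bimodularity — and the work is to calibrate the cut-offs so that the $L^2$-convergence relative to $\varphi$, which is all that is assumed, is retained while the $L^2$-implementations are forced to be uniformly bounded. In the setting where $(\mathrm{rHAP})^-$ is actually applied ($N$ finite, where the much stronger normalization behind Theorem \ref{thm:A} is available) this is absorbed into that machinery, but for the present functional-independence statement a direct truncation argument of this kind is what I would carry out.
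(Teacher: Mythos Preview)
Your treatment of (rHAP) is correct and is exactly the paper's argument: the same net works for $\psi$, with conditions \eqref{Item=RHAP1}--\eqref{Eqn=RHAP3} being functional-free and Lemmas \ref{Lem=StatePreserving} and \ref{Lem=StandardForm} handling \eqref{Eqn=RHAP4} and \eqref{Eqn=RHAP5}.

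For (rHAP)$^-$ your proposal has a gap. You correctly flag that without a uniform bound on $\|\Phi_{i,\varphi}^{(2)}\|$ the convergence $\|\Phi_i(x)-x\|_{2,\varphi}\to 0$ does not obviously transfer to $\psi$, and your use of the identity $\Phi_{i,\psi}^{(2)}=U\Phi_{i,\varphi}^{(2)}U^\ast$ (which is indeed implicit in the proof of Lemma \ref{Lem=StandardForm}) correctly reduces the question to convergence of $\Phi_{i,\varphi}^{(2)}$ at vectors outside $M\Omega_\varphi$. But your remedy---truncate via spectral projections of $\Phi_i(1)\in N'\cap M$---is neither carried out nor clearly viable: bounding $\Phi_i(1)$ controls $\|\Phi_i\|$, not $\|\Phi_i^{(2)}\|$. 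Kadison--Schwarz only yields $\varphi(\Phi_i(x)^\ast\Phi_i(x))\le\|\Phi_i(1)\|\,\varphi(\Phi_i(x^\ast x))$, and since (rHAP)$^-$ omits any hypothesis of the form $\varphi\circ\Phi_i\le C\varphi$, the right-hand side is uncontrolled. The ``calibration'' you defer is therefore the entire difficulty, not a detail.

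It is fair to note that the paper's own proof is equally terse on this point: it asserts that the same net witnesses (rHAP)$^-$ for $\psi$, citing only Lemmas \ref{Lem=StatePreserving} and \ref{Lem=StandardForm}, neither of which speaks to condition \eqref{Eqn=RHAP3-}. So you have spotted a subtlety the paper glosses over; but you have not closed it.
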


\begin{proof}
It follows from Lemma \ref{Lem=StatePreserving} and
Lemma \ref{Lem=StandardForm} that if $(\Phi_{j})_{j\in J}$ is a
net of approximating maps witnessing the relative Haagerup property
of $(M,N,\varphi)$ (resp. property $\text{(rHAP)}^{-}$ of $(M,N,\varphi)$),
then it also witnesses the Haagerup property of $(M,N,\psi)$ (resp.
property $\text{(rHAP)}^{-}$ of $(M,N,\psi)$) and vice versa.
\end{proof}

We will later see that in the case where the von Neumann subalgebra
$N$ is finite the statement in Theorem \ref{StateIndependence}
can be strengthened: in this case property $\text{(rHAP)}$ (and equivalently
property $\text{(rHAP)}^{-}$) does not even depend on the choice
of the conditional expectation $\mathbb{E}_{N}$.

Motivated by Theorem \ref{StateIndependence} we introduce the following
natural definition.

\begin{dfn}
Let $N\subseteq M$ be a unital
inclusion of von Neumann algebras which admits a faithful normal conditional
expectation $\mathbb{E}_{N}:M\rightarrow N$. We say that
the triple $(M,N,\mathbb{E}_{N})$ has the \emph{relative Haagerup property}
(or just \emph{property (rHAP)}) if $(M,N,\varphi)$ has the relative
Haagerup property for some (equivalently any) faithful normal positive
functional $\varphi\in M_{\ast}^+$ with $\varphi \circ \mathbb{E}_N=\varphi$. The same terminology shall be adopted for property (rHAP)$^-$.
\end{dfn}


\subsection{State preservation, contractivity and unitality of the approximating
maps in a special case} \label{SpecialCase}

In this subsection we will prove that the relative Haagerup property
of certain triples $(M,N,\mathbb{E}_{N})$ may be witnessed by approximating
maps that satisfy extra conditions, such as state-preservation, contractivity and unitality. This will play a crucial role in Section \ref{MainResults}.
The approach is inspired by \cite[Section 2]{BannonFang}, where
ideas from \cite{Jolissaint} were used.

\begin{lem} \label{Lem=Commute}
Let $M$ be a von Neumann algebra,
$\varphi\in M_{\ast}$ a faithful normal state and $y\in M$. If $y\varphi=\varphi y$ (i.e.\ $y \in M^\varphi$),
then $y\Omega_{\varphi}=\Omega_{\varphi}y$.
\end{lem}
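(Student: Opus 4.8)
The plan is to unwind the definition of the right action of $M$ on $L^{2}(M,\varphi)$ recalled in Subsection \ref{Sect=Modular}, namely $\Omega_{\varphi}y=J_{\varphi}y^{\ast}J_{\varphi}\Omega_{\varphi}$, and to reduce the asserted equality to a single identity involving $\Delta_{\varphi}$ that is precisely where the centralizer hypothesis $y\in M^{\varphi}$ enters. Since $\Omega_{\varphi}$ lies in the positive cone $L^{2}(M,\varphi)^{+}$ we have $J_{\varphi}\Omega_{\varphi}=\Omega_{\varphi}$, so $\Omega_{\varphi}y=J_{\varphi}y^{\ast}\Omega_{\varphi}$. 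Applying the anti-unitary involution $J_{\varphi}$, the claim $y\Omega_{\varphi}=\Omega_{\varphi}y$ becomes equivalent to $J_{\varphi}(y\Omega_{\varphi})=y^{\ast}\Omega_{\varphi}$, i.e.\ to $S_{\varphi}(y\Omega_{\varphi})=J_{\varphi}\Delta_{\varphi}^{1/2}(y\Omega_{\varphi})=y^{\ast}\Omega_{\varphi}$. The latter is just the defining property of $S_{\varphi}$ on $y\Omega_{\varphi}$, \emph{provided} that $\Delta_{\varphi}^{1/2}(y\Omega_{\varphi})=y\Omega_{\varphi}$; establishing this last identity is the only place the hypothesis is used.

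To prove $\Delta_{\varphi}^{1/2}(y\Omega_{\varphi})=y\Omega_{\varphi}$, recall from Subsection \ref{Sect=Modular} that $y\in M^{\varphi}$ means $\sigma_{t}^{\varphi}(y)=y$ for all $t\in\mathbb{R}$. Combining this with $\Delta_{\varphi}^{it}\Omega_{\varphi}=\Omega_{\varphi}$ gives, for every $t\in\mathbb{R}$,
\[
\Delta_{\varphi}^{it}(y\Omega_{\varphi})=\Delta_{\varphi}^{it}y\Delta_{\varphi}^{-it}\Omega_{\varphi}=\sigma_{t}^{\varphi}(y)\Omega_{\varphi}=y\Omega_{\varphi}.
\]
Thus $y\Omega_{\varphi}$ is a fixed vector of the strongly continuous unitary group $(\Delta_{\varphi}^{it})_{t\in\mathbb{R}}$. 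By the spectral theorem applied to the positive self-adjoint operator $\Delta_{\varphi}$, this forces $y\Omega_{\varphi}$ to lie in the spectral subspace of $\Delta_{\varphi}$ corresponding to the eigenvalue $1$; in particular $y\Omega_{\varphi}\in\Dom(\Delta_{\varphi}^{1/2})$ and $\Delta_{\varphi}^{1/2}(y\Omega_{\varphi})=y\Omega_{\varphi}$.

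Putting the two steps together yields
\[
\Omega_{\varphi}y=J_{\varphi}y^{\ast}\Omega_{\varphi}=J_{\varphi}\big(S_{\varphi}(y\Omega_{\varphi})\big)=J_{\varphi}J_{\varphi}\Delta_{\varphi}^{1/2}(y\Omega_{\varphi})=\Delta_{\varphi}^{1/2}(y\Omega_{\varphi})=y\Omega_{\varphi},
\]
as desired. There is no real obstacle here: the argument is a short computation in Tomita--Takesaki theory, and the only mild care needed is the bookkeeping with the unbounded operator $\Delta_{\varphi}^{1/2}$ and its domain, which is entirely handled by the spectral-subspace observation above.
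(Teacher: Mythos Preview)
Your proof is correct and essentially the same as the paper's. Both arguments reduce the claim to the fact that $y\Omega_{\varphi}$ is fixed by $\Delta_{\varphi}^{1/2}$ and then unwind the polar decomposition $S_{\varphi}=J_{\varphi}\Delta_{\varphi}^{1/2}$; the paper phrases the key step via analyticity of $y$ (so that $\sigma_{-i/2}^{\varphi}(y^{\ast})=y^{\ast}$), whereas you argue directly on the vector $y\Omega_{\varphi}$ using the spectral theorem, but this is the same computation.
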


\begin{proof}
As mentioned in Section \ref{Sect=Modular}, by \cite[Theorem VIII.2.6]{Takesaki2} we have that $\sigma_{t}^{\varphi}(y)=y$
for all $t\in\mathbb{R}$. But then $y$ is analytic and moreover $\sigma_{-i/2}^{\varphi}(y)=y$.
Hence
\[
\Omega_{\varphi}y=J_{\varphi}y^{\ast}J_{\varphi}\Omega_{\varphi}=J_{\varphi}\sigma_{-i/2}^{\varphi}(y^{\ast})J_{\varphi}\Omega_{\varphi}=J_{\varphi}\Delta_{\varphi}^{1/2}y^{\ast}\Delta_{\varphi}^{-1/2}J_{\varphi}\Omega_{\varphi}=S_{\varphi}y^{\ast}S_{\varphi}\Omega_{\varphi}=y\Omega_{\varphi}\text{.}
\]
The claim follows.
\end{proof}

\begin{prop} \label{Prop=BannonFang}
Let $ N\subseteq M$
be a unital inclusion of von Neumann algebras that admits a faithful
normal conditional expectation $\mathbb{E}_{N}$. Assume that $N$
is finite and let $\tau\in N_{\ast}$ be a faithful normal tracial
state that we extend to a state $\varphi:=\tau\circ\mathbb{E}_{N}$
on $M$. Let further $\Phi:M\rightarrow M$ be a normal, completely positive, $N$-$N$-bimodular
map for which there exists $\delta>0$ such that $c:=\Phi(1)\leq1-\delta$
and $\varphi\circ\Phi\leq(1-\delta)\varphi$. Then one can find $a,b\in N^{\prime}\cap M$
such that $a \geq 0$, $\mathbb{E}_{N}(a)=1$, $a\mathbb{E}_{N}(b^{\ast}b)=\mathbb{E}_{N}(b^{\ast}b)a=1-c$
and $b\varphi b^{\ast}=\varphi-\varphi\circ\Phi$.
\end{prop}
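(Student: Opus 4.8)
The plan is to construct $a$ and $b$ separately, exploiting the fact that $N$ is finite and $\varphi = \tau \circ \mathbb{E}_N$ so that $N \subseteq M^\varphi$. First I would produce $a$: since $\Phi$ is completely positive and $N$-bimodular, $c = \Phi(1) \in N' \cap M$ and $0 \le c \le 1-\delta$, so $1-c$ is a positive element of $N' \cap M$ bounded below by $\delta$, hence invertible in $N' \cap M$. The element $\mathbb{E}_N(1-c) = 1 - \mathbb{E}_N(c)$ lies in $Z(N')\cap \dots$ — more precisely, $\mathbb{E}_N$ restricted to $N' \cap M$ is a faithful normal conditional expectation onto $Z(N)$ (the center of $N$), which is tracial since $\tau$ is tracial on $N$; call it $\tau$ again by restriction. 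Then $\mathbb{E}_N(1-c)$ is a positive invertible element of $Z(N)$, and I would set
\[
a := (1-c)^{1/2}\,\mathbb{E}_N(1-c)^{-1}\,(1-c)^{1/2} \in N' \cap M, \qquad a \ge 0.
\]
Here I use that $\mathbb{E}_N(1-c)^{-1} \in Z(N)$ commutes with everything in $N' \cap M$, so the ordering of the factors does not matter and $a = (1-c)\,\mathbb{E}_N(1-c)^{-1}$. Then $\mathbb{E}_N(a) = \mathbb{E}_N(1-c)\,\mathbb{E}_N(1-c)^{-1} = 1$ by the module property of $\mathbb{E}_N$ over $Z(N)$, and $a\,\mathbb{E}_N(1-c) = 1-c$; but I need $a \mathbb{E}_N(b^* b) = 1-c$, so I must arrange $\mathbb{E}_N(b^*b) = \mathbb{E}_N(1-c)$.

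Next I construct $b$. Consider the functional $\chi := \varphi - \varphi \circ \Phi \in M_*$; by hypothesis $\chi \ge 0$ (indeed $\chi \ge \delta \varphi \ge 0$... actually $\varphi \circ \Phi \le (1-\delta)\varphi$ gives $\chi \ge \delta\varphi$). I want $b \in N' \cap M$ with $b \varphi b^* = \chi$, i.e. $\varphi(b^* x b) = \chi(x)$ for all $x \in M$. The key observation is that both $\varphi$ and $\chi$ are right $N$-central in the appropriate sense and, restricted to $N$, agree up to the module structure: since $\Phi$ is $N$-bimodular and $\varphi = \tau\circ\mathbb{E}_N$, one computes for $n \in N$ that $\varphi\circ\Phi(n) = \tau(\mathbb{E}_N(\Phi(n))) = \tau(\Phi(\mathbb{E}_N \dots))$ — more carefully, $\mathbb{E}_N \circ \Phi|_N$ is a completely positive map $N \to N$ which is $Z(N)$-bimodular, and $\tau \circ (\mathbb{E}_N\circ\Phi)|_N \le (1-\delta)\tau$. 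The plan is to realize $\chi$ via a vector in $L^2(M,\varphi)$: since $0 \le \chi \le \|\chi\| \varphi$ (using $\chi \le \varphi$), there is a positive element $h$ in the centralizer-type position... Concretely, because $\chi \le \varphi$ and $\varphi = \tau\circ\mathbb{E}_N$ with $\tau$ tracial, the Radon–Nikodym derivative of $\chi$ with respect to $\varphi$ along $N' \cap M$ exists: I would show there is $t \in N'\cap M$, $0 \le t \le 1$, with $\chi = \varphi(t^{1/2} \cdot t^{1/2})$, using that $\chi$ and $\varphi$ have the same "restriction to the relative commutant weight structure." Then set $b := t^{1/2}$, giving $b\varphi b^* = \chi = \varphi - \varphi\circ\Phi$ directly, and $b \in N'\cap M$, $b \ge 0$, so $b^* b = b^2 = t$. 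Finally $\mathbb{E}_N(b^*b) = \mathbb{E}_N(t)$, and I must check $\mathbb{E}_N(t) = \mathbb{E}_N(1-c)$: applying $\chi = \varphi(t \cdot)$ — wait, $\chi(x) = \varphi(t^{1/2} x t^{1/2}) = \varphi(x t)$ since $t \in M^\varphi$ — at $x = 1$ gives $\varphi(t) = \chi(1) = \varphi(1) - \varphi(\Phi(1)) = \varphi(1-c) = \tau(\mathbb{E}_N(1-c))$, which only controls the $\tau$-trace of $\mathbb{E}_N(t)$, not $\mathbb{E}_N(t)$ itself. So I instead need to choose $t$ with the stronger property $\mathbb{E}_N(t) = \mathbb{E}_N(1-c)$ as elements of $Z(N)$.

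To get that stronger property I would refine the construction: for each $n \in N_+$, $\chi(n x) = \varphi(n x t) $ should match $\varphi(\Phi$-defect$)$; using $N$-bimodularity of $\Phi$ one checks $\mathbb{E}_N(\Phi(x)) $ relates to $\mathbb{E}_N(x)$ weighted by $c$, specifically $\mathbb{E}_N \circ \Phi$ and $\mathbb{E}_N(c^{1/2} \cdot c^{1/2})$ — no: rather, testing $\chi$ against $z \in Z(N)$ and $x$ arbitrary and using that $t \in N'\cap M$ and $\mathbb{E}_N|_{N'\cap M}$ is $Z(N)$-linear, one gets $\tau(z\,\mathbb{E}_N(t)) = \chi(z) = \varphi(z) - \varphi(\Phi(z)) = \tau(z) - \tau(z\,\mathbb{E}_N(c)) = \tau(z\,\mathbb{E}_N(1-c))$ for all $z \in Z(N)$, and since $\tau$ is faithful and tracial on $Z(N)$ this forces $\mathbb{E}_N(t) = \mathbb{E}_N(1-c)$. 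That is the step that makes everything fit together, provided the Radon–Nikodym element $t$ genuinely lands in $N' \cap M$ — and that is the main obstacle: I must verify that the defect functional $\chi$, which is $N$-bicentral in the sense that $\chi(nxn') $ behaves like $\varphi$ under the $N$-actions (a consequence of $N$-bimodularity of $\Phi$ together with $N \subseteq M^\varphi$), has its Radon–Nikodym derivative against $\varphi$ sitting in the relative commutant. The cleanest route is: since $\Phi$ is $N$-bimodular, the functional $\chi$ satisfies $\chi(x n) = \chi(n x)$ for $n \in N$ — hence $\chi$ is an $N$-central functional; combined with $\chi \le \varphi$ and $\varphi = \tau\circ\mathbb{E}_N$, the element $t = d\chi/d\varphi$ commutes with $N$, i.e. $t \in N'\cap M$. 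I expect the bulk of the technical work, and the place where the hypotheses $\varphi\circ\Phi \le (1-\delta)\varphi$ and $\Phi(1) \le 1-\delta$ are truly used, to be in establishing this centrality and invertibility carefully; once $t$ and $a$ are in hand, the identities $\mathbb{E}_N(a) = 1$, $a\,\mathbb{E}_N(b^*b) = \mathbb{E}_N(b^*b)\,a = 1-c$, and $b\varphi b^* = \varphi - \varphi\circ\Phi$ all follow by direct computation using commutativity within $N'\cap M$ and $Z(N)$-linearity of $\mathbb{E}_N$.
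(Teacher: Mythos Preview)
Your construction of $a$ is correct and matches the paper's: from $c = \Phi(1) \in N' \cap M$ and $\mathbb{E}_N(1-c) \in Z(N)$ invertible (since $\mathbb{E}_N(1-c) \geq \delta$), one sets $a = (1-c)\,\mathbb{E}_N(1-c)^{-1}$, and the required properties follow.

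The gap is in your construction of $b$. You posit a positive element $t \in N' \cap M$ with $\chi(x) = \varphi(t^{1/2} x t^{1/2})$, and then slide to $\chi(x) = \varphi(xt)$ ``since $t \in M^\varphi$''. Neither step is justified. In the non-tracial setting there is no general Radon--Nikodym theorem producing a \emph{positive} element $t \in M$ with $\chi = t^{1/2}\varphi t^{1/2}$ from $\chi \leq \varphi$ alone; and a formula $\chi = \varphi(\,\cdot\, t)$ with $t \in M^\varphi$ would force $\chi$ to be $\sigma^\varphi$-invariant (equivalently $\varphi \circ \Phi \circ \sigma_s^\varphi = \varphi\circ\Phi$ for all $s$), which is nowhere assumed. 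The $N$-centrality of $\chi$ that you correctly observe ($n\chi = \chi n$ for $n \in N$) is strictly weaker than $\sigma^\varphi$-invariance, since the inclusion $N \subseteq M^\varphi$ may be proper.

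The paper bypasses this by working directly in the standard form. One takes the unique vector $\xi \in L^2(M,\varphi)^+$ representing $\chi$, and defines $b$ by $b(J_\varphi x \Omega_\varphi) := J_\varphi x \xi$; the bound $\chi \leq \varphi$ makes $b$ contractive, and commutation with $J_\varphi M J_\varphi$ shows $b \in M$. Then $b\Omega_\varphi = \xi$, so $b\varphi b^* = \chi$ immediately. The key point --- that $b \in N' \cap M$ --- comes exactly from your $N$-centrality observation: since $\chi \geq \delta\varphi$ is faithful, Lemma~\ref{Lem=Commute} applied to $\chi$ (via the isomorphism of standard forms) gives $n\xi = \xi n$ for $n \in N$, and a direct computation then yields $bn = nb$. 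Note this $b$ is generally \emph{not} self-adjoint. Finally $\mathbb{E}_N(b^*b) = \mathbb{E}_N(1-c)$ is checked by pairing against $\varphi(x\,\cdot\,)$ for arbitrary $x \in M$, using $b \in N'$ and the traciality of $\tau$; then $a\,\mathbb{E}_N(b^*b) = 1-c$ is immediate.
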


\begin{proof}
The complete positivity of $\Phi$ implies that   $0\leq\left\Vert \Phi\right\Vert = \|\Phi(1)\| = \|c\| \leq 1-\delta$,
hence the map $\Phi$ must be contractive. It is clear that $c = \Phi(1) \geq 0$. Further,
since $\mathbb{E}_{N}(1-c)>\mathbb{E}_{N}(\delta)=\delta$, the element
$\mathbb{E}_{N}(1-c)\in N$ is boundedly invertible.  Additionally, the $N$-$N$-bimodularity of
$\Phi$ implies that for every $n\in N$,
\[
nc=n\Phi(1)=\Phi(n)=\Phi(1)n=cn\text{,}
\]
so $c\in N^{\prime}\cap M$. The latter two observations imply that for
\[
a:=(1-c)(\mathbb{E}_{N}(1-c))^{-1}
\]
we have $a \in N' \cap M$, $a \geq 0$ and $\mathbb{E}_{N}(a)=1$.

Consider the positive normal functional $\varphi-\varphi\circ\Phi\in M_{\ast}$.
By \cite[Lemma 2.10]{Haagerup75} there exists a unique vector $\xi\in L^{2}(M,\varphi)^{+}$
such that $(\varphi-\varphi\circ\Phi)(x)=\left\langle x\xi,\xi\right\rangle $
for all $x\in M$. Note that $\{J_{\varphi}x\Omega_{\varphi}\mid x\in M\}$ is dense
in $L^{2}(M,\varphi)$ and define the linear map
\[
b:L^{2}(M,\varphi)\rightarrow L^{2}(M,\varphi)\text{, }J_{\varphi}x\Omega_{\varphi}\mapsto J_{\varphi}x\xi\text{.}
\]
It is contractive since
\[
\Vert b(J_{\varphi}x\Omega_{\varphi})\Vert_{2}^{2}=\Vert J_{\varphi}x\xi\Vert_{2}^{2}=\Vert x\xi\Vert_{2}^{2}=(\varphi-\varphi\circ\Phi)(x^{\ast}x)\leq\varphi(x^{\ast}x)=\Vert x\Omega_{\varphi}\Vert_{2}^{2} = \Vert J_\varphi x\Omega_{\varphi}\Vert_{2}^{2}
\]
for all $x\in M$. Further, for $x,y\in M$,
\[
bJ_{\varphi}xJ_{\varphi}(J_{\varphi}y\Omega_{\varphi})=bJ_{\varphi}xy\Omega_{\varphi}=J_{\varphi}xy\xi=J_{\varphi}xJ_{\varphi}J_{\varphi}y\xi=J_{\varphi}xJ_{\varphi}b(J_{\varphi}y\Omega_{\varphi})\text{.}
\]
It hence follows that $b$ and $J_\varphi x J_\varphi$ commute and therefore that $b\in(J_\varphi M  J_\varphi)^{\prime}=M^{\prime\prime}=M$.\\

We claim that $a$ and $b$ from above satisfy the required conditions.
It remains to show that $b\in N^{\prime}\cap M$, \emph{$b\varphi b^{\ast}=\varphi-\varphi\circ\Phi$}
and $a\mathbb{E}_{N}(b^{\ast}b)=\mathbb{E}_{N}(b^{\ast}b)a=1-c$ .
\begin{itemize}
\item \emph{$b\in N^{\prime}\cap M$:} By the assumption we have $\varphi-\varphi\circ\Phi\geq\delta\varphi$
and therefore $\varphi-\varphi\circ\Phi$ is a faithful normal functional.
For $x\in M$, $n\in N$ the $N$-$N$-bimodularity of $\Phi$ and
the traciality of $\varphi$ on $N$ (implying that $n$ is contained
in the centralizer of $\varphi$) imply that $\varphi\circ\Phi(xn)=\varphi(\Phi(x)n)=\varphi(n\Phi(x))=\varphi\circ\Phi(nx)$,
hence $n(\varphi-\varphi\circ\Phi)=(\varphi-\varphi\circ\phi)n$. The unique isomorphism between the standard forms induced by $\varphi$ and $\varphi -\varphi\circ\Phi$ maps $\xi$ to the canonical cyclic vector in $L^2(M,\varphi-\varphi\circ \Phi)$.
Hence, from Lemma \ref{Lem=Commute} applied to $\varphi-\varphi\circ\Phi$
we get $n\xi=\xi n$ for all $n\in N$,
which, together with the fact that $J_{\varphi}n\Omega_{\varphi} = n^*\Omega_{\varphi}$, implies that for $x\in M$
\begin{eqnarray}
\nonumber
bn(J_{\varphi}x\Omega_{\varphi})&=&bJ_{\varphi}xJ_{\varphi}n\Omega_{\varphi}=bJ_{\varphi}xn^{\ast}\Omega_{\varphi}=J_{\varphi}xn^{\ast}\xi\\
\nonumber
&=&J_{\varphi}x\xi n^{\ast}=J_{\varphi}xJ_{\varphi}nJ_{\varphi}\xi=nJ_{\varphi}x\xi=nb(J_{\varphi}x\Omega_{\varphi})\text{,}
\end{eqnarray}
so $b\in N^{\prime}\cap M$ by the density of $\{J_{\varphi}x\Omega_{\varphi}\mid x\in M\}$
in $L^{2}(M,\varphi)$.
\item \emph{$b\varphi b^{\ast}=\varphi-\varphi\circ\Phi$:} For every $x\in M$
the equality
\[
(b\varphi b^{\ast})(x)=\left\langle xb\Omega_{\varphi},b\Omega_{\varphi}\right\rangle =\left\langle x\xi,\xi\right\rangle =(\varphi-\varphi\circ\Phi)(x)
\]
holds, i.e. $b\varphi b^{\ast}=\varphi-\varphi\circ\Phi$.
\item \emph{$a\mathbb{E}_{N}(b^{\ast}b)=\mathbb{E}_{N}(b^{\ast}b)a=1-c$:}
For $x\in M$ we find by $b\in N^{\prime}\cap M$ and $b\varphi b^{\ast}=\varphi-\varphi\circ\Phi$
that
\begin{eqnarray}
\nonumber
& & \varphi(x\mathbb{E}_{N}(b^{\ast}b))=\varphi(\mathbb{E}_{N}(x)b^{\ast}b)=\varphi(b^{\ast}\mathbb{E}_{N}(x)b)=(\varphi-\varphi\circ\Phi)(\mathbb{E}_{N}(x))\\
\nonumber
&=& \varphi(\mathbb{E}_{N}(x))-\varphi(\mathbb{E}_{N}(x)\Phi(1))=\varphi(x)-\varphi(x\mathbb{E}_{N}(\Phi(1)))=\varphi(x\mathbb{E}_{N}(1-c))
\end{eqnarray}
and hence $\mathbb{E}_{N}(1-c)=\mathbb{E}_{N}(b^{\ast}b)$. It follows
by the definition of $a$ that $a\mathbb{E}_{N}(b^{\ast}b)=a\mathbb{E}_{N}(1-c)=1-c$
and similarly,  as  $a \in N' \cap M$, we have  $\mathbb{E}_{N}(b^{\ast}b)a=1-c$.
\end{itemize}
\end{proof}

\begin{lem} \label{Lem=TomTakExp}
Let $N\subseteq M$
be a unital inclusion of von Neumann algebras which admits a faithful
normal conditional expectation $\mathbb{E}_{N}$. Assume that $N$
is finite and let $\tau\in N_{\ast}$ be a faithful normal tracial
state that we extend to a state $\varphi:=\tau\circ\mathbb{E}_{N}$
on $M$. Let further $x\in N^{\prime}\cap M$ be an element which
is analytic for $\sigma^{\varphi}$. Then $\mathbb{E}_{N}(yx)=\mathbb{E}_{N}(\sigma_{i}^{\varphi}(x)y)$
for all $y\in M$.
\end{lem}

\begin{proof}
For $n\in N$ we have by the traciality of $\varphi$
on $N$ (implying that $n$ is contained in the centralizer of
$\varphi$) that
\[
n\sigma_{z}^{\varphi}(x)=\sigma_{z}^{\varphi}(n)\sigma_{z}^{\varphi}(x)=\sigma_{z}^{\varphi}(nx)=\sigma_{z}^{\varphi}(xn)=\sigma_{z}^{\varphi}(x)\sigma_{z}^{\varphi}(n)=\sigma_{z}^{\varphi}(x)n\text{.}
\]
for all $z\in\mathbb{C}$. Therefore, $\sigma_{z}^{\varphi}(x)\in N^{\prime}\cap M$
and in particular $\sigma_{i}^{\varphi}(x)\in N^{\prime}\cap M$.
One further calculates that for $y\in M$,
\begin{eqnarray}
\nonumber
(\varphi n)(\mathbb{E}_{N}(yx))&=&\varphi(\mathbb{E}_{N}(nyx))=\varphi(nyx)=\varphi(\sigma_{i}^{\varphi}(x)ny)\\
\nonumber
&=&  \varphi(n\sigma_{i}^{\varphi}(x)y)=(\varphi n)(\mathbb{E}_{N}(\sigma_{i}^{\varphi}(x)y))\text{.}
\end{eqnarray}
Since the set of all functionals of the form $\varphi n$, $n\in N$
is dense in $N_{\ast}$ we find that $\mathbb{E}_{N}(yx)=\mathbb{E}_{N}(\sigma_{i}^{\varphi}(x)y)$,
as claimed.
\end{proof}

\begin{lem}\label{Lem=CompactMaps}
 Let $ N\subseteq M$
be a unital inclusion of von Neumann algebras that admits a faithful
normal conditional expectation $\mathbb{E}_{N}$. Assume that $N$
is finite and let $\tau\in N_{\ast}$ be a faithful normal tracial
state that we extend to a state $\varphi:=\tau\circ\mathbb{E}_{N}$
on $M$.
Let $h_1, h_2 \in M$ and let $h_3, h_4 \in N'\cap M$  be analytic for $\sigma^\varphi$. Suppose that $\Phi:M \to M$ is a normal map such that $\Phi^{(2)}_\varphi \in \cK(M, N, \varphi)$ and define the map $\tilde{\Phi}:=h_1 \Phi( h_2 \: \cdot \: h_3) h_4$. Then we also have that   $\tilde{\Phi}^{(2)}_\varphi \in  \cK(M, N, \varphi)$.
\end{lem}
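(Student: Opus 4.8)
The plan is to write the $L^{2}$-implementation $\tilde\Phi^{(2)}_\varphi$ as a product of $\Phi^{(2)}_\varphi$ with a few fixed elements of $\langle M,N\rangle$, and then to use that $\cK(M,N,\varphi)$ is a closed two-sided ideal in $\langle M,N\rangle$. Let $\pi$ denote the GNS (left) action of $M$ on $L^{2}(M,\varphi)$. For $j\in\{3,4\}$ the element $h_j$ is analytic for $\sigma^{\varphi}$, hence so is $\sigma^{\varphi}_{i/2}(h_j)$, and we set
\[
\rho_j:=J_\varphi\,\sigma^{\varphi}_{i/2}(h_j)^{\ast}\,J_\varphi\in J_\varphi M J_\varphi=M'.
\]
From the identity $J_\varphi y^{\ast}\Omega_\varphi=\sigma^{\varphi}_{-i/2}(y)\Omega_\varphi$, valid for analytic $y$ and proved exactly as the computation in Lemma~\ref{Lem=Commute} (without the simplification $\sigma^{\varphi}_{-i/2}(y)=y$), one gets for every $x\in M$
\[
\rho_j(x\Omega_\varphi)=\pi(x)J_\varphi\sigma^{\varphi}_{i/2}(h_j)^{\ast}\Omega_\varphi=\pi(x)\,\sigma^{\varphi}_{-i/2}\bigl(\sigma^{\varphi}_{i/2}(h_j)\bigr)\Omega_\varphi=(xh_j)\Omega_\varphi ,
\]
so $\rho_j$ implements right multiplication by $h_j$ on the dense subspace $M\Omega_\varphi$. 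Composing the bounded operators $\pi(h_1),\rho_4,\Phi^{(2)}_\varphi,\pi(h_2),\rho_3$ and using $\Phi^{(2)}_\varphi(z\Omega_\varphi)=\Phi(z)\Omega_\varphi$ for $z\in M$, one checks on $M\Omega_\varphi$ that
\[
\bigl(\pi(h_1)\,\rho_4\,\Phi^{(2)}_\varphi\,\pi(h_2)\,\rho_3\bigr)(x\Omega_\varphi)=h_1\Phi(h_2xh_3)h_4\,\Omega_\varphi=\tilde\Phi(x)\Omega_\varphi ,\qquad x\in M .
\]
As the operator on the left is bounded, this shows in particular that $\tilde\Phi^{(2)}_\varphi$ exists and equals $\pi(h_1)\,\rho_4\,\Phi^{(2)}_\varphi\,\pi(h_2)\,\rho_3$.

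It remains to place the four outer factors in $\langle M,N\rangle$. Trivially $\pi(h_1),\pi(h_2)\in\pi(M)\subseteq\langle M,N\rangle$. For $\rho_3,\rho_4$ recall that $N\subseteq M^{\varphi}$, so $\sigma^{\varphi}_z$ maps $N'\cap M$ into itself for every $z\in\mathbb{C}$ — this is the computation carried out at the beginning of the proof of Lemma~\ref{Lem=TomTakExp}. Hence $\sigma^{\varphi}_{i/2}(h_3),\sigma^{\varphi}_{i/2}(h_4)\in N'\cap M\subseteq N'$, and therefore $\rho_3,\rho_4\in J_\varphi N' J_\varphi=\langle M,N\rangle$ by the commutation theorem for the Jones construction. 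Thus $\pi(h_1)\rho_4$ and $\pi(h_2)\rho_3$ lie in $\langle M,N\rangle$, and since $\Phi^{(2)}_\varphi\in\cK(M,N,\varphi)$, the ideal property gives $\tilde\Phi^{(2)}_\varphi=\pi(h_1)\rho_4\,\Phi^{(2)}_\varphi\,\pi(h_2)\rho_3\in\cK(M,N,\varphi)$, as desired.

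Alternatively, and without appealing to the commutation theorem, one verifies the four needed containments directly at the level of $\cK_{00}(M,N,\varphi)=\text{Span}\{ae_Nb\}$: left multiplication by $\pi(h_1)$ sends $ae_Nb$ to $(h_1a)e_Nb$; right multiplication by $\pi(h_2)$ sends $ae_Nb$ to $ae_N(bh_2)$; left multiplication by $\rho_4$ sends $ae_Nb$ to $(ah_4)e_Nb$ (here $h_4\in N'\cap M$ is commuted past $\mathbb{E}_N(bx)\in N$); and right multiplication by $\rho_3$ sends $ae_Nb$ to $ae_N(\sigma^{\varphi}_{i}(h_3)b)$ — this last identity is exactly Lemma~\ref{Lem=TomTakExp}, which yields $\mathbb{E}_N(bxh_3)=\mathbb{E}_N(\sigma^{\varphi}_{i}(h_3)bx)$. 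Passing to norm limits (multiplication by a fixed bounded operator is norm-continuous and $\cK(M,N,\varphi)=\overline{\cK_{00}(M,N,\varphi)}$) and applying the four operators successively to $\Phi^{(2)}_\varphi$ gives the claim.

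The only genuinely delicate point is the modular twist: inside $\Phi$, right multiplication by $h_j$ on $M\Omega_\varphi$ is implemented by $J_\varphi\sigma^{\varphi}_{i/2}(h_j)^{\ast}J_\varphi$ and not by $J_\varphi h_j^{\ast}J_\varphi$, which is precisely why analyticity of $h_3,h_4$ is assumed, and why Lemma~\ref{Lem=TomTakExp} must be invoked with the correct modular parameter; the rest is routine bookkeeping, and boundedness of $\tilde\Phi^{(2)}_\varphi$ comes for free from the product decomposition.
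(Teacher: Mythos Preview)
Your proof is correct. Your main argument is a cleaner variant of the paper's: you factor $\tilde\Phi^{(2)}_\varphi=\pi(h_1)\rho_4\,\Phi^{(2)}_\varphi\,\pi(h_2)\rho_3$ and then use that $\cK(M,N,\varphi)$ is a closed two-sided ideal in $\langle M,N\rangle=J_\varphi N' J_\varphi$, placing the right-multiplication operators $\rho_3,\rho_4$ in $\langle M,N\rangle$ via $\sigma^{\varphi}_{i/2}(h_j)\in N'\cap M$. The paper instead first quotes \cite[VIII.3.18(i)]{Takesaki2} for the existence and norm bound of $\tilde\Phi^{(2)}_\varphi$, and then checks directly on generators $ae_Nb$ of $\cK_{00}(M,N,\varphi)$ that the passage $\Phi\mapsto\tilde\Phi$ preserves finite rank (using Lemma~\ref{Lem=TomTakExp}), passing to norm limits at the end --- this is exactly your ``alternative'' argument. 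Your factorisation has the pleasant feature that boundedness of $\tilde\Phi^{(2)}_\varphi$ is automatic from the product decomposition, so no external reference is needed; the paper's route is slightly more elementary in that it never invokes the commutation theorem $\langle M,N\rangle=J_\varphi N' J_\varphi$.
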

\begin{proof}

Note first that by \cite[VIII.3.18(i)]{Takesaki2}  (and its proof) $\tilde{\Phi}$ has a bounded $L^2$-implementation, with $\|\tilde{\Phi}^{(2)}_\varphi\|\leq C \|\Phi^{(2)}_\varphi\| $, with the constant $C>0$ depending on $h_1, h_2, h_3, h_4$.
It thus suffices to show that the passage $\Phi \to \tilde{\Phi}$ preserves the property of having a finite-rank implementation.
Let then $a,b \in M$ so that $a e_N b$ is in $\cK_{00}(M, N, \varphi)$. So for $x \in M$ we have by Lemma \ref{Lem=TomTakExp},
\[
 h_1 (a e_N b  )( h_2 x h_3) h_4  \Omega_\varphi=  h_1  a h_4 e_N (  \sigma^\varphi_i(h_3) b  h_2 x ) \Omega_\varphi,
\]
and so this map is in $\cK_{00}(M, N, \varphi)$. 
\end{proof}

We are now ready to formulate the main result of this subsection.
In combination with Lemma \ref{Lem=ContractiveMaps} it will later
allow us to deduce that the relative Haagerup property of a triple
$(M,N,\mathbb{E}_{N})$ with finite $N$ may be witnessed by unital
and state-preserving maps. Its proof is inspired by \cite[Section
2]{BannonFang}.

\begin{thm} \label{BannonFangApproach}
Let $N\subseteq M$
be a unital inclusion of von Neumann algebras which admits a faithful
normal conditional expectation $\mathbb{E}_{N}$. Assume that $N$
is finite, let $\tau\in N_{\ast}$ be a faithful normal tracial state
that we extend to a state $\varphi:=\tau\circ\mathbb{E}_{N}$ on $M$
and suppose that the triple $(M,N,\varphi)$ has property $\text{(rHAP)}$
witnessed by contractive approximating maps. Then,  if all the elements of $M$ are analytic with respect to the modular automorphism group of $\varphi$ -- for example if there exists
a boundedly invertible element $h\in M^{+}$ with $\sigma_{t}^{\varphi}(x)=h^{it}xh^{-it}$
for all $t\in\mathbb{R}$, $x \in M$, property $\text{(rHAP)}$ of $(M,N,\varphi)$
may be witnessed by unital and state-preserving approximating maps,
i.e.\ we may assume that ($1^{\prime\prime}$) and ($4^{\prime}$)
hold.
\end{thm}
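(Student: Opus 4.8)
The plan is to start from a net $(\Phi_i)_{i\in I}$ of contractive completely positive maps witnessing property $\text{(rHAP)}$ of $(M,N,\varphi)$ and, after a mild rescaling, add to $\Phi_i$ an explicit finite-rank correction term -- built from the elements $a,b$ furnished by Proposition~\ref{Prop=BannonFang} -- which restores unitality and state-preservation at the cost of a controlled perturbation. It suffices to show that for every finite $F\subseteq M$ and every $\epsilon>0$ there is a normal, unital, completely positive, $N$-$N$-bimodular, $\varphi$-preserving map $\Xi_{F,\epsilon}\colon M\to M$ with $\Xi_{F,\epsilon}^{(2)}\in\mathcal{K}(M,N,\varphi)$ and $\|\Xi_{F,\epsilon}(x)-x\|_{2,\varphi}<\epsilon$ for all $x\in F$: ordering the pairs $(F,\epsilon)$ by $(F,\epsilon)\preceq(F',\epsilon')\iff F\subseteq F'$ and $\epsilon\ge\epsilon'$, the resulting net is uniformly bounded (the maps, being unital and completely positive, are contractive), converges to $\mathrm{id}$ in $\|\cdot\|_{2,\varphi}$ pointwise, and hence, by Lemma~\ref{Lem=SOTversusL2}, point-strongly; so it witnesses $\text{(rHAP)}$ with maps satisfying $(1'')$ and $(4')$.

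Fix $F$ and $\epsilon$; enlarge $F$ to a finite set $F_0$ containing $1$ and all $x^*x$ with $x\in F$, and let $R$ bound the norms of the elements of $F$. Since $\Phi_i\to\mathrm{id}$ point-strongly, hence (Lemma~\ref{Lem=SOTversusL2}, on the bounded set $F_0$) in $\|\cdot\|_{2,\varphi}$ on $F_0$, we may pick $\Phi:=\Phi_i$ with $\|\Phi(y)-y\|_{2,\varphi}<\eta$ for all $y\in F_0$, where $\eta>0$ is to be fixed later; for $\delta\in(0,1)$ put $\Phi^\delta:=(1-\delta)\Phi$. Then $\Phi^\delta$ is normal, completely positive and $N$-$N$-bimodular, with $\Phi^\delta(1)\le(1-\delta)1$, $\varphi\circ\Phi^\delta\le(1-\delta)\varphi$ and $(\Phi^\delta)^{(2)}=(1-\delta)\Phi^{(2)}\in\mathcal{K}(M,N,\varphi)$, so Proposition~\ref{Prop=BannonFang} applies to it and yields $a,b\in N'\cap M$ with $a\ge0$, $\mathbb{E}_N(a)=1$, $a\,\mathbb{E}_N(b^*b)=\mathbb{E}_N(b^*b)\,a=1-c$ (where $c:=\Phi^\delta(1)$) and $b\varphi b^*=\varphi-\varphi\circ\Phi^\delta$. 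By the standing hypothesis, $a$, $b$ and $a^{1/2}\in N'\cap M$ are analytic for $\sigma^\varphi$. We then set
\[
\Psi^\delta(x):=a^{1/2}\,\mathbb{E}_N(b^*xb)\,a^{1/2},\qquad \Xi^\delta:=\Phi^\delta+\Psi^\delta .
\]

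The algebraic properties of $\Xi^\delta$ are quick to verify. It is normal; it is completely positive, $\Psi^\delta$ being a composition of the completely positive maps $x\mapsto b^*xb$, $\mathbb{E}_N$ and $y\mapsto a^{1/2}ya^{1/2}$; it is $N$-$N$-bimodular because $a^{1/2},b\in N'\cap M$, which lets one move elements of $N$ through $\mathbb{E}_N(b^*(\cdot)b)$ and past $a^{1/2}$; it is unital because $a^{1/2}$ commutes with $\mathbb{E}_N(b^*b)\in N$, so that $\Psi^\delta(1)=\mathbb{E}_N(b^*b)\,a=1-c$ and $\Xi^\delta(1)=c+(1-c)=1$; and it is $\varphi$-preserving because, using that $a^{1/2}$ commutes with $\mathbb{E}_N(b^*xb)\in N$, that $\varphi\circ\mathbb{E}_N=\varphi$ together with the $N$-bimodularity of $\mathbb{E}_N$ and $\mathbb{E}_N(a)=1$, and that $b\varphi b^*=\varphi-\varphi\circ\Phi^\delta$, one gets $\varphi(\Psi^\delta(x))=\varphi(\mathbb{E}_N(b^*xb)\,a)=\varphi(\mathbb{E}_N(b^*xb))=\varphi(b^*xb)=(\varphi-\varphi\circ\Phi^\delta)(x)$, so $\varphi\circ\Xi^\delta=\varphi\circ\Phi^\delta+(\varphi-\varphi\circ\Phi^\delta)=\varphi$. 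Finally $(\Psi^\delta)^{(2)}\in\mathcal{K}(M,N,\varphi)$ by Lemma~\ref{Lem=CompactMaps}, applied with $\mathbb{E}_N$ in the role of $\Phi$ (so that its $L^2$-implementation is $e_N\in\mathcal{K}(M,N,\varphi)$), $h_1=h_4=a^{1/2}$, $h_2=b^*$, $h_3=b$ -- the hypotheses $h_1,h_2\in M$ and $h_3,h_4\in N'\cap M$ analytic being met; hence $(\Xi^\delta)^{(2)}=(\Phi^\delta)^{(2)}+(\Psi^\delta)^{(2)}\in\mathcal{K}(M,N,\varphi)$.

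The only substantial point is the approximation estimate. For $x\in F$,
\[
\|\Xi^\delta(x)-x\|_{2,\varphi}\le\|\Phi^\delta(x)-\Phi(x)\|_{2,\varphi}+\|\Phi(x)-x\|_{2,\varphi}+\|\Psi^\delta(x)\|_{2,\varphi}\le\delta R+\eta+\|\Psi^\delta(x)\|_{2,\varphi}.
\]
Here $\Psi^\delta$ need not be small in operator norm -- $\|\Psi^\delta(1)\|=\|1-c\|$ stays bounded away from $0$ unless $\Phi(1)$ is close to $1$ in norm, which we cannot assume -- but it is small in $\|\cdot\|_{2,\varphi}$ on $F$, because its total mass $\varphi\circ\Psi^\delta=\varphi-\varphi\circ\Phi^\delta$ is small there. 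Indeed, the Kadison--Schwarz inequality for the completely positive map $\Psi^\delta$ gives $\Psi^\delta(x)^*\Psi^\delta(x)\le\|\Psi^\delta(1)\|\,\Psi^\delta(x^*x)$, so
\[
\|\Psi^\delta(x)\|_{2,\varphi}^2\le\|1-c\|\,(\varphi-\varphi\circ\Phi^\delta)(x^*x);
\]
and $0\le 1-c\le1$, while, because $x^*x\in F_0$ and $\varphi$ is a state, $\varphi\circ\Phi^\delta(x^*x)=(1-\delta)\varphi(\Phi(x^*x))\ge(1-\delta)(\varphi(x^*x)-\eta)$, so $(\varphi-\varphi\circ\Phi^\delta)(x^*x)\le\delta\varphi(x^*x)+\eta\le\delta R^2+\eta$. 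Thus $\|\Xi^\delta(x)-x\|_{2,\varphi}\le\delta R+\eta+\sqrt{\delta R^2+\eta}$, and choosing first $\eta$ and then $\delta$ small enough makes this less than $\epsilon$; setting $\Xi_{F,\epsilon}:=\Xi^\delta$ for such a choice completes the argument. I expect this estimate -- together with getting the order of the quantifiers ($(F,\epsilon)$, then $\eta$, then the index $i$, then $\delta$) right, and the realization that the non-smallness of $\|\Psi^\delta(1)\|$ does no harm because it multiplies the small quantity $(\varphi-\varphi\circ\Phi^\delta)(x^*x)$ -- to be the only genuine obstacle; the rest is bookkeeping already prepared by Proposition~\ref{Prop=BannonFang} and Lemmas~\ref{Lem=TomTakExp} and~\ref{Lem=CompactMaps}.
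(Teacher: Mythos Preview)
Your proposal is correct and follows essentially the same route as the paper: rescale $\Phi_i$ by $(1-\delta)$, apply Proposition~\ref{Prop=BannonFang} to obtain $a,b\in N'\cap M$, and add the correction $a^{1/2}\mathbb{E}_N(b^*\,\cdot\,b)a^{1/2}$ (equal to the paper's $a\,\mathbb{E}_N(b^*\,\cdot\,b)$ since $a\in N'\cap M$ commutes with $N$) to restore unitality and $\varphi$-preservation, with compactness handled via the analyticity hypothesis. The only cosmetic difference is in the convergence step: the paper uses the operator inequality $0\le\Psi_j(x)-(1-\delta_j)\Phi_j(x)\le\|x\|(1-c_j)$ for $x\ge0$ to get strong convergence directly, whereas you estimate $\|\Psi^\delta(x)\|_{2,\varphi}$ via Kadison--Schwarz and then upgrade to strong convergence by Lemma~\ref{Lem=SOTversusL2}; both are valid and equally short.
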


\begin{proof}

Let $(\Phi_{j})_{j\in J_1}$ be a net of contractive
approximating maps witnessing property $\text{(rHAP)}$ of the triple
$(M,N,\varphi)$ and choose a net $(\delta_{j})_{j\in J_2}$ with $\delta_{j}\rightarrow0$. We now set $J = J_1 \times J_2$ with the product partial order and for $j = (j_1, j_2) \in J$ we set $\Phi_j = \Phi_{j_1}$ and $\delta_j = \delta_{j_2}$.
Then for all $j \in J$,
\begin{eqnarray}
\nonumber
c_{j}:=(1-\delta_{j})\Phi_{j}(1)\leq1-\delta_{j} \qquad \text{and} \qquad (1-\delta_{j})(\varphi\circ\Phi_{j})\leq(1-\delta_{j})\varphi.
\end{eqnarray}
 In particular, we may apply Proposition \ref{Prop=BannonFang}
to $(1-\delta_{j})\Phi_{j}$ to find elements $a_{j},b_{j}\in N^{\prime}\cap M$
with $a_j \geq 0$, $\mathbb{E}_{N}(a_{j})=1$, $a_{j}\mathbb{E}_{N}(b_{j}^{\ast}b_{j})=\mathbb{E}_{N}(b_{j}^{\ast}b_{j})a_{j}=1-c_{j}$ and $b_{j}\varphi b_{j}^{\ast}=\varphi-(1-\delta_{j})(\varphi\circ\Phi_{j})$. For $j\in J$ define
\[
\Psi_{j}:M\rightarrow M\text{, }\Psi_{j}(x):=(1-\delta_{j})\Phi_{j}(x)+a_{j}\mathbb{E}_{N}(b_{j}^{\ast}xb_{j})\text{.}
\]
It is clear that $\Psi_{j}$ is normal completely positive and $N$-$N$-bimodular.
Further,
\[
\Psi_{j}(1)=(1-\delta_{j})\Phi_{j}(1)+a_{j}\mathbb{E}_{N}(b_{j}^{\ast}b_{j})=c_{j}+(1-c_{j})=1
\]
and for any $x \in M$
\begin{eqnarray}
\nonumber
\varphi\circ\Psi_{j}(x)&=& (1-\delta_{j})\varphi(\Phi_{j}(x))+\varphi(a_{j}\mathbb{E}_{N}(b_{j}^{\ast}xb_{j}))\\
\nonumber
&=& (1-\delta_{j})\varphi(\Phi_{j}(x))+\varphi(\mathbb{E}_{N}(a_{j})b_{j}^{\ast}xb_{j})\\
\nonumber
&=& (1-\delta_{j})\varphi(\Phi_{j}(x))+(b_{j}\varphi b_{j}^{\ast})(x)\\
\nonumber
&=& (1-\delta_{j})\varphi(\Phi_{j}(x))+\varphi(x)-(1-\delta_{j})\varphi(\Phi_{j}(x))\\
\nonumber
&=& \varphi(x)\text{,}
\end{eqnarray}
so the $\Psi_{j}$ are unital and $\varphi$-preserving.\\

For the relative compactness note that by the assumption 
that every element in $M$ is analytic for $\sigma^{\varphi}$,
Lemma \ref{Lem=TomTakExp} implies that for all $x\in M$
\[
\Psi_{j}(x)=(1-\delta_{j})\Phi_{j}(x)+a_{j}\mathbb{E}_{N}(\sigma_{i}^{\varphi}(b_{j})b_{j}^{\ast}x),
\]
hence,
\[
\Psi_{j}^{(2)}=(1-\delta_{j})\Phi_{j}^{(2)}+a_{j}e_{N}\sigma_{i}^{\varphi}(b_{j})b_{j}^{\ast}\in\mathcal{K}(M,N,\varphi).
\]
It remains to show that for every $x\in M$, $\Psi_{j}(x)\rightarrow x$
strongly. For this, estimate for $x\geq0$,
\begin{eqnarray}
\nonumber
(\Psi_{j}-  (1-\delta_j) \Phi_{j})(x) &=& a_{j}^{1/2}\mathbb{E}_{N}(b_{j}^{\ast}xb_{j})a_{j}^{1/2} \\
\nonumber
&\leq& \Vert x\Vert a_{j}^{1/2}\mathbb{E}_{N}(b_{j}^{\ast}b_{j})a_{j}^{1/2}\\
\nonumber
&=& \Vert x\Vert(1-c_{j})\text{.}
\end{eqnarray}
Since $c_{j}=(1-\delta_{j})\Phi_{j}(1)\rightarrow1$ and $ (1-\delta_j)  \Phi_{j}(x)\rightarrow x$
strongly it then follows that
\[
\Psi_{j}(x)=(\Psi_{j}(x)-  (1-\delta_j)  \Phi_{j}(x))+ (1-\delta_j)  \Phi_{j}(x)\rightarrow x
\]
strongly for every $x\in M$. This completes the proof.
\end{proof}

Another important statement that was proved in \cite{CS-CMP} in case
of the usual (non-relative) Haagerup property is the following lemma.
It will later ensure the contractivity of certain approximating
maps and allow us to apply Theorem \ref{BannonFangApproach} in a
suitable setting.

\begin{lem} \label{Lem=ContractiveMaps}
Let $M$ be a finite
von Neumann algebra equipped with a faithful normal tracial state
$\tau\in M_{\ast}$ and let $ N\subseteq M$ be a unital inclusion.
Assume that $h \in N^{\prime}\cap M$ is a boundedly invertible
self-adjoint element and define $\varphi\in M_{\ast}$ by $\varphi(x):=\tau(hxh)$
for $x\in M$. Then, if $(M,N,\varphi)$ has property $\text{(rHAP)}$,
the approximating maps $(\Phi_{i})_{i\in I}$ witnessing property
$\text{(rHAP)}$ may be chosen contractively, i.e.\ we may assume that
($1^{\prime}$) holds.
\end{lem}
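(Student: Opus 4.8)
The plan is to start from a net $(\Phi_i)_{i\in I}$ of approximating maps witnessing property $\text{(rHAP)}$ of $(M,N,\varphi)$ -- a priori only uniformly bounded, say with $\sup_i\|\Phi_i\|\leq C$ -- and to manufacture from each $\Phi_i$ a genuinely contractive completely positive map that still preserves all the other requirements: normality, $N$-$N$-bimodularity, the domination $\varphi\circ\Phi_i\leq\varphi$ (or at least something sufficient), strong convergence to the identity, and compactness of the $L^2$-implementation relative to $N$. The key structural input is that $\varphi(x)=\tau(hxh)$ with $h\in N'\cap M$ boundedly invertible and self-adjoint; this forces $\sigma^\varphi_t(x)=h^{2it}xh^{-2it}$, so every element of $M$ is analytic for $\sigma^\varphi$, and moreover $h$ commutes with everything in $N$, which is exactly what lets the correction be pushed through an $N$-bimodular map.

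The concrete construction I would use is a ``two-sided $h$-twist and rescale'' in the spirit of \cite{CS-CMP}: replace $\Phi_i$ by $\widetilde{\Phi}_i(x):=\lambda_i\, h^{-1}\Phi_i(h x h)h^{-1}$ for a suitable scalar $\lambda_i\in(0,1]$, and then possibly compose with a further scalar multiple to get a true contraction. First I would check that $\widetilde{\Phi}_i$ is still normal, completely positive (composition of the c.p. map $\Phi_i$ with the c.p. maps $x\mapsto hxh$ and $x\mapsto h^{-1}xh^{-1}$) and $N$-$N$-bimodular, this last point using precisely that $h\in N'\cap M$ so the twisting commutes with left and right multiplication by elements of $N$. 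Next, the compactness of $\widetilde{\Phi}_i^{(2)}$ follows from Lemma \ref{Lem=CompactMaps}: with $h_1=h^{-1}$, $h_2=h$, $h_3=h$, $h_4=h^{-1}$ all analytic for $\sigma^\varphi$ (the inner ones lie in $N'\cap M$), the lemma gives $\widetilde{\Phi}_i^{(2)}\in\mathcal{K}(M,N,\varphi)$ directly. For the norm bound, $\|\widetilde{\Phi}_i(1)\|=\lambda_i\|h^{-1}\Phi_i(h^2)h^{-1}\|$, which is controlled in terms of $\|h\|$, $\|h^{-1}\|$ and $\|\Phi_i\|\leq C$; choosing $\lambda_i$ to absorb this constant makes $\widetilde{\Phi}_i$ contractive (c.p. plus $\|\widetilde{\Phi}_i(1)\|\leq1$). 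One then has to re-examine the domination condition: since $\varphi(\widetilde{\Phi}_i(x))=\lambda_i\tau(h\cdot h^{-1}\Phi_i(hxh)h^{-1}\cdot h)=\lambda_i\tau(\Phi_i(hxh))=\lambda_i\,\varphi(\Phi_i(x))\leq\lambda_i\,\varphi(x)\leq\varphi(x)$ for $x\geq0$, using $\varphi\circ\Phi_i\leq\varphi$ together with $\varphi(x)=\tau(hxh)$ and the cyclicity of the trace -- so condition \eqref{Eqn=RHAP4} is automatically retained.

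The remaining and only slightly delicate point is strong convergence $\widetilde{\Phi}_i(x)\to x$. Here I would first note that we are free to choose the scalars so that $\lambda_i\to1$ (each $\lambda_i$ needs only to be at most $\min\{1,\text{(the constant)}^{-1}\}$, but since $\Phi_i$ is only uniformly, not uniformly-near-$1$ bounded, I would instead take $\lambda_i:=(1+\epsilon_i)^{-1}\min(1,\dots)$-type choices; more robustly, observe that after passing to a subnet or reindexing we may assume the original $\Phi_i$ satisfy $\|\Phi_i(1)\|\to1$, since $\Phi_i(1)\to1$ strongly and $\Phi_i(1)\geq0$, so for large $i$ the rescaling factor is close to $1$). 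Then on bounded sets strong convergence is $\|\cdot\|_{2,\varphi}$-convergence by Lemma \ref{Lem=SOTversusL2}, and $\|\widetilde{\Phi}_i(x)-x\|_{2,\varphi}\leq|\lambda_i-1|\,\|h^{-1}\Phi_i(hxh)h^{-1}\|_{2,\varphi}+\|h^{-1}\Phi_i(hxh)h^{-1}-x\|_{2,\varphi}$; the first term vanishes because $\lambda_i\to1$ and the $\widetilde{\Phi}_i$ (being c.p. contractive and applied to a fixed $x$) give a bounded net, while the second vanishes because multiplication by $h,h^{-1}$ on the left and right is strongly continuous on bounded sets and $\Phi_i(hxh)\to hxh$ strongly. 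I expect this last convergence step -- reconciling the rescaling scalars with the need for $\lambda_i\to1$ while keeping contractivity -- to be the main obstacle, but it is resolved by the elementary observation that $\|\Phi_i(1)\|\to1$, so no essential new idea is required; the bulk of the argument is the bookkeeping already packaged in Lemmas \ref{Lem=CompactMaps} and \ref{Lem=SOTversusL2}.
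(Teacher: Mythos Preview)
Your proposal contains a genuine computational error in the domination step, and this error is fatal to the approach. You write
\[
\varphi(\widetilde{\Phi}_i(x))=\lambda_i\,\tau(\Phi_i(hxh))=\lambda_i\,\varphi(\Phi_i(x)),
\]
but the second equality is false: $\varphi(\Phi_i(x))=\tau(h\Phi_i(x)h)$, and there is no reason for $\tau(h\Phi_i(x)h)$ to equal $\tau(\Phi_i(hxh))$ unless $\Phi_i$ commutes with conjugation by $h$. What one \emph{can} extract from $\varphi\circ\Phi_i\leq\varphi$ is only $\tau\circ\Phi_i\leq\|h\|^2\|h^{-1}\|^2\,\tau$, so that $\varphi\circ\widetilde{\Phi}_i\leq\lambda_i\|h\|^2\|h^{-1}\|^2\,\varphi$. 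To recover condition \eqref{Eqn=RHAP4} you are then forced to take $\lambda_i\leq(\|h\|^2\|h^{-1}\|^2)^{-1}$, which is strictly less than $1$ whenever $h$ is not a scalar, and then $\widetilde{\Phi}_i(x)\to\lambda x\neq x$. The same obstruction already appears at the level of contractivity: even if $\Phi_i(1)\to 1$ strongly, the quantity $\|h^{-1}\Phi_i(h^2)h^{-1}\|$ need not approach $1$ (strong convergence only gives lower semicontinuity of the norm), so your ``$\lambda_i\to 1$'' argument does not go through.

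The paper's route, borrowed from \cite[Lemma 4.3]{CS-CMP}, is essentially different and addresses precisely this issue. Instead of a scalar rescaling it first performs a modular averaging
\[
\Phi_k^l(x)=\sqrt{\tfrac{1}{l\pi}}\int_{\mathbb{R}}e^{-t^2/l}\,\sigma^\varphi_t\bigl(\Phi_k(\sigma^\varphi_{-t}(x))\bigr)\,dt,
\]
which (since $h\in N'\cap M$) remains $N$-$N$-bimodular and relatively compact by Lemma \ref{Lem=CompactMaps}. The crucial gain is that $\Phi_k^l(1)\in N'\cap M$ and is approximately $\sigma^\varphi$-invariant, so one can apply functional calculus $f_k^{n,l}=F_n(\Phi_k^l(1))$ with $F_n(z)=e^{-n(z-1)^2}$ to obtain \emph{operator-valued} correctors in $N'\cap M$ and then set $\Psi_j(\cdot)=(1+\epsilon_j)^{-2}f_k^{n,l}\,\Phi_k^l(\cdot)\,f_k^{n,l}$. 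It is this operator-valued correction, rather than a single scalar, that simultaneously forces contractivity, preserves \eqref{Eqn=RHAP4}, and allows the net to converge to the identity. Your twist $x\mapsto h^{-1}\Phi_i(hxh)h^{-1}$ does not create any of this structure; the missing idea is exactly the modular averaging followed by functional-calculus correction.
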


\begin{proof}
The proof is given in \cite[Lemma 4.3]{CS-CMP}.
One only needs to check that the condition $h \in N^{\prime}\cap M^{+}$
ensures that the maps $\Phi_{k}^{\prime}$, $\Phi_{k}^{l}$ and $\Psi_{j}$
defined there are $N$-$N$-bimodule maps that are compact relative
to $N$.
 Let us comment on this.

In Step 1 of the proof of \cite[Lemma 4.3]{CS-CMP} it is shown that the approximating maps $\Phi_k$ witnessing the Haagerup property maybe chosen such that $\sup_k \Vert \Phi_k \Vert < \infty$. In the current setup of  $\text{(rHAP)}$  this is automatic (see Definition \ref{Dfn=RHAPState}) and so we may skip this step.

We now turn to Step 2 in the proof of \cite[Lemma 4.3]{CS-CMP}. Let $\Phi_k$ be the approximating maps witnessing the $\text{(rHAP)}$ for $(M, N, \varphi)$. In particular $\Phi_k$ is $N$-$N$-bimodular and $\Phi_k^{(2)} \in \cK(M, N, \varphi)$.
 By \cite[Theorem VIII.2.211]{Takesaki2} we have $\sigma_t^\varphi(x) = h^{it} x h^{-it}$, $t \in \mathbb{R}, x \in M$. Now recall the map defined in  \cite[Lemma 4.3]{CS-CMP} given by
\begin{equation}\label{Eqn=IntegralMap}
\begin{split}
\Phi_k^l(x) = &  \sqrt{   \frac{1}{l \pi }  }  \int_{- \infty}^{\infty}  e^{-t^2/l}  \sigma_t^\varphi( \Phi_k (   \sigma_{-t}^\varphi(x)  )    ) dt \\
= &  \sqrt{   \frac{1}{l \pi }  }  \int_{- \infty}^{\infty}  e^{-t^2/l}   h^{it}  \Phi_k (   h^{-it} x  h^{it}  )  h^{-it}  dt.
\end{split}
\end{equation}
Since $h \in N' \cap M$ this map is $N$-$N$-bimodular. Since $\sigma_t^{\varphi}(h^{is}) = h^{is}, s,t \in \mathbb{R}$ it follows from Lemma \ref{Lem=CompactMaps} that the $L^2$-implementation of
\begin{equation}\label{Eqn=IntegrantMap}
x \mapsto  \sigma_t^\varphi( \Phi_k (   \sigma_{-t}^\varphi(x)  )    ) =  h^{it}  \Phi_k (   h^{-it} x  h^{it}  )  h^{-it}, \qquad t \in \mathbb{R},
\end{equation}
exists and is compact, i.e.\ contained in $\cK(M, N, \varphi)$. By assumption $h$ is boundedly invertible and so $t \mapsto h^{it}$ depends continuously (in norm) on $t$. Hence the map  \eqref{Eqn=IntegrantMap} depends continuously on $t$ and it follows that \eqref{Eqn=IntegralMap} is compact.

 Next, in the proof of \cite[Lemma 4.3]{CS-CMP} the following operators were defined:
 \[
    g_k^l = \Phi_k^l(1), \qquad f_k^{n,l} = F_n(g_k^l),
 \]
 where $F_n(z) = e^{-n (z-1)^2}, z \in \mathbb{C}, n \in \mathbb{N}$. Since $\Phi_k^l$ is $N$-$N$ bimodular it follows  that $g_k^l \in N' \cap M$. Therefore also $f_k^{n,l} \in N' \cap M$. Then the proof of \cite[Lemma 4.3]{CS-CMP} defines for suitable $n(j), k(j), l(j) \in \mathbb{N}, \epsilon_j >0$ depending on some $j$ in a directed set the map $\Psi_j:M \to M$ via the formula:
 \[
   \Psi_j( \: \cdot \: ) = \frac{1}{(1 + \epsilon_j)^2} f_{k(j)}^{n(j), l(j)}  \Phi_{k(j)}^{l(j)}( \: \cdot \: )  f_{k(j)}^{n(j), l(j)}.
 \]
   Since  $f_{k(j)}^{n(j), l(j)} \in N' \cap M$   it follows that $\Psi_j$ is both $N$-$N$-bimodular and compact, i.e. $\Psi_j^{(2)} \in \cK(M,N, \varphi)$. The last part of the proof of \cite[Lemma 4.3]{CS-CMP} shows that $\Psi_j^{(2)} \rightarrow 1$ strongly and this holds true here as well with the same proof.
  By Lemma \ref{Lem=SOTversusL2} this shows that for every $x \in M$ we have  $\Psi_j(x) \rightarrow x$ strongly.

 \end{proof}

\vspace{1mm}


\section{For finite $N$: translation into the finite setting} \label{Preparation}

Let again $N\subseteq M$ be a unital inclusion of von Neumann
algebras which admits a faithful normal conditional expectation $\mathbb{E}_{N}$.
Assume moreover that $N$ is a general $\sigma$-finite von Neumann
algebra, though in many of the statements below we shall add the assumption
that $N$ is finite. The aim of this section is to characterise the
relative Haagerup property (resp.\ property  $\text{(rHAP)}^{-}$) of
the triple $(M,N,\mathbb{E}_{N})$ in terms of the structure of  certain
corners of crossed product von Neumann algebras associated with the
modular automorphism group of some faithful $\varphi\in M_{\ast}^{+}$
with $\varphi\circ\mathbb{E}_{N}=\varphi$. These statements will
play a crucial role in Section \ref{MainResults}.


\subsection{Crossed products}  \label{CrossedProducts}

Let us first recall some of the theory of crossed product von Neumann
algebras and their duality for which we refer to \cite[Section X.2]{Takesaki2}.
For this, fix an action $\mathbb{R}\curvearrowright^{\alpha}M$ on
$M\subseteq\mathcal{B}(\mathcal{H})$, define the corresponding \emph{fixed
point algebra}
\[
M^{\alpha}:=\{x\in M\mid\alpha_{t}(x)=x\text{ for all }t\in\mathbb{R}\}
\]
and let $M\rtimes_{\alpha}\mathbb{R}\subseteq\mathcal{B}(\mathcal{H}\otimes L^{2}(\mathbb{R}))\cong\mathcal{B}(L^{2}(\mathbb{R},\mathcal{H}))$
be the corresponding \emph{crossed product von Neumann algebra}. It is generated by the operators $\pi_{\alpha}(x)$, $x\in M$ and $\lambda_{t}:=\lambda_{t}^{\alpha}$, $t\in\mathbb{R}$ where
\begin{eqnarray}
\nonumber
(\pi_{\alpha}(x)\xi)(t)=\alpha_{-t}(x)\xi(t) \qquad \text{ and } \qquad (\lambda_{t}\xi)(s)=\xi(s-t)\qquad
\end{eqnarray}
for $s,t\in\mathbb{R}$, $x\in M$, $\xi\in\mathcal{H}\otimes L^{2}(\mathbb{R})$;  we will also occasionally use $\lambda$ to denote the left regular representation on $L^2(\mathbb{R})$, which should not cause any confusion.
Recall that this construction does not depend on the choice of the
embedding $M\subseteq\mathcal{B}(\mathcal{H})$ and that $M\cong\pi_{\alpha}(M)$.
For notational convenience we will therefore omit the faithful normal
representation $\pi_{\alpha}$ in our notation and identify $M$ with
$\pi_{\alpha}(M)$ and $N$ with $\pi_{\alpha}(N)$. Note that
 $\pi_{\alpha}(x)=x\otimes1$ for all
$x\in M^\alpha$. Set further $\lambda(f):=\int_{\mathbb{R}}f(t)\lambda_{t}dt$
for $f\in L^{1}(\mathbb{R})$ and
\[
\mathcal{L}(\mathbb{R}):=\{\lambda(f)\mid f\in L^{1}(\mathbb{R})\}^{\prime\prime}=\{\lambda_{s}\mid s\in\mathbb{R}\}^{\prime\prime}\subseteq\mathcal{B}(\mathcal{H}\otimes L^{2}(\mathbb{R}))\text{.}
\]

\begin{rmk} \label{Rmk=Fourier}

For $f\in L^{1}(\mathbb{R})$
we denote by
\[
\widehat{f}(s) =   \int_{\mathbb{R}}  f(t) e^{ist} dt  \in L^{\infty}(\mathbb{R}),
\]
its
Fourier transform. Let $\mathcal{F}_{2}:L^{2}(\mathbb{R})\rightarrow L^{2}(\mathbb{R}): f \mapsto (2\pi)^{  -\frac{1}{2}  } \widehat{f}$
be the unitary Fourier transform operator on $L^{2}(\mathbb{R})$.
Then $\mathcal{F}_{2} \lambda(f)\mathcal{F}_{2}^{\ast}$ is the multiplication
operator with $\widehat{f}$. We shall occasionally extend our notation in the following way. Let $f \in L^2(\mathbb{R})$ be such that its Fourier transform $\widehat{f}$ is in $L^\infty(\mathbb{R})$. We shall write $\lambda(f)$ for $\mathcal{F}_{2}^{\ast} \widehat{f} \mathcal{F}_{2}$ where we view $\widehat{f}$ as a multiplication operator. This is naturally compatible with the earlier notation for $f \in L^{1}(\mathbb{R})$
\end{rmk}

Let $\mathbb{R}\overset{\widehat{\alpha}}{\curvearrowright}M\rtimes_{\alpha}\mathbb{R}$ be the \emph{dual action} determined by
\begin{equation}  \label{Eqn=DualAction}
\widehat{\alpha}_{t}(x)=x, \qquad \text{ and } \qquad \widehat{\alpha}_{t}(\lambda_{s})=\exp(-ist)\lambda_{s},
\end{equation}
for $x\in M$, $s,t\in \mathbb{R}$ and recall that its fixed point algebra is given by
\begin{equation}  \label{Eqn=FixedCross}
M=(M\rtimes_{\alpha}\mathbb{R})^{\widehat{\alpha}}\text{.}
\end{equation}

The expression
\[
T_{\widehat{\alpha}}(x):=\int_{\mathbb{R}}\widehat{\alpha}_{s}(x)ds,\qquad x\in(M\rtimes_{\alpha}\mathbb{R})^{+},
\]
defines a faithful normal semi-finite operator valued weight on $M\rtimes_{\alpha}\mathbb{R}$
which takes values in the extended positive part of $M$. Choose $f\in L^{1}(\mathbb{R})\cap L^{2}(\mathbb{R})$
with $\Vert f\Vert_{2}=1$ such that the support of the Fourier transform
$\widehat{f}$ equals $\mathbb{R}$. We keep $f$ fixed throughout
the whole section. One has $T_{\widehat{\alpha}}(\lambda(f)^{\ast}\lambda(f))=\Vert f\Vert_{2}^{2}=1$,
hence we may define the unital normal completely positive map
\[
T_{f}:=T_{f,\widehat{\alpha}}:M\rtimes_{\alpha}\mathbb{R}\rightarrow M\text{, }x\mapsto T_{\widehat{\alpha}}(\lambda(f)^{\ast}x\lambda(f))\text{.}
\]
By Lemma \ref{Lem=SOTversusL2} $T_{f}$ is strongly continuous on the
unit ball. For a given map $\Phi:M\rtimes_{\alpha}\mathbb{R}\rightarrow M\rtimes_{\alpha}\mathbb{R}$
and a positive normal functional $\varphi\in M_{\ast}$ we further
define
\begin{equation}
\nonumber
\widetilde{\Phi}_{f}:M\rightarrow M\text{, }\;\;\;\widetilde{\Phi}_{f}(x)=T_{f}(\Phi(x))\text{.}
\end{equation}
and
\[
\widehat{\varphi}_{f}:M\rtimes_{\alpha}\mathbb{R}\rightarrow\mathbb{C}\text{, }\;\;\;\widehat{\varphi}_{f}(x)=\varphi(T_{f}(x)).
\]
The functional $\widehat{\varphi}_{f}$ is normal and positive. It
is moreover a state if $\varphi$ is a state. Since we assumed the
support of $\widehat{f}$ to be equal to $\mathbb{R}$, by Remark
\ref{Rmk=Fourier} the support projection of $\lambda(f)$ equals
$1$. It follows that $\widehat{\varphi}_{f}$ is faithful if and
only if $\varphi$ is faithful.

\begin{lem} \label{Lem=EasyLemma}
Assume that $N\subseteq M^{\alpha}$.
Then $T_{f}$ is $N$-$N$-bimodular, meaning that for $x,y\in N$,
$a\in M\rtimes_{\alpha}\mathbb{R}$ we have $T_{f}(xay)=xT_{f}(a)y$.
\end{lem}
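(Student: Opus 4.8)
The plan is to unwind the definition of $T_f$ and check bimodularity factor by factor, using the hypothesis $N \subseteq M^\alpha$ precisely at the two points where it is needed: to know that $\pi_\alpha(x) = x \otimes 1$ for $x \in N$, and to know that such elements commute with $\lambda_t$ and with the dual action. First I would recall that $T_f(a) = T_{\widehat\alpha}(\lambda(f)^\ast a \lambda(f))$, where $T_{\widehat\alpha}(b) = \int_{\mathbb R} \widehat\alpha_s(b)\, ds$ is the operator-valued weight onto the fixed point algebra $(M \rtimes_\alpha \mathbb R)^{\widehat\alpha} = M$. So the statement to prove, for $x,y \in N$ and $a \in M \rtimes_\alpha \mathbb R$, is
\[
T_{\widehat\alpha}\big(\lambda(f)^\ast\, xay\, \lambda(f)\big) = x\, T_{\widehat\alpha}\big(\lambda(f)^\ast a \lambda(f)\big)\, y.
\]

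The key observations are as follows. Since $N \subseteq M^\alpha$, for $x \in N$ we have $\pi_\alpha(x) = x \otimes 1$, which in particular commutes with $\lambda_t$ for every $t \in \mathbb R$, hence with $\lambda(f)$; it also satisfies $\widehat\alpha_s(x) = x$ for all $s$ by \eqref{Eqn=DualAction}. Therefore
\[
\lambda(f)^\ast\, xay\, \lambda(f) = x\, \big(\lambda(f)^\ast a \lambda(f)\big)\, y,
\]
and it remains to push $x$ and $y$ through $T_{\widehat\alpha}$. For this I would use that $T_{\widehat\alpha}$ is an $M$-bimodular operator-valued weight, i.e.\ $T_{\widehat\alpha}(m b m') = m\, T_{\widehat\alpha}(b)\, m'$ for $m,m' \in M = (M \rtimes_\alpha \mathbb R)^{\widehat\alpha}$ and $b$ in the domain; this is the standard module property of the operator-valued weight associated to the dual action (\cite[Section X.2]{Takesaki2}), and follows formally from $\widehat\alpha_s(m b m') = m\, \widehat\alpha_s(b)\, m'$ together with the definition of $T_{\widehat\alpha}$ as an integral, since $x,y$ are fixed by $\widehat\alpha$. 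Applying this with $m = x$, $m' = y$, $b = \lambda(f)^\ast a \lambda(f)$ gives exactly $T_{\widehat\alpha}(\lambda(f)^\ast xay \lambda(f)) = x\, T_{\widehat\alpha}(\lambda(f)^\ast a \lambda(f))\, y = x\, T_f(a)\, y$, as desired.

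The only mildly delicate point is the bimodularity of the unbounded operator-valued weight $T_{\widehat\alpha}$: one should note that $\lambda(f)^\ast a \lambda(f)$ lies in the domain of $T_{\widehat\alpha}$ (indeed $T_{\widehat\alpha}(\lambda(f)^\ast \lambda(f)) = \|f\|_2^2 = 1$, so $\lambda(f)^\ast \cdot \lambda(f)$ maps $M \rtimes_\alpha \mathbb R$ into $\mathfrak{m}_{T_{\widehat\alpha}}$), and that multiplication by the fixed elements $x,y \in N \subseteq M$ preserves this domain and commutes with the integral defining $T_{\widehat\alpha}$ — a routine normality/strong-continuity argument. I expect this domain bookkeeping, rather than any conceptual issue, to be the main (minor) obstacle; everything else is a direct computation using $N \subseteq M^\alpha$.
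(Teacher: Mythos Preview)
Your argument is correct and follows exactly the same route as the paper: use $N\subseteq M^\alpha$ to commute $x,y\in N$ past $\lambda(f)$, then invoke the $M$-bimodularity of the operator-valued weight $T_{\widehat\alpha}$ (coming from $\widehat\alpha$-invariance of $M$). The paper's proof is just a terser version of what you wrote, without the explicit domain remarks.
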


\begin{proof}
As $N\subseteq M^{\alpha}$ we have that $N$ and
$\lambda(f)$ commute. From the definition of $T_{\widehat{\alpha}}$
and \eqref{Eqn=FixedCross} we get that for $x,y\in N$ and $a\in M\rtimes_{\alpha}\mathbb{R}$,
\[
T_{\widehat{\alpha}}(\lambda(f)^{\ast}xay\lambda(f))=T_{\widehat{\alpha}}(x\lambda(f)^{\ast}a\lambda(f)y)=xT_{\widehat{\alpha}}(\lambda(f)^{\ast}a\lambda(f))y.
\]
 This concludes the proof.
\end{proof}

We recall the following formula which was proved in \cite[Lemma 5.2]{CS-IMRN} (which extends \cite[Theorem
3.1 (c)]{HaagerupMathScan}) in case $k=g$; the general case then follows from the polarization identity. For $k,g\in L^{2}(\mathbb{R})$  such that $\widehat{k}, \widehat{g} \in L^\infty(\mathbb{R})$ we have:
\begin{equation} \label{Eqn=HaagerupComputation}
T_{\widehat{\alpha}}(\lambda(k)^{\ast}x\lambda(g))=\int_{\mathbb{R}}\overline{k(t)}g(t)\alpha_{-t}(x)dt,\qquad x\in M.
\end{equation}
We shall need the following consequence of it. For $g \in L^1(\R)$ define $g^*(t) := \overline{g(-t)}$, which is the involution for the convolution algebra $L^1(\R)$.

\begin{lem} \label{Lem=Talpha}
Let $h\in C_c(\mathbb{R})$   and let $x\in M$. Then, for $k,g\in L^{1}(\mathbb{R})\cap L^{2}(\mathbb{R})$
and $a:=\lambda(h)x$,
\[
T_{\widehat{\alpha}}(\lambda(k)^{\ast}a\lambda(g))=\int_{\mathbb{R}}\int_{\mathbb{R}}k^{\ast}(s)g(t)h(-s-t)\alpha_{-t}(x)dsdt,
\]
and
\[
T_{\widehat{\alpha}}(\lambda(k)^{\ast}\lambda( g)a)=\int_{\mathbb{R}}\int_{\mathbb{R}k^{\ast}(s) g}(t)h(-s-t)xdsdt\text{.} \qquad
\]
\end{lem}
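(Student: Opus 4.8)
The plan is to reduce everything to the basic formula \eqref{Eqn=HaagerupComputation} together with the commutation relations between $\lambda(h)$ and the generators of the crossed product. First I would recall that for $h \in C_c(\R) \subseteq L^1(\R)$ we have $\lambda(h) = \int_\R h(r)\lambda_r\, dr$, and hence $\lambda(h)x = \int_\R h(r)\lambda_r x\, dr$. The key commutation identity is $\lambda_r x = \alpha_r(x)\lambda_r$ (equivalently $\lambda_r \pi_\alpha(x)\lambda_r^* = \pi_\alpha(\alpha_r(x))$, which follows directly from the defining formulas for $\pi_\alpha$ and $\lambda_t$), so that $\lambda(h)x = \int_\R h(r)\alpha_r(x)\lambda_r\, dr$. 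Equivalently one may note $\lambda(h)x = \pi_\alpha(\text{something})\cdot$ a smeared translation, but the cleanest route is to push $x$ to the right of the $\lambda_r$.

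For the first identity, with $a = \lambda(h)x$ I would write $\lambda(k)^* a \lambda(g) = \int_\R h(r)\, \lambda(k)^* \lambda_r x \lambda(g)\, dr = \int_\R h(r)\, \lambda(\bar{k}(\cdot - r)\text{-shift})^* \ldots$; more precisely, using $\lambda_r = \lambda(\delta_r)$ in the formal sense, one has $\lambda(k)^*\lambda_r = \lambda(k(\cdot - r))^*$ up to the involution bookkeeping, so that $\lambda(k)^* a \lambda(g) = \int_\R h(r) \lambda(k_r)^* x \lambda(g)\, dr$ where $k_r(s) = k(s-r)$. Applying \eqref{Eqn=HaagerupComputation} to each term (legitimate since $k_r, g \in L^1 \cap L^2$ with bounded Fourier transforms — indeed $L^1 \cap L^2$ functions have bounded Fourier transform) and using $T_{\widehat\alpha}$'s normality to interchange it with the $r$-integral, I get
\[
T_{\widehat\alpha}(\lambda(k)^* a \lambda(g)) = \int_\R h(r) \int_\R \overline{k(t-r)}\, g(t)\, \alpha_{-t}(x)\, dt\, dr.
\]
Now substitute $s = t - r$, i.e. $r = t - s$, so $\overline{k(t-r)} = \overline{k(s)} = k^*(-s)$... here I must be careful with the convention $k^*(s) = \overline{k(-s)}$, so $\overline{k(s)} = k^*(-s)$; after the change of variables the integrand becomes $h(t-s)\overline{k(s)}g(t)\alpha_{-t}(x)$, and rewriting $h(t-s) = h(-(s-t)) = h(-s-(-t))$... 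I would reconcile this with the claimed form $k^*(s)g(t)h(-s-t)\alpha_{-t}(x)$ by matching the substitution: the author's $s$ corresponds to a sign-flipped version of mine, so I expect that after setting $s \mapsto -s$ (using that $k^*(s) = \overline{k(-s)}$) the kernel $h(-s-t)$ emerges exactly. The main obstacle — and the only genuinely delicate point — is getting all the sign conventions and the interplay of the three involutions ($*$ on $L^1(\R)$, the adjoint on operators, and the substitution variables) to line up so the final formula reads precisely $h(-s-t)$; I would do this by carefully tracking one substitution at a time rather than trying to do it in one step.

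For the second identity the argument is essentially the same but \emph{easier}: since $\lambda(g)a = \lambda(g)\lambda(h)x = \lambda(g * h)x$ one could proceed directly, but more in the spirit of the first part, write $\lambda(k)^* \lambda(g) a = \int_\R h(r)\, \lambda(k)^*\lambda(g)\lambda_r x\, dr = \int_\R h(r)\, \lambda(k)^* \lambda(g_{-r}) x\, dr$ where now the crucial point is that $x$ appears \emph{without} any $\alpha$-twist applied, because the $\lambda_r$ sits to the right of both $\lambda(k)^*$ and $\lambda(g)$ and gets absorbed into $\lambda(g)$ before meeting $x$. Wait — more carefully, $\lambda(g)\lambda_r = \lambda(g(\cdot - r))$ has no effect on $x$, and then \eqref{Eqn=HaagerupComputation} applied to $\lambda(k)^*\lambda(g_r)x$ with $x \in M$ gives $\int_\R \overline{k(t)} g(t-r)\alpha_{-t}(x)\, dt$ — so there would still be an $\alpha_{-t}$. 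The resolution is that in the author's statement the second formula has $x$ rather than $\alpha_{-t}(x)$, which forces a different bracketing: one should instead pull $a = \lambda(h)x$ so that $\lambda(g)a = \lambda(g)\lambda(h)x$ and then use $\lambda(g)\lambda(h) = \lambda(g*h)$, giving $T_{\widehat\alpha}(\lambda(k)^*\lambda(g*h)x) = \int_\R \overline{k(t)}(g*h)(t)\alpha_{-t}(x)\,dt$ — but this still has $\alpha_{-t}(x)$, so I suspect the stated formula implicitly uses that $T_{\widehat\alpha}(\lambda(k)^*\lambda(g)a) = T_{\widehat\alpha}(\lambda(k)^*\lambda(g)\lambda(h))\, x$ as an operator-valued-weight identity \emph{because $x \in M$ commutes appropriately with the range} — i.e. $x \in M = (M\rtimes_\alpha\R)^{\widehat\alpha}$ and $T_{\widehat\alpha}$ is a module map over $M$, so $T_{\widehat\alpha}(by) = T_{\widehat\alpha}(b)y$ for $y \in M$ whenever things are defined. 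That module property is the clean way to get $x$ out front: write $\lambda(k)^*\lambda(g)\lambda(h)x$ with $x$ on the far right, apply the $M$-bimodularity of $T_{\widehat\alpha}$ to extract $x$, then apply \eqref{Eqn=HaagerupComputation} (or rather the special case $x = 1$ of Lemma~\ref{Lem=Talpha}'s first formula, with $g$ replaced by $g*h$) to the scalar-coefficient part, and finally rewrite $(g*h)$-type convolution as the claimed double integral $\int\int k^*(s)g(t)h(-s-t)\,ds\,dt$. I would present both parts together, doing the sign-chasing once and citing \eqref{Eqn=HaagerupComputation}, the relation $\lambda_r x = \alpha_r(x)\lambda_r$, normality of $T_{\widehat\alpha}$, and its $M$-bimodularity as the four inputs.
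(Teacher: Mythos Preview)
Your proposal is correct, and for the second formula you land on exactly the paper's argument: the $M$-bimodularity of the operator-valued weight $T_{\widehat\alpha}$ (with $M=(M\rtimes_\alpha\R)^{\widehat\alpha}$) lets you pull $x$ out to the right, reducing to the first formula with $x=1$.

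For the first formula your route differs from the paper's. You expand $\lambda(h)=\int_\R h(r)\lambda_r\,dr$, absorb each $\lambda_r$ into $\lambda(k)^\ast$ as a shift, apply \eqref{Eqn=HaagerupComputation} for every $r$, and then interchange the $r$-integral with $T_{\widehat\alpha}$. The paper instead makes a single algebraic move at the outset: it observes that $\lambda(k)^\ast a=\lambda(k)^\ast\lambda(h)x=\lambda(h^\ast\ast k)^\ast x$, and then applies \eqref{Eqn=HaagerupComputation} once with $k$ replaced by $h^\ast\ast k$, after which expanding $\overline{(h^\ast\ast k)(t)}$ and a substitution give the double integral directly. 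The paper's approach is shorter and sidesteps both the integral-interchange justification and most of the sign bookkeeping you flag as delicate (indeed your shift identity has a sign slip: $\lambda(k)^\ast\lambda_r=\lambda(k(\cdot+r))^\ast$, not $k(\cdot-r)$); your approach, on the other hand, makes the origin of the kernel $h(-s-t)$ more transparent as coming from translating by $r$ and then renaming variables. Both are fine.
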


\begin{proof}
We have $\lambda(k)^{\ast}a=\lambda(h^{\ast}\ast k)^{\ast}x$.
The equality \eqref{Eqn=HaagerupComputation} then implies
\begin{eqnarray}
\nonumber
T_{\widehat{\alpha}}(\lambda(k)^{\ast}a\lambda(g)) &=& T_{\widehat{\alpha}}(\lambda(h^{\ast}\ast k)^{\ast}x\lambda(g))\\
\nonumber
&=& \int_{\mathbb{R}}\overline{(h^{\ast}\ast k)}(t)g(t)\alpha_{-t}(x)dt\\
\nonumber
&=& \int_{\mathbb{R}}\int_{\mathbb{R}}k^{\ast}(s)g(t)h(-s-t)\alpha_{-t}(x)dsdt\text{.}
\end{eqnarray}
This concludes the proof of the first formula.  The second formula follows from the first after observing that $T_{\widehat{\alpha}}(\lambda(k)^{\ast}\lambda(g)a) = T_{\widehat{\alpha}}(\lambda(k)^{\ast}\lambda(g)\lambda(h))x$.
\end{proof}


\subsection{Passage to crossed products} \label{PassageCrossedProducts}

Let us now study the stability of the relative Haagerup property with
respect to certain crossed products. The setting is the same as in
Subsection \ref{CrossedProducts}.

\begin{prop} \label{Prop=Phi}
Let $\Phi:M\rtimes_{\alpha}\mathbb{R}\rightarrow M\rtimes_{\alpha}\mathbb{R}$
be a linear map and fix $f\in L^{1}(\mathbb{R})\cap L^{2}(\mathbb{R})$ as before.
Then the following statements hold:

\begin{enumerate}
\item \label{Item=Phi1} If $\Phi$ is completely positive then so
is $\widetilde{\Phi}_{f}$.
\item \label{Item=Phi2} Assume that $N\subseteq M^{\alpha}$. If $\Phi$
is an $N$-$N$-bimodule map then $\widetilde{\Phi}_{f}$ is an $N$-$N$-bimodule
map.
\end{enumerate}

\noindent In the remaining statements let $\varphi\in M_{\ast}^{+}$
be a faithful normal positive functional with $\varphi\circ\mathbb{E}_{N}=\varphi$
and $\varphi\circ\alpha_{t}=\varphi$ for all $t\in\mathbb{R}$. Then:

\begin{enumerate} \setcounter{enumi}{2}
\item \label{Item=Phi3} If $\widehat{\varphi}_{f}\circ\Phi\leq\widehat{\varphi}_{f}$
(resp. $\widehat{\varphi}_{f}\circ\Phi=\widehat{\varphi}_{f}$) then
$\varphi\circ\widetilde{\Phi}_{f}\leq\varphi$ (resp. $\varphi\circ\widetilde{\Phi}_{f}=\varphi$).
\item \label{Item=Phi3'} If the $L^{2}$-implementation of $\Phi$
with respect to $\widehat{\varphi}_{f}$ exists, then the $L^{2}$-implementation
of $\widetilde{\Phi}_{f}$ with respect to $\varphi$ exists as well.
\end{enumerate}

\noindent Now, if $N\subseteq M^{\alpha}$, $\mathbb{E}_{N}\circ\alpha_{t}=\mathbb{E}_{N}$
for all $t\in\mathbb{R}$ and $f$ is continuous, then:

\begin{enumerate} \setcounter{enumi}{4}
\item  \label{Item=Phi4} If $\Phi\in\mathcal{K}_{00}(M\rtimes_{\alpha}\mathbb{R},N,\widehat{\varphi}_{f})$,
then $\widetilde{\Phi}_{f}\in\mathcal{K}_{00}(M,N,\varphi)$.
\item  \label{Item=Phi5} If $\Phi\in\mathcal{K}(M\rtimes_{\alpha}\mathbb{R},N,\widehat{\varphi}_{f})$,
then $\widetilde{\Phi}_{f}\in\mathcal{K}(M,N,\varphi)$ .
\end{enumerate}

\end{prop}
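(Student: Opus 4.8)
The plan is to verify the six assertions in turn; the first four are formal consequences of the relation $\widetilde{\Phi}_f=T_f\circ\Phi\circ\pi_\alpha$ (where $\pi_\alpha\colon M\to M\rtimes_\alpha\mathbb{R}$ is the defining $*$‑homomorphism, which restricts to the identity on $N$) together with three basic facts: $T_f$ is unital completely positive; applying \eqref{Eqn=HaagerupComputation} with $k=g=f$ gives $T_f(x)=\int_{\mathbb{R}}|f(t)|^2\alpha_{-t}(x)\,dt$ for $x\in M$; and Lemma~\ref{Lem=EasyLemma} (which uses $N\subseteq M^{\alpha}$) says $T_f$ is $N$‑$N$‑bimodular. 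From the integral formula and $\varphi\circ\alpha_t=\varphi$ one gets $\widehat{\varphi}_f|_{M}=\varphi$; from it and $\mathbb{E}_N\circ\alpha_t=\mathbb{E}_N$ (where that is assumed) one gets $\mathbb{E}_N^{\varphi}\circ T_f|_M=\mathbb{E}_N^{\varphi}$ and $T_f|_{M^{\alpha}}=\id$.

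For (1), $\widetilde{\Phi}_f$ is a composition of completely positive maps. For (2), $\widetilde{\Phi}_f(n_1xn_2)=T_f(n_1\Phi(x)n_2)=n_1T_f(\Phi(x))n_2=n_1\widetilde{\Phi}_f(x)n_2$ by bimodularity of $\Phi$ and of $T_f$. For (3), if $x\in M^{+}$ then $\varphi(\widetilde{\Phi}_f(x))=\widehat{\varphi}_f(\Phi(x))\le\widehat{\varphi}_f(x)=\varphi(x)$ (apply the hypothesis to the positive element $\pi_\alpha(x)$), and the equality case is identical. For (4), Kadison--Schwarz for the unital completely positive map $T_f$ gives $T_f(\Phi(x))^{*}T_f(\Phi(x))\le T_f(\Phi(x)^{*}\Phi(x))$, whence $\varphi(\widetilde{\Phi}_f(x)^{*}\widetilde{\Phi}_f(x))\le\widehat{\varphi}_f(\Phi(x)^{*}\Phi(x))\le C\,\widehat{\varphi}_f(x^{*}x)=C\,\varphi(x^{*}x)$, i.e.\ $\widetilde{\Phi}_f^{(2)}$ exists (with $\|\widetilde{\Phi}_f^{(2)}\|\le C^{1/2}$); in particular with $C=1$ both $T_f^{(2)}$ and $\pi_\alpha^{(2)}$ are contractions.

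For (5), by linearity assume $\Phi=A\,\mathbb{E}_N^{\widehat{\varphi}_f}(B\,\cdot\,)$ with $A,B\in M\rtimes_\alpha\mathbb{R}$. Since $\mathbb{E}_N^{\widehat{\varphi}_f}(Bx)\in N\subseteq M^{\alpha}$ commutes with $\lambda(f)$ and $T_{\widehat{\alpha}}$ is an $M$‑bimodule map over $(M\rtimes_\alpha\mathbb{R})^{\widehat{\alpha}}=M$ (see \eqref{Eqn=FixedCross}), one has $\widetilde{\Phi}_f(x)=T_f(A)\,\mathbb{E}_N^{\widehat{\varphi}_f}(Bx)$ with $T_f(A)\in M$, so it suffices to show that $x\mapsto\mathbb{E}_N^{\widehat{\varphi}_f}(Bx)$ equals $\mathbb{E}_N^{\varphi}(b\,\cdot\,)$ for some $b\in M$, for then $\widetilde{\Phi}_f=T_f(A)\,\mathbb{E}_N^{\varphi}(b\,\cdot\,)\in\mathcal{K}_{00}(M,N,\varphi)$. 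First one checks that $\mathbb{E}_N^{\widehat{\varphi}_f}=\mathbb{E}_N^{\varphi}\circ T_f$: the right‑hand side is a normal unital completely positive idempotent onto $N$ (using $T_f|_{M^{\alpha}}=\id$) which preserves $\widehat{\varphi}_f=\varphi\circ T_f$, hence by uniqueness of the $\widehat{\varphi}_f$‑preserving conditional expectation onto $N$ it equals $\mathbb{E}_N^{\widehat{\varphi}_f}$ (this also shows $\sigma_t^{\widehat{\varphi}_f}(N)\subseteq N$, so $\mathcal{K}(M\rtimes_\alpha\mathbb{R},N,\widehat{\varphi}_f)$ is meaningful). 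Thus $\mathbb{E}_N^{\widehat{\varphi}_f}(Bx)=\mathbb{E}_N^{\varphi}(T_f(Bx))$, and for $B=\lambda(h)m$ with $h\in C_c(\mathbb{R})$, $m\in M$, Lemma~\ref{Lem=Talpha} applied to $a=\lambda(h)(mx)$ and $k=g=f$, followed by $\mathbb{E}_N^{\varphi}\circ\alpha_t=\mathbb{E}_N^{\varphi}$, collapses the double integral to a scalar:
\[
\mathbb{E}_{N}^{\varphi}\big(T_f(\lambda(h)mx)\big)=\Big(\int_{\mathbb{R}}\int_{\mathbb{R}} f^{*}(s)f(t)h(-s-t)\,ds\,dt\Big)\,\mathbb{E}_{N}^{\varphi}(mx);
\]
writing $C_h$ for that scalar, $b=C_h\,m$ and $\widetilde{\Phi}_f=C_h\,T_f(A)\,\mathbb{E}_N^{\varphi}(m\,\cdot\,)$, a relative finite‑rank operator.

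The main obstacle is the passage from $B$ of the form $\lambda(h)m$ to an arbitrary $B\in M\rtimes_\alpha\mathbb{R}$: one wants the assignment $B\mapsto b$ to be realised by a bounded normal map $M\rtimes_\alpha\mathbb{R}\to M$, so that one may pass to a $\sigma$‑strong limit of a bounded net of finite sums $\sum_k\lambda(h_k)m_k\to B$ and stay inside $M$; but the only a priori bound, $\|\mathbb{E}_N^{\varphi}(b\,\cdot\,)\|\le\|B\|$, does not control $\|b\|$ (since $\mathbb{E}_N^{\varphi}$ contracts $\|\cdot\|$ to a weaker norm). One therefore has to produce this map explicitly — essentially as a compression of $M\rtimes_\alpha\mathbb{R}$ built from the vector $f\in L^2(\mathbb{R})$ and the canonical unitary on $\mathcal{H}\otimes L^2(\mathbb{R})$ trivialising $\pi_\alpha$ — and to verify that it sends $\lambda(h)m$ to $C_h m$; this is where I would concentrate the work. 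Finally (6) follows from (5): since $\widetilde{\Phi}_f^{(2)}=T_f^{(2)}\,\Phi^{(2)}\,\pi_\alpha^{(2)}$ with $T_f^{(2)}$ and $\pi_\alpha^{(2)}$ contractions, the linear map $\Phi^{(2)}\mapsto\widetilde{\Phi}_f^{(2)}$ is norm‑contractive, and by (5) it carries $\mathcal{K}_{00}$‑operators into $\mathcal{K}_{00}(M,N,\varphi)$; as $\mathcal{K}_{00}$ is norm‑dense in $\mathcal{K}$, it carries $\mathcal{K}(M\rtimes_\alpha\mathbb{R},N,\widehat{\varphi}_f)$ into $\mathcal{K}(M,N,\varphi)$.
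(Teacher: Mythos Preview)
Your treatment of (1)--(4) and (6) is essentially the paper's argument; in particular your factorisation $\widetilde{\Phi}_f^{(2)}=T_f^{(2)}\Phi^{(2)}\pi_\alpha^{(2)}$ is a clean way to deduce (6) from (5).

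The self-identified gap in (5) is the only issue, and it is closed by a one-line observation you are circling around but not stating. The bounded normal map $B\mapsto b$ you are looking for is
\[
b\;=\;\widetilde{B}\;:=\;T_{\widehat{\alpha}}\big(\lambda(f)^{*}\lambda(f)\,B\big)\in M.
\]
This is exactly the second formula of Lemma~\ref{Lem=Talpha} (with $k=g=f$), and for $B=\lambda(h)m$ it returns $\int\!\!\int f^{*}(s)f(t)h(-s-t)\,ds\,dt\cdot m=C_h\,m$, i.e.\ precisely your $b$. Since $T_{\widehat{\alpha}}(\lambda(f)^{*}\lambda(f))=\|f\|_2^{2}=1$, the map $B\mapsto T_{\widehat{\alpha}}(\lambda(f)^{*}\lambda(f)B)$ is a well-defined normal (hence $\sigma$-weakly continuous) bounded linear map $M\rtimes_\alpha\mathbb{R}\to M$; no ad hoc ``compression built from $f$'' needs to be constructed, the operator-valued weight already provides it. The identity $\mathbb{E}_N^{\widehat{\varphi}_f}(Bx)=\mathbb{E}_N^{\varphi}(\widetilde{B}\,x)$, which you verified for $B=\lambda(h)m$, then passes to arbitrary $B$ by $\sigma$-weak density of such elements and the $\sigma$-weak continuity of both sides in $B$. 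This is exactly what the paper does.
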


\begin{proof}
\emph{\eqref{Item=Phi1}} is straightforward from
the constructions and \eqref{Item=Phi2} follows from Lemma \ref{Lem=EasyLemma}.

\emph{\eqref{Item=Phi3}}: If $\widehat{\varphi}_{f}\circ\Phi\leq\widehat{\varphi}_{f}$
we have for $x\in M^{+}$, using \eqref{Eqn=HaagerupComputation}
and the $\alpha$-invariance of $\varphi$,

\begin{eqnarray}
\nonumber
\varphi\circ\widetilde{\Phi}_{f}(x) &=& \varphi(T_{f}(\Phi(x)))= \widehat{\varphi}_{f}(\Phi(x))\leq\widehat{\varphi}_{f}(x) \\
\nonumber
&=& \varphi(T_{\widehat{\alpha}}(\lambda(f)^{\ast}x\lambda(f)))= \int_{\mathbb{R}}\vert f(t)\vert^{2}\varphi(\alpha_{-t}(x))dt= \varphi(x)\text{.}
\end{eqnarray}
Moreover, if $\widehat{\varphi}_{f}\circ\Phi=\widehat{\varphi}_{f}$
then the inequality above is actually an equality.

\emph{\eqref{Item=Phi3'}}: Assume that there exists a constant $C>0$
such that $\widehat{\varphi}_{f}(\Phi(x)^{\ast}\Phi(x))\leq C\widehat{\varphi}_{f}(x^{\ast}x)$
for all $x\in M$. Then, by the Kadison-Schwarz inequality and \eqref{Eqn=HaagerupComputation},
\begin{eqnarray}
\nonumber
\varphi(\widetilde{\Phi}_{f}(x)^{\ast}\widetilde{\Phi}_{f}(x)) &=& \varphi(T_{f}(\Phi(x))^{\ast}T_{f}(\Phi(x)))\leq\widehat{\varphi}_{f}(\Phi(x)^{\ast}\Phi(x))\leq C\widehat{\varphi}_{f}(x^{\ast}x)
=C\varphi(x^{\ast}x)
\end{eqnarray}
for all $x\in M$, where we use the fact (proved above) that $\varphi$ and $\hat{\varphi}_f$ coincide on $M_+$. 
This implies that the $L^{2}$-implementation of
$\widetilde{\Phi}_{f}$ with respect to $\varphi$ exists.

\emph{\eqref{Item=Phi4}}: By Lemma \ref{Lem=EasyLemma} and the discussion before, $\mathbb{F}_{N}=\mathbb{E}_{N}\circ T_{f}$
is the unique faithful normal $\widehat{\varphi}_{f}$-preserving
conditional expectation of $M\rtimes_{\alpha}\mathbb{R}$ onto $N$.
Let $a,b\in M\rtimes_{\alpha}\mathbb{R}$. 
By $N\subseteq M^{\alpha}$ we have for $x\in M$,

\begin{equation}  \label{Eqn=FnRestrict}
\widetilde{(a\mathbb{F}_{N}b)_{f}}(x):=T_{f}(a\mathbb{F}_{N}(bx))=T_{f}(a)\mathbb{F}_{N}(bx)
\end{equation}
We shall show that $\mathbb{F}_{N}(bx)=\mathbb{E}_{N}(\widetilde{b}x)$
for all $x\in M$, where $\widetilde{b}:=T_{\widehat{\alpha}}(\lambda(f)^{\ast}\lambda(f)b)$.
For this it suffices to consider the case where $b=\lambda(h)y$ for
some compactly supported function $h\in C_c(\mathbb{R})$ and $y\in M$,
since such elements span a  $\sigma$-weakly dense subset of $M\rtimes_{\alpha}\mathbb{R}$   and the map $b \mapsto \widetilde{b}$ is $\sigma$-weakly continuous.
Using Lemma \ref{Lem=Talpha} twice and the fact that $\mathbb{E}_{N}\circ\alpha_{t}=\mathbb{E}_{N}$
for all $t\in\mathbb{R}$ one has
\begin{eqnarray}
\nonumber
\mathbb{F}_{N}(bx) &=& \mathbb{E}_{N}\circ T_{f}(bx)\\
\nonumber
&=& \mathbb{E}_{N}\left(\int_{\mathbb{R}}\int_{\mathbb{R}}f^{\ast}(s)f(t)h(-s-t)\alpha_{-t}(yx)dsdt\right)\\
\nonumber
&=& \int_{\mathbb{R}}\int_{\mathbb{R}}f^{\ast}(s)f(t)h(-s-t)\mathbb{E}_{N}(yx)dsdt\\
\nonumber
&=&\mathbb{E}_{N}\left(\int_{\mathbb{R}}\int_{\mathbb{R}}f^{\ast}(s)f(t)h(-s-t)y  dsdt \:\:   x\right)\\
\nonumber
&=& \mathbb{E}_{N}(\widetilde{b}x)\text{,}
\end{eqnarray}
as claimed. Combining this equality and \eqref{Eqn=FnRestrict} we
get that $\widetilde{(a\mathbb{E}_{N}b)_{f}}=T_f(a)\mathbb{F}_{N}\widetilde{b}$.
By considering linear combinations of such expressions one gets that
if $\Phi\in\mathcal{K}_{00}(M\rtimes_{\alpha}\mathbb{R},N,\widehat{\varphi}_{f})$
then also $\widetilde{\Phi}_{f}\in\mathcal{K}_{00}(M,N,\varphi)$.
This proves \eqref{Item=Phi4}.

\emph{\eqref{Item=Phi5}}: The statement follows directly from \eqref{Item=Phi4}
by approximation and the fact that $\Vert\widetilde{\Phi}_{f}\Vert\leq\Vert\Phi\Vert$.
\end{proof}

In the following we will direct our attention to certain choices of
functions $f\in L^{1}(\mathbb{R})\cap L^{2}(\mathbb{R})$ with $\Vert f\Vert_{2}=1$
whose support of the Fourier transform $\widehat{f}$ equals $\mathbb{R}$.
For this, define for $j\in\mathbb{N}$ the $L^{2}$-normalized
\emph{Gaussian}
\[
f_{j}:\mathbb{R}\rightarrow\mathbb{R}\text{, }f_{j}(s):=\left(\frac{j}{\pi}\right)^{1/4}\text{exp}(-js^{2}/2)\text{.}
\]
Further set for a given map $\Phi:M\rtimes_{\alpha}\mathbb{R}\rightarrow M\rtimes_{\alpha}\mathbb{R}$
and a positive normal functional $\varphi\in M_{\ast}$
\[
\widehat{\varphi}_{j}:=\widehat{\varphi}_{f_{j}} \qquad \text{ and } \qquad \text{\ensuremath{\widetilde{\Phi}_{j}:=\widetilde{\Phi}_{f_{j}}\text{.}}}
\]

\begin{thm} \label{Thm=ActionHAP}
Let $ N\subseteq M$
be a unital inclusion of von Neumann algebras with a faithful normal
conditional expectation $\mathbb{E}_{N}:M\rightarrow N$. Let further
$\varphi\in M_{\ast}^{+}$ be a faithful normal positive functional
with $\varphi\circ\mathbb{E}_{N}=\varphi$ and $\mathbb{R}\curvearrowright^{\alpha}M$ be
an action such that $N\subseteq M^{\alpha}$. Finally assume that $\mathbb{E}_{N}\circ\alpha_{t}=\mathbb{E}_{N}$  (or, equivalently under the earlier assumptions, that $\varphi = \varphi \circ \alpha_t$) for all $t\in\mathbb{R}$. Then the following statements hold:

\begin{enumerate}
\item \label{Item=ActionHAP1} If for all $j\in\mathbb{N}$
the triple $(M\rtimes_{\alpha}\mathbb{R},N,\widehat{\varphi}_{j})$
has property $\text{(rHAP)}$ (resp. property $\text{(rHAP)}^{-}$),
then $(M,N,\varphi)$ has property $\text{(rHAP)}$ (resp. property
$\text{(rHAP)}^{-}$).
\item \label{Item=ActionHAP2} If for all $j\in\mathbb{N}$
property $\text{(rHAP)}$ of $(M\rtimes_{\alpha}\mathbb{R},N,\widehat{\varphi}_{j})$
is witnessed by unital (resp. $\widehat{\varphi}_{j}$-preserving)
approximating maps (see (1$^{\prime\prime}$) and (4$^{\prime}$)
in Section \ref{relhap}), then also property $\text{(rHAP)}$
of $(M,N,\varphi)$ may be witnessed by unital (resp. $\varphi$-preserving)
approximating maps.
\end{enumerate}
\end{thm}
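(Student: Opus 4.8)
The plan is to use \Cref{Prop=Phi} as a transfer principle, lifting approximating maps from the crossed products $M\rtimes_\alpha\mathbb{R}$ back to $M$, and then to invoke \Cref{Lem=ApproxSOT} to combine a family of such liftings into a single net converging to the identity on $M$. First I would fix, for each $j\in\mathbb{N}$, a net $(\Phi_{j,k})_{k\in K_j}$ of normal completely positive $N$-$N$-bimodular maps on $M\rtimes_\alpha\mathbb{R}$ witnessing $\text{(rHAP)}$ (resp.\ $\text{(rHAP)}^-$) of $(M\rtimes_\alpha\mathbb{R},N,\widehat{\varphi}_j)$; by \Cref{Prop=Phi} the maps $\widetilde{(\Phi_{j,k})}_{f_j}:M\to M$ are then normal, completely positive, $N$-$N$-bimodular, satisfy $\varphi\circ\widetilde{(\Phi_{j,k})}_{f_j}\leq\varphi$ (using part \eqref{Item=Phi3}) and have $L^2$-implementations lying in $\mathcal{K}(M,N,\varphi)$ (using part \eqref{Item=Phi5}). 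For the uniform boundedness required by $\text{(rHAP)}$ one notes $\|\widetilde{\Phi}_f\|\leq\|\Phi\|$, as remarked in the proof of \Cref{Prop=Phi}\eqref{Item=Phi5}. The one thing these lifted maps do \emph{not} automatically satisfy is point-strong convergence to the identity on $M$: for a fixed $j$ the net $(\widetilde{(\Phi_{j,k})}_{f_j})_k$ converges to $\widetilde{\id}_{f_j}=T_{f_j}|_M$, which is not the identity but rather the averaging map $x\mapsto\int_{\mathbb{R}}|f_j(t)|^2\alpha_{-t}(x)\,dt$.

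The key point, and the reason the index $j$ is there, is that as $j\to\infty$ the Gaussians $f_j$ concentrate at $0$, so $T_{f_j}(x)\to x$ strongly for every $x\in M$ (indeed $\int|f_j(t)|^2\alpha_{-t}(x)\,dt\to x$ by strong continuity of $\alpha$ and $\|f_j\|_2=1$ with mass concentrating at $0$). Thus I would set up the double-indexing as in \Cref{Lem=ApproxSOT}: take $A=B=M$ (with $\varphi_j=\varphi$ for all $j$), let $\pi=\id_M$, let $\Psi_j=T_{f_j}|_M$, and let the inner nets be $\Phi_{j,k}:=\widetilde{(\Phi_{j,k})}_{f_j}$. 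Then $\Psi_j\circ\pi=T_{f_j}\to\id_M$ strongly, each $\Psi_j$ is unital completely positive hence bounded by $1$, and $\Phi_{j,k}\to\Psi_j$ strongly in $k$ on bounded sets — but \Cref{Lem=ApproxSOT} as stated wants $\Phi_{j,k}\to\id$, not $\Phi_{j,k}\to\Psi_j$. So rather than applying that lemma verbatim I would apply the variant alluded to just before it (``we will also need several straightforward variations of this lemma''): a $2\varepsilon$-estimate combining $\|\widetilde{(\Phi_{j,k})}_{f_j}(x)-T_{f_j}(x)\|_{2,\varphi}$ small (for suitable $k=k(j)$) with $\|T_{f_j}(x)-x\|_{2,\varphi}$ small (for $j$ large) gives a net $(\widetilde{(\Phi_{j,k(j)})}_{f_j})_j$, indexed over finite subsets of $L^2(M,\varphi)$, converging strongly to $\id_M$. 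Composition of maps in $\mathcal{K}(M,N,\varphi)$ with the identity stays in $\mathcal{K}(M,N,\varphi)$, the $\varphi$-bound and $N$-bimodularity are preserved under the diagonal selection, and uniform boundedness follows from that of the original family (for $\text{(rHAP)}^-$ one simply drops the boundedness and state-domination conditions throughout).

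For part \eqref{Item=ActionHAP2}, I would run exactly the same argument but observe that if each $\Phi_{j,k}$ is unital, then by \Cref{Prop=Phi}\eqref{Item=Phi1} and the computation $\widetilde{\Phi}_f(1)=T_f(\Phi(1))=T_f(1)=1$ (since $T_f$ is unital) the lifted maps $\widetilde{(\Phi_{j,k})}_{f_j}$ are unital as well; and if each $\Phi_{j,k}$ is $\widehat{\varphi}_j$-preserving, then \Cref{Prop=Phi}\eqref{Item=Phi3} gives $\varphi\circ\widetilde{(\Phi_{j,k})}_{f_j}=\varphi$. The diagonal selection preserves these properties, so the resulting net on $M$ consists of unital (resp.\ $\varphi$-preserving) maps.

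\medskip

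\noindent\emph{Main obstacle.} The genuine technical subtlety is precisely the failure of $\widetilde{\id}_{f_j}$ to be the identity: one must carefully track two separate limiting regimes (the inner approximation in $k$ for fixed $j$, and the concentration of the Gaussians in $j$) and interleave them via a directed-set/diagonal argument, while simultaneously checking that \emph{every} structural property — complete positivity, normality, $N$-bimodularity, $\varphi$-domination, uniform boundedness, and membership of the $L^2$-implementation in $\mathcal{K}(M,N,\varphi)$ — survives both the lifting $\Phi\mapsto\widetilde{\Phi}_{f_j}$ and the subsequent diagonal selection. Once the bookkeeping is organised around the ``variation of \Cref{Lem=ApproxSOT}'' with $\Psi_j=T_{f_j}$ in the role of the outer net, the estimates themselves are routine.
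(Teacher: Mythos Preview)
Your proposal is correct and follows essentially the same strategy as the paper: lift the approximating maps via $\widetilde{\Phi}_{f_j}=T_{f_j}\circ\Phi|_M$, use Proposition~\ref{Prop=Phi} to verify all structural properties, exploit the concentration of the Gaussians $f_j$ to get $T_{f_j}(x)\to x$, and run a diagonal argument. The only organisational difference is that the paper applies Lemma~\ref{Lem=ApproxSOT} \emph{verbatim} rather than a variant, by taking $A=M$, $B=M\rtimes_\alpha\mathbb{R}$, $\pi$ the inclusion, $\Psi_j=T_{f_j}$, and $\Phi_{j,k}$ the original approximating maps on the crossed product; in that framing the hypothesis $\Phi_{j,k}(x)\to x$ for $x\in B$ is exactly the point-strong convergence on $M\rtimes_\alpha\mathbb{R}$ given by (rHAP), and the conclusion $\Psi_j\circ\Phi_{j,k}\circ\pi(x)\to x$ is precisely $\widetilde{(\Phi_{j,k})}_{f_j}(x)\to x$ on $M$. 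This avoids the need to invoke an unstated ``variant'' of the lemma, but the underlying $2\varepsilon$-estimate is of course the same.
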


\begin{proof}
\emph{\eqref{Item=ActionHAP1}}: For fixed $j\in\mathbb{N}$
let $(\Phi_{j,k})_{k\in K_j}$ be a bounded net of normal
completely positive maps witnessing the relative Haagerup property
of $(M\rtimes_{\alpha}\mathbb{R},N,\widehat{\varphi}_{j})$. In particular,
$\Phi_{j,k}\rightarrow1$ in the point-strong topology in $k$. Set
$\widetilde{\Phi}_{j,k}:=T_{f_{j}}\circ\Phi_{j,k}$. As $s\mapsto\alpha_{s}(x)$
is strongly continuous for $x\in M$ and $f_{j}$ is $L^{2}$-normalized
with mass concentrated around 0, Lemma \ref{Lem=Talpha} shows that
for $x\in M$,
\[
T_{f_{j}}(x)=\int_{\mathbb{R}}\vert f_{j}(s)\vert^{2}\alpha_{s}(x)ds\rightarrow x
\]
as $j\rightarrow\infty$ in the strong topology. Lemma \ref{Lem=ApproxSOT}
then shows that we may find a directed set $\mathcal{F}$ and a function
$(\widetilde{j},\widetilde{k}):\mathcal{F}\rightarrow\{(j,k)\mid j\in \mathbb{N},k\in K_{j}\}\text{, }F\mapsto(\widetilde{j}(F),\widetilde{k}(F))$
such that the net $(\widetilde{\Phi}_{\widetilde{j}(F),\widetilde{k}(F)})_{F\in\mathcal{F}}$
converges to the identity in the point-strong topology. By Proposition
\ref{Prop=Phi} these maps then witness the relative Haagerup property
for $(M,N,\varphi)$. In the same way, using a variant of Lemma \ref{Lem=ApproxSOT}, we can deduce that if $(M\rtimes_{\alpha}\mathbb{R},N,\widehat{\varphi}_{j})$
has property $\text{(rHAP)}^{-}$, then $(M,N,\varphi)$ has property
$\text{(rHAP)}^{-}$ as well.

\emph{\eqref{Item=ActionHAP2}}: Note that if $\Phi_{j,k}$ is unital
for all $k\in\mathbb{N}$, then $\widetilde{\Phi}_{j,k}$ is unital
as well and if $\Phi_{j,k}$ is $\widehat{\varphi}_{j}$-preserving
for all $k\in\mathbb{N}$, then $\widetilde{\Phi}_{j,k}$ is $\varphi$-preserving,
c.f. Proposition \ref{Prop=Phi}.
\end{proof}

We will now apply this theorem to the modular automorphism group $\sigma^{\varphi}$
of $\varphi$ as well as its dual action.

\begin{thm} \label{Thm=CrossToNormal}
Let $N\subseteq M$
be a unital inclusion of von Neumann algebras with a faithful normal
conditional expectation $\mathbb{E}_{N}:M\rightarrow N$. Assume that
$N$ is finite and let $\tau\in N_{\ast}$ be a faithful normal tracial
state. Further define the faithful normal (possibly non-tracial) state
$\varphi:=\tau\circ\mathbb{E}_{N}$ on $M$. Then the following statements
hold:

\begin{enumerate}
\item If for all $j\in\mathbb{N}$ the triple $(M\rtimes_{\sigma^{\varphi}}\mathbb{R},N,\widehat{\varphi}_{j})$
has property $\text{(rHAP)}$ (resp. property $\text{(rHAP)}^{-}$),
then $(M,N,\varphi)$ has property $\text{(rHAP)}$ (resp. property
$\text{(rHAP)}^{-}$).
\item If for all $j\in\mathbb{N}$ property $\text{(rHAP)}$
of $(M\rtimes_{\alpha}\mathbb{R},N,\widehat{\varphi}_{j})$ is witnessed
by unital (resp. $\widehat{\varphi}_{j}$-preserving) approximating
maps, then also property $\text{(rHAP)}$ of $(M,N,\varphi)$ may
be witnessed by unital (resp. $\varphi$-preserving) approximating
maps.
\end{enumerate}
\end{thm}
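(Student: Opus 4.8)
The plan is to obtain Theorem \ref{Thm=CrossToNormal} as the special case of Theorem \ref{Thm=ActionHAP} in which the action $\mathbb{R}\curvearrowright^{\alpha}M$ is taken to be the modular automorphism group $\sigma^{\varphi}$ of the state $\varphi=\tau\circ\mathbb{E}_{N}$. Thus the only thing that has to be verified is that the two structural hypotheses of Theorem \ref{Thm=ActionHAP} hold in this situation, namely that $N\subseteq M^{\sigma^{\varphi}}$ and that $\mathbb{E}_{N}\circ\sigma^{\varphi}_{t}=\mathbb{E}_{N}$ for all $t\in\mathbb{R}$ (equivalently $\varphi\circ\sigma^{\varphi}_{t}=\varphi$, which is automatic).

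For the first condition I would recall, as observed at the end of Section \ref{Sect=Modular}, that the traciality of $\tau$ on $N$ together with $\varphi=\tau\circ\mathbb{E}_{N}$ forces $N\subseteq M^{\varphi}$, and that by \cite[Theorem VIII.2.6]{Takesaki2} the centralizer is $M^{\varphi}=\{x\in M\mid\sigma^{\varphi}_{t}(x)=x\text{ for all }t\in\mathbb{R}\}=M^{\sigma^{\varphi}}$; hence $N\subseteq M^{\sigma^{\varphi}}$. For the second condition, since $\mathbb{E}_{N}$ is the unique $\varphi$-preserving normal conditional expectation of $M$ onto $N$, by \cite[Theorem IX.4.2]{Takesaki2} the modular group $\sigma^{\varphi}$ commutes with $\mathbb{E}_{N}$ and restricts on $N$ to the modular group $\sigma^{\varphi|_{N}}=\sigma^{\tau}$, which is trivial because $\tau$ is a trace. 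Consequently, for every $x\in M$ one gets $\mathbb{E}_{N}(\sigma^{\varphi}_{t}(x))=\sigma^{\varphi}_{t}(\mathbb{E}_{N}(x))=\mathbb{E}_{N}(x)$, since $\mathbb{E}_{N}(x)\in N$ is fixed by $\sigma^{\varphi}_{t}$.

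With both hypotheses in place, Theorem \ref{Thm=ActionHAP} applied to $\alpha=\sigma^{\varphi}$ yields statement (1) at once, and statement (2) is precisely part (2) of that theorem in the same specialisation. I do not expect a serious obstacle here beyond this bookkeeping; the one point worth stressing is that traciality of $\tau$ is exactly what is used both to place $N$ inside the centralizer of $\varphi$ and to make $\sigma^{\varphi}$ act trivially on $N$, properties that would in general fail for an arbitrary faithful normal state on $N$, which is the reason $N$ is assumed to be finite in the hypotheses.
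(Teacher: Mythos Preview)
Your proposal is correct and follows exactly the paper's approach: the paper's proof is the single line ``This is Theorem \ref{Thm=ActionHAP} for $\alpha=\sigma^{\varphi}$; the assumptions are satisfied, as follows from the fact that $N \subset M^\varphi$.'' You have simply spelled out in more detail why $N\subseteq M^{\sigma^{\varphi}}$ and $\mathbb{E}_{N}\circ\sigma^{\varphi}_{t}=\mathbb{E}_{N}$ hold, which is entirely appropriate.
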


\begin{proof}
This is Theorem \ref{Thm=ActionHAP} for $\alpha=\sigma^{\varphi}$; the assumptions are satisfied, as follows from the fact that $N \subset M^\varphi$.
\end{proof}

We will also prove the converse of Theorem \ref{Thm=CrossToNormal}
by using crossed product duality.  We first recall the following lemma which is well-known. We will use the fact that every function $g\in L^{\infty}(\mathbb{R})$ may be viewed
as a multiplication operator on $L^{2}(\mathbb{R})$.
\begin{lem} \label{Lem=HilbertSchmidt}
For $g,h\in C_{b}(\mathbb{R})\cap L^{2}(\mathbb{R})$
we have that $g\lambda(h)\in\mathcal{B}(L^{2}(\mathbb{R}))$ is Hilbert-Schmidt with
\[
{\rm Tr}((g\lambda(h))^{\ast}g\lambda(h))=\Vert g\Vert_{2}^{2}\Vert h\Vert_{2}^{2}\text{.}
\]
\end{lem}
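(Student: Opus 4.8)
The plan is to compute the Hilbert--Schmidt norm by a direct calculation in the Fourier picture, where the operator $g\lambda(h)$ takes a particularly transparent integral-kernel form. First I would recall that, by Remark \ref{Rmk=Fourier}, conjugating $\lambda(h)$ by the unitary Fourier transform $\mathcal{F}_2$ turns it into the multiplication operator by $\widehat{h}$; since $h \in L^2(\mathbb{R})$ we have $\widehat{h} \in L^2(\mathbb{R})$ as well, so this is a (generally unbounded-symbol, but here $L^2$) multiplication operator. More directly, I would exhibit $g\lambda(h)$ as an integral operator on $L^2(\mathbb{R})$: for $\xi \in L^2(\mathbb{R})$ one has $(\lambda(h)\xi)(s) = \int_{\mathbb{R}} h(t)\,\xi(s-t)\,dt = \int_{\mathbb{R}} h(s-u)\,\xi(u)\,du$, so $(g\lambda(h)\xi)(s) = \int_{\mathbb{R}} g(s) h(s-u)\,\xi(u)\,du$. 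Thus $g\lambda(h)$ has integral kernel $K(s,u) = g(s)\,h(s-u)$.

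Next I would invoke the standard fact that an integral operator with kernel $K \in L^2(\mathbb{R}^2)$ is Hilbert--Schmidt with $\mathrm{Tr}((g\lambda(h))^\ast g\lambda(h)) = \|K\|_{L^2(\mathbb{R}^2)}^2$. So it remains to check that $K \in L^2(\mathbb{R}^2)$ and to evaluate its norm. Using Tonelli's theorem and the translation-invariance of Lebesgue measure,
\[
\int_{\mathbb{R}}\int_{\mathbb{R}} |g(s)|^2 |h(s-u)|^2 \, du \, ds = \int_{\mathbb{R}} |g(s)|^2 \left( \int_{\mathbb{R}} |h(s-u)|^2 \, du \right) ds = \int_{\mathbb{R}} |g(s)|^2 \, ds \cdot \|h\|_2^2 = \|g\|_2^2 \|h\|_2^2,
\]
which is finite precisely because $g, h \in L^2(\mathbb{R})$. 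This gives both the Hilbert--Schmidt property and the claimed identity.

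There is essentially no serious obstacle here; the only point requiring a word of care is the justification that $g\lambda(h)$ genuinely \emph{is} the integral operator with the kernel $K$ above rather than merely agreeing with it on a dense subspace. The hypothesis $g,h \in C_b(\mathbb{R}) \cap L^2(\mathbb{R})$ is convenient for this: $g$ bounded ensures multiplication by $g$ is a bounded operator, $h \in L^1_{\mathrm{loc}}$ with the continuity makes the convolution integral defining $\lambda(h)\xi$ absolutely convergent for $\xi$ in a dense class (e.g.\ $C_c(\mathbb{R})$), and one checks the kernel formula there and then extends by the density and boundedness/closedness argument, or simply notes that any bounded operator represented on a dense domain by an $L^2$-kernel coincides with the Hilbert--Schmidt operator attached to that kernel. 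I would present this verification in one or two sentences and then quote the kernel formula for the trace.
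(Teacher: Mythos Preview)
Your proof is correct and follows essentially the same approach as the paper: both identify $g\lambda(h)$ as the integral operator with kernel $K(s,u)=g(s)h(s-u)$, invoke the identification of Hilbert--Schmidt operators with $L^2(\mathbb{R}^2)$-kernels, and compute $\|K\|_2^2=\|g\|_2^2\|h\|_2^2$. Your version is slightly more detailed in justifying the kernel formula and carrying out the norm computation, but the argument is the same.
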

\begin{proof}
Let $\cS_2(\cH)$ denote the Hilbert-Schmidt operators on a Hilbert space $\cH$.  We have  linear identifications $\cH \otimes \overline {\cH} \simeq \cS_2(\cH)$ where $\xi \otimes \overline{\eta}$ corresponds to the rank 1 operator $v \mapsto \xi \eta^\ast(v)$. We identify $L^2(\mathbb{R})$ with $\overline{L^2(\mathbb{R})}$ linearly and isometrically through the pairing $\langle \xi, \eta \rangle = \int_{\mathbb{R}} \xi(s) \eta(s) ds$. Therefore we have isometric linear identifications
\begin{equation}\label{Eqn=Isometries}
\cS_2(L^2(\R)) \simeq L^2(\mathbb{R}) \otimes L^2(\mathbb{R}) \simeq L^2(\mathbb{R}^2),
\end{equation}
 where the rank 1 operator $\xi \eta^\ast$ corresponds to the function $(s,t) \mapsto \xi(s) \eta(t)$.

Now, $g\lambda(h)$ is an integral operator on $L^2(\mathbb{R})$ with a square-integrable kernel $K(x,y):= g(x) h(x-y)$.
Then $g\lambda(h)$ is Hilbert-Schmidt and corresponds to $K \in L^2(\mathbb{R}^2)$ in \eqref{Eqn=Isometries}, so that $\Vert g\lambda(h) \Vert_{\cS_2} = \Vert K \Vert_2 = \|g\|_2 \|h\|_2$.
\end{proof}

Further recall that for $j\in\mathbb{N}$ the Gaussian $f_{j}$
was defined by $f_{j}(s):=j^{1/4}\pi^{-1/4}\text{exp}(-js^{2}/2)$,
$s\in\mathbb{R}$ and $\widehat{f_{j}}$ denotes its Fourier transform.
Both these functions are $L^{2}$-normalized by definition and the
Plancherel identity. Define for $i,j\in\mathbb{N}$ a positive linear functional
$\psi_{i,j}$ on $\mathcal{B}(L^{2}(\mathbb{R}))$ by
\[
\psi_{i,j}(x):={\rm Tr}((\widehat{f_{i}}\lambda(f_{j}))^{\ast}x\widehat{f_{i}}\lambda(f_{j}))\text{.}
\]
 It is a state by Lemma \ref{Lem=HilbertSchmidt}.
We will need the following elementary lemma for which we give a short
non-explicit proof following from the results in \cite{CS-IMRN}.

\begin{lem}\label{Lem=BHhap}
For all $i,j\in\mathbb{N}$ the pair $(\mathcal{B}(L^{2}(\mathbb{R})),\psi_{i,j})$
has the Haagerup property in the sense that the triple $(\mathcal{B}(L^{2}(\mathbb{R})), \mathbb{C}, \psi_{i,j})$ has the relative Haagerup property, see \cite[Definition 3.1]{CS-IMRN}.
Moreover, the approximating maps may be chosen to be unital and $\psi_{i,j}$-preserving.
\end{lem}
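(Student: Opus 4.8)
The plan is to first record the trivial but essential reduction: here $N=\C 1_M$ with $M=\mathcal{B}(L^2(\R))$, so the Jones projection $e_\C$ is rank one, $\cK(M,\C,\psi_{i,j})$ is exactly the ideal of compact operators on $L^2(M,\psi_{i,j})$, and the relative Haagerup property of the triple $(M,\C,\psi_{i,j})$ is nothing but the Haagerup property of the pair $(M,\psi_{i,j})$ in the sense of \cite[Definition 3.1]{CS-IMRN} (cf.\ Remark \ref{Rmk=HAPforPosFunctionals}(3)). A non-explicit argument then proceeds by noting that $\mathcal{B}(L^2(\R))$ is injective, hence amenable, that amenable von Neumann algebras have the Haagerup property, and that by \cite{CS-IMRN} this property is independent of the faithful normal state witnessing it; the refinement to unital, state-preserving approximating maps also follows from the techniques of \cite{CS-IMRN}.

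I would, however, prefer to write down the approximating maps explicitly, since the lemma is elementary and one thereby avoids invoking state-independence. Let $\rho\in\mathcal{B}(L^2(\R))$ be the density operator of $\psi_{i,j}$; it is positive, of trace one, and injective (as $\psi_{i,j}$ is faithful), hence compact with an orthonormal eigenbasis $(e_n)_{n\in\N}$ of $L^2(\R)$ and eigenvalues $\lambda_n>0$. For $N\in\N$ let $p_N$ be the projection onto $\spa\{e_1,\dots,e_N\}$, put $\epsilon_N:=\sum_{n>N}\lambda_n$, and let $\omega_N$ be the normal state with density $\epsilon_N^{-1}\sum_{n>N}\lambda_n|e_n\rangle\langle e_n|$. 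Define
\[
\Phi_N(x):=p_N x p_N+\omega_N(x)(1-p_N),\qquad x\in\mathcal{B}(L^2(\R)).
\]
Each $\Phi_N$ is a normal, unital completely positive, (trivially) $\C$-bimodular map; since $\rho$ commutes with $p_N$ one checks directly that $\psi_{i,j}\circ\Phi_N=\psi_{i,j}$, and $\Phi_N\to\id$ in the point-strong topology because $p_N\to 1$ strongly.

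For the relative compactness I would identify $L^2(\mathcal{B}(L^2(\R)),\psi_{i,j})$ with the Hilbert--Schmidt operators $\cS_2(L^2(\R))$ via $x\Omega_{\psi_{i,j}}\mapsto x\rho^{1/2}$, and compute that under this identification $\Phi_N^{(2)}$ acts by $\xi\mapsto p_N\xi p_N+\Tr(\zeta_N\xi)(1-p_N)\rho^{1/2}$, where $\zeta_N:=\epsilon_N^{-1}\sum_{n>N}\lambda_n^{1/2}|e_n\rangle\langle e_n|$. Since $\rho$ is of trace class one has $(1-p_N)\rho^{1/2},\zeta_N\in\cS_2(L^2(\R))$ (with $\|(1-p_N)\rho^{1/2}\|_{\cS_2}^2=\epsilon_N$ and $\|\zeta_N\|_{\cS_2}^2=\epsilon_N^{-1}$), and $p_N$ has finite rank; hence $\Phi_N^{(2)}$ has finite rank, so in particular lies in $\cK(\mathcal{B}(L^2(\R)),\C,\psi_{i,j})$. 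The only point requiring any care is precisely this last identification of $\Phi_N^{(2)}$ together with the observation that $(1-p_N)\rho^{1/2}$ and $\zeta_N$ are Hilbert--Schmidt — which is exactly where trace-class-ness of $\rho$ enters — while verifying unitality, $\psi_{i,j}$-preservation, and point-strong convergence is routine.
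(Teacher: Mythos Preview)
Your proof is correct. The paper's own argument is the non-explicit one you sketch in the first paragraph: it cites \cite[Proposition 3.4]{CS-IMRN} for the Haagerup property of $(\mathcal{B}(L^2(\R)),\Tr)$, then \cite[Theorem 1.3]{CS-IMRN} to change the weight to $\psi_{i,j}$, and finally \cite[Theorem 5.1]{CS-CMP} to obtain unital, state-preserving approximating maps.

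Your explicit construction is a genuinely different and more elementary route. It exploits directly that the density operator of $\psi_{i,j}$ is trace class with trivial kernel, and builds the approximating maps by hand from its spectral projections; this bypasses entirely the state-independence theorem and the contractivity/unitality machinery of \cite{CS-CMP}. The advantage of your approach is that it is self-contained and makes the compactness of $\Phi_N^{(2)}$ completely transparent (indeed finite rank). The paper's approach, on the other hand, treats the lemma as an instance of general theory and requires no computation at all, which fits its purpose as an auxiliary step inside a longer crossed-product argument. Both are perfectly valid; yours would be preferable in a more expository setting.
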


\begin{proof}
According to \cite[Proposition 3.4]{CS-IMRN},
$(\mathcal{B}(L^{2}(\mathbb{R}),\text{Tr})$ has the Haagerup property.
By \cite[Theorem 1.3]{CS-IMRN} the Haagerup property does not
depend on the choice of the faithful normal semi-finite weight and
hence $(\mathcal{B}(L^{2}(\mathbb{R})),\psi_{i,j})$ has the Haagerup
property for all $i,j\in\mathbb{N}$. In \cite[Theorem 5.1]{CS-CMP}
 it was proved that the approximating maps may be taken
unital and state preserving. This finishes the proof.
\end{proof}

As before, let $N\subseteq M$ be a unital inclusion of von
Neumann algebras which admits a faithful normal conditional expectation
$\mathbb{E}_{N}$ and fix a faithful normal state $\varphi$ on $M$
with $\varphi=\varphi\circ\mathbb{E}_{N}$. Let $\sigma^{\varphi}$
be the corresponding modular automorphism group, $M\rtimes_{\sigma^{\varphi}}\mathbb{R}$
the crossed product von Neumann algebra and let
\[
\theta:=\widehat{\sigma^{\varphi}}:\mathbb{R}\curvearrowright M\rtimes_{\sigma^{\varphi}}\mathbb{R}
\]
be the dual action as defined in \eqref{Eqn=DualAction}. Define for
$j\in\mathbb{N}$ the state $\widehat{\varphi}_{j}:=\varphi\circ T_{f_{j},\theta}$
on $M\rtimes_{\sigma^{\varphi}}\mathbb{R}$ as before and recall that
$M$ (hence also $N$) is invariant under $\theta$. We may in turn
consider the double crossed product which admits an isomorphism of
von Neumann algebras (i.e. a bijective $\ast$-homomorphism, which
is automatically normal by Sakai \cite[Theorem 1.13.2]{Sakai}),
\begin{equation} \label{doubleisom}
(M\rtimes_{\sigma^{\varphi}}\mathbb{R})\rtimes_{\theta}\mathbb{R}\cong M\otimes\mathcal{B}(L^{2}(\mathbb{R})).
\end{equation}
Let us describe what this isomorphism looks like. For $g\in L^{\infty}(\mathbb{R})$
write $\mu(g):=1_{M}\otimes g\in M\otimes\mathcal{B}(L^{2}(\mathbb{R}))$
for the multiplication operator acting in the second tensor leg. The double crossed
product above is generated by $M\rtimes_{\sigma^{\varphi}}\mathbb{R}$
and the left regular representation of  the second copy of $\mathbb{R}$, denoted here by
$\lambda_{t}^{\theta}$, $t\in\mathbb{R}$. Under the isomorphism,
$M\rtimes_{\sigma^{\varphi}}\mathbb{R}$ is identified as a subalgebra
of $M\otimes\mathcal{B}(L^{2}(\mathbb{R}))$ via inclusion. Further,
$\lambda_{t}^{\theta}$ is identified for every $t\in\mathbb{R}$
with $\mu(e_{t})=1_{M}\otimes e_{t}$ where $e_{t}(s):=\exp(-ist)$
for $s\in\mathbb{R}$. Under this correspondence, $\lambda^{\theta}(f_{j})=\mu(\widehat{f_{j}})$.
We find that for $x\in M\otimes\mathcal{B}(L^{2}(\mathbb{R}))$,
\begin{eqnarray}
\nonumber
(\varphi\circ T_{f_{j},\theta}\circ T_{f_{i},\widehat{\theta}})(x) &=& \varphi\left(T_{\theta}\left(\lambda(f_{j})^{\ast}T_{\widehat{\theta}}\left(\mu(\widehat{f_{i}})^{\ast}x\mu(\widehat{f_{i}})\right)\lambda(f_{j})\right)\right)\\
\nonumber
&=& \varphi\left(T_{\theta}\left(T_{\widehat{\theta}} \left(\lambda(f_{j})^{\ast}   \mu(\widehat{f_{i}})^\ast x \mu(\widehat{f_{i}}) \lambda(f_{j})\right)\right)\right)\text{.}
\end{eqnarray}
By \cite[Theorem X.2.3]{Takesaki2}
and the fact that $\varphi\circ\sigma_{t}^{\varphi}=\varphi$
we have that (formally, being imprecise about domains) the normal
semi-finite faithful weight $\varphi\circ T_{\theta}\circ T_{\widehat{\theta}}$
coincides with $\varphi\otimes{\rm Tr}$. Hence, for $i,j\in\mathbb{N}$ we have equality of states
\[
\varphi\circ T_{f_{j},\theta}\circ T_{f_{i},\widehat{\theta}}=\varphi\otimes\psi_{i,j}.
\]
 The following theorem now provides a passage to study the relative
Haagerup property on the continuous core of a von Neumann algebra,
which is semi-finite.

\begin{thm} \label{Thm=NormalToCross}
Let $ N\subseteq M$
be a unital inclusion of von Neumann algebras which admits a faithful
normal conditional expectation $\mathbb{E}_{N}$ and assume that $N$
is finite with a faithful normal tracial state $\tau\in N_{\ast}$.
Set $\varphi=\tau\circ\mathbb{E}_{N}\in M_{\ast}$. Then the following
two statements hold:

\begin{enumerate}
\item  \label{Thm=NormalToCross(1)} The triple $(M,N,\varphi)$ has
property $\text{(rHAP)}$ (resp. property $\text{(rHAP)}^{-}$) if
and only if $(M\rtimes_{\sigma^{\varphi}}\mathbb{R},N,\widehat{\varphi}_{j})$
has property $\text{(rHAP)}$ (resp. property $\text{(rHAP)}^{-}$)
for all $j\in\mathbb{N}$.
\item  \label{Thm=NormalToCross(2)} If property $\text{(rHAP)}$ of
$(M,N,\varphi)$ is witnessed by unital (resp. $\varphi$-preserving)
maps, then for all $j\in\mathbb{N}$ property $\text{(rHAP)}$
of $(M\rtimes_{\alpha}\mathbb{R},N,\widehat{\varphi}_{j})$ is witnessed
by unital (resp. $\widehat{\varphi}_{j}$-preserving) maps, and vice
versa.
\end{enumerate}
\end{thm}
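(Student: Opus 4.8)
The plan is as follows. The backward implication in~(1) is exactly Theorem~\ref{Thm=CrossToNormal}(1), and the ``vice versa'' assertion of~(2) is Theorem~\ref{Thm=CrossToNormal}(2), so it remains to show that property~$\text{(rHAP)}$ (resp.\ property~$\text{(rHAP)}^-$) of $(M,N,\varphi)$ \emph{implies} property~$\text{(rHAP)}$ (resp.\ $\text{(rHAP)}^-$) of $(M\rtimes_{\sigma^\varphi}\mathbb{R},N,\widehat\varphi_j)$ for every $j\in\mathbb{N}$, and that if the former is witnessed by unital (resp.\ $\varphi$-preserving) maps then the latter can be witnessed by unital (resp.\ $\widehat\varphi_j$-preserving) maps. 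Write $\widetilde M:=M\rtimes_{\sigma^\varphi}\mathbb{R}$ and $\theta:=\widehat{\sigma^\varphi}$ for the dual action, so that $\widetilde M\rtimes_\theta\mathbb{R}\cong M\otimes\mathcal{B}(L^2(\mathbb{R}))$ by~\eqref{doubleisom}. Recall from the discussion preceding the theorem that under this isomorphism $N\subseteq M\subseteq\widetilde M$ becomes $N\otimes\mathbb{C}1$ and that $\varphi\otimes\psi_{i,j}=\widehat\varphi_j\circ T_{f_i,\widehat\theta}$ for all $i,j\in\mathbb{N}$; in other words $\varphi\otimes\psi_{i,j}$ is exactly the functional on $\widetilde M\rtimes_\theta\mathbb{R}$ produced from the reference state $\widehat\varphi_j$ on the base $\widetilde M$ by the construction of Subsection~\ref{PassageCrossedProducts}.

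\textbf{Step 1 (tensor stability).} Fix $i,j\in\mathbb{N}$. Let $(\Phi_k)_k$ witness $\text{(rHAP)}$ of $(M,N,\varphi)$ and let $(\Xi_l)_l$ witness the Haagerup property of $(\mathcal{B}(L^2(\mathbb{R})),\psi_{i,j})$, taken unital and $\psi_{i,j}$-preserving by Lemma~\ref{Lem=BHhap}. The maps $\Phi_k\otimes\Xi_l$ on $M\otimes\mathcal{B}(L^2(\mathbb{R}))$ are completely positive, uniformly bounded, $N\otimes\mathbb{C}1$-bimodular and $(\varphi\otimes\psi_{i,j})$-decreasing, with $L^2$-implementation $\Phi_k^{(2)}\otimes\Xi_l^{(2)}$. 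Since the Jones projection factorizes, $e_{N\otimes\mathbb{C}1}=e_N\otimes e_{\mathbb{C}}$, and correspondingly $\langle M\otimes\mathcal{B}(L^2(\mathbb{R})),N\otimes\mathbb{C}1\rangle=\langle M,N\rangle\,\barotimes\,\mathcal{B}(L^2(\mathcal{B}(L^2(\mathbb{R})),\psi_{i,j}))$, one checks directly that $\Phi_k^{(2)}\otimes\Xi_l^{(2)}\in\cK(M\otimes\mathcal{B}(L^2(\mathbb{R})),N\otimes\mathbb{C}1,\varphi\otimes\psi_{i,j})$. A diagonal argument of the type behind Lemma~\ref{Lem=ApproxSOT} extracts from the $\Phi_k\otimes\Xi_l$ a net converging point-strongly to the identity; hence $(M\otimes\mathcal{B}(L^2(\mathbb{R})),N\otimes\mathbb{C}1,\varphi\otimes\psi_{i,j})$ has $\text{(rHAP)}$, and, as the $\Xi_l$ are already unital and $\psi_{i,j}$-preserving, the $\Phi_k\otimes\Xi_l$ are unital (resp.\ $(\varphi\otimes\psi_{i,j})$-preserving) whenever the $\Phi_k$ are. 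The identical argument applies to $\text{(rHAP)}^-$.

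\textbf{Step 2 (descent to $\widetilde M$).} By Step~1 and the identification above, $(\widetilde M\rtimes_\theta\mathbb{R},N,\varphi\otimes\psi_{i,j})$ has $\text{(rHAP)}$ (resp.\ $\text{(rHAP)}^-$) for every $i\in\mathbb{N}$. We now descend to the base $\widetilde M$ as in the proof of Theorem~\ref{Thm=ActionHAP}(1): given a bounded net $(\Theta_{i,k})_k$ of approximating maps for $(\widetilde M\rtimes_\theta\mathbb{R},N,\varphi\otimes\psi_{i,j})$, put $\widetilde\Theta_{i,k}:=T_{f_i,\widehat\theta}\circ\Theta_{i,k}\colon\widetilde M\to\widetilde M$, note that $T_{f_i,\widehat\theta}(x)=\int_{\mathbb{R}}|f_i(s)|^2\theta_{-s}(x)\,ds\to x$ strongly as $i\to\infty$, and combine the $\widetilde\Theta_{i,k}$ over $(i,k)$ via a second use of Lemma~\ref{Lem=ApproxSOT} to obtain a net converging point-strongly to $\id_{\widetilde M}$. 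Complete positivity and $N$-bimodularity (using $N\subseteq\widetilde M^\theta=M$, cf.~\eqref{Eqn=FixedCross}) follow from Proposition~\ref{Prop=Phi}(1)--(2); as $j$ is arbitrary, this finishes the proof once the remaining requirements are checked: the $\widehat\varphi_j$-decrease $\widehat\varphi_j\circ\widetilde\Theta_{i,k}\leq\widehat\varphi_j$, the relative $L^2$-compactness $\widetilde\Theta_{i,k}^{(2)}\in\cK(\widetilde M,N,\widehat\varphi_j)$, and — in the refined statement — unitality and $\widehat\varphi_j$-preservation.

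\textbf{The main obstacle} is precisely this last verification, and it is where the relative setting becomes substantially more delicate than the situation treated in~\cite{CS-IMRN}. Proposition~\ref{Prop=Phi}(3)--(5) and Theorem~\ref{Thm=ActionHAP} are formulated only for reference functionals \emph{invariant} under the relevant $\mathbb{R}$-action, whereas the state $\widehat\varphi_j$ on $\widetilde M$ is \emph{not} $\theta$-invariant: one has $\widehat\varphi_j(\theta_t(\lambda_s))=e^{-ist}\widehat\varphi_j(\lambda_s)$ with $\widehat\varphi_j(\lambda_s)$ a non-vanishing Gaussian in $s$, and indeed the semifinite core $\widetilde M$ admits no $\theta$-invariant faithful normal state whatsoever. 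Consequently $\widehat\varphi_j\circ\widetilde\Theta_{i,k}$ is only \emph{asymptotically} (as $i\to\infty$) dominated by $\widehat\varphi_j$, and likewise the compactness, unitality and state-preservation statements hold only asymptotically; each has to be turned into an exact statement. The way around this is first to average the $\Theta_{i,k}$ over $\widehat\theta$ (which is harmless, as $\widehat\theta$ preserves the dual-weight structure of $\widetilde M\rtimes_\theta\mathbb{R}$), and then to use that on the semifinite algebra $\widetilde M$ the modular group of $\widehat\varphi_j$ is inner, so that, after compressing to the spectral corners of its Radon--Nikodym derivative on which it is implemented by a bounded invertible element, the Bannon--Fang-type results of Subsection~\ref{SpecialCase} (Proposition~\ref{Prop=BannonFang}, Lemmas~\ref{Lem=TomTakExp} and~\ref{Lem=CompactMaps}, Theorem~\ref{BannonFangApproach}, and Lemma~\ref{Lem=ContractiveMaps}) apply and deliver the contractivity, unitality and state-preservation upgrades. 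Executing this carefully is the technical heart of the section.
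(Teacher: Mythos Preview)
Your Steps~1 and~2 faithfully reproduce the paper's strategy: tensor with $(\mathcal{B}(L^2(\mathbb{R})),\psi_{i,j})$ via Lemma~\ref{Lem=BHhap} and a relative version of \cite[Lemma~3.5]{CS-IMRN}, identify the result with the double crossed product $\widetilde M\rtimes_\theta\mathbb{R}$, and then descend to $\widetilde M$ by the mechanism underlying Theorem~\ref{Thm=ActionHAP}. The paper's own proof is extremely terse at this last step --- it simply writes ``It follows from Theorem~\ref{Thm=ActionHAP} and the discussion above'' --- and does \emph{not} pause over the obstacle you raise.

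You are correct that $\widehat\varphi_j$ is not $\theta$-invariant (for instance $\widehat\varphi_j(\lambda_r)=e^{-jr^2/4}\neq 0$, so $\widehat\varphi_j(\theta_t(\lambda_r))=e^{-irt}\widehat\varphi_j(\lambda_r)\neq\widehat\varphi_j(\lambda_r)$), so the literal hypotheses of Theorem~\ref{Thm=ActionHAP} and of Proposition~\ref{Prop=Phi}(3),(5) are not met when one tries to apply them with $(\widetilde M,\theta,\widehat\varphi_j)$ in place of $(M,\alpha,\varphi)$. This is a genuine subtlety that the paper passes over in silence.

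That said, your proposed remedy --- averaging over $\widehat\theta$, then compressing to spectral corners of the Radon--Nikodym density of $\widehat\varphi_j$ and invoking the full Bannon--Fang machinery of Subsection~\ref{SpecialCase} --- is far heavier than anything the paper does, and you explicitly stop short of carrying it out (``Executing this carefully is the technical heart of the section''). Moreover, the ``spectral corners'' manoeuvre is precisely the content of Proposition~\ref{Prop=ToCorner} and Theorem~\ref{Prop=HAPtoCorner}, which in the paper's logical order come \emph{after} Theorem~\ref{Thm=NormalToCross}; Theorem~\ref{Prop=HAPtoCorner} is even proved \emph{using} Theorem~\ref{Thm=NormalToCross}, so citing it here would be circular (Proposition~\ref{Prop=ToCorner} alone is independent, but you would have to redevelop it in situ).

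In short: you have put your finger on a real gap between the stated hypotheses of Theorem~\ref{Thm=ActionHAP} and its invocation here, but your proposal remains a sketch at exactly the point where the difficulty lies, and the route you outline is substantially more involved than --- and structurally different from --- what the paper records.
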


\begin{proof}
The if statements were proven in Theorem \ref{Thm=CrossToNormal}.
For the converse of \eqref{Thm=NormalToCross(1)} assume that $(M,N,\varphi)$
has the relative Haagerup property.  $(\mathcal{B}(L^{2}(\mathbb{R})),\mathbb{C},\psi_{i,j})$
has the relative Haagerup property for all $i,j\in\mathbb{N}$, see Lemma \ref{Lem=BHhap}. Therefore by a suitable modification of \cite[Lemma 3.5]{CS-IMRN},
we see that $(M\otimes\mathcal{B}(L^{2}(\mathbb{R})),N\otimes\mathbb{C},\varphi\otimes\psi_{i,j})$
has the relative Haagerup property for all $i,j\in\mathbb{N}$. It follows
from Theorem \ref{Thm=ActionHAP} and the discussion above that $(M\rtimes_{\sigma^{\varphi}}\mathbb{R},N,\widehat{\varphi}_{j})$
has the relative Haagerup property
\footnote{Note that
in the picture above $\pi(x)=x\otimes1$ and hence $\pi(N)=N\otimes\mathbb{C}$
since $\varphi$ is tracial on $N$. This is used implicitly in the
identifications of $N$ in the double crossed product isomorphism \eqref{doubleisom}.}.

The statements in \eqref{Thm=NormalToCross(2)} and the statement
about property $\text{(rHAP)}^{-}$ follow in the same way.
\end{proof}


\subsection{Passage to corners of crossed products}

In the last subsection we characterised the relative Haagerup property
of the triple $(M,N,\varphi)$ for finite $N$ with a faithful normal
tracial state $\tau\in N_{\ast}$ and $\varphi:=\tau\circ\mathbb{E}_{N}\in M_{\ast}$
in terms of the crossed product triples $(M\rtimes_{\alpha}\mathbb{R},N,\widehat{\varphi}_{j})$,
$j\in\mathbb{N}$. In the following we will pass over to suitable
corners of these crossed products which allows to translate our investigations into the setting
of  finite von Neumann algebras. In this setting the following
lemma will be useful.

\begin{lem} \label{Lem=CompactTwist}
Let $ N\subseteq M$
be a unital inclusion of finite von Neumann algebras, let $\tau\in M_{\ast}$
be a faithful normal tracial state and let $\mathbb{E}_{N}:M\rightarrow N$
be the unique $\tau$-preserving faithful normal conditional expectation
onto $N$. Further, let $h\in N^{\prime}\cap M$ be self-adjoint and boundedly
invertible. For a linear completely positive map $\Phi:M\rightarrow M$ set
\[
\Phi^{h}(x)=h^{-1}\Phi(hxh)h^{-1}\text{.}
\]
Then, the $L^{2}$-implementation $\Phi^{(2)}$ of $\Phi$ with respect
to $\tau$ exists if and only if the $L^{2}$-implementation $(\Phi^{h})^{(2)}$
of $\Phi^{h}$ with respect to $h\tau h$ exists. Further, $\Phi^{(2)}\in\mathcal{K}(M,N,\tau)$
if and only if $(\Phi^{h})^{(2)}\in\mathcal{K}(M,N,h\tau h)$.
\end{lem}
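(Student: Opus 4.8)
The plan is to observe that the map $\Phi \mapsto \Phi^h$ is nothing but a conjugation that is compatible with passing from the GNS space of $\tau$ to the GNS space of $h\tau h$, and that this conjugation is implemented by a bounded, boundedly invertible operator which moreover commutes with the Jones projection $e_N$. First I would set $\psi := h\tau h$ and note that, since $h \in N'\cap M$ is self-adjoint and boundedly invertible, $\psi$ is again a faithful normal state (up to normalization), and $\psi \circ \mathbb{E}_N = \psi$ because $\mathbb{E}_N$ is $N$-bimodular and $h \in N'\cap M$; hence $\mathbb{E}_N$ is also the unique $\psi$-preserving conditional expectation and the Jones projection $e_N^\psi$ is well defined. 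The key object is the map
\[
W : L^2(M,\tau) \to L^2(M,\psi), \qquad x\Omega_\tau \mapsto x h^{-1}\Omega_\psi, \qquad x \in M,
\]
which I claim extends to a bounded, boundedly invertible operator. Indeed $\|x h^{-1}\Omega_\psi\|_{2,\psi}^2 = \psi((xh^{-1})^\ast x h^{-1}) = \tau(h h^{-1} x^\ast x h^{-1} h) = \tau(x^\ast x) = \|x\Omega_\tau\|_{2,\tau}^2$, so in fact $W$ is a unitary. (Here one uses $h \in N'\cap M \subseteq M^\tau$, so $h$ is central enough for the trace manipulation; more simply, traciality of $\tau$ gives $\tau(h^{-1}x^\ast x h^{-1} h h) = \tau(x^\ast x h^{-1} h h h^{-1}) = \tau(x^\ast x)$.)

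Next I would compute how $W$ intertwines the relevant operators. A direct calculation shows that for $a \in M$ the operator $W^\ast (a)_\psi W$ — where $(a)_\psi$ denotes left multiplication by $a$ on $L^2(M,\psi)$ — equals left multiplication by $a$ on $L^2(M,\tau)$; in other words $W$ intertwines the two GNS representations of $M$. Likewise, since $W$ maps $N\Omega_\tau = N h^{-1}\Omega_\psi$ (using $h \in N'\cap M$) densely onto $L^2(N,\psi)$, one gets $W^\ast e_N^\psi W = e_N^\tau$. Consequently conjugation by $W$ maps $\mathcal{K}_{00}(M,N,\psi)$ onto $\mathcal{K}_{00}(M,N,\tau)$ and hence $\mathcal{K}(M,N,\psi)$ onto $\mathcal{K}(M,N,\tau)$. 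The final ingredient is the identity relating the two $L^2$-implementations: for $x \in M$,
\[
\Phi^{(2)}_\tau\, x\Omega_\tau = \Phi(x)\Omega_\tau, \qquad (\Phi^h)^{(2)}_\psi\, x h^{-1}\Omega_\psi = h^{-1}\Phi(hxh)h^{-1}\Omega_\psi \cdot ???
\]
— more carefully, one checks $W^\ast (\Phi^h)^{(2)}_\psi W\, x\Omega_\tau = W^\ast h^{-1}\Phi(h x h)h^{-1}\Omega_\psi$, and since $W^\ast y h^{-1}\Omega_\psi = y\Omega_\tau$ this equals $h^{-1}\Phi(hxh) h^{-1}\Omega_\tau$... so one does not get $\Phi^{(2)}_\tau$ directly but rather $\Phi^{(2)}_\tau$ pre- and post-composed with the bounded invertible multiplication operators $L_{h^{-1}}$ and $R_{h}$ (left multiplication by $h^{-1}$, right multiplication by $h$) on $L^2(M,\tau)$. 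Thus $(\Phi^h)^{(2)}_\psi$ exists iff $\Phi^{(2)}_\tau$ exists (the correcting operators being bounded with bounded inverse), and $W^\ast (\Phi^h)^{(2)}_\psi W = L_{h^{-1}} \Phi^{(2)}_\tau R_h$.

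Finally, since $L_{h^{-1}}$ and $R_h$ are multiplication operators by elements of $N'\cap M$ (and $M$), they lie in $\langle M,N\rangle$ and normalize the ideal $\mathcal{K}(M,N,\tau)$; moreover they are boundedly invertible, so $\Phi^{(2)}_\tau \in \mathcal{K}(M,N,\tau)$ iff $L_{h^{-1}}\Phi^{(2)}_\tau R_h \in \mathcal{K}(M,N,\tau)$. Combining with the previous paragraph, $(\Phi^h)^{(2)}_\psi \in \mathcal{K}(M,N,\psi)$ iff $W^\ast(\Phi^h)^{(2)}_\psi W \in \mathcal{K}(M,N,\tau)$ iff $\Phi^{(2)}_\tau \in \mathcal{K}(M,N,\tau)$, which is the claim. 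I expect the main obstacle to be purely bookkeeping: keeping straight on which Hilbert space each operator acts, getting the left/right multiplication corrections by $h^{\pm 1}$ in the right places, and verifying carefully that the correcting multiplication operators indeed normalize the relative compact ideal (which follows from $\mathcal{K}(M,N,\tau)$ being a two-sided ideal in $\langle M,N\rangle$, as recorded after the definition of $\mathcal{K}$). Alternatively — and perhaps more cleanly — one can avoid $W$ altogether and argue directly at the level of finite-rank operators: writing $\Phi^{(2)}_\tau = \lim_k \sum_i a_{i,k} e_N b_{i,k}$ in norm, one verifies using Lemma \ref{Lem=TomTakExp}-type manipulations (here with a \emph{tracial} state, so $\sigma^\tau$ is trivial and $\mathbb{E}_N(h^{-1} b) = \mathbb{E}_N(b h^{-1})$ for $h \in N'\cap M$) that $(\Phi^h)^{(2)}_{\psi}$ is the norm limit of the corresponding twisted finite-rank operators, and conversely; but the unitary-conjugation argument is more conceptual and I would present that.
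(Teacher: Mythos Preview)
Your overall strategy---relating $(\Phi^{h})^{(2)}_{\psi}$ to $\Phi^{(2)}_{\tau}$ via the unitary $W:x\Omega_\tau\mapsto xh^{-1}\Omega_\psi$ and then invoking that $\mathcal{K}(M,N,\cdot)$ is a two--sided ideal in $\langle M,N\rangle$---is sound, but two of your intermediate claims are false and need to be repaired.

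First, $\psi\circ\mathbb{E}_N=\psi$ is \emph{not} true in general. For $x\in M^+$ one has $\psi(\mathbb{E}_N(x))=\tau(h^2\mathbb{E}_N(x))=\tau(\mathbb{E}_N(h^2)x)$, whereas $\psi(x)=\tau(h^2 x)$; these agree for all $x$ only if $h^2\in N$. The (unique) $\psi$-preserving normal conditional expectation onto $N$ is instead
\[
\mathbb{E}_N^{h}(x)=\mathbb{E}_N(h^2)^{-1/2}\,\mathbb{E}_N(hxh)\,\mathbb{E}_N(h^2)^{-1/2},
\]
as the paper verifies directly (using that $\mathbb{E}_N(h^2)\in Z(N)$ is boundedly invertible). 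This is precisely how the paper begins its proof.

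Second, and more seriously for your argument, $W^\ast e_N^{\psi}W\neq e_N^{\tau}$ in general. A short computation using $W^\ast(y\Omega_\psi)=yh\,\Omega_\tau$ and the formula for $\mathbb{E}_N^{h}$ gives
\[
W^\ast e_N^{\psi}W \;=\; h\,\mathbb{E}_N(h^2)^{-1}\,e_N^{\tau}\,h,
\]
so the two Jones projections differ by multiplication with invertible elements of $M$. Fortunately this is \emph{enough} for your purposes: since $W$ intertwines the left $M$-actions, $W^\ast a\,e_N^{\psi}\,b\,W=a\,(W^\ast e_N^{\psi}W)\,b\in\mathcal{K}_{00}(M,N,\tau)$, and by symmetry $W\,e_N^{\tau}\,W^\ast\in\mathcal{K}_{00}(M,N,\psi)$; hence conjugation by $W$ does interchange the two compact ideals, even though it does not literally swap the Jones projections. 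Together with your (correct) observation that $W^\ast(\Phi^h)^{(2)}_\psi W$ equals $\Phi^{(2)}_\tau$ twisted by boundedly invertible elements of $\langle M,N\rangle$---in fact $L_{h^{-1}}\Phi^{(2)}_\tau L_h$ rather than $L_{h^{-1}}\Phi^{(2)}_\tau R_h$, though either version lies in $\langle M,N\rangle$ since $h\in N'\cap M$---this yields a valid proof.

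By contrast, the paper avoids the unitary $W$ entirely: it identifies $\mathbb{E}_N^{h}$ explicitly, establishes existence of the $L^2$-implementations via a direct norm estimate, and for compactness computes $(a\mathbb{E}_N b)^h=(h^{-1}a\,\mathbb{E}_N(h^2)\,h^{-1})\,\mathbb{E}_N^{h}(h^{-1}bh\,\cdot\,)$, which immediately shows that $\Phi\mapsto\Phi^h$ maps $\mathcal{K}_{00}(M,N,\tau)$ into $\mathcal{K}_{00}(M,N,h\tau h)$. Your conjugation approach is more conceptual once the errors above are fixed; the paper's approach is more hands-on but requires no auxiliary Hilbert-space map.
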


\begin{proof}
Note first that the assumptions on $h$ imply that
$\mathbb{E}_{N}(h^{2})$ is a positive boundedly invertible element
of the center $Z(N)$ of $N$. Indeed, we have for all $n\in N$ the
equality $n\mathbb{E}_{N}(h^{2})=\mathbb{E}_{N}(nh^{2})=\mathbb{E}_{N}(h^{2}n)=\mathbb{E}_{N}(h^{2})n$,
and if $h$ is boundedly invertible, then $h^2\geq c1_{M}$ for some $c>0$, hence $\mathbb{E}_{N}(h^{2})\geq c 1_{M}$.

The map $\mathbb{E}_{N}^{h}:x\mapsto\mathbb{E}_{N}(h^{2})^{-1/2}\mathbb{E}_{N}(hxh)\mathbb{E}_{N}(h^{2})^{-1/2}$
is the unique normal $h\tau h$-preserving conditional expectation
onto $N$. Indeed,  we can verify it is an idempotent, normal, ucp map with image equal to $N$ and for any $x\in M$ we have
\begin{eqnarray}
\nonumber
(h\tau h)(\mathbb{E}_{N}^{h}(x)) &=& \tau(h\mathbb{E}_{N}(h^{2})^{-1/2}\mathbb{E}_{N}(hxh)\mathbb{E}_{N}(h^{2})^{-1/2}h)\\
\nonumber
&=& \tau(h^2\mathbb{E}_{N}(h^{2})^{-1}\mathbb{E}_{N}(hxh))\\
\nonumber
&=& \tau(\mathbb{E}_{N}(h^{2}\mathbb{E}_{N}(h^{2})^{-1}\mathbb{E}_{N}(hxh)))\\
\nonumber
&=& \tau(\mathbb{E}_{N}(h^{2})\mathbb{E}_{N}(h^{2})^{-1}\mathbb{E}_{N}(hxh))\\
\nonumber
&=&\tau(\mathbb{E}_{N}(hxh))\\
\nonumber
&=&\tau(hxh)\\
\nonumber
&=& (h\tau h)(x)\text{.}
\end{eqnarray}

Now assume that the $L^{2}$-implementation $\Phi^{(2)}$ of $\Phi$
with respect to $\tau$ exists, i.e. that there exists a constant
$C>0$ such that $\tau(\Phi(x)^{\ast}\Phi(x))\leq C\tau(x^{\ast}x)$
for all $x\in M$. Then 
\begin{eqnarray}
\nonumber
& & (h\tau h)(\Phi^{h}(x)^{\ast}\Phi^{h}(x))=\tau(\Phi(hx^{\ast}h)h^{-2}\Phi(hxh)) \leq \left\Vert h^{-2}\right\Vert \tau(\Phi(hx^{\ast}h)\Phi(hxh)) \\
\nonumber
& \leq & C\left\Vert h^{-2}\right\Vert \tau(hx^{\ast}hhxh) \leq C\left\Vert h^{-2}\right\Vert \left\Vert h^{2}\right\Vert \tau(hx^{\ast}xh)= C\left\Vert h^{-2}\right\Vert \left\Vert h^{2}\right\Vert (h\tau h)(x^{\ast}x)
\end{eqnarray}
for all $x\in M$, so the $L^{2}$-implementation $(\Phi^{h})^{(2)}$
exists as well.

The converse implication follows, as $\Phi = ({\Phi^h})^{h^{-1}}$.



For elements $a,b,x\in M$ the equality
\begin{eqnarray}
\nonumber
(a\mathbb{E}_{N}b)^{h}(x) &=& h^{-1}a\mathbb{E}_{N}(bhxh)h^{-1}\\
\nonumber
&=& h^{-1}a\mathbb{E}_{N}(h^{2})^{1/2}\mathbb{E}_{N}^{h}(h^{-1}bhx)\mathbb{E}_{N}(h^{2})^{1/2}h^{-1}\\
\nonumber
&=& (h^{-1}a\mathbb{E}_{N}(h^{2})h^{-1})\mathbb{E}_{N}^{h}(h^{-1}bhx) \\
\nonumber
&=& (h^{-1}a\mathbb{E}_{N}(h^{2})h^{-1}\mathbb{E}_{N}^{h}h^{-1}bh)(x)
\end{eqnarray}
implies by taking linear combinations and approximation that if $\Phi^{(2)}\in\mathcal{K}(M,N,\tau)$,
then $(\Phi^{h})^{(2)}\in\mathcal{K}(M,N,h\tau h)$. The converse
statement follows as before, which finishes the proof.
\end{proof}

Now, for a triple $(M,N,\varphi)$
let $h$ be the unique (possibly unbounded) positive self-adjoint operator affiliated
with $M\rtimes_{\sigma^{\varphi}}\mathbb{R}$ such that $h^{it}=\lambda_{t}$
for all $t\in\mathbb{R}$. If we further assume that $N\subseteq M^{\sigma^{\varphi}}$  (which implies that $N$ is finite with a tracial state $\varphi|_N$)
we have for $x\in N$ that $\lambda_{t}x\lambda_{t}^{\ast}=\sigma_{t}^{\varphi}(x)=x$
and hence $\lambda_{t}\in N^{\prime}\cap(M\rtimes_{\sigma^{\varphi}}\mathbb{R})$.
This implies that $h$ is affiliated with $N^{\prime}\cap(M\rtimes_{\sigma^{\varphi}}\mathbb{R})$
and so its finite spectral projections are elements in $N^{\prime}\cap(M\rtimes_{\sigma^{\varphi}}\mathbb{R})$.
Set for $k\in\mathbb{N}$
\[
p_{k}=\chi_{[k^{-1},k]}(h)\qquad\text{ and }\qquad h_{k}=hp_{k}\text{.}
\]
Here $\chi_{[k^{-1},k]}$ denotes the indicator function of $[k^{-1},k]\subseteq\mathbb{R}$
and $p_{k}$ is the corresponding spectral projection. Then, for every $k\in\mathbb{N}$,
$h_{k}$ is boundedly invertible in the corner algebra $p_{k}(M\rtimes_{\sigma^{\varphi}}\mathbb{R})p_{k}$
and we write $h_{k}^{-1}$ for its inverse which we view as an operator
in $M\rtimes_{\sigma^{\varphi}}\mathbb{R}$.

Denote by $\widehat{\varphi}:=\varphi\circ T_{\theta}$ the dual weight
of $\varphi$ and let $\tau_{\rtimes}$ be the unique faithful normal
semi-finite weight on $M\rtimes_{\sigma^{\varphi}}\mathbb{R}$ whose
Connes cocycle derivative satisfies $(D\widehat{\varphi}/D\tau_{\rtimes})_{t}=h^{it}$
for all $t\in\mathbb{R}$ (we refer to \cite[Lemma 5.2]{HaaOVW}; the proofs
below stay within the realm of bounded functionals). It is a trace
on $M\rtimes_{\sigma^{\varphi}}\mathbb{R}$ which is formally given
by
\[
\tau_{\rtimes}(x)=\varphi\circ T_{\theta}(h^{-\frac{1}{2}}xh^{-\frac{1}{2}})\text{, }\qquad x\in (M\rtimes_{\sigma^{\varphi}}\mathbb{R})^+\text{.}
\]
By construction we have

\begin{equation} \label{Eqn=HkRadonNykodym}
\widehat{\varphi}_{j}(p_{k}xp_{k})=\tau_{\rtimes}(h_{k}^{\frac{1}{2}}\lambda(f_{j})^{\ast}x\lambda(f_{j})h_{k}^{\frac{1}{2}}),\qquad x\in M\rtimes_{\sigma^{\varphi}}\mathbb{R}\text{.}
\end{equation}
for all $j\in\mathbb{N}$, where $\widehat{\varphi}_{j}$
and $f_{j}$ are defined as in Subsection  \ref{PassageCrossedProducts}.
Further note that the operators $\lambda(f_{j})$ and $h_{k}$ commute.

\begin{rmk} \label{Rmk=Fourier2}
Following Remark \ref{Rmk=Fourier},
for $k\in\mathbb{N}$ the operators $p_{k}$ and $h_{k}$ can be described
in terms of multiplication operators conjugated with the Fourier unitary
$\mathcal{F}_{2}$. Indeed, $\mathcal{F}_{2} \lambda_{t}\mathcal{F}_{2}^{\ast}$
is the multiplication operator on $L^{2}(\mathbb{R},\mathcal{H})$
with the function $(s\mapsto e^{its})$ and therefore (under proper
identification of the domains) $\mathcal{F}_{2} h\mathcal{F}_{2}^{\ast}$
coincides with the multiplication operator with $(s\mapsto e^{s})$.
It follows that for all $k\in\mathbb{N}$, $\mathcal{F}_{2}p_{k}\mathcal{F}_{2}^{\ast}$
is the multiplication with $(I_{k}:s\mapsto\chi_{[-\log(k),\log(k)]}(s)  )$
and $\mathcal{F}_{2}h_{k}\mathcal{F}_{2}^{\ast}$ is the multiplication
with $(J_{k}:s\mapsto\chi_{[-\log(k),\log(k)]}(s)e^{s})$. Therefore,
by Remark \ref{Rmk=Fourier},
\[
p_{k}=\lambda(\widehat{I_{k}}),\qquad h_{k}=\lambda(\widehat{J_{k}}),\qquad\textrm{ and }\qquad h_{k}^{-1}=\lambda(\widehat{J_{k}^{-1}})\text{,}
\]
where $J_{k}^{-1}$ is the function $(s\mapsto\chi_{[-\log(k),\log(k)]}e^{-s})$.
We also have that
\begin{equation} \label{Eqn=PosBdInv}
\lambda(f_{j})h_{k}=\lambda(f_{j})\lambda(\widehat{J_{k}})=\lambda(f_{j}\ast\widehat{J_{k}})=\mathcal{F}_{2}^{\ast}\widehat{f}_{j}J_{k}\mathcal{F}_{2},
\end{equation}
where we view the product $\widehat{f}_{j}J_{k}$ as a multiplication
operator. Since the Fourier transform of $f_{j}$ is Gaussian we see
that $ \mathcal{F}_2^\ast \widehat{f}_{j}J_{k} \mathcal{F}_2$ is positive and boundedly invertible  in the corner algebra $ p_k(M \rtimes_{\sigma^\varphi} \R)p_k$.
Further, by \eqref{Eqn=HaagerupComputation} and the Plancherel identity,
\[
T_{\theta}(h_{k}^{-1})  = T_\theta (\lambda(\widehat{J_k^{-1/2}}) \lambda(\widehat{J_k^{-1/2}})) =\Vert\widehat{J_{k}^{-1/2}}\Vert_{2}^{2}=\Vert J_{k}^{-1/2}\Vert_{2}^{2}=k-k^{-1}. 
\]
It follows that
\[
\tau_{\rtimes}(p_{k})=\varphi(T_{\theta}(h^{-1/2}p_{k}h^{-1/2}))=\varphi(T_{\theta}(h_{k}^{-1}))=  k - k^{-1}.
\]
 In particular, $\tau_{\rtimes}(p_{k})<\infty$. 
Since $\tau_\rtimes$ is tracial we also have for $x \in  M\rtimes_{\sigma^{\varphi}} \mathbb{R}$, 
\begin{equation}\label{Eqn=TauExpressionExtra}
\tau_{\rtimes}(p_k x p_k) = \varphi \circ T_\theta(h_k^{-1} p_k x p_k).
\end{equation}
\end{rmk}

In the next statements it is notationally more convenient to work
with property $\text{(rHAP)}$ (resp. property $\text{(rHAP)}^{-}$)
for general faithful normal positive functionals instead of just states,
see Remark \ref{Rmk=HAPforPosFunctionals}. Note that $p_{k}\widehat{\varphi}_{j}p_{k}$,
$j\in\mathbb{N}$ is not a state, but a positive scalar multiple of a state.

We shall use the fact that the unique faithful normal $\widehat{\varphi}_{j}$-preserving
conditional expectation $\mathbb{E}_{N}^{\widehat{\varphi}_{j}}$
of $M\rtimes_{\sigma^{\varphi}}\mathbb{R}$ onto $N$ is given by
$\mathbb{E}_{N}^{\widehat{\varphi}_{j}}=\mathbb{E}_{N}\circ T_{f_{j}}$.
This fact was used in the proof of Proposition \ref{Prop=Phi} already.

\begin{lem} \label{Lem=NewExpectation}
For every $k\in\mathbb{N}$,
$j\in\mathbb{N}$ there is a faithful normal $p_{k}\widehat{\varphi}_{j}p_{k}$-preserving
conditional expectation of $p_{k}(M\rtimes_{\sigma^{\varphi}}\mathbb{R})p_{k}$
onto $p_{k}Np_{k}$ given by
\begin{equation} \label{Eqn=CEV}
x\mapsto\mu_{k}^{-1}p_{k}\mathbb{E}_{N}(T_{f_{j}}(x))p_{k}=\mu_{k}^{-1}p_{k}\mathbb{E}_{N}^{\widehat{\varphi}_{j}}(x)p_{k}
\end{equation}
where $\mu_{k}:=T_{f_{j}}(p_{k})=\Vert\widehat{f}_{j}\chi_{[-\log(k),\log(k)]}\Vert_{2}^{2}$.
In particular, $T_{f_{j}}(p_{k})$ is a scalar multiple of the identity.
\end{lem}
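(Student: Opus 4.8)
The plan is to take the candidate map in \eqref{Eqn=CEV} at face value and verify the defining properties of a faithful normal conditional expectation on the corner algebra, the only genuinely computational input being the identification of $\mu_k = T_{f_j}(p_k)$ as a positive finite nonzero scalar.

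First I would prove that $\mu_k := T_{f_j}(p_k)$ is a scalar multiple of $1$. Since $N\subseteq M^{\sigma^\varphi}$, the operators $p_k$ and $\lambda(f_j)$ both lie in $\mathcal{L}(\mathbb{R})$ and hence commute; writing $h$ for the positive self-adjoint operator affiliated with $M\rtimes_{\sigma^\varphi}\mathbb{R}$ with $h^{it}=\lambda_t$, conjugation by the Fourier unitary $\mathcal{F}_2$ as in Remark \ref{Rmk=Fourier2} (under which $\mathcal{F}_2 h\mathcal{F}_2^\ast$ is multiplication by $s\mapsto e^s$ and $\mathcal{F}_2\lambda(f_j)\mathcal{F}_2^\ast$ is multiplication by $\widehat{f_j}$) shows that $\lambda(f_j)^\ast p_k\lambda(f_j)=\psi(h)$ for the bounded Borel function $\psi(v)=|\widehat{f_j}(\log v)|^2\chi_{[k^{-1},k]}(v)$. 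Since the dual action satisfies $\theta_t(h)=e^{-t}h$, one has $T_\theta(\psi(h))=\int_{\mathbb{R}}\psi(e^{-t}h)\,dt$, and the change of variables $v=e^{-t}h$ turns this into the scalar $\bigl(\int_0^\infty\psi(v)\tfrac{dv}{v}\bigr)1=\bigl(\int_{-\log k}^{\log k}|\widehat{f_j}(s)|^2\,ds\bigr)1$, i.e.\ $\mu_k=\|\widehat{f}_j\chi_{[-\log k,\log k]}\|_2^2$, which is finite and strictly positive as $\widehat{f_j}$ is a strictly positive Gaussian. This already settles the ``in particular'' claim. (As a consistency check, the same computation with $\psi(v)=v^{-1}\chi_{[k^{-1},k]}(v)$ recovers $\tau_\rtimes(p_k)=k-k^{-1}$ from Remark \ref{Rmk=Fourier2}.)

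Next I would put $\mathcal{E}(x):=\mu_k^{-1}p_k\mathbb{E}_N^{\widehat{\varphi}_j}(x)p_k=\mu_k^{-1}p_k\mathbb{E}_N(T_{f_j}(x))p_k$ for $x\in p_k(M\rtimes_{\sigma^\varphi}\mathbb{R})p_k$, where $\mathbb{E}_N^{\widehat{\varphi}_j}=\mathbb{E}_N\circ T_{f_j}$ is the unique $\widehat{\varphi}_j$-preserving conditional expectation of $M\rtimes_{\sigma^\varphi}\mathbb{R}$ onto $N$ from the proof of Proposition \ref{Prop=Phi}. Because $p_k\in N'\cap(M\rtimes_{\sigma^\varphi}\mathbb{R})$, the assignment $a\mapsto p_k a$ is a normal $\ast$-homomorphism $N\to p_k(M\rtimes_{\sigma^\varphi}\mathbb{R})p_k$ with image $p_kNp_k$, so $\mathcal{E}$ indeed takes values in $p_kNp_k$; it is normal and completely positive since $T_{f_j}$, $\mathbb{E}_N$ and compression by $p_k$ are and $\mu_k>0$. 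Unitality in the corner is immediate from the previous paragraph: $\mathcal{E}(p_k)=\mu_k^{-1}p_k\mathbb{E}_N(\mu_k 1)p_k=p_k$. Moreover, for $a\in N$ the left $N$-modularity of $T_{f_j}$ (Lemma \ref{Lem=EasyLemma}) gives $T_{f_j}(ap_k)=a\mu_k$, whence $\mathcal{E}(ap_k)=\mu_k^{-1}p_k\mathbb{E}_N(a\mu_k)p_k=ap_k$, so $\mathcal{E}$ restricts to the identity on $p_kNp_k$. Being a unital completely positive idempotent with range $p_kNp_k$, $\mathcal{E}$ is a conditional expectation by Tomiyama's theorem.

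Finally I would verify state preservation and faithfulness. On the corner the functional $p_k\widehat{\varphi}_jp_k$ agrees with the restriction of $\widehat{\varphi}_j$. Given $x$ in the corner, set $a:=\mathbb{E}_N^{\widehat{\varphi}_j}(x)\in N$; using $p_kap_k=ap_k$, left $N$-modularity of $T_{f_j}$, the fact that $T_{f_j}$ restricts to the identity on $N\subseteq M^{\sigma^\varphi}$ (so $\widehat{\varphi}_j|_N=\varphi|_N$), and that $\mathbb{E}_N^{\widehat{\varphi}_j}$ preserves $\widehat{\varphi}_j$, one obtains
\[
\widehat{\varphi}_j(\mathcal{E}(x))=\mu_k^{-1}\widehat{\varphi}_j(ap_k)=\mu_k^{-1}\varphi(aT_{f_j}(p_k))=\varphi(a)=\widehat{\varphi}_j(a)=\widehat{\varphi}_j(x),
\]
so $\mathcal{E}$ is $p_k\widehat{\varphi}_jp_k$-preserving. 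Faithfulness follows: if $x\geq 0$ lies in the corner with $\mathcal{E}(x)=0$, then $\widehat{\varphi}_j(x)=(p_k\widehat{\varphi}_jp_k)(\mathcal{E}(x))=0$, and since $\widehat{\varphi}_j$ is faithful (the support of $\widehat{f_j}$ being all of $\mathbb{R}$) we get $x=0$. The main obstacle is the first step: everything else is bookkeeping, but the identification of $\mu_k$ as a scalar requires care with the crossed-product structure and with the Fourier picture, and throughout one must remember that the ambient algebra is the corner $p_k(M\rtimes_{\sigma^\varphi}\mathbb{R})p_k$ — so ``unital'' means $\mathcal{E}(p_k)=p_k$ and $N$ is everywhere replaced by $p_kNp_k$.
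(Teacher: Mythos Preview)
Your proof is correct and follows essentially the same approach as the paper. The only cosmetic difference is in the first step: the paper computes $\mu_k$ by writing $p_k\lambda(f_j)=\lambda(g_{j,k})$ for $g_{j,k}$ the inverse Fourier transform of $\widehat{f}_j\chi_{[-\log k,\log k]}$ and applying \eqref{Eqn=HaagerupComputation} directly, while you compute $T_\theta(\psi(h))$ via the scaling $\theta_t(h)=e^{-t}h$ and a change of variables --- both yield $\|\widehat{f}_j\chi_{[-\log k,\log k]}\|_2^2$ and the rest of the argument (state preservation, idempotence on $p_kNp_k$, faithfulness) is the same bookkeeping in both.
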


\begin{proof}
First note that by Remark \ref{Rmk=Fourier} and Remark
\ref{Rmk=Fourier2} the operator $p_{k}\lambda(f_{j})$ coincides
with $\lambda(g_{j,k})$ where $g_{j,k}$ is the inverse Fourier transform
of the function $\widehat{f}_{j}\chi_{[-\log(k),\log(k)]}$.  The equality \eqref{Eqn=HaagerupComputation} then implies that
\begin{equation} \label{Eqn=LambdaScalar}
T_{f_{j}}(p_{k})=T_{\theta}(\lambda(f_{j})^{\ast}p_{k}\lambda(f_{j}))=  T_\theta(\lambda(g_{j,k})^* \lambda(g_{j,k}))=\Vert g_{j,k}\Vert_{2}^{2}
=\mu_{k}
\end{equation}
is a multiple of the identity.

For $x\in p_{k}(M\rtimes_{\sigma^{\varphi}}\mathbb{R})p_{k}$ expand
\begin{eqnarray}
\nonumber
(p_{k}\widehat{\varphi}_{j}p_{k})(p_{k}\mathbb{E}_{N}^{\widehat{\varphi}_{j}}(x)p_{k}) &=& \widehat{\varphi}_{j}(p_{k}\mathbb{E}_{N}^{\widehat{\varphi}_{j}}(x)p_{k})\\
\nonumber
& = & (\varphi\circ T_{f_{j}})(p_{k}\mathbb{E}_{N}(T_{f_{j}}(x))p_{k})\\
\nonumber
& = & (\varphi\circ T_{\theta})(\lambda(f_{j})^{\ast}p_{k}\mathbb{E}_{N}(T_{f_{j}}(x))p_{k}\lambda(f_{j}))\text{.}
\end{eqnarray}
 Since  $N\subseteq M^{\sigma^{\varphi}}$ we see that
\begin{eqnarray}
\nonumber
(p_{k}\widehat{\varphi}_{j}p_{k})(p_{k}\mathbb{E}_{N}^{\widehat{\varphi}_{j}}(x)p_{k}) &
 = & (\varphi\circ T_{\theta})(\mathbb{E}_{N}(T_{f_{j}}(x))\lambda(f_{j})^{\ast}p_{k}\lambda(f_{j}))\\
\nonumber
&= & \varphi(\mathbb{E}_{N}(T_{f_{j}}(x))T_{f_{j}}(p_{k}))\text{.}
\end{eqnarray}
With \eqref{Eqn=LambdaScalar} we can continue as follows:
\[
(p_{k}\widehat{\varphi}_{j}p_{k})(p_{k}\mathbb{E}_{N}^{\widehat{\varphi}_{j}}(x)p_{k})=\mu_{k}\varphi(\mathbb{E}_{N}(T_{f_{j}}(x)))=\mu_{k}\varphi(T_{f_{j}}(x))=\mu_{k}\widehat{\varphi}_{j}(x)=\mu_{k}\widehat{\varphi}_{j}(p_{k}xp_{k})\text{.}
\]
This proves that \eqref{Eqn=CEV} is $p_{k}\widehat{\varphi}_{j}p_{k}$-preserving,
as claimed. For $x\in N\subseteq M^{\sigma^{\varphi}}$ we have that
$x$ and $p_{k}$ commute. Therefore, using the $N$-module property
of the maps involved,
\[
p_{k}\mathbb{E}_{N}(T_{f_{j}}(p_{k}xp_{k}))p_{k}=p_{k}xp_{k}\mathbb{E}_{N}(T_{f_{j}}(p_{k}))=\mu_{k}p_{k}xp_{k}.
\]
 This shows that the map $x\mapsto\mu_{k}^{-1}p_{k}\mathbb{E}_{N}(T_{f_{j}}(x))p_{k}$
is a unital (the unit being $p_{k}$) normal completely positive projection
onto $p_{k}Np_{k}$ (see \cite[Theorem 1.5.10]{NateTaka}).
\end{proof}

\begin{lem} \label{Lem=TomitaTakesaki}
Let $ N\subseteq M$
be a unital inclusion of von Neumann algebras which admits a faithful
normal conditional expectation $\mathbb{E}_{N}$. Assume that $N$
is finite and let $\tau\in N_{\ast}$ be a faithful normal tracial
state that we extend to a state $\varphi:=\tau\circ\mathbb{E}_{N}$
on $M$. Then we have $\mathbb{E}_{N}(T_{f_{j}}(xa))=\mathbb{E}_{N}(T_{f_{j}}(ax))$  and   $\mathbb{E}_{N}(T_{\theta}(xa))=\mathbb{E}_{N}(T_{\theta}(ax))$
for every $j\in\mathbb{N}$, $a\in\mathcal{L}(\mathbb{R})$
and $x\in M\rtimes_{\sigma^{\varphi}}\mathbb{R}$.
\end{lem}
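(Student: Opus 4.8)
The plan is to deduce both identities from a single equivariance property of $T_{\theta}$ (and of $T_{f_{j}}$) under conjugation by $\lambda_{t}$, combined with the invariance of $\mathbb{E}_{N}$ and $\varphi$ under $\sigma^{\varphi}$.

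First I would record the structural facts. Since $\varphi=\tau\circ\mathbb{E}_{N}$ with $\tau$ tracial, we have $N\subseteq M^{\varphi}$, and the covariance relation of the crossed product gives $\lambda_{t}x\lambda_{t}^{*}=\sigma_{t}^{\varphi}(x)$ for $x\in M$; in particular each $\lambda_{t}$ commutes with $N$, so $\mathcal{L}(\mathbb{R})\subseteq N'\cap(M\rtimes_{\sigma^{\varphi}}\mathbb{R})$, and the operators $\lambda(f_{j})$ commute with every $\lambda_{t}$. Next, a one-line computation from \eqref{Eqn=DualAction} shows that the dual action $\theta=\widehat{\sigma^{\varphi}}$ commutes with $\Ad(\lambda_{t})$: indeed $\theta_{s}(\lambda_{t}c\lambda_{t}^{*})=e^{-ist}\lambda_{t}\,\theta_{s}(c)\,e^{ist}\lambda_{t}^{*}=\Ad(\lambda_{t})(\theta_{s}(c))$. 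Integrating over $s$ and using that $\Ad(\lambda_{t})$ preserves the fixed point algebra $M=(M\rtimes_{\sigma^{\varphi}}\mathbb{R})^{\theta}$, where it acts as $\sigma_{t}^{\varphi}$ (hence also on the extended positive part $\widehat{M}_{+}$, in which $T_{\theta}$ takes its values), one obtains
\[
T_{\theta}\bigl(\Ad(\lambda_{t})(c)\bigr)=\sigma_{t}^{\varphi}\bigl(T_{\theta}(c)\bigr),\qquad c\in(M\rtimes_{\sigma^{\varphi}}\mathbb{R})^{+},\ t\in\mathbb{R}.
\]
Because $\lambda(f_{j})$ commutes with $\lambda_{t}$, the automorphism $\Ad(\lambda_{t})$ also commutes with $c\mapsto\lambda(f_{j})^{*}c\lambda(f_{j})$, so the same reasoning yields $T_{f_{j}}\bigl(\Ad(\lambda_{t})(c)\bigr)=\sigma_{t}^{\varphi}\bigl(T_{f_{j}}(c)\bigr)$.

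Now I would use that $\tau=\varphi|_{N}$ is tracial, so $\sigma_{t}^{\varphi}$ acts trivially on $N$, and that $\mathbb{E}_{N}$ commutes with $\sigma^{\varphi}$ by \cite[Theorem IX.4.2]{Takesaki2}; together these give $\mathbb{E}_{N}\circ\sigma_{t}^{\varphi}=\sigma_{t}^{\varphi}\circ\mathbb{E}_{N}=\mathbb{E}_{N}$, and of course $\varphi\circ\sigma_{t}^{\varphi}=\varphi$. Writing $S$ for either $T_{\theta}$ or $T_{f_{j}}$ and taking $a=\lambda_{t}$, the identity $\lambda_{t}x=\Ad(\lambda_{t})(x\lambda_{t})$ gives
\[
\mathbb{E}_{N}\bigl(S(\lambda_{t}x)\bigr)=\mathbb{E}_{N}\bigl(S(\Ad(\lambda_{t})(x\lambda_{t}))\bigr)=\mathbb{E}_{N}\bigl(\sigma_{t}^{\varphi}(S(x\lambda_{t}))\bigr)=\mathbb{E}_{N}\bigl(S(x\lambda_{t})\bigr),
\]
which is the claim for $a=\lambda_{t}$. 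To pass to a general $a\in\mathcal{L}(\mathbb{R})$ I would first treat $a=\lambda(g)$ with $g\in L^{1}(\mathbb{R})$ by writing $\lambda(g)=\int_{\mathbb{R}}g(t)\lambda_{t}\,dt$ and pulling the integral through the normal map $\mathbb{E}_{N}\circ S$, and then handle arbitrary $a$ by the $\sigma$-weak density of $\{\lambda(g):g\in L^{1}(\mathbb{R})\}$ in $\mathcal{L}(\mathbb{R})$ together with normality.

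The only delicate point is this last extension step when $S=T_{\theta}$: whereas $T_{f_{j}}$ is a bounded unital completely positive map, so $\mathbb{E}_{N}\circ T_{f_{j}}$ is normal and the density argument is immediate, $T_{\theta}$ is merely an operator-valued weight and $\mathbb{E}_{N}\circ T_{\theta}$ is only normal and semifinite, so one has to restrict to the domain on which $T_{\theta}(xa)$ and $T_{\theta}(ax)$ are finite. As announced in the discussion preceding Remark \ref{Rmk=Fourier2}, this is harmless: it suffices to check the identity after cutting down by the finite projections $p_{k}$, where by \eqref{Eqn=TauExpressionExtra} the relevant expressions are given by bounded functionals, or equivalently after pairing with a faithful family of normal states, and in that bounded regime the above density-and-normality argument goes through verbatim. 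I expect this bookkeeping about domains of $T_{\theta}$, rather than any conceptual difficulty, to be the main nuisance; the algebraic heart of the lemma is the equivariance $T_{\theta}\circ\Ad(\lambda_{t})=\sigma_{t}^{\varphi}\circ T_{\theta}$ (and its analogue for $T_{f_{j}}$) together with $\mathbb{E}_{N}\circ\sigma_{t}^{\varphi}=\mathbb{E}_{N}$.
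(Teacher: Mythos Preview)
Your argument is correct and takes a genuinely different route from the paper. The paper fixes $a=\lambda(k)$ with $k\in L^{1}(\mathbb{R})$ and $x=y\lambda(g)$ with $y\in M$, $g\in C_{c}(\mathbb{R})$, computes $T_{f_{j}}(xa)$ and $T_{f_{j}}(ax)$ explicitly via the Haagerup formula \eqref{Eqn=HaagerupComputation}, applies $\mathbb{E}_{N}$ using $\mathbb{E}_{N}\circ\sigma_{t}^{\varphi}=\mathbb{E}_{N}$, and reduces to a change-of-variables identity between two double scalar integrals; it then extends to general $a$ and $x$ by Kaplansky density and strong continuity on bounded sets, and says the $T_{\theta}$ case is analogous. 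Your approach instead isolates the structural equivariance $S\circ\Ad(\lambda_{t})=\sigma_{t}^{\varphi}\circ S$, which gives the identity for $a=\lambda_{t}$ and \emph{all} $x$ at once, and then only needs a density argument in the variable $a$. This is more conceptual and pinpoints exactly why the lemma holds, at the cost of having to discuss domain issues for $T_{\theta}$ (which you flag correctly); the paper's explicit computation sidesteps those issues by staying within the span of elements for which \eqref{Eqn=HaagerupComputation} applies directly, but requires a two-variable density argument and a slightly fiddly integral manipulation.
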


\begin{proof}


We first prove that $\mathbb{E}_{N}(T_{f_{j}}(xa))=\mathbb{E}_{N}(T_{f_{j}}(ax))$. 
 Suppose $a= \lambda( k)$ and $x= y \lambda(g)$ for $y \in M$, $k \in L^1(\R)$ and $g \in C_c(\R)$. Let us first compute $ T_{f_{j}}(xa)$ and $T_{f_{j}}(ax)$. By the formula \eqref{Eqn=HaagerupComputation} we have
\[
T_{f_{j}}(xa) = \int_{\mathbb{R}} f_{j}^{\ast}(-t)  (g \ast  k \ast f_{j})(t) \sigma^{\varphi}_{-t}(y) dt.
\]
By a similar computation we get
\[
T_{f_{j}}(ax) = \int_{\mathbb{R}} (f_{j}^{\ast}\ast  k)(-t) (g\ast f_{j})(t) \sigma^{\varphi}_{-t}(y) dt.
\]
We now apply $\mathbb{E}_{N}$ to these expressions and use the fact that $N$
is contained in the centralizer of $\varphi$, so $\mathbb{E}_{N} (\sigma^{\varphi}_{-t}(y)) = \mathbb{E}_{N} (y)$. It therefore suffices to prove the equality of the integrals $\int_{\mathbb{R}} f_{j}^{\ast}(-t)  (g \ast  k \ast f_{j})(t) dt$ and $\int_{\mathbb{R}} (f_{j}^{\ast}\ast  k)(-t) (g\ast f_{j})(t) dt$; Using the commutativity of the convolution on $\mathbb{R}$, we can rewrite the first one as
\[
\int_{\mathbb{R}} \int_{\mathbb{R}} f_{j}^{\ast}(-t) (g\ast f_{j})(t-s) {k }(s) ds dt
\]
and the second one is equal to
\[
\int_{\mathbb{R}} \int_{\mathbb{R}} f_{j}^{\ast}(-t-s) k (s) (g\ast f_{j}(t)) ds dt.
\]
In the second integral we can introduce a new variable $t':= t+s$ and it transforms into
\[
\int_{\mathbb{R}} \int_{\mathbb{R}} f_{j}^{\ast}(-t') (g\ast f_{j})(t'-s) k (s) ds dt',
\]
which is equal to the first one. For arbitrary $a \in \mathcal{L}(\mathbb{R})$ and $x \in M\rtimes_{\sigma^{\varphi}}\mathbb{R}$ we can find bounded nets $(a_i)_{i \in I}$ and $(x_i)_{i\in I}$ formed by linear combinations of elements of the form discussed above that converge strongly to $a$ and $x$, respectively, as a consequence of Kaplansky's density theorem. As multiplication is strongly continuous on bounded subsets, we have strong limits $\lim_{i\in I} a_{i} x_{i} = ax$ and $\lim_{i\in I} x_{i} a_{i}$. As both $\mathbb{E}_{N}$ and $T_{f_{j}}$ are strongly continuous on bounded subsets, we may conclude.

The equality  $\mathbb{E}_{N}(T_{\theta}(xa))=\mathbb{E}_{N}(T_{\theta}(ax))$ follows by a similar computation.
\end{proof}

The ideas appearing in the proof of the next statements are of a similar type.

\begin{prop} \label{Prop=ToCorner}
Let $N\subseteq M$
be a unital inclusion of von Neumann algebras which admits a faithful
normal conditional expectation $\mathbb{E}_{N}$. Assume that $N$
is finite and let $\tau\in N_{\ast}$ be a faithful normal tracial
state that we extend to a state $\varphi:=\tau\circ\mathbb{E}_{N}$
on $M$. Then, for every $j\in\mathbb{N}$, the following
statements hold:

\begin{enumerate}
\item  \label{Eqn=HardHAPEq1} The triple $(M\rtimes_{\sigma^{\varphi}}\mathbb{R},N,\widehat{\varphi}_{j})$
satisfies property $\text{(rHAP)}$ if and only if the triple $(p_{k}(M\rtimes_{\sigma^{\varphi}}\mathbb{R})p_{k},p_{k}Np_{k},p_{k}\widehat{\varphi}_{j}p_{k})$
satisfies property $\text{(rHAP)}$ for every $k\in\mathbb{N}$. Moreover,
the $\text{(rHAP)}$  may be witnessed by contractive maps,
i.e. we may assume that (1$^{\prime}$) holds.
\item  \label{Eqn=HardHAPEq2} If for every $k\in\mathbb{N}$ the property
$\text{(rHAP)}$ of the triple $(p_{k}(M\rtimes_{\sigma^{\varphi}}\mathbb{R})p_{k},p_{k}Np_{k},p_{k}\widehat{\varphi}_{j}p_{k})$
is witnessed by unital $p_{k}\widehat{\varphi}_{j}p_{k}$-preserving
approximating maps, then the relative Haagerup property of $(M\rtimes_{\sigma^{\varphi}}\mathbb{R},N,\widehat{\varphi}_{j})$
is witnessed by unital $\widehat{\varphi}_{j}$-preserving maps.
\item  \label{Eqn=HardHAPEq3} If the triple $(M\rtimes_{\sigma^{\varphi}}\mathbb{R},N,\widehat{\varphi}_{j})$
satisfies property $\text{(rHAP)}^{-}$ then the triple $(p_{k}(M\rtimes_{\sigma^{\varphi}}\mathbb{R})p_{k},p_{k}Np_{k},p_{k}\widehat{\varphi}_{j}p_{k})$
satisfies property $\text{(rHAP)}^{-}$ as well for every $k\in\mathbb{N}$.
\end{enumerate}
\end{prop}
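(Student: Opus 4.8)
The plan is to set $R:=M\rtimes_{\sigma^{\varphi}}\mathbb{R}$, $P_{k}:=p_{k}Rp_{k}$, $Q_{k}:=p_{k}Np_{k}$, and to move the relative Haagerup property back and forth between $(R,N,\widehat{\varphi}_{j})$ and the corners $(P_{k},Q_{k},p_{k}\widehat{\varphi}_{j}p_{k})$ by compression and inflation along the projections $p_{k}$. First I would record the structural facts that make the corners tractable: since $p_{k}\in\mathcal{L}(\mathbb{R})$ (Remark \ref{Rmk=Fourier2}) and $N\subseteq M^{\sigma^{\varphi}}$ we have $p_{k}\in N'\cap R$, and by Lemma \ref{Lem=TomitaTakesaki} every element of $\mathcal{L}(\mathbb{R})$ — in particular $p_{k}$ — lies in the centralizer of $\widehat{\varphi}_{j}$; consequently $\widehat{\varphi}_{j}(y)=\widehat{\varphi}_{j}(p_{k}yp_{k})+\widehat{\varphi}_{j}((1-p_{k})y(1-p_{k}))$ for $y\geq0$, and $p_{k}\to1$ strongly. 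By \eqref{Eqn=HkRadonNykodym}--\eqref{Eqn=PosBdInv} the element $v_{k}:=\lambda(f_{j})h_{k}^{1/2}$ is a positive, boundedly invertible element of $Q_{k}'\cap P_{k}$ with $p_{k}\widehat{\varphi}_{j}p_{k}=\tau_{\rtimes}(v_{k}\,\cdot\,v_{k})$ on $P_{k}$; as $\tau_{\rtimes}(p_{k})<\infty$, the corner $P_{k}$ is finite and, up to a positive scalar, $p_{k}\widehat{\varphi}_{j}p_{k}$ is of the form treated in Lemmas \ref{Lem=ContractiveMaps}, \ref{Lem=CompactTwist} and \ref{BannonFangApproach}. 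Finally, Lemma \ref{Lem=NewExpectation} identifies the conditional expectation onto $Q_{k}$ with $x\mapsto\mu_{k}^{-1}p_{k}\mathbb{E}_{N}^{\widehat{\varphi}_{j}}(x)p_{k}$, from which one reads off that, on $L^{2}(P_{k})=p_{k}Jp_{k}J\,L^{2}(R,\widehat{\varphi}_{j})$, the Jones projection of $(P_{k},Q_{k},p_{k}\widehat{\varphi}_{j}p_{k})$ equals the restriction of $\mu_{k}^{-1}p_{k}Jp_{k}J\,e_{N}^{\widehat{\varphi}_{j}}$; note $p_{k}Jp_{k}J$ is the orthogonal projection onto $L^{2}(P_{k})$ and $Jp_{k}J\in\langle R,N\rangle=JN'J$ (see \cite{PopaDelarocheBook}).

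For the forward implication of (1) and for (3), I would compress: given approximating maps $(\Phi_{i})_{i}$ for $(R,N,\widehat{\varphi}_{j})$, put $\Phi_{i}^{(k)}(x):=p_{k}\Phi_{i}(x)p_{k}$ on $P_{k}$. Complete positivity is immediate, $Q_{k}$-bimodularity follows from $p_{k}\in N'$, uniform boundedness is inherited, and the decrease condition holds because $(p_{k}\widehat{\varphi}_{j}p_{k})(\Phi_{i}^{(k)}(x))=\widehat{\varphi}_{j}(p_{k}\Phi_{i}(x)p_{k})\leq\widehat{\varphi}_{j}(\Phi_{i}(x))\leq\widehat{\varphi}_{j}(x)=(p_{k}\widehat{\varphi}_{j}p_{k})(x)$ for $x\geq0$ in $P_{k}$, using the splitting of $\widehat{\varphi}_{j}$ above. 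Since $x\Omega_{\widehat{\varphi}_{j}}\mapsto p_{k}xp_{k}\Omega_{\widehat{\varphi}_{j}}$ is exactly the projection $p_{k}Jp_{k}J$, one gets $(\Phi_{i}^{(k)})^{(2)}=(p_{k}Jp_{k}J)\Phi_{i}^{(2)}(p_{k}Jp_{k}J)$, and writing $\Phi_{i}^{(2)}$ as a norm-limit of $\sum a_{m}e_{N}^{\widehat{\varphi}_{j}}b_{m}$ and using the identity for $e_{Q_{k}}$ together with the ideal property of $\mathcal{K}$ inside the Jones algebra yields $(\Phi_{i}^{(k)})^{(2)}\in\mathcal{K}(P_{k},Q_{k},p_{k}\widehat{\varphi}_{j}p_{k})$; convergence to $\mathrm{id}_{P_{k}}$ (point-strongly, resp.\ in $\|\cdot\|_{2}$) is clear since $p_{k}\in P_{k}$.

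For the reverse implication of (1), the `Moreover', and for (2), I would inflate. Assuming each corner has $\text{(rHAP)}$, Lemma \ref{Lem=ContractiveMaps} lets me take the witnessing maps $\Phi_{i}^{(k)}$ contractive; for (2) the hypothesis already provides unital, $p_{k}\widehat{\varphi}_{j}p_{k}$-preserving ones. I would then set
\[
\Psi_{i}^{(k)}(x)=\Phi_{i}^{(k)}(p_{k}xp_{k})+\tfrac{1}{1-\mu_{k}}(1-p_{k})\,\mathbb{E}_{N}^{\widehat{\varphi}_{j}}\big((1-p_{k})x(1-p_{k})\big)\,(1-p_{k}),
\]
keeping the correction term only for (2). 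Each $\Psi_{i}^{(k)}$ is normal, completely positive and, as $p_{k}\in N'$, an $N$-bimodule map; without the correction it is contractive, and with it one checks $\Psi_{i}^{(k)}(1)=1$ (using $\mathbb{E}_{N}^{\widehat{\varphi}_{j}}(1-p_{k})=1-\mu_{k}$) and $\widehat{\varphi}_{j}\circ\Psi_{i}^{(k)}=\widehat{\varphi}_{j}$, the second summand being $(1-\mu_{k})^{-1}$ times the $(1-p_{k})\widehat{\varphi}_{j}(1-p_{k})$-preserving conditional expectation of $(1-p_{k})R(1-p_{k})$ onto $(1-p_{k})N(1-p_{k})$ (the analogue of Lemma \ref{Lem=NewExpectation} for $1-p_k$), so that the two pieces add up to $\widehat{\varphi}_{j}(x)$ by centrality of $p_{k}$. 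Relative compactness of the first summand is the compression computation read backwards; for the correction term, Lemma \ref{Lem=TomitaTakesaki} absorbs the trailing $(1-p_{k})$ into $\mathbb{E}_{N}^{\widehat{\varphi}_{j}}$ and exhibits its $L^{2}$-implementation as $(1-\mu_{k})^{-1}(1-p_{k})\,J(1-p_{k})J\,e_{N}^{\widehat{\varphi}_{j}}\,(1-p_{k})$, which lies in the ideal $\mathcal{K}(R,N,\widehat{\varphi}_{j})$ because $Jp_{k}J\in\langle R,N\rangle$. For point-strong convergence: for fixed $k$, $\Phi_{i}^{(k)}(p_{k}xp_{k})\to p_{k}xp_{k}$ in $i$, the correction term $C_{k}(x)$ obeys $\|C_{k}(x)\Omega_{\widehat{\varphi}_{j}}\|^{2}\leq\widehat{\varphi}_{j}(C_{k}(x^{*}x))=\widehat{\varphi}_{j}((1-p_{k})x^{*}x(1-p_{k}))\leq\|x\|^{2}(1-\mu_{k})\to0$, and $p_{k}xp_{k}\to x$ strongly in $k$; a diagonal $2\varepsilon$-argument on $L^{2}$ as in Lemma \ref{Lem=ApproxSOT}, translated back by Lemma \ref{Lem=SOTversusL2}, then produces the required net (contractive, resp.\ unital and $\widehat\varphi_j$-preserving).

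The step I expect to be the real obstacle is the relative-compactness bookkeeping in the (de)compression: because $e_{N}^{\widehat{\varphi}_{j}}$ does \emph{not} commute with $Jp_{k}J$, one cannot simply compress finite-rank operators, and the argument has to be routed through the explicit formula $e_{Q_{k}}=\mu_{k}^{-1}(p_{k}Jp_{k}J)e_{N}^{\widehat{\varphi}_{j}}(p_{k}Jp_{k}J)$ coming from Lemma \ref{Lem=NewExpectation}, the two-sidedness of the ideal $\mathcal{K}$ inside $\langle R,N\rangle=JN'J$, and the cyclicity of $\mathbb{E}_{N}^{\widehat{\varphi}_{j}}$ with respect to $\mathcal{L}(\mathbb{R})$ (Lemma \ref{Lem=TomitaTakesaki}); a minor but persistent nuisance throughout is that $p_{k}\widehat{\varphi}_{j}p_{k}$ is only a positive scalar multiple of a state, so every normalization must be carried along carefully (cf.\ Remark \ref{Rmk=HAPforPosFunctionals}).
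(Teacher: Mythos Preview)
Your proposal is correct and follows essentially the same compression/inflation strategy as the paper. Two minor differences are worth noting. First, for the relative-compactness bookkeeping you route the argument through the identification $\langle R,N\rangle=JN'J$ (so that $Jp_kJ\in\langle R,N\rangle$) and the ideal property of $\mathcal{K}$; the paper instead works at the map level and uses Lemma~\ref{Lem=TomitaTakesaki} to commute $p_k\in\mathcal{L}(\mathbb{R})$ through $\mathbb{E}_N^{\widehat{\varphi}_j}=\mathbb{E}_N\circ T_{f_j}$, obtaining directly $p_k(a\mathbb{E}_N^{\widehat{\varphi}_j}b)(x)p_k=(p_kap_k)\mathbb{E}_N^{\widehat{\varphi}_j}((p_kbp_k)x)$ and hence relative finite rank in the corner via Lemma~\ref{Lem=NewExpectation}. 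Your concern that ``$e_N$ does not commute with $Jp_kJ$'' is slightly overstated: since $p_k$ lies in the centralizer of $\widehat{\varphi}_j$ one has $Jp_kJe_N=p_ke_N$, which makes your compression computation go through cleanly. Second, in (2) the paper introduces an auxiliary $(1-\epsilon_k)$ scaling in the spirit of Proposition~\ref{Prop=BannonFang}, giving the correction $a_k\mathbb{E}_N^{\widehat{\varphi}_j}(b_k^{1/2}\,\cdot\,b_k^{1/2})$ with $b_k=1-(1-\epsilon_k)p_k$; your correction term is exactly this with $\epsilon_k=0$, which is harmless here because $1-p_k$ is already a projection and $\mathbb{E}_N^{\widehat{\varphi}_j}(1-p_k)=1-\mu_k>0$ is invertible.
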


\begin{proof}
\emph{First part of \eqref{Eqn=HardHAPEq1}}: For
the ``$\Rightarrow$'' direction assume that $(M\rtimes_{\sigma^{\varphi}}\mathbb{R},N,\widehat{\varphi}_{j})$
satisfies property $\text{(rHAP)}$, that it is witnessed by a net
of maps $(\Phi_{i})_{i\in I}$ and fix $k\in\mathbb{N}$. We will
show that $(p_{k}\Phi_{i}(\:\cdot\:)p_{k})_{i\in I}$ is a net of
approximating maps witnessing the relative Haagerup property of $(p_{k}(M\rtimes_{\sigma^{\varphi}}\mathbb{R})p_{k},p_{k}Np_{k},p_{k}\widehat{\varphi}_{j}p_{k})$:

It is clear that for every $i\in I$ the map $p_{k}\Phi_{i}(\:\cdot\:)p_{k}$
is completely positive, that the net $(p_{k}\Phi_{i}(\:\cdot\:)p_{k})_{i\in I}$
admits a uniform bound on its norms and that $p_{k}\Phi_{i}(\:\cdot\:)p_{k}\rightarrow\text{id}$
in the point-strong topology in $i$ as maps on $p_{k}(M\rtimes_{\sigma^{\varphi}}\mathbb{R})p_{k}$.

By our assumptions, $N\subseteq M^{\sigma^{\varphi}}$ and hence $p_{k}$
and $N$ commute. Hence for $a,b\in N$, $x\in p_{k}(M\rtimes_{\sigma^{\varphi}}\mathbb{R})p_{k}$ we have
\[
p_{k}\Phi_{i}(p_{k}ap_{k}xp_{k}bp_{k})p_{k}=p_{k}\Phi_{i}(ap_{k}xp_{k}b)p_{k}=p_{k}a\Phi_{i}(p_{k}xp_{k})bp_{k}=p_{k}ap_{k}\Phi_{i}(p_{k}xp_{k})p_{k}bp_{k},
\]
 which shows that $p_{k}\Phi_{i}(\:\cdot\:)p_{k}$ is a $p_{k}Np_{k}$-$p_{k}Np_{k}$-bimodule
map for every $i\in I$.

 We have by \cite[Theorem VIII.3.19.(vi)]{Takesaki2},  \cite[Theorem X.1.17.(ii)]{Takesaki2} and the fact that $p_k$ and $\lambda(f_j)$ commute that
\[
\sigma^{\widehat{\varphi}_j}_t(p_k) =  \lambda(f_j)^{it} \sigma^{\widehat{\varphi}}_t(p_k) \lambda(f_j)^{-it} =  \lambda(f_j)^{it} p_k \lambda(f_j)^{-it} = p_k.
\]
Therefore by \cite[Lemma 2.3]{CS-IMRN}, for $x\in p_{k}(M\rtimes_{\sigma^{\varphi}}\mathbb{R})p_{k}$
positive,
\[
(p_{k}\widehat{\varphi}_{j}p_{k})(p_{k}\Phi_{i}(x)p_{k})=\widehat{\varphi}_{j}(p_{k}\Phi_{i}(x)p_{k})\leq\widehat{\varphi}_{j}(\Phi_{i}(x))\leq\widehat{\varphi}_{j}(x)=(p_{k}\widehat{\varphi}_{j}p_{k})(x)\text{,}
\]
i.e. $(p_{k}\widehat{\varphi}_{j}p_{k})\circ(p_{k}\Phi_{i}(\:\cdot\:)p_{k})\leq p_{k}\widehat{\varphi}_{j}p_{k}$.

Now, for every map $\Phi$ on $M\rtimes_{\sigma^{\varphi}}\mathbb{R}$
of the form $\Phi=a\mathbb{E}_{N}^{\widehat{\varphi}_{j}}(\cdot)b$
with $a,b\in M\rtimes_{\sigma^{\varphi}}\mathbb{R}$ and $x\in p_{k}(M\rtimes_{\sigma^{\varphi}}\mathbb{R})p_{k}$
we have, using Lemma \ref{Lem=TomitaTakesaki} (recalling that $\mathbb{E}_{N}^{\widehat{\varphi}_{j}} = \mathbb{E}_{N} \circ T_{f_j}$) and the fact that $p_{k}$
commutes with $N$, that
\[
p_{k}\Phi(x)p_{k}=p_{k}a\mathbb{E}_{N}^{\widehat{\varphi}_{j}}(bp_{k}xp_{k})p_{k}=p_{k}a\mathbb{E}_{N}^{\widehat{\varphi}_{j}}(p_{k}bp_{k}x)p_{k}=(p_{k}ap_{k})\mathbb{E}_{N}^{\widehat{\varphi}_{j}}(p_{k}bp_{k}x)\text{.}
\]
Lemma \ref{Lem=NewExpectation} then implies that $(p_{k}\Phi(\:\cdot\:)p_{k})^{(2)}\in\mathcal{K}_{00}(p_{k}(M\rtimes_{\sigma^{\varphi}}\mathbb{R})p_{k},p_{k}Np_{k},p_{k}\widehat{\varphi}_{j}p_{k})$.
By taking linear combinations and approximating we see that if $\Phi$
is a map on $M\rtimes_{\sigma^{\varphi}}\mathbb{R}$ with $\Phi^{(2)}\in\mathcal{K}(M\rtimes_{\sigma^{\varphi}}\mathbb{R},N,\widehat{\varphi}_{j})$
then  $(p_{k}\Phi(\:\cdot\:)p_{k})^{(2)}\in\mathcal{K}(p_{k}(M\rtimes_{\sigma^{\varphi}}\mathbb{R})p_{k},p_{k}Np_{k},p_{k}\widehat{\varphi}_{j}p_{k})$.
Therefore for the approximating maps $\Phi_{i}$, $i\in I$ we conclude
that
\[
(p_{k}\Phi_{i}(\:\cdot\:)p_{k})^{(2)}\in\mathcal{K}(p_{k}(M\rtimes_{\sigma^{\varphi}}\mathbb{R})p_{k},p_{k}Np_{k},p_{k}\widehat{\varphi}_{j}p_{k})\text{.}
\]
This shows that $(p_{k}\Phi_{i}(\:\cdot\:)p_{k})_{i\in I}$ indeed
witnesses the relative Haagerup property of the triple $(p_{k}(M\rtimes_{\sigma^{\varphi}}\mathbb{R})p_{k},p_{k}Np_{k},p_{k}\widehat{\varphi}_{j}p_{k})$.\\

\emph{\eqref{Eqn=HardHAPEq3}}: Note that if $(M\rtimes_{\sigma^{\varphi}}\mathbb{R},N,\widehat{\varphi}_{j})$
has property $\text{(rHAP)}^{-}$ witnessed by the net $(\Phi_{i})_{i\in I}$,
then property $\text{(rHAP)}^{-}$ of $(p_{k}(M\rtimes_{\sigma^{\varphi}}\mathbb{R})p_{k},p_{k}Np_{k},p_{k}\widehat{\varphi}_{j}p_{k})$
follows in a very similar way as above. The only condition that  remains to be
checked is that the $L^{2}$-implementation $(p_{k}\Phi_{i}(\:\cdot\:)p_{k})^{(2)}$
exists. For this, assume that there exists $C>0$ with $\widehat{\varphi}_{j}(\Phi_{i}(x)^{\ast}\Phi_{i}(x))\leq C\widehat{\varphi}_{j}(x^{\ast}x)$
for all $x\in M\rtimes_{\sigma^{\varphi}}\mathbb{R}$. Then, using again \cite[Lemma2.3]{CS-IMRN} for the second inequality,

\begin{eqnarray}
\nonumber
(p_{k}\widehat{\varphi}_{j}p_{k})((p_{k}\Phi_{i}(x)p_{k})^{\ast}(p_{k}\Phi_{i}(x)p_{k}))&=& \widehat{\varphi}_{j}(p_{k}\Phi_{i}(x^{\ast})p_{k}\Phi_{i}(x)p_{k}) \\
\nonumber
&\leq& \widehat{\varphi}_{j}(p_{k}\Phi_{i}(x)^{\ast}\Phi_{i}(x)p_{k}) \\
\nonumber
&\leq& \widehat{\varphi}_{j}(\Phi_{i}(x)^{\ast}\Phi_{i}(x)) \\
\nonumber
&\leq& C\widehat{\varphi}_{j}(x^{\ast}x) \\
\nonumber
&=& C(p_{k}\widehat{\varphi}_{j}p_{k})(x^{\ast}x)
\end{eqnarray}
for all $x\in p_{k}(M\rtimes_{\sigma^{\varphi}}\mathbb{R})p_{k}$.
The claim follows. \\

\emph{Second part of \eqref{Eqn=HardHAPEq1}}: For the ``$\Leftarrow$''
direction assume that for every $k\in\mathbb{N}$ the triple $(p_{k}(M\rtimes_{\sigma^{\varphi}}\mathbb{R})p_{k},p_{k}Np_{k},p_{k}\widehat{\varphi}_{j}p_{k})$
satisfies property $\text{(rHAP)}$ witnessed by approximating maps
$(\Phi_{k,i})_{i\in I_{k}}$. We wish to apply Lemma \ref{Lem=ContractiveMaps}
for which we check the conditions. By $N\subseteq M^{\sigma^{\varphi}}$
we have that $N$ and $\lambda_{t}$ commute for every $t\in\mathbb{R}$
and hence so do $N$ and $h_{k}$. In particular, $h_{k}\in(p_{k}Np_{k})^{\prime}\cap p_{k}(M\rtimes_{\sigma^{\varphi}}\mathbb{R})p_{k}$.
By \eqref{Eqn=PosBdInv} and the remarks after it, it follows that
$\lambda(f_{j})h_{k}$ is positive and boundedly invertible. Now,
from \eqref{Eqn=HkRadonNykodym} we see that the conditions of Lemma
\ref{Lem=ContractiveMaps} are fulfilled and this lemma shows that
the maps of the net $(\Phi_{k,i})_{i\in I_{k}}$ can be chosen contractively,
i.e. we may assume that (1$^{\prime}$) holds.

We shall prove that $(\Phi_{k,i}(p_{k}\:\cdot\:p_{k}))_{k\in\mathbb{N},i\in I_{k}}$
induces a net witnessing property $\text{(rHAP)}$ of $(M\rtimes_{\sigma^{\varphi}}\mathbb{R},N,\widehat{\varphi}_{j})$.
This in particular shows that we may assume (1$^{\prime}$).

By the contractivity of the $\Phi_{k,i}$ it is clear that the maps
$\Phi_{k,i}(p_{k}\:\cdot\:p_{k})$ are completely positive with a
uniform bound on their norms.

Since $N$ and $p_{k}$ commute we see that for $a,b\in N$ and $x\in M\rtimes_{\sigma^{\varphi}}\mathbb{R}$
\begin{eqnarray}
\nonumber
\Phi_{k,i}(p_{k}axbp_{k}) &= & \Phi_{k,i}(p_{k}ap_{k}xp_{k}bp_{k}) \\
\nonumber
&=& p_{k}ap_{k}\Phi_{k,i}(p_{k}xp_{k})p_{k}bp_{k}\\
\nonumber
&= & ap_{k}\Phi_{k,i}(p_{k}xp_{k})p_{k}b \\
\nonumber
&=& a\Phi_{k,i}(p_{k}xp_{k})b\text{.}
\end{eqnarray}
 Therefore $\Phi_{k,i}(p_{k}\:\cdot\:p_{k})$ is an $N$-$N$ bimodule
map for every $k\in\mathbb{N}$, $i\in I_{k}$.

We have, using again \cite[Lemma 2.3]{CS-IMRN}, that for $x\in(M\rtimes_{\sigma^{\varphi}}\mathbb{R})^{+}$

\begin{eqnarray}
\nonumber
\widehat{\varphi}_{j}(\Phi_{k,i}(p_{k}xp_{k}))&=&\widehat{\varphi}_{j}(p_{k}\Phi_{k,i}(p_{k}xp_{k})p_{k})\\
\nonumber
&=& (p_{k}\widehat{\varphi}_{j}p_{k})(\Phi_{k,i}(p_{k}xp_{k})) \\
\nonumber
&\leq& (p_{k}\widehat{\varphi}_{j}p_{k})(p_{k}xp_{k})\\
\nonumber
&=& \widehat{\varphi}_{j}(p_{k}xp_{k})\leq\widehat{\varphi}_{j}(x).
\end{eqnarray}
i.e. $\widehat{\varphi}_{j}\circ\Phi_{k,i}(p_{k}\:\cdot\:p_{k})\leq\widehat{\varphi}_{j}$.

We claim that $(\Phi_{k,i}(p_{k}\:\cdot\:p_{k}))^{(2)}\in\mathcal{K}(M\rtimes_{\sigma^{\varphi}}\mathbb{R},N,\widehat{\varphi}_{j})$
for all $k\in\mathbb{N}$, $i\in I_{k}$. Indeed, take an arbitrary
map $\Phi$ of the form $\Phi(x)=p_{k}ap_{k}\mathbb{E}_{N}(T_{f_{j}}(p_{k}bp_{k}x))$
for $x\in M\rtimes_{\sigma^{\varphi}}\mathbb{R}$ where $a,b\in M\rtimes_{\sigma^{\varphi}}\mathbb{R}$.
The $L^{2}$-implementations of such operators span $\mathcal{K}_{00}(p_{k}(M\rtimes_{\sigma^{\varphi}}\mathbb{R})p_{k},p_{k}Np_{k},p_{k}\widehat{\varphi}_{j}p_{k})$
by Lemma \ref{Lem=NewExpectation}. Lemma \ref{Lem=TomitaTakesaki}
and the fact that $p_{k}$ and $N$ commute show that for $x\in M\rtimes_{\sigma^{\varphi}}\mathbb{R}$,
\[
\Phi(p_{k}xp_{k})=p_{k}ap_{k}\mathbb{E}_{N}(T_{f_{j}}(p_{k}bp_{k}xp_{k}))=p_{k}ap_{k}\mathbb{E}_{N}(T_{f_{j}}(p_{k}bp_{k}x)).
\]
 Then, since $\mathbb{E}_{N}\circ T_{f_{j}}$ is the faithful normal
$\widehat{\varphi}_{j}$-preserving conditional expectation of $M\rtimes_{\sigma^{\varphi}}\mathbb{R}$
onto $N$, this implies that $(\Phi(p_k \cdot p_k))^{(2)} \in\mathcal{K}_{00}(M\rtimes^{\sigma^{\varphi}}\mathbb{R},N,\widehat{\varphi}_{j})$.  
By taking linear combinations and approximation we see that if $\Phi^{(2)}\in\mathcal{K}(p_{k}(M\rtimes_{\sigma^{\varphi}}\mathbb{R})p_{k},p_{k}Np_{k},p_{k}\widehat{\varphi}_{j}p_{k})$,
then $( \Phi(p_k\cdot p_{k}))^{(2)} \in\mathcal{K}(M\rtimes^{\sigma^{\varphi}}\mathbb{R},N,\widehat{\varphi}_{j})$.
We conclude that
\[
(\Phi_{k,i}(p_{k}\:\cdot\:p_{k}))^{(2)}\in\mathcal{K}(M\rtimes^{\sigma^{\varphi}}\mathbb{R},N,\widehat{\varphi}_{j}).
\]

Now, for $x\in M\rtimes_{\sigma^{\varphi}}\mathbb{R}$ we see that
\[
\lim_{k\rightarrow\infty}\lim_{i\in I_{k}}\Phi_{k,i}(p_{k}xp_{k})=x,
\]
in the strong topology. Then a variant of Lemma \ref{Lem=ApproxSOT}
shows that there is a directed set $\mathcal{F}$ and a function $(\widetilde{k},\widetilde{i}):\mathcal{F}\rightarrow\{(k,i)\mid k\in\mathbb{N},i\in I_{k}\}$,
$F\mapsto(\widetilde{k}(F),\widetilde{i}(F))$ such $(\Phi_{\widetilde{k}(F),\widetilde{i}(F)})_{F\in\mathcal{F}}$
witnesses the relative Haagerup property of $(M\rtimes_{\sigma^{\varphi}}\mathbb{R},N,\widehat{\varphi}_{j})$.\\

\emph{\eqref{Eqn=HardHAPEq2}}: It only remains to show that if for
every $k\in\mathbb{N}$ the property $\text{(rHAP)}$ of the triple
$(p_{k}(M\rtimes_{\sigma^{\varphi}}\mathbb{R})p_{k},p_{k}Np_{k},p_{k}\widehat{\varphi}_{j}p_{k})$
is witnessed by unital $p_{k}\widehat{\varphi}_{j}p_{k}$-preserving
approximating maps, then the relative Haagerup property of $(M\rtimes_{\sigma^{\varphi}}\mathbb{R},N,\widehat{\varphi}_{j})$
is witnessed by unital $\widehat{\varphi}_{j}$-preserving maps. For
this, assume that the maps $(\Phi_{k,i})_{i\in I}$ from before are
unital and $p_{k}\widehat{\varphi}_{j}p_{k}$-preserving and choose
a sequence $(\epsilon_{k})_{k\in\mathbb{N}}\subseteq\left(0,1\right)$
with $\epsilon_{k}\rightarrow0$. Recall that $p_{k}\in N^{\prime}\cap(M\rtimes_{\sigma^{\varphi}}\mathbb{R})$
and note that $\mathbb{E}_{N}^{\widehat{\varphi}_{j}}(1-(1-\epsilon_{k})p_{k})\geq\epsilon_{k}$.
We then have $\mathbb{E}_{N}^{\widehat{\varphi}_{j}}(1-(1-\epsilon_{k})p_{k})\in N\cap N^{\prime}$,
the inverse $(\mathbb{E}_{N}^{\widehat{\varphi}_{j}}(1-(1-\epsilon_{k})p_{k}))^{-1}\in N\cap N^{\prime}$
exists and $a_{k}:=(1-(1-\epsilon_{k})p_{k})(\mathbb{E}_{N}^{\widehat{\varphi}_{j}}(1-(1-\epsilon_{k})p_{k}))^{-1}\in N^{\prime}\cap(M\rtimes_{\sigma^{\varphi}}\mathbb{R})$
is positive. Set $b_{k}:=1-(1-\epsilon_{k})p_{k} geq 0$. Define the maps
\begin{eqnarray}
\nonumber
\widetilde{\Phi}_{k,i}(\: \cdot \: ):=(1-\epsilon_{k})\Phi_{k,i}(p_{k}\:\cdot\:p_{k})+a_{k}\mathbb{E}_{N}^{\widehat{\varphi}_{j}}(b_{k}^{1/2}\:\cdot\:b_{k}^{1/2}).
\end{eqnarray}
Obviously $\widetilde{\Phi}_{k,i}$ is normal, completely positive and $N$-$N$-bimodular.  We may finish the proof as in Theorem \ref{BannonFangApproach} now; since the statement of that theorem is not directly applicable here we will give the complete proof for the convenience of the reader.

We have
\begin{eqnarray}
\nonumber
\widetilde{\Phi}_{k,i}(1)=(1-\epsilon_{k})\Phi_{k,i}(p_{k})+a_{k}\mathbb{E}_{N}^{\widehat{\varphi}_{j}}(b_{k})=(1-\epsilon_{k})p_{k}+(1-(1-\epsilon_{k})p_{k})=1\text{.}
\end{eqnarray}
Now, since $\Phi_{k,i}$ is $p_{k}\widehat{\varphi}_{j}p_{k}$-preserving
we have that $\widehat{\varphi}_{j}\circ\Phi_{k,i}(p_{k}xp_{k})=\widehat{\varphi_{j}}(p_{k}xp_{k})$
for all $x\in M\rtimes_{\sigma^{\varphi}}\mathbb{R}$, and hence with
Lemma \ref{Lem=TomitaTakesaki} we deduce that
\begin{eqnarray}
\nonumber
\widehat{\varphi}_{j}\circ\widetilde{\Phi}_{k,i}(x) &=& (1-\epsilon_{k})\widehat{\varphi}_{j}(\Phi_{k,i}(p_{k}xp_{k}))+\widehat{\varphi}_{j}(a_{k}\mathbb{E}_{N}^{\widehat{\varphi}_{j}}(b_{k}^{1/2}xb_{k}^{1/2}))\\
\nonumber
&=& (1-\epsilon_{k})\widehat{\varphi}_{j}(p_{k}xp_{k})+\widehat{\varphi}_{j}(\mathbb{E}_{N}^{\widehat{\varphi}_{j}}(a_{k})b_{k}^{1/2}xb_{k}^{1/2}) \\
\nonumber
&=& (1-\epsilon_{k})\widehat{\varphi}_{j}\circ\mathbb{E}_{N}^{\widehat{\varphi}_{j}}(p_{k}xp_{k})+\widehat{\varphi}_{j}\circ\mathbb{E}_{N}^{\widehat{\varphi}_{j}}(b_{k}^{1/2}xb_{k}^{1/2})\\
\nonumber
&=& (1-\epsilon_{k})\widehat{\varphi}_{j}\circ\mathbb{E}_{N}^{\widehat{\varphi}_{j}}(p_{k}x)+\widehat{\varphi}_{j}\circ\mathbb{E}_{N}^{\widehat{\varphi}_{j}}(b_{k}x)\\
\nonumber
&=& (1-\epsilon_{k})\widehat{\varphi}_{j}(p_{k}x)+\widehat{\varphi}_{j}(b_{k}x) \\
\nonumber
&=& \widehat{\varphi}_{j}(x)\text{.}
\end{eqnarray}
By the fact that  $(\Phi_{k,i}(p_{k}\:\cdot\:p_{k}))^{(2)}\in\mathcal{K}(M\rtimes_{\sigma^{\varphi}}\mathbb{R},N,\widehat{\varphi}_{j})$
and by Lemma \ref{Lem=TomitaTakesaki}, we have
\begin{eqnarray}
\nonumber
\widetilde{\Phi}_{k,i}^{(2)}=(1-\epsilon_{k})(\Phi_{k,i}(p_{k}\:\cdot\:p_{k}))^{(2)}+a_{k}e_{N}^{\widehat{\varphi}_{j}}b_{k}\in\mathcal{K}(M\rtimes_{\sigma^{\varphi}}\mathbb{R},N,\widehat{\varphi}_{j})\text{.}
\end{eqnarray}
Further, for every $x\in(M\rtimes_{\sigma^{\varphi}}\mathbb{R})_{+}$,
\begin{eqnarray}
\nonumber
\widetilde{\Phi}_{k,i}(x)-(1-\epsilon_{k})\Phi_{k,i}(p_{k}xp_{k}) &=& a_{k}\mathbb{E}_{N}^{\widehat{\varphi}_{j}}(b_{k}^{1/2}xb_{k}^{1/2}) \\
\nonumber
&\leq& \left\Vert x\right\Vert a_{k}\mathbb{E}_{N}^{\widehat{\varphi}_{j}}(b_{k})\\
\nonumber
&=& \left\Vert x\right\Vert (1-(1-\epsilon_{k})p_{k})\text{,}
\end{eqnarray} 
from which we deduce that $\lim_{F\in\mathcal{F}}\widetilde{\Phi}_{\widetilde{k}(F),\widetilde{i}(F)}=\text{id}_{M\rtimes_{\sigma^{\varphi}}\mathbb{R}}$.
This implies that the net $(\widetilde{\Phi}_{\widetilde{k}(F),\widetilde{i}(F)})_{F\in\mathcal{F}}$
of unital $\widehat{\varphi}_{j}$-preserving maps witnesses the relative
Haagerup property of $(M\rtimes_{\sigma^{\varphi}}\mathbb{R},N,\widehat{\varphi}_{j})$.
\end{proof}

We are now ready to formulate the key statement of this section. Note
that for every $k\in\mathbb{N}$ the von Neumann algebra $p_{k}(M\rtimes_{\sigma^{\varphi}}\mathbb{R})p_{k}$
is finite with a faithful normal tracial state $p_{k}\tau_{\rtimes}p_{k}$.

\begin{thm} \label{Prop=HAPtoCorner}
Let $N\subseteq M$
be a unital inclusion of von Neumann algebras which admits a faithful
normal conditional expectation $\mathbb{E}_{N}$. Assume that $N$
is finite and let $\tau\in N_{\ast}$ be a faithful normal tracial
state that we extend to a state $\varphi:=\tau\circ\mathbb{E}_{N}$
on $M$. Then the following are equivalent:

\begin{enumerate}
\item   \label{Eqn=HAPEq1} The triple $(M,N,\varphi)$ has property
$\text{(rHAP)}$;
\item   \label{Eqn=HAPEq2} $(M\rtimes_{\sigma^{\varphi}}\mathbb{R},N,\widehat{\varphi}_{j})$
has property $\text{(rHAP)}$ for every $j\in\mathbb{N}$;
\item   \label{Eqn=HAPEq3} $(p_{k}(M\rtimes_{\sigma^{\varphi}}\mathbb{R})p_{k},p_{k}Np_{k},p_{k}\tau_{\rtimes}p_{k})$
has property $\text{(rHAP)}$ for every $k\in\mathbb{N}$.
\end{enumerate}
Further, the following statement holds:
\begin{enumerate} \setcounter{enumi}{3}
\item  \label{Eqn=HAPEq4} If the triple $(M,N,\varphi)$ has property
$\text{(rHAP)}^{-}$, then $(p_{k}(M\rtimes_{\sigma^{\varphi}}\mathbb{R})p_{k},p_{k}Np_{k},p_{k}\tau_{\rtimes}p_{k})$
has property $\text{(rHAP)}^-$ for every $k\in\mathbb{N}$.
\end{enumerate}
\end{thm}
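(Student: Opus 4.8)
The plan is to assemble the reductions already established in this section; the one genuinely new ingredient is a Radon--Nikodym twist which, for each $j$ and $k$, identifies the corner triple carrying the non-tracial functional $p_k\widehat{\varphi}_j p_k$ with the one carrying the trace $p_k\tau_\rtimes p_k$. The equivalence \eqref{Eqn=HAPEq1} $\Leftrightarrow$ \eqref{Eqn=HAPEq2} is precisely Theorem \ref{Thm=NormalToCross}\eqref{Thm=NormalToCross(1)}, so the content lies in matching up \eqref{Eqn=HAPEq2} with \eqref{Eqn=HAPEq3} and its one-sided $\text{(rHAP)}^{-}$ analogue \eqref{Eqn=HAPEq4}.

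Fix $j,k\in\mathbb{N}$ and set $v_{j,k}:=\lambda(f_j)h_k^{1/2}=h_k^{1/2}\lambda(f_j)$. Since $f_j$ is real and even, $\lambda(f_j)$ is a positive operator with Gaussian Fourier transform (Remark \ref{Rmk=Fourier}); it commutes with $h_k$, and since $N\subseteq M^{\sigma^\varphi}$ it commutes with $N$. In the Fourier picture of Remark \ref{Rmk=Fourier2} the restriction of $\lambda(f_j)$ to the range of $p_k$ is multiplication by a Gaussian on the compact interval $[-\log k,\log k]$, hence is boundedly invertible in the corner $p_k(M\rtimes_{\sigma^\varphi}\mathbb{R})p_k$, while $h_k$ is boundedly invertible there by construction; thus $v_{j,k}$, being the product of two commuting positive operators each boundedly invertible in the corner, is positive and boundedly invertible there and lies in $(p_kNp_k)'\cap p_k(M\rtimes_{\sigma^\varphi}\mathbb{R})p_k$. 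Using $\lambda(f_j)^\ast=\lambda(f_j)$, formula \eqref{Eqn=HkRadonNykodym} reads $p_k\widehat{\varphi}_j p_k=v_{j,k}\,(p_k\tau_\rtimes p_k)\,v_{j,k}$ as functionals on the corner. I would then apply Lemma \ref{Lem=CompactTwist} inside the finite algebra $p_k(M\rtimes_{\sigma^\varphi}\mathbb{R})p_k$ with subalgebra $p_kNp_k$, the tracial state proportional to $p_k\tau_\rtimes p_k$ (rescaling is harmless, cf.\ Remark \ref{Rmk=HAPforPosFunctionals}) and $h:=v_{j,k}$: for a completely positive $p_kNp_k$-bimodular map, the existence and relative compactness of the $L^2$-implementation transfer between the functionals $p_k\tau_\rtimes p_k$ and $p_k\widehat{\varphi}_j p_k$, in either direction, via the assignments $\Psi\mapsto\Psi^{h}$ and $\Psi\mapsto\Psi^{h^{-1}}$ of that lemma. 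The remaining defining conditions --- complete positivity, $p_kNp_k$-bimodularity, the uniform bound, domination of the functional, and point-strong (resp.\ $\|\cdot\|_2$-) convergence to the identity --- are preserved by this conjugation through routine estimates involving $\|v_{j,k}^{\pm1}\|$ and the traciality of $\tau_\rtimes$; in particular $\|hbh\|_{2,\tau_\rtimes}^2=\tau_\rtimes(h^2b^\ast h^2b)\le\|h\|^2\tau_\rtimes(hb^\ast bh)=\|h\|^2\|b\|_{2,h\tau_\rtimes h}^2$ (using $\tau_\rtimes(ZX)\le\tau_\rtimes(ZY)$ for $0\le X\le Y$, $Z\ge 0$) lets the $\text{(rHAP)}^{-}$ case go through with no uniform bound available. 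So, for every $j,k$, property $\text{(rHAP)}$ (resp.\ $\text{(rHAP)}^{-}$) of $(p_k(M\rtimes_{\sigma^\varphi}\mathbb{R})p_k,p_kNp_k,p_k\widehat{\varphi}_j p_k)$ is equivalent to (resp.\ implied by) that of $(p_k(M\rtimes_{\sigma^\varphi}\mathbb{R})p_k,p_kNp_k,p_k\tau_\rtimes p_k)$, the latter visibly independent of $j$.

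Given this, \eqref{Eqn=HAPEq2} $\Leftrightarrow$ \eqref{Eqn=HAPEq3} follows by chaining the previous paragraph with Proposition \ref{Prop=ToCorner}\eqref{Eqn=HardHAPEq1}: $(M\rtimes_{\sigma^\varphi}\mathbb{R},N,\widehat{\varphi}_j)$ has $\text{(rHAP)}$ for all $j$ iff $(p_k(M\rtimes_{\sigma^\varphi}\mathbb{R})p_k,p_kNp_k,p_k\widehat{\varphi}_j p_k)$ has $\text{(rHAP)}$ for all $j,k$, iff $(p_k(M\rtimes_{\sigma^\varphi}\mathbb{R})p_k,p_kNp_k,p_k\tau_\rtimes p_k)$ has $\text{(rHAP)}$ for all $k$. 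For \eqref{Eqn=HAPEq4} I would run the chain one-directionally: $\text{(rHAP)}^{-}$ of $(M,N,\varphi)$ gives, by the $\text{(rHAP)}^{-}$ part of Theorem \ref{Thm=NormalToCross}\eqref{Thm=NormalToCross(1)}, $\text{(rHAP)}^{-}$ of $(M\rtimes_{\sigma^\varphi}\mathbb{R},N,\widehat{\varphi}_j)$ for every $j$; Proposition \ref{Prop=ToCorner}\eqref{Eqn=HardHAPEq3} then yields $\text{(rHAP)}^{-}$ of $(p_k(M\rtimes_{\sigma^\varphi}\mathbb{R})p_k,p_kNp_k,p_k\widehat{\varphi}_j p_k)$ for all $k$, and the twist by $v_{j,k}$ (used only in the direction that needs no uniform bound) delivers $\text{(rHAP)}^{-}$ of $(p_k(M\rtimes_{\sigma^\varphi}\mathbb{R})p_k,p_kNp_k,p_k\tau_\rtimes p_k)$.

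The main obstacle, I expect, is the bookkeeping around conditional expectations: one must make sure that the relative-compactness ideals $\mathcal{K}(\,\cdot\,,\,\cdot\,,p_k\widehat{\varphi}_j p_k)$ and $\mathcal{K}(\,\cdot\,,\,\cdot\,,p_k\tau_\rtimes p_k)$ are built from the expectations that conjugation by $v_{j,k}$ actually intertwines --- concretely, that the expectation $\mathbb{E}_{p_kNp_k}^{h}$ appearing in the proof of Lemma \ref{Lem=CompactTwist} coincides, by uniqueness of state-preserving conditional expectations, with the $p_k\widehat{\varphi}_j p_k$-preserving expectation $\mu_k^{-1}p_k\mathbb{E}_N(T_{f_j}(\,\cdot\,))p_k$ of Lemma \ref{Lem=NewExpectation} (recall $\mathbb{E}_N^{\widehat{\varphi}_j}=\mathbb{E}_N\circ T_{f_j}$) --- and, relatedly, to push the $\text{(rHAP)}^{-}$ assertion through the twist purely at the level of $\|\cdot\|_2$-estimates rather than via the strong topology.
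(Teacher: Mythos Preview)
Your proof is correct and follows essentially the same route as the paper: the element $v_{j,k}=\lambda(f_j)h_k^{1/2}$ is precisely the paper's $A_{j,k}$, and the Radon--Nikodym twist via Lemma \ref{Lem=CompactTwist} together with the chaining through Theorem \ref{Thm=NormalToCross} and Proposition \ref{Prop=ToCorner} is exactly how the paper proceeds. Your final paragraph's concern about matching the conditional expectations is handled in the paper implicitly by the uniqueness of the state-preserving expectation, just as you suggest.
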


\begin{proof}
The equivalence ``\eqref{Eqn=HAPEq1} $\Leftrightarrow$
\eqref{Eqn=HAPEq2}'' was proved in Theorem \ref{Thm=NormalToCross}.

``\eqref{Eqn=HAPEq2} $\Rightarrow$ \eqref{Eqn=HAPEq3}'': Assume
that for $j\in\mathbb{N}$ the triple $(M\rtimes_{\sigma^{\varphi}}\mathbb{R},N,\widehat{\varphi}_{j})$
has property $\text{(rHAP)}$ and fix $k \in \mathbb{N}$. Then by Proposition \ref{Prop=ToCorner}, the triple $(p_{k}(M\rtimes_{\sigma^{\varphi}}\mathbb{R})p_{k},p_{k}Np_{k},p_{k}\widehat{\varphi}_{j}p_{k})$ also has the (rHAP). Let $(\Phi_{i})_{i\in I}$ be a net
of suitable approximating maps and define the
self-adjoint boundedly invertible operator $A_{j,k}:=\lambda(f_{j})h_{k}^{1/2}\in(p_{k}Np_{k})^{\prime}\cap(p_{k}(M\rtimes_{\sigma^{\varphi}}\mathbb{R})p_{k})$.
By \eqref{Eqn=HkRadonNykodym} for every $x\in p_{k}(M\rtimes_{\sigma^{\varphi}}\mathbb{R})p_{k}$
the equality
\[
(p_{k}\widehat{\varphi}_{j}p_{k})(x)=\tau_{\rtimes}(A_{j,k}^{\ast}xA_{j,k})=(A_{j,k}p_{k}\tau_{\rtimes}p_{k}A_{j,k})(x)
\]
holds and hence Lemma \ref{Lem=CompactTwist} implies that the $L^{2}$-implementation
of the map $\Phi_{i}^{\prime}(\: \cdot \:)  :=A_{j,k}\Phi_{i}(A_{j,k}^{-1}\:\cdot\:A_{j,k}^{-1})A_{j,k}$
exists and is contained in $\mathcal{K}(p_{k}(M\rtimes_{\sigma^{\varphi}}\mathbb{R})p_{k},p_{k}Np_{k},p_{k}\tau_{\rtimes}p_{k})$.
Similarly to the proof of Proposition \ref{Prop=ToCorner} one checks
that the net $(\Phi_{i}^{\prime})_{i\in I}$ witnesses property $\text{(rHAP)}$
of $(p_{k}(M\rtimes_{\sigma^{\varphi}}\mathbb{R})p_{k},p_{k}Np_{k},p_{k}\tau_{\rtimes}p_{k})$.
We omit the details.

``\eqref{Eqn=HAPEq2} $\Leftarrow$ \eqref{Eqn=HAPEq3}'' Now assume
that the triple $(p_{k}(M\rtimes_{\sigma^{\varphi}}\mathbb{R})p_{k},p_{k}Np_{k},p_{k}\tau_{\rtimes}p_{k})$
has property $\text{(rHAP)}$ for every $k\in\mathbb{N}$. It suffices
to show that the triple $(p_{k}(M\rtimes_{\sigma^{\varphi}}\mathbb{R})p_{k},p_{k}Np_{k},p_{k}\widehat{\varphi}_{j}p_{k})$
has property $\text{(rHAP)}$ as it implies the desired statement
by Proposition \ref{Prop=ToCorner}. So let $(\Phi_{i})_{i\in I}$
be a net that witnesses property $\text{(rHAP)}$ of $(p_{k}(M\rtimes_{\sigma^{\varphi}}\mathbb{R})p_{k},p_{k}Np_{k},p_{k}\tau_{\rtimes}p_{k})$
and set $\Phi_{i}^{\prime}:=A_{j,k}^{-1}\Phi_{i}(A_{j,k}\:\cdot\:A_{j,k})A_{j,k}^{-1}\text{.}$
Lemma \ref{Lem=CompactTwist} and \eqref{Eqn=HkRadonNykodym} imply
that for every $i\in I$ the $L^{2}$-implementation $(\Phi_{i}^{\prime})^{(2)}$
of $\Phi_{i}^{\prime}$ with respect to the positive functional $p_{k}\widehat{\varphi_{j}}p_{k}$
is contained in $\mathcal{K}(p_{k}(M\rtimes_{\sigma^{\varphi}}\mathbb{R})p_{k},p_{k}Np_{k},p_{k}\widehat{\varphi}_{j}p_{k})$.
Again, similarly to the proof of Proposition \ref{Prop=ToCorner}
one checks that the net $(\Phi_{i}^{\prime})_{i\in I}$ witnesses
property $\text{(rHAP)}$.

It remains to show  \eqref{Eqn=HAPEq4}. The statement easily follows
from Proposition \ref{Thm=NormalToCross},  Proposition \ref{Prop=ToCorner} and the arguments used in the proof of the implication ``\eqref{Eqn=HAPEq2} $\Rightarrow$ \eqref{Eqn=HAPEq3}''.
\end{proof}

\vspace{1mm}


\section{Main results}  \label{MainResults}

After the main work has been done in Section \ref{Preparation} we
can now put the pieces together. This allows us to show that in the
case of a finite von Neumann subalgebra the notion of relative Haagerup
property is independent of the choice of the corresponding faithful
normal conditional expectation, that the approximating maps may be
chosen to be unital and state-preserving and that property $\text{(rHAP)}$
and property $\text{(rHAP)}^{-}$ are equivalent. The general notation
will be the same as in Section \ref{Preparation}.


\subsection{Independence of the conditional expectation}

Let $ N\subseteq M$ be a unital inclusion of von Neumann
algebras for which $N$ is finite with a faithful normal tracial state
$\tau\in N_{\ast}$. Let further $\mathbb{E}_{N},\mathbb{F}_{N}:M\rightarrow N$
be two faithful normal conditional expectations and extend $\tau$
to states $\varphi:=\tau\circ\mathbb{E}_{N}$ and $\psi:=\tau\circ\mathbb{F}_{N}$
on $M$. In this subsection we will prove that the triple $(M,N,\mathbb{E}_{N})$
has property $\text{(rHAP)}$ if and only if the triple $(M,N,\mathbb{F}_{N})$
does, i.e.\ the relative Haagerup property is an intrinsic invariant
of the inclusion $N\subseteq M$.   Let us first introduce some notation.

As in Section \ref{Preparation} consider the crossed product von
Neumann algebra $M\rtimes_{\sigma^{\varphi}}\mathbb{R}$ which contains
the projections $p_{k}\in N^{\prime}\cap(M\rtimes_{\sigma^{\varphi}}\mathbb{R})$,
$k\in\mathbb{N}$ and carries the canonical normal semi-finite tracial
weight $\tau_{\rtimes}$ which we will from now on denote by $\tau_{\rtimes,1}$.
For $t\in\mathbb{R}$ write $\lambda_{t}^{\varphi}$ for the left
regular representation operators in $M\rtimes_{\sigma^{\varphi}}\mathbb{R}$.
Similarly, we write $\tau_{\rtimes,2}$ for the canonical normal semi-finite
tracial weight on $M\rtimes_{\sigma^{\psi}}\mathbb{R}$ and denote
the corresponding left regular representation operators by $\lambda_{t}^{\psi}$,
$t\in\mathbb{R}$.

For $t\in\mathbb{R}$ let $u_{t}:=(D\varphi/D\psi)_{t}\in M$ be the
Connes cocycle Radon-Nikodym derivative, so in particular $u_{t}\sigma_{t}^{\varphi}(u_{s})=u_{t+s}$ and $\sigma_t^\psi(x) = u_t^* \sigma_t^\varphi(x) u_t$ hold
for all $s,t\in\mathbb{R}$. Then (see \cite[Proof of Theorem X.1.7]{Takesaki2}) there exists an isomorphism $\rho:M\rtimes_{\sigma^{\psi}}\mathbb{R}\rightarrow M\rtimes_{\sigma^{\varphi}}\mathbb{R}$
of von Neumann algebras which restricts to the identity on $M$ and
for which $\rho(\lambda_{t}^{\psi})=u_{t}\lambda_{t}^{\varphi}$ for
all $t\in\mathbb{R}$. This implies that the dual actions $\theta^{\varphi}$
and $\theta^{\psi}$ of $\sigma^{\varphi}$ and $\sigma^{\psi}$ respectively
are related by the equality $\theta_{t}^{\varphi}\circ\rho=\rho\circ\theta_{t}^{\psi}\text{, }t\in\mathbb{R}$.
Further, $\tau_{\rtimes,1}\circ\rho=\tau_{\rtimes,2}$ (see the footnote \footnote{
This is well-known to specialists, but it seems that the statement does not appear explicitly in \cite{Takesaki2}. The argument goes as follows.  Firstly, as $\rho$ intertwines the dual actions on $M\rtimes_{\sigma^{\psi}}\mathbb{R}$ and $M\rtimes_{\sigma^{\varphi}}\mathbb{R}$  we find that $\widehat{\varphi} \circ \rho$ is the dual weight of $\varphi$ in the crossed product $M\rtimes_{\sigma^{\psi}}\mathbb{R}$. Let  $t \in \mathbb{R}$. By \cite[Theorem X.1.17]{Takesaki2} we have  Connes cocycle derivative $\left( \frac{  D  \widehat{\psi}  }{  D \widehat{\varphi} \circ \rho  } \right)_t = u_t = \rho(u_t)$. Then by the chain rule \cite[Theorem VIII.3.7]{Takesaki2},
\[
\begin{split}
\left( \frac{  D\tau_{\rtimes, 2}  }{ D  \tau_{\rtimes, 1} \circ \rho  } \right)_t =  &
\left( \frac{  D\tau_{\rtimes, 2}  }{ D \widehat{\psi}   } \right)_t
\left( \frac{  D  \widehat{\psi}  }{  D \widehat{\varphi} \circ \rho  } \right)_t
\left( \frac{  D \widehat{\varphi} \circ \rho   }{ D  \tau_{\rtimes, 1} \circ \rho  } \right)_t
=   \lambda^{\psi}_{-t}  \rho^{-1}( u_{t}  \lambda^\varphi_t) = 1.
\end{split}
\]
Hence $\tau_{\rtimes, 1} \circ \rho = \tau_{\rtimes, 2} $.}).
Denote by $h_{\psi}$ the unique unbounded self-adjoint positive operator
affiliated with $M\rtimes_{\sigma^{\psi}}\mathbb{R}$ such that $h_{\psi}^{it}=\lambda_{t}^{\psi}$
for all $t\in\mathbb{R}$ and set
\[
p_{\psi,k}:=\chi_{[k^{-1},k]}(h_{\psi})\qquad\text{ and }\qquad q_{k}:=\rho(p_{\psi,k})\text{.} \qquad
\]
for $k\in\mathbb{N}$. Further, define
\[
h_{\psi,k}:=\rho(\chi_{[k^{-1},k]}(h_{\psi})h_{\psi})=\rho(p_{\psi,k}h_{\psi}).
\]
Recall that  for all $k \in \mathbb{N}$ and $t \in \mathbb{R}$ we write $h^{it} = \lambda_t^{\varphi}$, $p_k:= \chi_{[k^{-1},k]}(h)$, and $h_{k} := p_k h$.

The following statement compares to Lemma \ref{Lem=NewExpectation}.

\begin{lem} \label{Lem=CEVComp1}
For every $k\in\mathbb{N}$
there is a (unique) faithful normal $p_{k}\tau_{\rtimes,1}p_{k}$-preserving
conditional expectation $\mathbb{E}_{1,k}:p_{k}(M\rtimes_{\sigma^{\varphi}}\mathbb{R})p_{k}\rightarrow p_{k}Np_{k}$
given by
\begin{equation}\label{Eqn=PostArXivCEV}
\nonumber
x\mapsto\nu_{k}^{-1}p_{k}\mathbb{E}_{N}(T_{\theta^{\varphi}}(h_{k}^{-1}x))p_{k}\text{,}
\end{equation}
where $\nu_{k}:=T_{\theta^{\varphi}}(h_{k}^{-1})=k-k^{-1}$.
In particular, $T_{\theta^{\varphi}}(h_{k}^{-1})$ is a scalar multiple
of the identity.
\end{lem}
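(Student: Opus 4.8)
The plan is to check that the explicit formula in the statement defines a faithful normal $p_{k}\tau_{\rtimes,1}p_{k}$-preserving conditional expectation of $p_{k}(M\rtimes_{\sigma^{\varphi}}\mathbb{R})p_{k}$ onto $p_{k}Np_{k}$, and then to invoke uniqueness. Since $p_{k}(M\rtimes_{\sigma^{\varphi}}\mathbb{R})p_{k}$ is finite, with faithful normal tracial state proportional to $p_{k}\tau_{\rtimes,1}p_{k}$, any $p_{k}\tau_{\rtimes,1}p_{k}$-preserving conditional expectation onto a von Neumann subalgebra is unique, and automatically faithful because the trace is; so it suffices to verify that
\[
\Psi_{k}(x):=\nu_{k}^{-1}p_{k}\mathbb{E}_{N}(T_{\theta^{\varphi}}(h_{k}^{-1}x))p_{k},\qquad x\in p_{k}(M\rtimes_{\sigma^{\varphi}}\mathbb{R})p_{k},
\]
has the required properties. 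That $T_{\theta^{\varphi}}(h_{k}^{-1})=(k-k^{-1})1=\nu_{k}1$ is a scalar multiple of the identity was already computed in Remark \ref{Rmk=Fourier2}, which also settles the last assertion of the lemma.

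I would begin by recording two structural facts. As $N\subseteq M^{\sigma^{\varphi}}$, every element of $N$ commutes with each $\lambda_{t}^{\varphi}$, hence with all bounded Borel functions of $h$; in particular $p_{k},h_{k},h_{k}^{-1}\in N'\cap(M\rtimes_{\sigma^{\varphi}}\mathbb{R})$. Moreover the dual action $\theta^{\varphi}$ fixes $M$ pointwise, so $T_{\theta^{\varphi}}$ is $M$-bimodular on its domain. Together these show that $\Psi_{k}$ is well defined, that it is $N$-$N$-bimodular, hence $p_{k}Np_{k}$-bimodular since $p_{k}$ commutes with $N$, and that it takes values in $p_{k}Np_{k}$. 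In addition $\Psi_{k}(p_{k})=\nu_{k}^{-1}p_{k}\mathbb{E}_{N}(T_{\theta^{\varphi}}(h_{k}^{-1}))p_{k}=\nu_{k}^{-1}p_{k}\mathbb{E}_{N}(\nu_{k}1)p_{k}=p_{k}$, so $\Psi_{k}$ is unital with unit $p_{k}$; combined with bimodularity this forces $\Psi_{k}$ to restrict to the identity on $p_{k}Np_{k}$, so it is an idempotent onto $p_{k}Np_{k}$.

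Next I would address normality and complete positivity. Applying Lemma \ref{Lem=TomitaTakesaki} with $a=h_{k}^{-1/2}\in\mathcal{L}(\mathbb{R})$ (and with its ``$x$'' taken to be $h_{k}^{-1/2}x$) gives $\mathbb{E}_{N}(T_{\theta^{\varphi}}(h_{k}^{-1}x))=\mathbb{E}_{N}(T_{\theta^{\varphi}}(h_{k}^{-1/2}xh_{k}^{-1/2}))$ for $x$ in the corner; this is legitimate because for $x\geq0$ both arguments are dominated by $\|x\|h_{k}^{-1}$, which lies in the domain of $T_{\theta^{\varphi}}$ as $T_{\theta^{\varphi}}(h_{k}^{-1})<\infty$. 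Hence $\Psi_{k}=\nu_{k}^{-1}p_{k}\mathbb{E}_{N}\big(T_{\theta^{\varphi}}(h_{k}^{-1/2}(\,\cdot\,)h_{k}^{-1/2})\big)p_{k}$, and since $h_{k}^{-1/2}$ is self-adjoint and lies in $\mathcal{L}(\mathbb{R})$, the map $x\mapsto T_{\theta^{\varphi}}(h_{k}^{-1/2}xh_{k}^{-1/2})$ is of the same kind as the maps $T_{f}$ of Subsection \ref{CrossedProducts} (with $f$ replaced by a symbol that is no longer $L^{2}$-normalised, cf.\ Remark \ref{Rmk=Fourier2}); by the computation \eqref{Eqn=HaagerupComputation} and Lemma \ref{Lem=SOTversusL2} it is normal and completely positive. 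So $\Psi_{k}$ is a normal completely positive unital idempotent onto the von Neumann subalgebra $p_{k}Np_{k}$ with the $p_{k}Np_{k}$-bimodule property, i.e.\ a conditional expectation (cf.\ \cite[Theorem 1.5.10]{NateTaka}).

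It remains, and this is the main computation, to check that $\Psi_{k}$ preserves $p_{k}\tau_{\rtimes,1}p_{k}$. Fix $x=p_{k}xp_{k}$ and put $n:=\mathbb{E}_{N}(T_{\theta^{\varphi}}(h_{k}^{-1}x))\in N$, so that $\Psi_{k}(x)=\nu_{k}^{-1}p_{k}np_{k}$. Using \eqref{Eqn=TauExpressionExtra} (applied to the element $\nu_{k}^{-1}n$, with $\tau_{\rtimes}=\tau_{\rtimes,1}$ and $\theta=\theta^{\varphi}$ in the notation there), the relations $p_{k}n=np_{k}$ and $h_{k}^{-1}p_{k}=h_{k}^{-1}$, and the right $M$-modularity of $T_{\theta^{\varphi}}$ together with $T_{\theta^{\varphi}}(h_{k}^{-1})=\nu_{k}1$, one obtains
\[
(p_{k}\tau_{\rtimes,1}p_{k})(\Psi_{k}(x))=\tau_{\rtimes,1}(\nu_{k}^{-1}p_{k}np_{k})=\varphi\big(T_{\theta^{\varphi}}(\nu_{k}^{-1}h_{k}^{-1}n)\big)=\varphi(n).
\]
Since $\varphi=\varphi\circ\mathbb{E}_{N}$ this equals $\varphi(T_{\theta^{\varphi}}(h_{k}^{-1}x))$, and by \eqref{Eqn=TauExpressionExtra} once more $\varphi(T_{\theta^{\varphi}}(h_{k}^{-1}x))=\tau_{\rtimes,1}(p_{k}xp_{k})=(p_{k}\tau_{\rtimes,1}p_{k})(x)$. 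Thus $\Psi_{k}$ is $p_{k}\tau_{\rtimes,1}p_{k}$-preserving, and by the uniqueness noted at the outset $\mathbb{E}_{1,k}=\Psi_{k}$. The only genuine difficulty is domain bookkeeping for the operator-valued weight $T_{\theta^{\varphi}}$, which is why the symmetrised formula from Lemma \ref{Lem=TomitaTakesaki} and the finiteness $T_{\theta^{\varphi}}(h_{k}^{-1})=\nu_{k}1$ are exploited throughout.
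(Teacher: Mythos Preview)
Your proof is correct and follows essentially the same approach as the paper's own proof: both compute $\nu_{k}$ via Remark \ref{Rmk=Fourier2}, verify $p_{k}\tau_{\rtimes,1}p_{k}$-preservation through the formula \eqref{Eqn=TauExpressionExtra}, and obtain complete positivity by symmetrising the expression $h_{k}^{-1}x$ to $h_{k}^{-1/2}xh_{k}^{-1/2}$ using Lemma \ref{Lem=TomitaTakesaki}. Your write-up is in fact somewhat more careful about domain issues for the operator-valued weight than the paper, which simply refers back to the analogous Lemma \ref{Lem=NewExpectation} and leaves the remaining properties to the reader.
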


\begin{proof}
The proof is essentially the same as that of Lemma
\ref{Lem=NewExpectation}. First note that by Remark \ref{Rmk=Fourier2}
the operator $h_{k}$ coincides with $\lambda(\widehat{J_{k}})$ where
$J_{k}(s) =  \chi_{[-\log(k),\log(k)]}e^{s}$ and that $\nu_{k}=T_{\theta^{\varphi}}(h_{k}^{-1})= k-k^{-1}$
is a multiple of the identity. For $x\in p_{k}(M\rtimes_{\sigma^{\varphi}}\mathbb{R})p_{k}$
one checks using \eqref{Eqn=TauExpressionExtra} for the second and last equality, that
\begin{eqnarray}
\nonumber
(p_{k}\tau_{\rtimes,1}p_{k})(p_{k}\mathbb{E}_{N}(T_{\theta^{\varphi}}(h_{k}^{-1}x))p_{k})&=&\tau_{\rtimes,1}(p_{k}\mathbb{E}_{N}(T_{\theta^{\varphi}}(h_{k}^{-1}x))p_{k})\\
\nonumber
&=&\varphi\circ T_{\theta^{\varphi}}(p_{k}h_{k}^{-1}\mathbb{E}_{N}(T_{\theta^{\varphi}}(h_{k}^{-1}x))p_{k})\\
\nonumber
&=&\varphi\circ T_{\theta^{\varphi}}(h_{k}^{-1}\mathbb{E}_{N}(T_{\theta^{\varphi}}(h_{k}^{-1}x)))\\
\nonumber
&=&\varphi\left(T_{\theta^{\varphi}}(h_{k}^{-1})\mathbb{E}_{N}(T_{\theta^{\varphi}}(h_{k}^{-1}x))\right)\\
\nonumber
&=&\nu_{k}\varphi\left(\mathbb{E}_{N}(T_{\theta^{\varphi}}(h_{k}^{-1}x))\right)\\
\nonumber
&=&\nu_{k}\varphi\circ T_{\theta^{\varphi}}(h_{k}^{-1}x))\\
\nonumber
&=&\nu_{k}\tau_{\rtimes,1}(p_{k}xp_{k})\text{,}
\end{eqnarray}
hence $\mathbb{E}_{1,k}$ is indeed $p_{k}\tau_{\rtimes,1}p_{k}$-preserving.
Here we used in the fourth line that $N$ is invariant under the dual
action $\theta^{\varphi}$ and in the fifth line that $T_{\theta^{\varphi}}(h_{k}^{-1})$
is a multiple of the identity.

From Lemma \ref{Lem=TomitaTakesaki} we see that 
\[
\nu_{k}^{-1}p_{k}\mathbb{E}_{N}(T_{\theta^{\varphi}}(h_{k}^{-1} \: \cdot \: ))p_{k} = \nu_{k}^{-1}p_{k}\mathbb{E}_{N}(T_{\theta^{\varphi}}(h_{k}^{-1/2}  \: \cdot \:   h_{k}^{-1/2}))p_{k},
\]
and from the right hand side of this expression it is clear that \eqref{Eqn=PostArXivCEV} is completely positive.
The remaining statements (i.e. that
$\mathbb{E}_{1,k}$ is a unital faithful normal  
$p_{k}Np_{k}$-$p_{k}Np_{k}$-bimodule map) are then easy to check.

\end{proof}

The following lemma provides the analogous statement for the functional
$q_{k}\tau_{\rtimes,1}q_{k}$ and the inclusion $q_{k}Nq_{k}\subseteq q_{k}(M\rtimes_{\sigma^{\varphi}}\mathbb{R})q_{k}$.
We omit the proof.

\begin{lem} \label{Lem=CEVComp2}
For every $k\in\mathbb{N}$
there is a (unique) faithful normal $q_{k}\tau_{\rtimes,1}q_{k}$-preserving
conditional expectation $\mathbb{E}_{2,k}:q_{k}(M\rtimes_{\sigma^{\varphi}}\mathbb{R})q_{k}\rightarrow q_{k}Nq_{k}$
given by
\begin{equation}
\nonumber
x\mapsto\nu_{k}^{-1}q_{k}\mathbb{E}_{N}(T_{\theta^{\varphi}}(h_{\psi,k}^{-1}x))q_{k}\text{,}
\end{equation}
where $\nu_{k}:= k-k^{-1}$ as before.
\end{lem}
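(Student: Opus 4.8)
The plan is to obtain this by transport from Lemma \ref{Lem=CEVComp1} along the canonical isomorphism $\rho\colon M\rtimes_{\sigma^{\psi}}\mathbb{R}\to M\rtimes_{\sigma^{\varphi}}\mathbb{R}$ introduced above. First I would apply Lemma \ref{Lem=CEVComp1} to the inclusion $N\subseteq M$ equipped with the conditional expectation $\mathbb{F}_N$ (so that the associated state is $\psi=\tau\circ\mathbb{F}_N$), obtaining for each $k\in\mathbb{N}$ the unique faithful normal $p_{\psi,k}\tau_{\rtimes,2}p_{\psi,k}$-preserving conditional expectation of $p_{\psi,k}(M\rtimes_{\sigma^{\psi}}\mathbb{R})p_{\psi,k}$ onto $p_{\psi,k}Np_{\psi,k}$, namely $y\mapsto\nu_{k}^{-1}p_{\psi,k}\mathbb{F}_N(T_{\theta^{\psi}}((p_{\psi,k}h_{\psi})^{-1}y))p_{\psi,k}$, with $\nu_k=T_{\theta^{\psi}}((p_{\psi,k}h_{\psi})^{-1})=k-k^{-1}$ by the analogue of Remark \ref{Rmk=Fourier2}.

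I would then push this through $\rho$, using the properties recorded before the lemma: $\rho$ restricts to the identity on $M$ (hence on $N$ and on the conditional expectation onto $N$), it intertwines the dual actions, $\theta^{\varphi}_t\circ\rho=\rho\circ\theta^{\psi}_t$, so that $\rho\circ T_{\theta^{\psi}}=T_{\theta^{\varphi}}\circ\rho$ on the extended positive cone since $T_{\theta}(\,\cdot\,)=\int_{\mathbb{R}}\theta_s(\,\cdot\,)\,ds$; moreover by definition $\rho(p_{\psi,k})=q_k$ and $\rho(p_{\psi,k}h_{\psi})=h_{\psi,k}$, with $\rho$ preserving functional calculus so that $\rho((p_{\psi,k}h_{\psi})^{-1})=h_{\psi,k}^{-1}$, and finally $\tau_{\rtimes,1}\circ\rho=\tau_{\rtimes,2}$. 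Conjugating the conditional expectation of the previous paragraph by $\rho$ then produces a faithful normal $q_k\tau_{\rtimes,1}q_k$-preserving conditional expectation of $q_k(M\rtimes_{\sigma^{\varphi}}\mathbb{R})q_k$ onto $q_kNq_k$ which, unwinding the definitions, is exactly the map displayed in the statement; its uniqueness needs no separate argument, since $q_kNq_k$ is finite and a trace-preserving conditional expectation onto a subalgebra is unique.

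Alternatively one may argue directly, copying the computation of Lemma \ref{Lem=CEVComp1} verbatim: the ingredients needed are (i) $h_{\psi,k}\in N'\cap(M\rtimes_{\sigma^{\varphi}}\mathbb{R})$, positive and boundedly invertible in the corner, which is immediate from the corresponding facts for $p_{\psi,k}h_{\psi}$ together with $\rho$ fixing $N$; (ii) $T_{\theta^{\varphi}}(h_{\psi,k}^{-1})=\nu_k$, a scalar; and (iii) the analogue of \eqref{Eqn=TauExpressionExtra}, $\tau_{\rtimes,1}(q_kxq_k)=(\psi\circ T_{\theta^{\varphi}})(h_{\psi,k}^{-1/2}xh_{\psi,k}^{-1/2})$, which comes out of the Connes-cocycle chain rule exactly as in the footnote establishing $\tau_{\rtimes,1}\circ\rho=\tau_{\rtimes,2}$ (the cocycle of the dual weight $\psi\circ T_{\theta^{\varphi}}$ with respect to $\tau_{\rtimes,1}$ being $t\mapsto u_t\lambda^{\varphi}_t=\rho(h_{\psi})^{it}$). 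Complete positivity then follows, as before, by rewriting the map in the symmetric form via (the transport of) Lemma \ref{Lem=TomitaTakesaki}, and the remaining properties — unitality with unit $q_k$, normality, faithfulness, and $q_kNq_k$-bimodularity — are routine. The one step that is not pure formalism is ingredient (iii) / the identity $\tau_{\rtimes,1}\circ\rho=\tau_{\rtimes,2}$: one must distinguish carefully between the dual weight of $\psi$ inside $M\rtimes_{\sigma^{\varphi}}\mathbb{R}$ and the one inside $M\rtimes_{\sigma^{\psi}}\mathbb{R}$, and observe that — because $\rho$ is not the identity on the $\lambda$-part — $h_{\psi,k}$ no longer lies in $\mathcal{L}(\mathbb{R})$, so that Lemma \ref{Lem=TomitaTakesaki} has to be applied inside $M\rtimes_{\sigma^{\psi}}\mathbb{R}$, where $(p_{\psi,k}h_{\psi})^{\pm1}$ is affiliated with $\mathcal{L}(\mathbb{R})$, and then transported back by $\rho$. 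Everything else is bookkeeping identical to Lemma \ref{Lem=CEVComp1}.
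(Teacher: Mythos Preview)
The paper omits this proof, saying only that it is analogous to Lemma~\ref{Lem=CEVComp1}; your transport argument via $\rho$ is precisely the natural way to realise that analogy, and your alternative direct approach is exactly what the authors have in mind.

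There is, however, one point you have glossed over. Transporting the conditional expectation from Lemma~\ref{Lem=CEVComp1} (applied to the data $(M,N,\mathbb{F}_N,\psi)$) through $\rho$ yields
\[
x\longmapsto\nu_k^{-1}q_k\,\mathbb{F}_N\bigl(T_{\theta^\varphi}(h_{\psi,k}^{-1}x)\bigr)\,q_k,
\]
with $\mathbb{F}_N$ rather than the $\mathbb{E}_N$ written in the displayed formula: $\rho$ is the identity on $M$, so it leaves $\mathbb{F}_N$ as $\mathbb{F}_N$, it does not convert it into $\mathbb{E}_N$. Your parenthetical that $\rho$ ``restricts to the identity \dots\ on the conditional expectation onto $N$'' does not bridge this gap. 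Indeed, running your own ingredient~(iii) shows that applying $q_k\tau_{\rtimes,1}q_k$ to the $\mathbb{E}_N$-formula gives $\varphi\bigl(T_{\theta^\varphi}(h_{\psi,k}^{-1}x)\bigr)$, whereas $(q_k\tau_{\rtimes,1}q_k)(x)=\psi\bigl(T_{\theta^\varphi}(h_{\psi,k}^{-1}x)\bigr)$; since $\varphi\neq\psi$ on $M$ these need not agree, so the $\mathbb{E}_N$-formula, while a conditional expectation onto $q_kNq_k$, is not in general the trace-preserving one. This looks like a typographical slip in the paper's statement (and the same $\mathbb{E}_N$ then reappears in the computation inside Proposition~\ref{Prop=CornerToCorner}, where applying Lemma~\ref{Lem=TomitaTakesaki} to $M\rtimes_{\sigma^\psi}\mathbb{R}$ likewise requires $\mathbb{F}_N$). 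With $\mathbb{E}_N$ replaced by $\mathbb{F}_N$, your argument is complete and correct.
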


\begin{prop} \label{Prop=CornerToCorner}
Let $N\subseteq M$
be a unital inclusion of von Neumann algebras for which $N$ is finite
with a faithful normal tracial state $\tau\in N_{\ast}$. Let further
$\mathbb{E}_{N},\mathbb{F}_{N}:M\rightarrow N$ be two faithful
normal conditional expectations and extend $\tau$ to states $\varphi:=\tau\circ\mathbb{E}_{N}$
and $\psi:=\tau\circ\mathbb{F}_{N}$ on $M$. Then the following statements
are equivalent:
\begin{enumerate}
\item   \label{Eqn=CornerHAPEq1} For every $k\in\mathbb{N}$ the triple
$(p_{k}(M\rtimes_{\sigma^{\varphi}}\mathbb{R})p_{k},p_{k}Np_{k},p_{k}\tau_{\rtimes,1}p_{k})$
has property $\text{(rHAP)}$.
\item   \label{Eqn=CornerHAPEq2} For every $k\in\mathbb{N}$ the triple
$(q_{k}(M\rtimes_{\sigma^{\varphi}}\mathbb{R})q_{k},q_{k}Nq_{k},q_{k}\tau_{\rtimes,1}q_{k})$
has property $\text{(rHAP)}$.
\end{enumerate}
\end{prop}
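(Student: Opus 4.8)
The plan is to deduce the statement from a general principle: \emph{if $(e_k)_k$ and $(\tilde e_k)_k$ are two increasing sequences of projections in $N'\cap\widetilde M$, where $\widetilde M:=M\rtimes_{\sigma^{\varphi}}\mathbb{R}$, with $\tau_{\rtimes,1}(e_k)<\infty$, $\tau_{\rtimes,1}(\tilde e_k)<\infty$ and $\bigvee_k e_k=\bigvee_k \tilde e_k=1$, then $(e_k\widetilde Me_k,e_kNe_k,e_k\tau_{\rtimes,1}e_k)$ has property $\mathrm{(rHAP)}$ for every $k$ if and only if $(\tilde e_k\widetilde M\tilde e_k,\tilde e_kN\tilde e_k,\tilde e_k\tau_{\rtimes,1}\tilde e_k)$ has property $\mathrm{(rHAP)}$ for every $k$.} The proposition follows by taking $e_k=p_k$ and $\tilde e_k=q_k$: both are projections in $N'\cap\widetilde M$ (for $q_k$ this uses that $h_\psi$ is affiliated with $N'\cap(M\rtimes_{\sigma^{\psi}}\mathbb{R})$, as $N\subseteq M^{\sigma^{\psi}}$, and that $\rho$ fixes $N$ pointwise), both increase to $1$, and $\tau_{\rtimes,1}(p_k)=\tau_{\rtimes,1}(q_k)=k-k^{-1}$ by Remark \ref{Rmk=Fourier2} together with $\tau_{\rtimes,1}\circ\rho=\tau_{\rtimes,2}$.

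The general principle rests on three facts, each established by a compression argument of the type already used in the proof of Proposition \ref{Prop=ToCorner}. \textbf{(i) Passing to a subprojection.} If $e\le f$ are projections in $N'\cap\widetilde M$ of finite $\tau_{\rtimes,1}$-trace and the $f$-corner has $\mathrm{(rHAP)}$ witnessed by $(\Phi_i)_i$, then $x\mapsto e\Phi_i(x)e$ witnesses $\mathrm{(rHAP)}$ for the $e$-corner: complete positivity, the uniform norm bound, $eNe$-bimodularity (since $e$ commutes with $N$) and point-strong convergence to the identity (since $exe=x$ on $e\widetilde Me$) are immediate; the inequality $(e\tau_{\rtimes,1}e)\circ(e\Phi_i(\cdot)e)\le e\tau_{\rtimes,1}e$ follows from $0\le e\le f$ and traciality of $\tau_{\rtimes,1}$; and relative $L^2$-compactness follows from the identity $\mathbb{E}_{eNe}(x)=e\,\mathbb{E}_{fNf}(e)^{-1}\mathbb{E}_{fNf}(x)\,e$ for $x\in e\widetilde Me$ (with $\mathbb{E}_{fNf}(e)$ a positive element of the centre of $fNf$, invertible on its support, which dominates $e$), established exactly as in Lemmas \ref{Lem=NewExpectation} and \ref{Lem=CEVComp1}, showing that compression by $e$ carries $\mathcal{K}_{00}(f\widetilde Mf,fNf,\cdot)$ into $\mathcal{K}_{00}(e\widetilde Me,eNe,\cdot)$ and hence, by $\|e\Phi_i(\cdot)e\|\le\|\Phi_i\|$ and approximation, $\mathcal{K}$ into $\mathcal{K}$. \textbf{(ii) Partial isometries.} If $v\in N'\cap\widetilde M$ has $v^*v=e$, $vv^*=e'$, then $\mathrm{Ad}(v)$ is a $\tau_{\rtimes,1}$-preserving normal $*$-isomorphism of $e\widetilde Me$ onto $e'\widetilde Me'$ carrying $eNe$ onto $e'Ne'$ in an $N$-bimodular way, so the $e$-corner has $\mathrm{(rHAP)}$ if and only if the $e'$-corner does; combined with (i) this gives: if $e\precsim f$ in $N'\cap\widetilde M$ and the $f$-corner has $\mathrm{(rHAP)}$, so does the $e$-corner. \textbf{(iii) Increasing unions.} If $f\in N'\cap\widetilde M$ has finite trace and $(e_l)_l$ is increasing in $N'\cap\widetilde M$ with $e_l\le f$, $\bigvee_l e_l=f$, and each $e_l$-corner has $\mathrm{(rHAP)}$, then the $f$-corner has $\mathrm{(rHAP)}$: since $f\tau_{\rtimes,1}f$ is tracial we may (by Lemma \ref{Lem=ContractiveMaps} applied with trivial weight, or simply by renormalising) assume the witnessing maps $(\Phi_{l,i})_i$ are contractive, whence $\Psi_{l,i}(x):=\Phi_{l,i}(e_lxe_l)$ are contractive, completely positive, $fNf$-bimodular, $f\tau_{\rtimes,1}f$-decreasing maps on $f\widetilde Mf$ with $L^2$-implementations in $\mathcal{K}(f\widetilde Mf,fNf,f\tau_{\rtimes,1}f)$ by the computation in (i), and $\lim_l\lim_i\Psi_{l,i}(x)=\lim_l e_lxe_l=x$ strongly; a variant of Lemma \ref{Lem=ApproxSOT}, as at the end of the proof of Proposition \ref{Prop=ToCorner}, then yields a single net of such maps converging point-strongly to the identity.

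Granting (i)--(iii), the general principle is symmetric in $(e_k)$ and $(\tilde e_k)$, so it suffices to prove one implication; assume each $e_l$-corner has $\mathrm{(rHAP)}$ and fix $m$. Let $e_l\tilde e_m=v_l\,|e_l\tilde e_m|$ be the polar decomposition inside $N'\cap\widetilde M$; then $v_l^*v_l$ is the support projection $s_l$ of $\tilde e_m e_l\tilde e_m\le\tilde e_m$ and $v_lv_l^*$ is the support projection $r_l$ of $e_l\tilde e_m e_l\le e_l$. Since $e_l\nearrow1$ we have $\tilde e_m e_l\tilde e_m\nearrow\tilde e_m$, so $(s_l)_l$ increases to $\tilde e_m$. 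By (i) the $r_l$-corner has $\mathrm{(rHAP)}$, by (ii) so does the $s_l$-corner (via $\mathrm{Ad}(v_l^*)$), and by (iii) with $f=\tilde e_m$ the $\tilde e_m$-corner has $\mathrm{(rHAP)}$. This proves the general principle, and hence the proposition.

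The step I expect to be the main obstacle is the relative $L^2$-compactness bookkeeping in (i) and (iii): one must verify carefully that compression by a projection $e\in N'\cap\widetilde M$ with $e\le f$ really sends operators finite rank relative to $fNf$ to operators finite rank relative to $eNe$, which forces one to identify the relevant Jones projections and trace-preserving conditional expectations and to track the central normalising factor $\mathbb{E}_{fNf}(e)$, in the spirit of Lemmas \ref{Lem=NewExpectation} and \ref{Lem=CEVComp1}. The remaining ingredients are either formal or direct repetitions of arguments already carried out in Section \ref{Preparation}.
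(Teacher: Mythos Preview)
Your argument is correct and takes a genuinely different, more abstract route than the paper. The paper works directly with the specific projections $p_k,q_l$: it defines $\Phi'_{k,l,i}(x)=p_k\Phi_{l,i}(q_lxq_l)p_k$ and checks the (rHAP) conditions one by one; the delicate point, relative $L^2$-compactness, is handled by an explicit computation that unwinds the concrete formulas for the trace-preserving conditional expectations $\mathbb{E}_{1,k}$ and $\mathbb{E}_{2,l}$ (Lemmas \ref{Lem=CEVComp1} and \ref{Lem=CEVComp2}) in terms of $T_{\theta^\varphi}$ and the Radon--Nikodym elements $h_k,h_{\psi,l}$, together with the commutation trick of Lemma \ref{Lem=TomitaTakesaki} and the isomorphism $\rho$. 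Your approach instead isolates a general principle for \emph{any} two increasing sequences of finite-trace projections in $N'\cap\widetilde M$ joining to $1$, proved via three modular facts and a polar-decomposition argument. The comparison of conditional expectations you need in (i) and (iii) (the formula $\mathbb{E}_{eNe}(x)=e\,\mathbb{E}_{fNf}(e)^{-1}\mathbb{E}_{fNf}(x)\,e$ and the identity $\mathbb{E}_{fNf}(ye)=\mathbb{E}_{fNf}(ey)$ for $e\in N'$) follows from traciality of $\tau_{\rtimes,1}$ alone, so you avoid the crossed-product-specific computations entirely. What your approach buys is generality and conceptual clarity; what the paper's approach buys is a single direct compression step without the detour through intermediate corners $r_l,s_l$. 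Two small remarks: your reference to Lemmas \ref{Lem=NewExpectation} and \ref{Lem=CEVComp1} for the formula for $\mathbb{E}_{eNe}$ is slightly misleading, since those lemmas use the specific $T_{\theta^\varphi}$ structure, whereas your claim needs only traciality; and in (iii) the phrase ``or simply by renormalising'' should be dropped, as contractivity genuinely requires Lemma \ref{Lem=ContractiveMaps} (with $h=1$).
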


\begin{proof}
By symmetry it suffices to consider the direction
``\eqref{Eqn=CornerHAPEq2} $\Rightarrow$ \eqref{Eqn=CornerHAPEq1}''.
For this, fix $k,l\in\mathbb{N}$ and let $(\Phi_{l,i})_{i\in I_{l}}$
be a net of maps witnessing the relative Haagerup property of the
triple $(q_{l}(M\rtimes_{\sigma^{\varphi}}\mathbb{R})q_{l},q_{l}Nq_{l},q_{l}\tau_{\rtimes,1} q_{l})$,
which we can assume to be contractive by Lemma \ref{Lem=ContractiveMaps}.
Define for $i\in I_{l}$ the normal completely positive contractive
map
\[
\Phi_{k,l,i}^{\prime}:p_{k}(M\rtimes_{\sigma^{\varphi}}\mathbb{R})p_{k}\rightarrow p_{k}(M\rtimes_{\sigma^{\varphi}}\mathbb{R})p_{k}:
 x \mapsto p_{k}\Phi_{l,i}(q_{l}xq_{l})p_{k}.
\]
As $N\subseteq M^{\sigma^{\varphi}}$ and $N\subseteq M^{\sigma^{\psi}}$,
$N$ commutes with both $q_{l}$ and $p_{k}$. Thus we have that for $x\in p_{k}(M\rtimes_{\sigma^{\varphi}}\mathbb{R})p_{k}$
and $a,b\in N$
\begin{eqnarray}
\nonumber
 \Phi_{k,l,i}^{\prime}(p_{k}ap_{k}xp_{k}bp_{k}) &=& p_{k}\Phi_{l,i}(q_{l}p_{k}ap_{k}xp_{k}bp_{k}q_{l})p_{k}= p_{k}\Phi_{l,i}(q_{l}axbq_{l})p_{k} = p_{k}\Phi_{l,i}(q_{l}aq_{l}xq_{l}bq_{l})p_{k}\\
\nonumber
&=&  p_{k}q_{l}aq_{l}\Phi_{l,i}(q_{l}xq_{l})q_{l}bq_{l}p_{k} = p_{k}a\Phi_{l,i}(q_{l}xq_{l})bp_{k} = p_{k}ap_{k}\Phi_{l,i}(q_{l}xq_{l})p_{k}bp_{k} \\
\nonumber
&=& p_{k}ap_{k}\Phi_{k,l,i}^{\prime}(x)p_{k}bp_{k}\text{,}
\end{eqnarray}
i.e. $\Phi_{k,l,i}'$ is $p_{k}Np_{k}$-$p_{k}Np_{k}$-bimodular. Further,
$(p_{k}\tau_{\rtimes,1}p_{k})\circ\Phi_{k,l,i}^{\prime}\leq p_{k}\tau_{\rtimes,1}p_{k}$
since for all positive $x\in p_{k}(M\rtimes_{\sigma^{\varphi}}\mathbb{R})p_{k}$ we have
\begin{eqnarray}
\nonumber
(p_{k}\tau_{\rtimes,1}p_{k})\circ\Phi_{k,l,i}^{\prime}(x) &=& \tau_{\rtimes_{1}}(p_{k}\Phi_{l,i}(q_{l}p_{k}xp_{k}q_{l})p_{k})\leq \tau_{\rtimes,1}(\Phi_{l,i}(q_{l}p_{k}xp_{k}q_{l})) \\
\nonumber
&=& \tau_{\rtimes,1}(q_{l}\Phi_{l,i}(q_{l}p_{k}xp_{k}q_{l})q_{l}) \leq \tau_{\rtimes,1}(q_{l}p_{k}xp_{k}q_{l}) \\
\nonumber
&\leq& (p_{k}\tau_{\rtimes,1}p_{k})(x).
\end{eqnarray}

For every map $\Phi$ on $q_{l}(M\rtimes_{\sigma^{\varphi}}\mathbb{R})q_{l}$
of the form $\Phi=a\mathbb{E}_{2,l}b$ with $a,b\in q_{l}(M\rtimes_{\sigma^{\varphi}}\mathbb{R})q_{l}$
and $x\in p_{k}(M\rtimes_{\sigma^{\varphi}}\mathbb{R})p_{k}$ we have
by Lemma \ref{Lem=CEVComp2} that {
\begin{eqnarray}
\nonumber
p_{k}\Phi(q_{l}xq_{l})p_{k}&=&p_{k}a\mathbb{E}_{2,l}(bq_{l}xq_{l})p_{k} \\
\nonumber
&=& \nu_{l}^{-1}p_{k}aq_{l}\mathbb{E}_{N}(T_{\theta^{\varphi}}(h_{\psi,l}^{-1}bq_{l}xq_{l}))q_{l}p_{k}\\
\nonumber
&=& \nu_{l}^{-1}p_{k}ap_{k}\mathbb{E}_{N}(T_{\theta^{\varphi}}(h_{\psi,l}^{-1}bq_{l}xq_{l})).\\
\nonumber
\end{eqnarray}
Now we may use the isomorphism $\rho$ and apply Lemma \ref{Lem=TomitaTakesaki}  to $M \rtimes_{\sigma^\psi} \mathbb{R}$ to get
\begin{eqnarray*}
p_{k}\Phi(q_{l}xq_{l})p_{k}&=& 
 \nu_{l}^{-1}p_{k}ap_{k}\mathbb{E}_{N}(T_{\theta^{\psi}}(  p_{\psi,l} h_{\psi}^{-1}  \rho^{-1}(b)   p_{\psi, l} \rho^{-1}(x)   p_{\psi,l})) \\
&=&
 \nu_{l}^{-1}p_{k}ap_{k}\mathbb{E}_{N}(T_{\theta^{\psi}}(  p_{\psi,l} h_{\psi}^{-1}  \rho^{-1}(b)   p_{\psi, l} \rho^{-1}(x)   ) \\
 &=&
 \nu_{l}^{-1}p_{k}ap_{k}\mathbb{E}_{N}(T_{\theta^{\varphi}}(q_{l}h_{\psi,l}^{-1}bq_{l}x)). 
\end{eqnarray*}
Then  by  Lemma \ref{Lem=TomitaTakesaki} applied to  $M \rtimes_{\sigma^\varphi} \mathbb{R}$ for the second equality    and  Lemma  \ref{Lem=CEVComp1} for the last equality, we find
\begin{eqnarray*}
p_{k}\Phi(q_{l}xq_{l})p_{k}  &=&
 \nu_{l}^{-1}p_{k}ap_{k}\mathbb{E}_{N}(T_{\theta^{\varphi}}(q_{l}h_{\psi,l}^{-1}bq_{l}x p_k)) \\
&=& \nu_{l}^{-1}p_{k}ap_{k}\mathbb{E}_{N}(T_{\theta^{\varphi}}(h_{k}^{-1}(h_{k}q_{l}h_{\psi,l}^{-1}bq_{l}x)))p_{k}\\
\nonumber
&=& \nu_{k}\nu_{l}^{-1}p_{k}a\mathbb{E}_{1,k}((h_{k}q_{l}h_{\psi,l}^{-1}bq_{l})x).
\end{eqnarray*}} 
Thus $(p_{k}\Phi(q_{l}\:\cdot\:q_{l})p_{k})^{(2)}\in\mathcal{K}_{00}(p_{k}(M\rtimes_{\sigma^{\varphi}}\mathbb{R})p_{k},p_{k}Np_{k},p_{k}\tau_{\rtimes,1}p_{k})$. By taking linear combinations and approximation we see that if $\Phi^{(2)}\in\mathcal{K}(q_{l}(M\rtimes_{\sigma^{\varphi}}\mathbb{R})q_{l},q_{l}Nq_{l},q_{l}\tau_{\rtimes, 1}q_{l})$,
then also  $p_{k}\Phi(q_{l}\:\cdot\:q_{l})p_{k})^{(2)}\in\mathcal{K}(p_{k}(M\rtimes_{\sigma^{\varphi}}\mathbb{R})p_{k},p_{k}Np_{k},p_{k}\tau_{\rtimes,1}p_{k})$.
In particular, $(\Phi_{k,l,i}^{\prime})^{(2)}\in\mathcal{K}(p_{k}(M\rtimes_{\sigma^{\varphi}}\mathbb{R})p_{k},p_{k}Np_{k},p_{k}\tau_{\rtimes,1}p_{k})$
for $k,l\in\mathbb{N}$ and $i\in I_{l}$.

For every $x\in p_{k}(M\rtimes_{\sigma^{\varphi}}\mathbb{R})p_{k}$
we have that
\[
\lim_{l\rightarrow\infty}\lim_{i\in I_{l}}\Phi_{k,l,i}^{\prime}(x)=x
\]
 in the strong topology. A variant of Lemma \ref{Lem=ApproxSOT} then shows that
there is a directed set $\mathcal{F}$ and an increasing function
$(\widetilde{l},\widetilde{i}):\mathcal{F}\rightarrow\{(l,i)\mid k\in\mathbb{N},i\in I_{l}\}$,
$F\mapsto(\widetilde{l}(F),\widetilde{i}(F))$ such that $(\Phi_{k,\widetilde{l}(F),\widetilde{i}(F)}^{\prime})_{F\in\mathcal{F}}$
witnesses the relative Haagerup property of $(p_{k}(M\rtimes_{\sigma^{\varphi}}\mathbb{R})p_{k},p_{k}Np_{k},p_{k}\tau_{\rtimes,1}p_{k})$.
\end{proof}

\begin{thm} \label{thm:condexpindep}
Let $ N\subseteq M$ be a unital inclusion
of von Neumann algebras with $N$  finite. Let   $\mathbb{E}_{N},\mathbb{F}_{N}:M\rightarrow N$
be two faithful normal conditional expectations. Then the triple $(M,N,\mathbb{E}_{N})$
has  $\text{(rHAP)}$   if and only if the triple $(M,N,\mathbb{F}_{N})$
has  $\text{(rHAP)}$.
\end{thm}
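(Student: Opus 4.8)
The plan is to chain together the two main structural results of Section~\ref{Preparation} and Section~\ref{MainResults}: the characterisation of property $\text{(rHAP)}$ in terms of corners of continuous cores in Theorem~\ref{Prop=HAPtoCorner}, and the comparison of such corners for different conditional expectations in Proposition~\ref{Prop=CornerToCorner}. First I would fix a faithful normal tracial state $\tau\in N_\ast$ (which exists since $N$ is finite and $\sigma$-finite) and set $\varphi:=\tau\circ\mathbb{E}_N$ and $\psi:=\tau\circ\mathbb{F}_N$, both faithful normal states on $M$ with $\varphi\circ\mathbb{E}_N=\varphi$ and $\psi\circ\mathbb{F}_N=\psi$. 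By the definition following Theorem~\ref{StateIndependence}, $(M,N,\mathbb{E}_N)$ has $\text{(rHAP)}$ if and only if $(M,N,\varphi)$ does, and similarly for $\mathbb{F}_N$ and $\psi$. Applying Theorem~\ref{Prop=HAPtoCorner} (equivalence of \eqref{Eqn=HAPEq1} and \eqref{Eqn=HAPEq3}) to $\varphi$ and then separately to $\psi$, the problem reduces to comparing the corner triples $(p_k(M\rtimes_{\sigma^\varphi}\mathbb{R})p_k, p_kNp_k, p_k\tau_{\rtimes,1}p_k)$ with the corner triples $(p_{\psi,k}(M\rtimes_{\sigma^\psi}\mathbb{R})p_{\psi,k}, p_{\psi,k}Np_{\psi,k}, p_{\psi,k}\tau_{\rtimes,2}p_{\psi,k})$, for all $k\in\mathbb{N}$.

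Next I would transport the $\psi$-corners into $M\rtimes_{\sigma^\varphi}\mathbb{R}$ by means of the isomorphism $\rho:M\rtimes_{\sigma^\psi}\mathbb{R}\to M\rtimes_{\sigma^\varphi}\mathbb{R}$ recalled before Lemma~\ref{Lem=CEVComp1}, which restricts to the identity on $M$ and satisfies $\tau_{\rtimes,1}\circ\rho=\tau_{\rtimes,2}$. Since $\rho|_N=\mathrm{id}_N$ and $q_k:=\rho(p_{\psi,k})$, the map $\rho$ restricts to a normal $\ast$-isomorphism of $p_{\psi,k}(M\rtimes_{\sigma^\psi}\mathbb{R})p_{\psi,k}$ onto $q_k(M\rtimes_{\sigma^\varphi}\mathbb{R})q_k$ carrying $p_{\psi,k}Np_{\psi,k}$ onto $q_kNq_k$ and the state $p_{\psi,k}\tau_{\rtimes,2}p_{\psi,k}$ onto $q_k\tau_{\rtimes,1}q_k$. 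Property $\text{(rHAP)}$ of a triple is evidently invariant under such an isomorphism of triples: one simply conjugates a witnessing net of approximating maps, and conjugation preserves normality, complete positivity, $N$-bimodularity, the domination of the reference functional and membership in the space of relative compact operators. Hence the $\psi$-corners have $\text{(rHAP)}$ for all $k$ if and only if $(q_k(M\rtimes_{\sigma^\varphi}\mathbb{R})q_k, q_kNq_k, q_k\tau_{\rtimes,1}q_k)$ has $\text{(rHAP)}$ for all $k$.

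Finally, Proposition~\ref{Prop=CornerToCorner} says precisely that the latter holds if and only if $(p_k(M\rtimes_{\sigma^\varphi}\mathbb{R})p_k, p_kNp_k, p_k\tau_{\rtimes,1}p_k)$ has $\text{(rHAP)}$ for all $k$. Reading the resulting chain of equivalences in both directions, $(M,N,\mathbb{E}_N)$ has $\text{(rHAP)}$ $\iff$ the $p_k$-corners do $\iff$ the $q_k$-corners do $\iff$ the $p_{\psi,k}$-corners do $\iff$ $(M,N,\mathbb{F}_N)$ has $\text{(rHAP)}$, which is the claim. (An entirely parallel chain, using item \eqref{Eqn=HAPEq4} of Theorem~\ref{Prop=HAPtoCorner} together with the last assertion of Proposition~\ref{Prop=CornerToCorner}, would give the corresponding independence statement for property $\text{(rHAP)}^-$ if desired.)

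Essentially all of the genuine mathematical content has already been done in Theorem~\ref{Prop=HAPtoCorner} and, above all, in Proposition~\ref{Prop=CornerToCorner}; the remaining work is bookkeeping, and the only point that warrants a moment of care — and hence the ``main obstacle'' at this stage — is keeping the identifications of $N$ inside the two different crossed products consistent. Since $N\subseteq M^\varphi\cap M^\psi$, the copy $\pi(N)$ is $N\otimes 1$ in both $M\rtimes_{\sigma^\varphi}\mathbb{R}$ and $M\rtimes_{\sigma^\psi}\mathbb{R}$, $\rho$ is the identity on this common copy of $M$, and the traces $\tau_{\rtimes,1},\tau_{\rtimes,2}$ correspond under $\rho$; once these are pinned down, the conditional expectations onto the relevant corners of $N$ (computed explicitly in Lemmas~\ref{Lem=NewExpectation}, \ref{Lem=CEVComp1} and \ref{Lem=CEVComp2}) automatically match, and the argument goes through without further difficulty.
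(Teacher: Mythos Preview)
Your proof is correct and follows essentially the same strategy as the paper: reduce via Theorem~\ref{Prop=HAPtoCorner} to the corner triples of the continuous core, transport the $\psi$-corners to the $q_k$-corners via the isomorphism $\rho$, and then invoke Proposition~\ref{Prop=CornerToCorner} to pass between the $p_k$- and $q_k$-corners. The paper phrases this as a one-directional argument (with the converse by symmetry) while you write it as a chain of equivalences, but the content is identical.
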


\begin{proof}
Assume that the triple $(M,N,\mathbb{E}_{N})$ has
the relative Haagerup property. Let $\tau$ be a faithful normal
tracial state on $N$ that we extend to a state $\varphi:=\tau\circ\mathbb{E}_{N}$
on $M$. Theorem \ref{Prop=HAPtoCorner} implies that for every $k\in\mathbb{N}$
the triple $(p_{k}(M\rtimes_{\sigma^{\varphi}}\mathbb{R})p_{k},p_{k}Np_{k},p_{k}\tau_{\rtimes,1}p_{k})$
has the $\text{(rHAP)}$. With Proposition \ref{Prop=CornerToCorner}
we get that for every $k\in\mathbb{N}$ the triple $(q_{k}(M\rtimes_{\sigma^{\varphi}}\mathbb{R})q_{k},q_{k}Nq_{k},q_{k}\tau_{\rtimes,1}q_{k})$
has the $\text{(rHAP)}$. The isomorphism $\rho$
restricts to an isomorphism $q_{k}(M\rtimes_{\sigma^{\varphi}}\mathbb{R})q_{k}\cong p_{\psi,k}(M\rtimes_{\sigma^{\psi}}\mathbb{R})p_{\psi,k}$
which maps $q_{k}Nq_{k}$ onto $p_{\psi,k}Np_{\psi,k}$ and for which
$(q_{k}\tau_{\rtimes,1}q_{k})\circ\rho=p_{\psi,k}\tau_{\rtimes,2}p_{\psi,k}$.
Combining this with   Theorem \ref{Prop=HAPtoCorner}
implies that $(M,N,\mathbb{F}_{N})$ has the  $\text{(rHAP)}$.
\end{proof}


\subsection{Unitality and state-preservation of the approximating maps}

The following theorem states that for  triples $(M,N,\varphi)$ with $N$ finite  
the approximating maps may be assumed to be unital and state-preserving.
The proof combines the passage to suitable crossed products and corners
of crossed products from Section \ref{Preparation} with the case
considered in Subsection \ref{SpecialCase}.

\begin{thm} \label{Unitality+State-Preservation}
Let $ N\subseteq M$
be a unital inclusion of von Neumann algebras which admits a faithful
normal conditional expectation $\mathbb{E}_{N}$. Assume that $N$
is finite. Let $\tau \in N_{\ast}$ be a faithful normal (possibly non-tracial) state
that we extend to a state $\varphi:=\tau \circ \mathbb{E}_{N}$ on $M$
and assume that the triple $(M,N,\varphi)$ has property $\text{(rHAP)}$.
Then property $\text{(rHAP)}$ may be witnessed by a net of unital
and $\varphi$-preserving approximating maps, i.e. we may assume (1$^{\prime\prime}$)
and (4$^{\prime}$).
\end{thm}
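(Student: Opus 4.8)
The plan is to reduce the general (possibly non-tracial) state case to the tracial situation handled in Subsection~\ref{SpecialCase}, using the crossed product machinery developed in Section~\ref{Preparation}. First I would observe that although $\tau$ is only assumed to be a faithful normal state on $N$ (not tracial), the finiteness of $N$ lets us pick an auxiliary \emph{tracial} faithful normal state $\tau_0\in N_*$ and set $\varphi_0:=\tau_0\circ\mathbb{E}_N$. By Theorem~\ref{StateIndependence} the property $\text{(rHAP)}$ of $(M,N,\varphi)$ is equivalent to that of $(M,N,\varphi_0)$, so it suffices to prove the unitality and state-preservation statement for $\varphi_0$ and then transfer back. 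However, transferring back only gives us \emph{some} approximating maps for $\varphi$; to get $\varphi$-preserving maps we must argue directly. So the cleaner route is: it is enough to prove the theorem in the case where $\tau$ is tracial, and for a non-tracial $\tau$ one uses that any faithful normal state $\tau$ on a finite von Neumann algebra $N$ is of the form $\tau(\cdot)=\tau_0(h\cdot h)$ for a boundedly invertible positive $h\in N$ with $\tau_0$ tracial, reducing to Lemma~\ref{Lem=ContractiveMaps}-type arguments; but since $h\in N\subseteq N'\cap M$ and $h$ is central-twisted only mildly, this places us precisely in the framework of Lemma~\ref{Lem=CompactMaps} and Theorem~\ref{BannonFangApproach}.

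The main steps, for tracial $\tau$, are as follows. By Theorem~\ref{Prop=HAPtoCorner}, property $\text{(rHAP)}$ of $(M,N,\varphi)$ is equivalent to property $\text{(rHAP)}$ of the finite-corner triples $(p_k(M\rtimes_{\sigma^\varphi}\mathbb{R})p_k, p_kNp_k, p_k\tau_\rtimes p_k)$ for all $k\in\mathbb{N}$. Each such corner is a \emph{finite} von Neumann algebra equipped with a faithful normal tracial state $p_k\tau_\rtimes p_k$, and crucially its modular automorphism group is trivial, so in particular \emph{every} element is analytic for the modular group. Hence Theorem~\ref{BannonFangApproach} applies directly to each corner triple, provided we first know that $\text{(rHAP)}$ of the corner is witnessed by \emph{contractive} maps --- but this is exactly the content of the ``Moreover'' part of Proposition~\ref{Prop=ToCorner}\eqref{Eqn=HardHAPEq1} (or alternatively Lemma~\ref{Lem=ContractiveMaps} applied with $h=1$, since the trace is already tracial). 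Thus for each $k$ the corner triple has $\text{(rHAP)}$ witnessed by \emph{unital and $p_k\tau_\rtimes p_k$-preserving} approximating maps.

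Next I would run this back up the ladder. By the equivalence ``\eqref{Eqn=HAPEq2} $\Leftrightarrow$ \eqref{Eqn=HAPEq3}'' in Theorem~\ref{Prop=HAPtoCorner} together with Proposition~\ref{Prop=ToCorner}\eqref{Eqn=HardHAPEq2}, unital and state-preserving approximation on the corners $p_k(M\rtimes_{\sigma^\varphi}\mathbb{R})p_k$ upgrades (after the conjugation by $A_{j,k}=\lambda(f_j)h_k^{1/2}$, using that $A_{j,k}$ commutes with $p_kNp_k$ and intertwines $p_k\tau_\rtimes p_k$ with $p_k\widehat\varphi_j p_k$ via \eqref{Eqn=HkRadonNykodym}, which preserves the unital/state-preserving conditions up to the argument in Proposition~\ref{Prop=ToCorner}\eqref{Eqn=HardHAPEq2}) to unital and $\widehat\varphi_j$-preserving approximation on each $M\rtimes_{\sigma^\varphi}\mathbb{R}$. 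Finally, Theorem~\ref{Thm=CrossToNormal}\,(2) (equivalently Theorem~\ref{Thm=ActionHAP}\eqref{Item=ActionHAP2}) pushes unital and $\widehat\varphi_j$-preserving approximating maps on the crossed products $(M\rtimes_{\sigma^\varphi}\mathbb{R},N,\widehat\varphi_j)$ down to unital and $\varphi$-preserving approximating maps on $(M,N,\varphi)$, since $\widetilde\Phi_{j,k}=T_{f_j}\circ\Phi_{j,k}$ preserves unitality and preserves the reference functional whenever $\Phi_{j,k}$ does. This completes the proof for tracial $\tau$, and the general case follows by the $h$-twisting reduction described in the first paragraph (writing $\tau=\tau_0(h\cdot h)$ with $h\in N$ boundedly invertible positive and $\tau_0$ tracial, then applying Lemma~\ref{Lem=CompactMaps} to incorporate the twist while keeping the maps unital and $\varphi$-preserving after a renormalization in the spirit of Theorem~\ref{BannonFangApproach}).

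The main obstacle I expect is the careful bookkeeping in the ``upgrade'' step: one must check that the conjugation maps $\Phi\mapsto A_{j,k}\Phi(A_{j,k}^{-1}\cdot A_{j,k}^{-1})A_{j,k}$ and the compression maps $\Phi\mapsto\Phi(p_k\cdot p_k)$ genuinely preserve \emph{both} unitality and exact state-preservation, not merely the weaker inequalities $\varphi\circ\Phi\le\varphi$; this is precisely why Proposition~\ref{Prop=ToCorner}\eqref{Eqn=HardHAPEq2} needed the explicit $a_k,b_k$ correction-term construction rather than a naive compression, and the same delicate correction must be threaded through here. The secondary difficulty is the non-tracial reduction: one needs the twist element $h$ to lie in $N'\cap M$ (here $h\in N$, so this is automatic) and to be analytic for $\sigma^\varphi$, and one must verify that the renormalization producing unital maps does not destroy $\varphi$-preservation --- this is handled exactly as in the proof of Theorem~\ref{BannonFangApproach} via Proposition~\ref{Prop=BannonFang}, which is why that proposition was stated for general finite $N$ rather than only the tracial-state case.
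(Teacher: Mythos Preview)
Your overall architecture is right, but there is a real gap at the step you yourself flag as the ``main obstacle''. You pass to the corners with the \emph{tracial} state $p_k\tau_\rtimes p_k$, apply Theorem~\ref{BannonFangApproach} there (trivially, since the modular group is the identity), and then propose to conjugate by $A_{j,k}$ to land in $(p_k(M\rtimes_{\sigma^\varphi}\mathbb{R})p_k,\,p_kNp_k,\,p_k\widehat{\varphi}_jp_k)$. The conjugation $\Phi\mapsto A_{j,k}^{-1}\Phi(A_{j,k}\,\cdot\,A_{j,k})A_{j,k}^{-1}$ does preserve exact state-preservation (a direct check), but it does \emph{not} preserve unitality: the image of $p_k$ is $A_{j,k}^{-1}\Phi(A_{j,k}^2)A_{j,k}^{-1}$, which has no reason to equal $p_k$. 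Proposition~\ref{Prop=ToCorner}\eqref{Eqn=HardHAPEq2} does not repair this --- its correction term handles the compression step $\Phi\mapsto\Phi(p_k\,\cdot\,p_k)$, not the $A_{j,k}$-conjugation. The paper sidesteps this entirely: it stays at the level of the \emph{non-tracial} corner state $p_k\widehat{\varphi}_jp_k$, observes (via \eqref{Eqn=HkRadonNykodym} and \cite[Theorem~VIII.2.11]{Takesaki2}) that its modular group is \emph{inner}, implemented by the bounded invertible element $h_k^{1/2}\lambda(f_j)$, and applies Theorem~\ref{BannonFangApproach} directly there. This is precisely why Theorem~\ref{BannonFangApproach} was stated under the hypothesis ``$\sigma^\varphi$ is implemented by a bounded invertible $h$'' rather than only for traces.

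Your non-tracial reduction also has a problem: a faithful normal state $\tau$ on a finite $N$ is of the form $\tau_0(d\,\cdot\,)$ with $d$ positive and $\tau_0$-integrable, but $d$ need not be bounded or boundedly invertible, so writing $\tau=\tau_0(h\,\cdot\,h)$ with bounded invertible $h\in N$ is not generally possible. The paper's argument here is much simpler and avoids twisting altogether: once you have unital $(\tau_0\circ\mathbb{E}_N)$-preserving maps for a tracial $\tau_0$, Lemma~\ref{Lem=StatePreserving} (the equality case of \eqref{Item=Epreserving1}$\Leftrightarrow$\eqref{Item=Epreserving3}) gives $\mathbb{E}_N\circ\Phi_i=\mathbb{E}_N$, and then the implication \eqref{Item=Epreserving1}$\Rightarrow$\eqref{Item=Epreserving2} shows the \emph{same} maps are $(\tau\circ\mathbb{E}_N)$-preserving for \emph{every} faithful $\tau\in N_*$; Lemma~\ref{Lem=StandardForm} handles compactness. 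No $h$-twist is needed.
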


\begin{proof}
First assume that $\tau$ is tracial.
Since the triple $(M,N,\varphi)$ has property $\text{(rHAP)}$
we get with Theorem \ref{Thm=NormalToCross} and Proposition \ref{Prop=ToCorner}
that for all $j\in\mathbb{N}$, $k\in\mathbb{N}$ the triple
$(p_{k}(M\rtimes_{\sigma^{\varphi}}\mathbb{R})p_{k},p_{k}Np_{k},p_{k}\widehat{\varphi}_{j}p_{k})$
has property $\text{(rHAP)}$ as well and that it may be witnessed
by a net of contractive approximating maps. As we have seen before,
for every $k\in\mathbb{N}$ the element $h_{k}^{1/2}\lambda(f_{j})\in p_{k}(M\rtimes_{\sigma^{\varphi}}\mathbb{R})p_{k}$
is positive and boundedly invertible in $p_{k}(M\rtimes_{\sigma^{\varphi}}\mathbb{R})p_{k}$.
Further, by \eqref{Eqn=HkRadonNykodym} and \cite[Theorem VIII.2.11]{Takesaki2}
the equality
\begin{eqnarray}
\nonumber
\sigma_{t}^{p_{k}\widehat{\varphi_{j}}p_{k}}(x)=(h_{k}^{1/2}\lambda(f_{j}))^{it}x(h_{k}^{1/2}\lambda(f_{j}))^{-it}
\end{eqnarray}
holds for all $x\in p_{k}(M\rtimes_{\sigma^{\varphi}}\mathbb{R})p_{k}$,
$t\in\mathbb{R}$. Theorem \ref{BannonFangApproach} then implies
that property $\text{(rHAP)}$ of $(p_{k}(M\rtimes_{\sigma^{\varphi}}\mathbb{R})p_{k},p_{k}Np_{k},p_{k}\widehat{\varphi}_{j}p_{k})$
may for every $j,k \in\mathbb{N}$ be witnessed
by a net of unital $(p_{k}\widehat{\varphi}_{j}p_{k})$-preserving
maps. By applying the converse directions of Proposition \ref{Prop=ToCorner}
and Theorem \ref{Thm=NormalToCross} we deduce the claimed statement.

Now we show that we may replace $\tau$ by any non-tracial faithful state   in $N_\ast$.  Let still $\tau \in N_\ast$ be a faithful tracial state.
 Let $(\Phi_i)_{i \in I}$ be approximating maps witnessing the $\text{(rHAP)}$ for $(M,N,\tau \circ \mathbb{E}_N)$ which by the previous paragraph  may be taken unital and $\tau \circ \mathbb{E}_N$-preserving. The proof of Theorem  \ref{StateIndependence}, exploiting Lemmas   \ref{Lem=StatePreserving} and \ref{Lem=StandardForm} shows that $(\Phi_i)_{i \in I}$ also witness the $\text{(rHAP)}$  for  $(M,N, \varphi \circ \mathbb{E}_N)$ for any faithful state $\varphi \in N_\ast$. Further Lemma \ref{Lem=StatePreserving} shows that  $\Phi_i$ is $\varphi \circ  \mathbb{E}_N$-preserving and we are done.
\end{proof}


\subsection{Equivalence of $\text{(rHAP)}$ and $\text{(rHAP)}^{-}$}

In \cite{BannonFang} among other things Bannon and Fang prove that
for triples $(M,N,\tau)$ of finite von Neumann algebras with a tracial
state $\tau\in M_{\ast}$ the subtraciality condition in Popa's notion
of the relative Haagerup property is redundant. It is easy to check
that their proof translates into our setting, which leads to the following
variation of \cite[Theorem 2.2]{BannonFang}.

\begin{thm}[Bannon-Fang] \label{BaFaTheorem2.2}
Let $M$ be a finite von Neumann algebra equipped with a faithful normal tracial
state $\tau\in M_{\ast}$ and let $ N\subseteq M$ be a unital
inclusion of von Neumann algebras. If the triple $(M,N,\tau)$ has
property $\text{(rHAP)}^{-}$, then it has property $\text{(rHAP)}$.
Further, property $\text{(rHAP)}$ may be witnessed by unital and
trace-preserving approximating maps.
\end{thm}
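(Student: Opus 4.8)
This is a "relative-version transplant" of Bannon–Fang's original argument (\cite[Section 2]{BannonFang}), whose ingredients we have essentially already assembled in Subsection~\ref{SpecialCase}. The plan is as follows. Suppose $(M,N,\tau)$ has property $\text{(rHAP)}^-$, witnessed by a net $(\Phi_i)_{i\in I}$ of normal, completely positive, $N$-$N$-bimodular maps with $\Phi_i^{(2)}\in\mathcal{K}(M,N,\tau)$ and $\|\Phi_i(x)-x\|_{2,\tau}\to 0$ for all $x\in M$. Since $\tau$ is tracial on $M$ (and hence on $N$, with $\mathbb{E}_N$ automatically $\tau$-preserving), by Lemma~\ref{Lem=SOTversusL2} the $L^2$-convergence is strong convergence on bounded sets; so the first task is only to produce \emph{uniform boundedness}, and then to upgrade the maps to be contractive, unital and trace-preserving.

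\textbf{Step 1: uniform boundedness.} The obstruction is that a priori $\sup_i\|\Phi_i\| = \sup_i\|\Phi_i(1)\|$ need not be finite. Here I would reuse the normalisation trick from Step~1 of the proof of \cite[Lemma 4.3]{CS-CMP} (the same reduction invoked in Lemma~\ref{Lem=ContractiveMaps}): replace each $\Phi_i$ by $g_i^{-1/2}\,\Phi_i(\,\cdot\,)\,g_i^{-1/2}$ on the spectral subspace where $g_i:=\Phi_i(1)$ is bounded away from $0$, after cutting off with a function $F_n(g_i)$ of $g_i$ that is $1$ near $1$ and kills the large part of the spectrum. The crucial point in the relative setting is that $g_i=\Phi_i(1)\in N'\cap M$ by $N$-$N$-bimodularity, so $F_n(g_i)\in N'\cap M$ as well, whence all the modified maps remain $N$-$N$-bimodular and, by a computation like the one in the proof of Lemma~\ref{Lem=CompactMaps} (conjugating $a e_N b$ by an element of $N'\cap M$ keeps it in $\mathcal{K}_{00}$), remain relatively compact. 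The strong convergence to the identity is preserved exactly as in \cite{BannonFang}. This produces a net with $\sup_i\|\Phi_i\|<\infty$ satisfying $\tau\circ\Phi_i\le\tau$ (the latter needs the traciality of $\tau$, which gives $\mathbb{E}_N\circ\Phi_i\le\mathbb{E}_N$ and hence $\tau\circ\Phi_i\le\tau$ via Lemma~\ref{Lem=StatePreserving}), i.e.\ a net witnessing $\text{(rHAP)}$ in the sense of Definition~\ref{Dfn=RHAPState}.

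\textbf{Step 2: contractivity.} Since $M$ is finite with $\tau$ tracial, the trivial element $h=1\in N'\cap M^+$ satisfies $\sigma^\varphi_t = \mathrm{id}$ and $\varphi = \tau$, so Lemma~\ref{Lem=ContractiveMaps} applies verbatim (with $h=1$, $\varphi=\tau$) and lets us assume the approximating maps are contractive, i.e.\ condition $(1')$ holds.

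\textbf{Step 3: unitality and trace-preservation.} Now $\varphi=\tau$ is a tracial state on $M$, so every element of $M$ is analytic for the trivial modular group $\sigma^\varphi=\mathrm{id}$; thus the hypothesis of Theorem~\ref{BannonFangApproach} is met (take $h=1$). Applying that theorem to the contractive net from Step~2 produces a net of \emph{unital} and \emph{$\tau$-preserving} approximating maps still witnessing $\text{(rHAP)}$, which in particular shows $(M,N,\tau)$ has property $\text{(rHAP)}$ with the stated extra features. The main obstacle in the whole argument is Step~1 — verifying that the Bannon–Fang/CS-CMP cut-off procedure preserves $N$-$N$-bimodularity and relative $L^2$-compactness simultaneously — but this is exactly the content of the observation $\Phi_i(1)\in N'\cap M$ together with Lemma~\ref{Lem=CompactMaps}, so no genuinely new idea is needed; Steps 2 and 3 are then immediate specialisations (to $h=1$, $\varphi=\tau$) of Lemma~\ref{Lem=ContractiveMaps} and Theorem~\ref{BannonFangApproach} respectively.
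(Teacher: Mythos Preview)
The paper does not prove this theorem; it simply records that the original argument of Bannon--Fang \cite[Theorem~2.2]{BannonFang}, already formulated for tracial inclusions, translates once compatibility with the compactness notion used here is checked. Your plan to reconstruct the proof from the tools of Subsection~\ref{SpecialCase} is sound in outline, and Steps~2 and~3 are correct: with $h=1$ and $\varphi=\tau$, Lemma~\ref{Lem=ContractiveMaps} and Theorem~\ref{BannonFangApproach} apply verbatim once one has $\text{(rHAP)}$ with contractive maps.

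The genuine gap is in Step~1. Your assertion that the cut-off procedure yields $\tau\circ\Phi_i\le\tau$, justified by ``traciality of $\tau$, which gives $\mathbb{E}_N\circ\Phi_i\le\mathbb{E}_N$'', is false. A unital normal completely positive $N$-$N$-bimodular map on a tracial $(M,\tau)$ need not be $\tau$-decreasing: on $M=M_2(\mathbb{C})$ with normalised trace and $N=\mathbb{C}1$, the map $\Phi(x)=x_{11}\cdot 1$ is unital completely positive with bounded $L^2$-extension, yet $\tau(\Phi(E_{11}))=1>\tfrac12=\tau(E_{11})$. The CS--CMP Step~1 cut-off produces uniform boundedness and \emph{preserves} a pre-existing sub-state condition (since $\tau(f\Phi_i(x)f)=\tau(\Phi_i(x)f^2)\le\tau(\Phi_i(x))$ for $0\le f\le 1$), but it does not manufacture one from nothing. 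As condition~\eqref{Eqn=RHAP4} of Definition~\ref{Dfn=RHAPState} is precisely what separates $\text{(rHAP)}$ from $\text{(rHAP)}^-$, your Step~1 does not produce a valid input for Steps~2--3. Obtaining sub-traciality from the mere existence of a bounded $L^2$-extension is the substantive content of Bannon--Fang's argument; it exploits the symmetry of $L^2(M,\tau)$ specific to the tracial case (via the tracial adjoint $\Phi\mapsto\Phi^\sharp$ determined by $\tau(\Phi^\sharp(a)b)=\tau(a\Phi(b))$, for which contractivity of $\Phi$ is equivalent to $\tau\circ\Phi^\sharp\le\tau$), and this mechanism is absent from your sketch.
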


In combination with Theorem \ref{Unitality+State-Preservation} the
following theorem provides a generalisation of Theorem \ref{BaFaTheorem2.2}.

\begin{thm} \label{rHAP=rHAP-}
Let $ N\subseteq M$
be a unital inclusion of von Neumann algebras  which admits a faithful
normal conditional expectation $\mathbb{E}_{N}$. Assume that $N$
is finite.  Let $\tau \in N_{\ast}$ be a faithful normal state
that we extend to a state $\varphi:=\tau \circ \mathbb{E}_{N}$ on $M$.
Then the triple $(M,N,\varphi)$ has property $\text{(rHAP)}$ if
and only if it has property $\text{(rHAP)}^{-}$.
\end{thm}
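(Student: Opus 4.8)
The implication $\text{(rHAP)}\Rightarrow\text{(rHAP)}^-$ is immediate from the definitions, so the plan is to prove the converse. Indeed, if $(\Phi_i)_{i\in I}$ is a net witnessing $\text{(rHAP)}$ for $(M,N,\varphi)$, then conditions \eqref{Item=RHAP1-} and \eqref{Eqn=RHAP2-} are contained in \eqref{Item=RHAP1} and \eqref{Eqn=RHAP2}, condition \eqref{Eqn=RHAP4-} is a part of \eqref{Eqn=RHAP5}, and \eqref{Eqn=RHAP3-} follows by evaluating the strong convergence \eqref{Eqn=RHAP3} at the cyclic vector, since $\Vert\Phi_i(x)-x\Vert_{2,\varphi}=\Vert(\Phi_i(x)-x)\Omega_\varphi\Vert$.

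For the converse, the first step is to reduce to the tracial case. As $N$ is finite and $\sigma$-finite it admits a faithful normal tracial state $\tau_0\in N_\ast$; set $\varphi_0:=\tau_0\circ\mathbb{E}_N$. Both $\varphi$ and $\varphi_0$ are $\mathbb{E}_N$-preserving, so Theorem \ref{StateIndependence} shows that $(M,N,\varphi)$ has $\text{(rHAP)}$ (resp.\ $\text{(rHAP)}^-$) if and only if $(M,N,\varphi_0)$ does. Hence it suffices to prove the statement when $\tau$ is tracial, which I henceforth assume. Then, assuming that $(M,N,\varphi)$ has $\text{(rHAP)}^-$, I would apply Theorem \ref{Prop=HAPtoCorner}\eqref{Eqn=HAPEq4} to deduce that for every $k\in\mathbb{N}$ the corner triple $(p_k(M\rtimes_{\sigma^\varphi}\mathbb{R})p_k,\ p_kNp_k,\ p_k\tau_{\rtimes}p_k)$ also has $\text{(rHAP)}^-$. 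Crucially, $p_k(M\rtimes_{\sigma^\varphi}\mathbb{R})p_k$ is now a \emph{finite} von Neumann algebra equipped with the faithful normal tracial functional $p_k\tau_{\rtimes}p_k$ (a positive multiple of a state, as $\tau_{\rtimes}(p_k)=k-k^{-1}<\infty$), with $p_kNp_k$ a unital subalgebra; passing to a scalar multiple of this functional affects neither property, cf.\ Remark \ref{Rmk=HAPforPosFunctionals}. Thus the Bannon--Fang theorem in the form of Theorem \ref{BaFaTheorem2.2} applies to this finite triple and gives that $(p_k(M\rtimes_{\sigma^\varphi}\mathbb{R})p_k,\ p_kNp_k,\ p_k\tau_{\rtimes}p_k)$ has $\text{(rHAP)}$ for every $k\in\mathbb{N}$. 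Invoking the equivalence \eqref{Eqn=HAPEq1}$\Leftrightarrow$\eqref{Eqn=HAPEq3} of Theorem \ref{Prop=HAPtoCorner} then yields $\text{(rHAP)}$ for $(M,N,\varphi)$, and undoing the tracial reduction via Theorem \ref{StateIndependence} completes the argument.

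I do not expect a genuine obstacle here, since almost all the weight of the statement has been absorbed into the translation results of Section \ref{Preparation}. The one point that needs care is that $\text{(rHAP)}^-$ of the ambient triple must first be shown to descend to the finite corners before the Bannon--Fang argument can be used, and that descent is precisely the content of Theorem \ref{Prop=HAPtoCorner}\eqref{Eqn=HAPEq4}; beyond that, the proof is a matter of correctly composing that theorem with its converse direction and with Theorem \ref{BaFaTheorem2.2}. (In combination with Theorem \ref{Unitality+State-Preservation} one moreover obtains that $\text{(rHAP)}$ is then witnessed by unital, $\varphi$-preserving maps, recovering the full strength of the Bannon--Fang conclusion.)
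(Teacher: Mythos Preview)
Your proposal is correct and follows essentially the same route as the paper: reduce to tracial $\tau$ via Theorem \ref{StateIndependence}, push $\text{(rHAP)}^-$ down to the finite corners using Theorem \ref{Prop=HAPtoCorner}\eqref{Eqn=HAPEq4}, apply Bannon--Fang (Theorem \ref{BaFaTheorem2.2}) there, and return via the equivalence \eqref{Eqn=HAPEq1}$\Leftrightarrow$\eqref{Eqn=HAPEq3}. Your write-up is in fact slightly more explicit than the paper's (e.g.\ spelling out the easy implication and the normalization of $p_k\tau_\rtimes p_k$), but the argument is the same.
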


\begin{proof}
 By Theorem  \ref{StateIndependence} we may without loss of generality assume that $\tau$ is tracial on $N$.
It is clear that property $\text{(rHAP)}$ implies
property $\text{(rHAP)}^{-}$. Conversely, if the triple $(M,N,\varphi)$
has property $\text{(rHAP)}^{-}$, then we deduce from Theorem \ref{Prop=HAPtoCorner}
that for every $k\in\mathbb{N}$ the triple $(p_{k}(M\rtimes_{\sigma^{\varphi}}\mathbb{R})p_{k},p_{k}Np_{k},p_{k}\tau_{\rtimes,1}p_{k})$
has property $\text{(rHAP)}^{-}$ as well. Recall that $p_{k}(M\rtimes_{\sigma^{\varphi}}\mathbb{R})p_{k}$
is finite since $p_{k}\tau_{\rtimes}p_{k}$ is a faithful normal tracial
state. We can hence apply Theorem \ref{BaFaTheorem2.2} to deduce that $(p_{k}(M\rtimes_{\sigma^{\varphi}}\mathbb{R})p_{k},p_{k}Np_{k},p_{k}\tau_{\rtimes,1}p_{k})$ has $\text{(rHAP)}$ for every $k \in \mathbb{N}$. In combination
with Theorem \ref{Prop=HAPtoCorner} this implies that the triple
$(M,N,\varphi)$ has property $\text{(rHAP)}$.
\end{proof}

\vspace{1mm}

We finish this subsection with an easy lemma which will be needed later on. It could be formulated in a greater generality, but this is the form we will use in Section \ref{amfreeprod}.

	\begin{lem} \label{lem:condexp}
		Let $ N\subseteq M_1 \subset M$ be a unital inclusion
		of von Neumann algebras with $N$  finite. Assume that we have faithful normal conditional expectations $\mathbb{E}_1:M_1 \to N$ and $\mathbb{F}_1:M \to M_1$ and a faithful tracial state $\tau \in N_*$. Set $\varphi = \tau \circ \mathbb{E}_1 \circ  \mathbb{F}_1$. Then if the triple $(M,N,\varphi)$ has property $\text{(rHAP)}$	then the triple $(M_1,N,\varphi|_{M_1})$ also has  	property $\text{(rHAP)}$.	
		\end{lem}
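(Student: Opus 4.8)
The plan is to pull back approximating maps for the big inclusion $(M,N,\varphi)$ to the intermediate inclusion $(M_1,N,\varphi|_{M_1})$ by composing with the conditional expectation $\mathbb{F}_1:M\to M_1$ and restricting to $M_1$. More precisely, if $(\Phi_i)_{i\in I}$ is a net witnessing property $\text{(rHAP)}$ for $(M,N,\varphi)$ (which by Theorem~\ref{Unitality+State-Preservation} we may take unital and $\varphi$-preserving, though this is not strictly needed), I would set $\Psi_i := \mathbb{F}_1 \circ \Phi_i|_{M_1} : M_1 \to M_1$ and check that this net witnesses property $\text{(rHAP)}$ for $(M_1,N,\varphi|_{M_1})$. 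Note first that $\varphi|_{M_1} = \tau \circ \mathbb{E}_1$, so the modular group of $\varphi|_{M_1}$ leaves $N$ invariant and the triple $(M_1,N,\varphi|_{M_1})$ is of the required type.

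First I would verify the algebraic conditions. Each $\Psi_i$ is normal and completely positive as a composition of such maps, and $\sup_i\Vert\Psi_i\Vert<\infty$ follows from $\Vert\mathbb{F}_1\Vert=1$ and the uniform bound on $\Vert\Phi_i\Vert$. For the $N$-$N$-bimodularity: since $N\subseteq M_1$, for $n,m\in N$ and $x\in M_1$ we have $\Psi_i(nxm)=\mathbb{F}_1(\Phi_i(nxm))=\mathbb{F}_1(n\Phi_i(x)m)=n\mathbb{F}_1(\Phi_i(x))m=n\Psi_i(x)m$, using that $\Phi_i$ is $N$-bimodular and that $\mathbb{F}_1$ is an $M_1$-bimodule map (hence in particular $N$-bimodular). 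For the $\varphi$-decreasing property: for $x\in M_1^+$, $\varphi(\Psi_i(x))=\varphi|_{M_1}(\mathbb{F}_1(\Phi_i(x)))$; since $\varphi=\varphi\circ\mathbb{F}_1$ (because $\varphi=\tau\circ\mathbb{E}_1\circ\mathbb{F}_1$) this equals $\varphi(\Phi_i(x))\le\varphi(x)=\varphi|_{M_1}(x)$, using condition~\eqref{Eqn=RHAP4} for $\Phi_i$ applied to $x\in M\subseteq M^+$. For point-strong convergence $\Psi_i(x)\to x$: since $\Phi_i(x)\to x$ strongly and $\mathbb{F}_1$ is strongly continuous on bounded sets (Lemma~\ref{Lem=SOTversusL2}, as $\mathbb{F}_1$ is $\varphi$-preserving), and $(\Phi_i(x))_i$ is bounded, we get $\mathbb{F}_1(\Phi_i(x))\to\mathbb{F}_1(x)=x$ strongly.

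The main point, and the step I expect to require the most care, is the relative compactness condition~\eqref{Eqn=RHAP5}: I must show $\Psi_i^{(2)}\in\mathcal{K}(M_1,N,\varphi|_{M_1})$. The key observation is that $L^2(M_1,\varphi|_{M_1})$ sits isometrically inside $L^2(M,\varphi)$ as a closed subspace (this uses $\varphi=\varphi\circ\mathbb{F}_1$, which guarantees that the inclusion $M_1\hookrightarrow M$ extends to an isometry on GNS spaces), and the corresponding orthogonal projection is $(\mathbb{F}_1)^{(2)}=:e_{M_1}$. Moreover $L^2(N,\varphi)\subseteq L^2(M_1,\varphi|_{M_1})\subseteq L^2(M,\varphi)$, so the Jones projection $e_N^{\varphi|_{M_1}}$ onto $L^2(N,\varphi|_{M_1})$ equals the restriction of $e_N^{\varphi}$ to $L^2(M_1,\varphi|_{M_1})$, i.e.\ $e_N^{\varphi|_{M_1}}=e_N^{\varphi}e_{M_1}=e_{M_1}e_N^{\varphi}$. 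One then checks that $\Psi_i^{(2)}=e_{M_1}\,\Phi_i^{(2)}\,e_{M_1}$ as operators on $L^2(M_1,\varphi|_{M_1})$ (both sides send $x\Omega_\varphi$ for $x\in M_1$ to $\mathbb{F}_1(\Phi_i(x))\Omega_\varphi$; for the right-hand side one uses that $e_{M_1}y\Omega_\varphi=\mathbb{F}_1(y)\Omega_\varphi$ for $y\in M$ and that $e_{M_1}x\Omega_\varphi=x\Omega_\varphi$ for $x\in M_1$). Finally, for a finite-rank generator $ae_N^\varphi b\in\mathcal{K}_{00}(M,N,\varphi)$ with $a,b\in M$, compressing gives $e_{M_1}(ae_N^\varphi b)e_{M_1}$; writing $e_N^\varphi b e_{M_1}=e_N^\varphi \mathbb{E}_N^\varphi(\,\cdot\,)$-type manipulations and using that $\mathbb{E}_N^\varphi=\mathbb{E}_1\circ\mathbb{F}_1$ together with the module properties, one identifies this compression, up to norm limits, with an element of $\mathcal{K}(M_1,N,\varphi|_{M_1})$; indeed $e_{M_1}ae_N^\varphi=e_{M_1}a e_{M_1}e_N^\varphi$ up to the Jones relations, and $e_{M_1}ae_{M_1}$ corresponds to left multiplication by $\mathbb{F}_1$-type operators which need not lie in $M_1$ but whose relevant matrix coefficients do — here one uses that $\mathbb{E}_N^{\varphi|_{M_1}}(b'x)=\mathbb{E}_1(\mathbb{F}_1(b')x)$ for $x\in M_1$ to absorb everything into a genuine finite-rank operator relative to $N$ in $M_1$. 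Passing from $\mathcal{K}_{00}$ to $\mathcal{K}$ by norm approximation, using $\Vert e_{M_1}Te_{M_1}\Vert\le\Vert T\Vert$, completes the argument. I would double-check the precise form of the compression of $ae_N^\varphi b$, as this is where the interplay between the two conditional expectations is most delicate; the cleanest route may be to note directly that $\Psi_i(\,\cdot\,)=\sum$ (or norm-limit of sums) of maps $a'\mathbb{E}_N(b'\,\cdot\,)$ with $a'=\mathbb{F}_1(a)$ and $b'$ obtained from $b$ via Lemma~\ref{Lem=TomitaTakesaki}-type identities, which manifestly lands in $\mathcal{K}(M_1,N,\varphi|_{M_1})$.
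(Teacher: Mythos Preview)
Your proposal is correct and follows essentially the same approach as the paper: define $\Psi_i=\mathbb{F}_1\circ\Phi_i|_{M_1}$ and verify the conditions. For the compactness step, the paper does exactly the ``cleanest route'' you mention at the end: since $\mathbb{E}_N^\varphi=\mathbb{E}_1\circ\mathbb{F}_1$, one computes directly for $x\in M_1$ that $\mathbb{F}_1\bigl(a\,\mathbb{E}_N^\varphi(bx)\bigr)=\mathbb{F}_1(a)\,\mathbb{E}_1(\mathbb{F}_1(b)x)$, which lies in $\mathcal{K}_{00}(M_1,N,\varphi|_{M_1})$; your compression argument reaches the same identity once you use $e_N^\varphi\le e_{M_1}$ (so that $e_{M_1}ae_N^\varphi b e_{M_1}=\mathbb{F}_1(a)\,e_N^{\varphi|_{M_1}}\,\mathbb{F}_1(b)$ on $L^2(M_1)$), and there is no need for Lemma~\ref{Lem=TomitaTakesaki}-type manipulations here.
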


\begin{proof}
Suppose that $(\Phi_i)_{i \in I}$ is a net of approximations (unital, $\varphi$-preserving maps on $M$) satisfying the conditions in the property 	$\text{(rHAP)}$ for the triple  $(M,N,\varphi)$. For each $i \in I$ define $\Psi_i:= \mathbb{F}_1 \circ \Phi_i|_{M_1}$. Our conditions guarantee that $\mathbb{F}_1$ is $\varphi$-preserving, so $\Psi_i$ is a normal, ucp, $N$-bimodular, $\varphi|_{M_1}$ preserving map on $M_1$. Due to the last theorem, we need only to check that $(\Psi_i)_{i \in I}$ satisfy the conditions in the property $\text{(rHAP)}^{-}$ (for the triple $(M_1,N,\varphi|_{M_1})$). Condition (iii) holds as for $x \in M_1$ we have $\Psi_i(x) - x = \mathbb{F}_1 (\Phi_i(x) - x)$ and $\mathbb{F}_1^{(2)}$ is the orthogonal projection from $L^2(M, \varphi)$ onto $L^2(M_1, \varphi|_{M_1})$.

To verify the last condition we assume first that $\Phi_i$ is of the form $a (\mathbb{E}_1 \circ  \mathbb{F}_1)(b\cdot)$ for some $a, b \in M$. But then for $x \in M_1$ we have
\begin{align*} \Psi_i(x) = \mathbb{F}_1 (a (\mathbb{E}_1 \circ  \mathbb{F}_1)(bx))) =
\mathbb{F}_1 (a) (\mathbb{E}_1 \circ  \mathbb{F}_1)(bx))) = \mathbb{F}_1 (a) \mathbb{E}_1 (\mathbb{F}_1(b)x),  \end{align*}
so we get that $\Psi_i^{(2)} \in \mathcal{K}_{00}(M_1,N,\varphi|_{M_1})$. Taking linear combinations and approximation ends the proof.
\end{proof}

\section{First examples}\label{Sect=Examples}

In this section we first put our definitions and main results in concrete context, discussing examples of the Haagerup (and non-Haagerup) inclusions arising in the framework of Cartan subalgebras, as studied in \cite{Jolissaint}, \cite{Ueda} and \cite{ClaireEquiv}, and then present the case of the big algebra being just $\mathcal{B}(\mathcal{H})$. The examples related to the latter situation show that the relative Haagerup property is not implied by coamenability as defined in \cite{PopaCorr}.

\subsection{Examples from equivalence relations and groupoids} \label{Subsec:Examples}

 In this subsection we will discuss examples of inclusions of von Neumann algebras which satisfy the relative Haagerup property and have already appeared in the literature. As mentioned in the introduction, the notion of the Haagerup property  regarding the von Neumann inclusions beyond the finite context first appeared  in the study of von Neumann algebras associated with groupoids/equivalence relations.

The first result here is due to \cite{Jolissaint}, still in the finite context. Note that Jolissaint uses the definition of the Haagerup inclusion $N \subset M$ due to Popa in \cite{Popa06}, namely the one using the larger ideal of `generalised compacts' than the one employed in this paper, but also note that due to \cite[Proposition 2.2]{Popa06} both notions coincide if $N' \cap M \subset N$, so for example if $N$ is a maximal abelian subalgebra in $M$, which is the case of interest for the result below.
	
	\begin{thm}\cite[Theorem 2.1]{Jolissaint}
		Let $\mathcal{R}$ be a measure preserving standard equivalence relation on a set $X$ (with the measure $\nu$ on $\mathcal{R}$ induced by the invariant probability measure $\mu$ on $X$). Then the following are equivalent:
		\begin{itemize}
			\item[(i)] $\mathcal{R}$ has the Haagerup property, i.e.\ it admits a sequence of positive-definite functions $(\varphi_n:\mathcal{R} \to \mathbb{C})_{n \in \mathbb{N}}$ which are bounded by $1$ on the diagonal, converge to $1$ $\nu$-almost everywhere and satisfy the `vanishing property': for every $n \in \mathbb{N}$ and $\epsilon>0$ there is \[\nu(\{(x,y) \in \mathcal{R}: |\varphi_n(x,y)| > \epsilon\}) < \infty;\]
			\item[(ii)] the von Neumann inclusion (of finite von Neumann algebras)
			\[ L^\infty(X, \mu)\subset \mathcal{L}(\mathcal{R})\]
			has the relative Haagerup property.
		\end{itemize}	
	\end{thm}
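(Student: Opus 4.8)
The plan is to establish the equivalence by translating between positive-definite functions on the equivalence relation $\mathcal{R}$ and $L^\infty(X,\mu)$-bimodular completely positive maps on $\mathcal{L}(\mathcal{R})$. Recall that $\mathcal{L}(\mathcal{R})$ is spanned (weak-$*$ densely) by partial isometries $u_\phi$ indexed by partial isomorphisms $\phi$ in the full pseudogroup of $\mathcal{R}$, that $L^\infty(X,\mu)$ sits inside as the Cartan subalgebra, and that the canonical trace $\tau$ on $\mathcal{L}(\mathcal{R})$ restricts to integration against $\mu$ on $L^\infty(X,\mu)$, with the associated conditional expectation $\mathbb{E}$ sending $u_\phi$ to its restriction to the fixed-point set. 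The key dictionary is: a positive-definite function $\varphi_n:\mathcal{R}\to\mathbb{C}$ (as defined in (i)) gives rise to a completely positive, $L^\infty$-bimodular map $\Phi_n$ on $\mathcal{L}(\mathcal{R})$ which acts as a ``Fourier multiplier'' — on the dense span it is determined by $\Phi_n(a\, u_\phi) \cdot(x) \mapsto$ multiplication of the kernel of $a u_\phi$ by $\varphi_n$; the multiplicative domain/Schur-type argument (as in the classical group case, cf.\ \cite{Jolissaint}) ensures $\Phi_n$ is well-defined, normal and completely positive precisely when $\varphi_n$ is positive-definite on $\mathcal{R}$, and $\Phi_n(1)=1$ when $\varphi_n$ is bounded by $1$ on the diagonal. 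Bimodularity over $L^\infty(X,\mu)$ is automatic from the fact that such a multiplier leaves the kernel's diagonal structure intact. Since everything is finite and tracial here, Theorem \ref{BaFaTheorem2.2} (and Theorem \ref{Unitality+State-Preservation}) lets us work freely with the weaker property $\text{(rHAP)}^-$ and with non-unital or non-trace-preserving approximants, so only the qualitative features need checking.

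Concretely, for the implication (i) $\Rightarrow$ (ii): given the sequence $(\varphi_n)_{n\in\mathbb{N}}$ from (i), form the multipliers $\Phi_n$ as above. Pointwise convergence $\varphi_n\to 1$ $\nu$-a.e.\ together with a dominated-convergence estimate on each fixed $a\in\mathcal{L}(\mathcal{R})$ (whose $L^2$-kernel is square-integrable against $\nu$) gives $\|\Phi_n(a)-a\|_{2,\tau}\to 0$, which is condition \eqref{Eqn=RHAP3-} of Definition \ref{Dfn=RHAPState-}. The ``vanishing property'' — that $\{|\varphi_n|>\epsilon\}$ has finite $\nu$-measure — is exactly what forces $\Phi_n^{(2)}$ to be compact relative to $L^\infty(X,\mu)$: approximating $\varphi_n$ in supremum norm by functions supported on a set of finite $\nu$-measure yields, via the kernel picture, finite-rank-relative-to-$N$ operators of the form $\sum a_i e_N b_i$ norm-converging to $\Phi_n^{(2)}$, so $\Phi_n^{(2)}\in\mathcal{K}(\mathcal{L}(\mathcal{R}),L^\infty(X,\mu),\tau)$. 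Complete positivity and $L^\infty$-bimodularity being already in hand, $(\Phi_n)_{n\in\mathbb{N}}$ witnesses $\text{(rHAP)}^-$, hence by Theorem \ref{BaFaTheorem2.2} the inclusion has the relative Haagerup property; note that since $L^\infty(X,\mu)$ is maximal abelian we have $N'\cap M\subseteq N$ and so by \cite[Proposition 2.2]{Popa06} this coincides with Jolissaint's notion.

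For the converse (ii) $\Rightarrow$ (i): starting from a net of approximating maps $(\Phi_i)_{i\in I}$ witnessing the relative Haagerup property, which by Theorem \ref{Unitality+State-Preservation} we may take unital and $\tau$-preserving, one must extract positive-definite functions on $\mathcal{R}$. The standard device is to ``symmetrize'' and take matrix coefficients: the $L^\infty$-bimodularity means $\Phi_i$ is a multiplier, so there is a measurable kernel $\psi_i:\mathcal{R}\to\mathbb{C}$ with $\Phi_i(a)$ having kernel $\psi_i\cdot(\text{kernel of }a)$; complete positivity of $\Phi_i$ translates into positive-definiteness of $\psi_i$ on $\mathcal{R}$, unitality into $\psi_i|_{\text{diag}}=1$, relative compactness of $\Phi_i^{(2)}$ into the finite-$\nu$-measure condition for the super-level sets, and point-$L^2$ convergence $\Phi_i(a)\to a$ into $\psi_i\to 1$ in measure, from which a subsequence (relabelled as $(\varphi_n)_{n\in\mathbb{N}}$) converges $\nu$-almost everywhere. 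The main obstacle, and the point requiring real care, is precisely this passage from an abstract $N$-bimodular completely positive map to an honest positive-definite kernel on $\mathcal{R}$ and the verification that relative compactness in the operator-algebraic sense is equivalent to the measure-theoretic vanishing property — this is where one genuinely uses that $N=L^\infty(X,\mu)$ is a Cartan (so the Jones projection $e_N$ is ``diagonal'' and finite-rank-relative-to-$N$ operators have kernels supported near finitely many pseudogroup translates of the diagonal) rather than an arbitrary finite subalgebra; once the dictionary is set up the rest is bookkeeping, and for this we simply cite the detailed verification in \cite{Jolissaint}.
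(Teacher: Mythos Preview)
The paper does not actually prove this theorem: it is stated as a citation of \cite[Theorem 2.1]{Jolissaint} in the examples section, with no accompanying proof. Your sketch correctly outlines the standard dictionary between positive-definite kernels on $\mathcal{R}$ and $L^\infty(X,\mu)$-bimodular completely positive multipliers on $\mathcal{L}(\mathcal{R})$, which is indeed the approach of Jolissaint's original argument; since the paper itself defers entirely to that reference, there is nothing further to compare.
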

	
	The definition beyond the finite case has first been considered in \cite{Ueda}; a more detailed study has been conducted by Anantharaman-Delaroche in \cite{ClaireEquiv}. Note that both these papers use the notion of the relative Haagerup property for arbitrary (expected) von Neumann inclusions identical to the one studied here. We will now describe the setup.
	
	Let $\mathcal{G}$ be a measured groupoid with countable fibers, equipped with a quasi-invariant probability measure $\mu$ on the unit space $\mathcal{G}^{(0)}$ (note that a measure preserving standard equivalence relation as considered above is one source of such examples). Again $\mu$ induces a measure $\nu$ on $\mathcal{G}$; we further obtain a (not necessarily finite) von Neumann algebra $  \mathcal{L}(\mathcal{G}) \subset \mathcal{B}(L^2(\mathcal{G}, \nu))$. The following result holds.
	
	\begin{thm}\cite[Theorem 1]{ClaireEquiv}
		Let $\mathcal{G}$ be a measured groupoid with countable fibers, as above. 	Then the following conditions are equivalent:
		\begin{itemize}
			\item[(i)] $\mathcal{G}$ has the Haagerup property, i.e.\ it admits a sequence of positive-definite functions $(F_n:\mathcal{G} \to \mathbb{C})_{n \in \mathbb{N}}$ which are equal to $1$ on $\mathcal{G}^{(0)}$, converge to $1$ $\nu$-almost everywhere and satisfy the `vanishing property': for every $n \in \mathbb{N}$ and $\epsilon>0$ there is \[\nu(\{g \in \mathcal{G}: |\varphi_n(g)| > \epsilon\}) < \infty;\]
			\item[(ii)] the von Neumann inclusion
			\[ L^\infty(\mathcal{G}^{(0)}, \mu)\subset \mathcal{L}(\mathcal{G})\]
			has the relative Haagerup property.
		\end{itemize}		
	\end{thm}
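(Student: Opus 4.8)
The plan is to use the standard dictionary between positive-definite functions on a measured groupoid and completely positive ``Herz--Schur'' multipliers of its von Neumann algebra, specialised to the canonical subalgebra $N = L^\infty(\mathcal{G}^{(0)},\mu) \subseteq M = \mathcal{L}(\mathcal{G})$, which carries the canonical faithful normal conditional expectation $\mathbb{E}$ (restriction of functions on $\mathcal{G}$ to $\mathcal{G}^{(0)}$) and the reference state $\varphi = \mu\circ\mathbb{E}$; note that the modular group of $\varphi$ acts by the modular cocycle of $\mathcal{G}$, which is trivial on $\mathcal{G}^{(0)}$, so $\sigma_t^\varphi(N)\subseteq N$, and that $N$ is finite, being abelian. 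The first step is to set up the correspondence: a bounded measurable positive-definite function $F$ on $\mathcal{G}$ with $F|_{\mathcal{G}^{(0)}}=1$ determines a normal, unital, completely positive map $\Phi_F$ on $M$, specified on the generating partial isometries $v_\sigma$ (one for each Borel bisection $\sigma$) and on $N$ by $\Phi_F(v_\sigma a) = (F\circ\sigma)\,v_\sigma a$, $a\in N$; positivity of the Gram matrices encoding positive-definiteness of $F$ is exactly what makes $\Phi_F$ completely positive, $\Phi_F$ is automatically $N$-bimodular, and $\mathbb{E}\circ\Phi_F = \mathbb{E}$ since $F=1$ on units, so $\varphi\circ\Phi_F=\varphi$. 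Conversely, any normal $N$-bimodular map $\Phi$ on $M$ is of this shape: $N$-bimodularity forces $\Phi(v_\sigma)f = \alpha_\sigma(f)\Phi(v_\sigma)$ for $f\in N_{s(\sigma)}$, with $\alpha_\sigma = \mathrm{Ad}\,v_\sigma$, so $\Phi(v_\sigma)$ lies in the ``$\alpha_\sigma$-twisted'' $N$-bimodule $Nv_\sigma$ (the Cartan-type structure of $N\subseteq M$; the isotropy is a routine elaboration), and patching the resulting elements of $N$ over a Borel partition of $\mathcal{G}$ into bisections yields $F$, compatibility of the patches being forced by the cocycle relations.

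With the dictionary in hand the two implications are parallel. For ``(i) $\Rightarrow$ (ii)'', given $(F_n)$ as in (i) — note $|F_n|\le 1$ by Cauchy--Schwarz since $F_n=1$ on units — set $\Phi_n := \Phi_{F_n}$. Conditions \eqref{Item=RHAP1}--\eqref{Eqn=RHAP4} of Definition \ref{Dfn=RHAPState} are immediate from the correspondence, and $\|\Phi_n(x)-x\|_{2,\varphi}\to 0$ for $x\in M$ follows from $F_n\to 1$ $\nu$-a.e.\ together with dominated convergence in $L^2(\mathcal{G},\nu)$; since the $\Phi_n$ are uniformly bounded, Lemma \ref{Lem=SOTversusL2} upgrades this to point-strong convergence. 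The decisive point is condition \eqref{Eqn=RHAP5}, namely $\Phi_n^{(2)}\in\mathcal{K}(M,N,\varphi)$: in the disintegration $L^2(M,\varphi)=\int^\oplus \ell^2(\mathcal{G}^x)\,d\mu(x)$ the Jones projection $e_N$ is the field of rank-one projections onto the unit vectors $\delta_x$, and $\Phi_n^{(2)}$ is the diagonal multiplication operator with symbol $F_n$; the vanishing property says that, for each $\epsilon>0$, $\{|F_n|>\epsilon\}$ meets almost every fibre in a uniformly bounded finite set, hence (Lusin--Novikov selection) is covered by finitely many bisections $\sigma_1,\dots,\sigma_K$, and since multiplication by $F_n\chi_{\sigma_j}$ is a finite-rank operator relative to $N$ (of the shape $v_{\sigma_j}e_N w$, $w\in M$), $\Phi_n^{(2)}$ lies within $\epsilon$ in norm of $\sum_{j\le K}(\cdots)\in\mathcal{K}_{00}(M,N,\varphi)$; letting $\epsilon\to 0$ gives the claim.

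For ``(ii) $\Rightarrow$ (i)'', start from a net $(\Phi_i)_{i\in I}$ witnessing the relative Haagerup property of $(M,N,\varphi)$; since $N$ is finite, Theorems \ref{thm:condexpindep} and \ref{Unitality+State-Preservation} allow us to take the $\Phi_i$ unital and $\varphi$-preserving, and by the converse half of the dictionary each $\Phi_i=\Phi_{F_i}$ for a positive-definite $F_i$ with $F_i|_{\mathcal{G}^{(0)}}=1$ and $|F_i|\le 1$. Reversing the fibrewise computation above, $\Phi_i^{(2)}\in\mathcal{K}(M,N,\varphi)$ forces a uniform-in-$x$ bound on $|\{\gamma\in\mathcal{G}^x : |F_i(\gamma)|>\epsilon\}|$ for each $\epsilon>0$, which after integrating against the probability measure $\mu$ yields $\nu(\{|F_i|>\epsilon\})<\infty$; and from $\|\Phi_i(v_\sigma)-v_\sigma\|_{2,\varphi}\to 0$ one reads off $F_i\circ\sigma\to 1$ in $L^2(s(\sigma),\mu)$ for each bisection $\sigma$. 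Fixing a countable family of bisections covering $\mathcal{G}$, a diagonal extraction (legitimate after replacing the net by a sequence, using separability of the $L^2$-spaces involved) produces a sequence $(F_n)$ converging to $1$ $\nu$-a.e.\ while retaining positive-definiteness, $F_n|_{\mathcal{G}^{(0)}}=1$, and the vanishing property — which is exactly (i).

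I expect the main obstacle to be making the disintegration bookkeeping precise: identifying $\mathcal{K}(M,N,\varphi)$ inside $\langle M,N\rangle$ with the operator-norm closure of the fields of uniformly finite-rank operators, and matching that description \emph{exactly} against the measure-theoretic vanishing condition (the Lusin--Novikov decomposition of finite-fibre Borel sets into bisections, and the interplay between uniform fibre-cardinality bounds and $\nu$-finiteness of the relevant sublevel sets). A secondary nuisance is the passage from nets to sequences in the second implication, and the careful handling of the ``twisted bimodule'' step when $\mathcal{G}$ has isotropy so that $N$ fails to be maximal abelian.
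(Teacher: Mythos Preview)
The paper does not prove this theorem: it is quoted as \cite[Theorem~1]{ClaireEquiv}, i.e.\ as a known result of Anantharaman-Delaroche, included in Section~\ref{Sect=Examples} purely to illustrate existing sources of Haagerup inclusions. No argument is given in the present paper, so there is no ``paper's own proof'' against which to compare your proposal.

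For what it is worth, your sketch follows the expected line (the Herz--Schur dictionary between positive-definite groupoid functions and $N$-bimodular completely positive multipliers, combined with a fibrewise description of $\mathcal{K}(M,N,\varphi)$), and this is indeed how the result is obtained in \cite{ClaireEquiv}. Two remarks. First, your use of Theorems~\ref{thm:condexpindep} and~\ref{Unitality+State-Preservation} to upgrade the approximating maps to unital state-preserving ones is convenient but anachronistic relative to the original source; it is of course legitimate within the present paper. Second, the issue you flag at the end is real: when $\mathcal{G}$ has nontrivial isotropy, $N=L^\infty(\mathcal{G}^{(0)})$ is not maximal abelian in $\mathcal{L}(\mathcal{G})$, so the step ``every normal $N$-bimodular map is a Schur multiplier'' cannot be read off from a bare Cartan argument and requires the groupoid-specific bimodule analysis carried out in \cite{ClaireEquiv}; this, together with the precise identification of relative compactness with the $\nu$-vanishing condition via Lusin--Novikov, is where the actual work in a full proof lies.
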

	
Ueda shows in \cite[Lemma 5]{Ueda} (and then Anantharaman-Delaroche reproves it in \cite[Theorem 3]{ClaireEquiv}) that a property of a groupoid as above called \emph{treeability} implies the Haagerup property.  \cite[Theorem 5]{ClaireEquiv} also shows that for ergodic measured groupoid with countable fibers the Haagerup property is incompatible with Property (T); we are however not aware of explicit examples of such Property (T) groupoids leading to von Neumann algebras which are not finite, and a general intuition regarding Property (T) objects says that these should naturally lead to finite von Neumann algebras (for example discrete property (T) quantum groups are necessarily unimodular, see \cite{Fima}).

 \subsection{Examples and counterexamples with $M =\mathcal{B}(\mathcal{H})$}

We end this section with the example where $M = \mathcal{B}(\mathcal{H})$ and study which triples $(\mathcal{B}(\mathcal{H}), N, \mathbb{E}_N)$ have (rHAP). Since the conditional expectation $\mathbb{E}_N$ is assumed to be normal it follows by a result of Tomiyama from \cite{Tomiyama} that $N$ must be a direct sum of type I factors, so $N \simeq \oplus_{i \in I} \mathcal{B}(\mathcal{K}_i )$ for some index set $I$. Note that each $\mathcal{B}(\mathcal{K}_i )$ may occur in $\mathcal{B}(\mathcal{H})$ with a certain multiplicity $m_i \in \mathbb{N} \cup \{ \infty \}$. In general, we have that $N$ is spatially isomorphic to $\oplus_{i \in I} \mathcal{B}(\mathcal{K}_i ) \otimes \mathbb{C} 1_{m_i}$ where $1_{m_i}$ is the identity acting on a Hilbert space of dimension $m_i$. For simplicity in the examples below we assume that all multiplicities $m_i$ equal 1 and ignore the spatial isomorphism. In that case the normal conditional expectation of $\mathcal{B}(\mathcal{H})$ onto $\oplus_{i \in I} \mathcal{B}(\mathcal{K}_i )$ is unique and determined by $\mathbb{E}_N(x) = \sum_{i \in I} p_i x p_i$ where $p_i$ is the projection onto $\mathcal{K}_i$. Therefore, in this case we can speak not only of the Haagerup property of the inclusion $N \subseteq \mathcal{B}(\mathcal{H})$, but also about maps being compact and of finite index relative to this inclusion.

\begin{thm}\label{Thm=BHcase}
Assume that $\mathcal{H}$ is a separable Hilbert space, that $\mathcal{H} = \bigoplus_{i \in I} \mathcal{K}_i$, where $I$ is an index set and that the dimension of $\mathcal{K}_i$ does not depend on $i\in I$. Put $N = \bigoplus_{i \in I}  \mathcal{B}(\mathcal{K}_i) \subset \mathcal{B}(\mathcal{H})$. Then the triple $(\mathcal{B}(\mathcal{H}), N, \mathbb{E}_N)$  has the property (rHAP).
\end{thm}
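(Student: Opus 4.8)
The plan is to use the constant–dimension hypothesis to split off a spectator tensor factor, reduce to the diagonal masa inside $\mathcal{B}(\ell^2(I))$, and then exhibit explicit approximating Schur multipliers. \emph{Step 1 (reduction to a tensor product).} Let $\mathcal{K}$ be a Hilbert space of the common dimension $\dim\mathcal{K}_i$ and choose unitaries $u_i\colon\mathcal{K}\to\mathcal{K}_i$; assembled into a unitary $\mathcal{H}\cong\ell^2(I)\otimes\mathcal{K}$ they identify $(\mathcal{B}(\mathcal{H}),N,\mathbb{E}_N)$ with $(\mathcal{B}(\ell^2(I))\otimes\mathcal{B}(\mathcal{K}),\ \ell^\infty(I)\otimes\mathcal{B}(\mathcal{K}),\ \mathbb{E}_{\mathrm{diag}}\otimes\mathrm{id})$, where $\ell^\infty(I)\subseteq\mathcal{B}(\ell^2(I))$ is the diagonal masa and $\mathbb{E}_{\mathrm{diag}}(x)=\sum_{i}e_{ii}xe_{ii}$ (this uses that under the unitary $p_i$ becomes $e_{ii}\otimes 1$). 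We may assume $\mathcal{H}$ is infinite-dimensional, otherwise everything is finite-dimensional and $(\mathrm{rHAP})$ holds trivially (one may take $\Phi_i=\mathrm{id}$, as $\mathrm{id}^{(2)}=1\in\mathcal{K}(M,N,\varphi)$ since $e_N$ has full central support in the basic construction). Thus it suffices to show: (a) $(\mathcal{B}(\ell^2(I)),\ell^\infty(I),\mathbb{E}_{\mathrm{diag}})$ has $(\mathrm{rHAP})$ for every $I$; and (b) $(\mathrm{rHAP})$ passes from $(M_0,N_0,\mathbb{E}_0)$ to $(M_0\otimes P,N_0\otimes P,\mathbb{E}_0\otimes\mathrm{id}_P)$ for any $\sigma$-finite $P$.

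\emph{Step 2 (tensor stability (b)).} Fix a faithful normal state $\varphi_0$ on $M_0$ with $\varphi_0\circ\mathbb{E}_0=\varphi_0$, a faithful normal state $\omega$ on $P$, and a net $(\Phi_j)_j$ witnessing $(\mathrm{rHAP})$ for $(M_0,N_0,\varphi_0)$; put $\varphi=\varphi_0\otimes\omega$. I would check that $(\Phi_j\otimes\mathrm{id}_P)_j$ witnesses $(\mathrm{rHAP})$ for $(M_0\otimes P,N_0\otimes P,\varphi)$: complete positivity, the uniform bound, $N_0\otimes P$-bimodularity and $\varphi\circ(\Phi_j\otimes\mathrm{id}_P)\le\varphi$ are immediate; point-strong convergence to the identity follows from Lemma~\ref{Lem=SOTversusL2}, since the $\Phi_j\otimes\mathrm{id}_P$ are $L^2$-contractions whose $L^2$-implementations $\Phi_j^{(2)}\otimes\mathrm{id}_{L^2(P,\omega)}$ converge strongly to $1$; and since $\sigma^{\varphi}_t=\sigma^{\varphi_0}_t\otimes\sigma^{\omega}_t$ preserves $N_0\otimes P$ with $\mathbb{E}^{\varphi}_{N_0\otimes P}=\mathbb{E}_0\otimes\mathrm{id}_P$, the Jones projection is $e_{N_0}^{\varphi_0}\otimes\mathrm{id}_{L^2(P,\omega)}$, so from $(a\otimes1)(e_{N_0}^{\varphi_0}\otimes\mathrm{id})(b\otimes1)=(a\,e_{N_0}^{\varphi_0}\,b)\otimes\mathrm{id}$ one gets $\mathcal{K}(M_0,N_0,\varphi_0)\otimes\{\mathrm{id}_{L^2(P,\omega)}\}\subseteq\mathcal{K}(M_0\otimes P,N_0\otimes P,\varphi)$, whence condition~\eqref{Eqn=RHAP5} holds for $\Phi_j\otimes\mathrm{id}_P$. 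By Steps~1--2 we are reduced to (a), and since $\mathcal{H}$ is separable and infinite-dimensional, in the remaining case $I$ is countably infinite; reindexing, $I=\mathbb{Z}$.

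\emph{Step 3 (approximating Schur multipliers).} Pick a full-support probability vector $(\mu_i)_{i\in\mathbb{Z}}$ and let $\varphi(x)=\sum_i\mu_i\langle xe_i,e_i\rangle$, so $\varphi\circ\mathbb{E}_{\mathrm{diag}}=\varphi$ and $\sigma^\varphi_t=\mathrm{Ad}(\rho^{it})$ with $\rho=\mathrm{diag}(\mu_i)$ preserves $\ell^\infty(\mathbb{Z})$. The normal completely positive $\ell^\infty(\mathbb{Z})$-bimodular maps on $\mathcal{B}(\ell^2(\mathbb{Z}))$ are exactly the Schur multipliers $S_\phi\big((x_{ij})\big)=(\phi_{ij}x_{ij})$ with $\phi$ positive semidefinite, and then $\|S_\phi\|_{cb}=\sup_i\phi_{ii}$. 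Choosing $\rho_n\in(0,1)$ with $\rho_n\uparrow 1$, set $\phi^{(n)}_{ij}=\rho_n^{\,|i-j|}$: the sequence $(\rho_n^{\,|k|})_{k\in\mathbb{Z}}$ is positive definite on $\mathbb{Z}$ (its Fourier transform is the Poisson kernel $\tfrac{1-\rho_n^2}{|1-\rho_ne^{i\theta}|^2}$), so $\Phi_n:=S_{\phi^{(n)}}$ is unital, contractive, completely positive, $\ell^\infty(\mathbb{Z})$-bimodular, with $\varphi\circ\Phi_n=\varphi$ because $\phi^{(n)}_{ii}=1$. Under $L^2(\mathcal{B}(\ell^2(\mathbb{Z})),\varphi)\cong\cS_2(\ell^2(\mathbb{Z}))$ (via $x\mapsto x\rho^{1/2}$), $\Phi_n^{(2)}$ is entrywise multiplication by $\phi^{(n)}$; since $|\phi^{(n)}_{ij}|\le1$ and $\phi^{(n)}_{ij}\to1$ for all $i,j$, it tends strongly to $1$, and by Lemma~\ref{Lem=SOTversusL2} $\Phi_n\to\mathrm{id}$ point-strongly. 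Only condition~\eqref{Eqn=RHAP5} remains.

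\emph{Step 4 (relative compactness — the crux).} Under the same identification $e_{\ell^\infty(\mathbb{Z})}$ is the projection of $\cS_2(\ell^2(\mathbb{Z}))$ onto the diagonal Hilbert--Schmidt operators, and $a\,e_{\ell^\infty(\mathbb{Z})}\,b(\xi)=a\cdot\mathrm{diag}(b\xi)$ for $a,b\in\mathcal{B}(\ell^2(\mathbb{Z}))$. For the bilateral shift $S$ ($Se_i=e_{i+1}$) and $k\in\mathbb{Z}$ a direct computation yields that $S^k\,e_{\ell^\infty(\mathbb{Z})}\,(S^*)^k$ is entrywise multiplication by the matrix $\mathbf{1}_{\{i-j=k\}}$, i.e. the $L^2$-implementation of $S^k\mathbb{E}_{\mathrm{diag}}\big((S^*)^k\,\cdot\,\big)$, hence lies in $\mathcal{K}_{00}(\mathcal{B}(\ell^2(\mathbb{Z})),\ell^\infty(\mathbb{Z}),\varphi)$. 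Since $\rho_n^{\,|i-j|}=\sum_{k\in\mathbb{Z}}\rho_n^{\,|k|}\mathbf{1}_{\{i-j=k\}}$ and the operator norm of the tail $\sum_{|k|>K}\rho_n^{\,|k|}\mathbf{1}_{\{i-j=k\}}$ equals $\sup_{|k|>K}\rho_n^{\,|k|}=\rho_n^{\,K+1}\to0$, it follows that $\Phi_n^{(2)}=\sum_{k\in\mathbb{Z}}\rho_n^{\,|k|}\,S^k\,e_{\ell^\infty(\mathbb{Z})}\,(S^*)^k\in\overline{\mathcal{K}_{00}}=\mathcal{K}(\mathcal{B}(\ell^2(\mathbb{Z})),\ell^\infty(\mathbb{Z}),\varphi)$, completing the proof. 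I expect this step to be the main obstacle: it requires pinning down $\mathcal{K}(\mathcal{B}(\mathcal{H}),N,\mathbb{E}_N)$ concretely enough to recognise the $L^2$-implementations of the approximating maps inside it, and it is exactly here that constant-dimensionality enters — it transports the problem to the translation-invariant inclusion $\ell^\infty(\mathbb{Z})\subseteq\mathcal{B}(\ell^2(\mathbb{Z}))$ on which the shift $S$ supplies the required finite-rank operators; for $\bigoplus_i\mathcal{B}(\mathcal{K}_i)$ with unbounded $\dim\mathcal{K}_i$ no such device exists and the relative Haagerup property can indeed fail.
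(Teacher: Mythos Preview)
Your proof is correct and follows essentially the same strategy as the paper: reduce via the constant-dimension hypothesis to the inclusion $\ell^\infty(\mathbb{Z})\subseteq\mathcal{B}(\ell^2(\mathbb{Z}))$ tensored with $\mathcal{B}(\mathcal{K})$, then use Herz--Schur multipliers coming from positive-definite functions on $\mathbb{Z}$. The only notable difference is that the paper employs the Fej\'er kernels $W(k)=\max(1-|k|/n,0)$, which are compactly supported and hence give $L^2$-implementations that are \emph{finite-rank} relative to $\ell^\infty(\mathbb{Z})$ outright, whereas your Poisson kernels $\rho_n^{|k|}$ require the additional norm-convergence argument of Step~4 to land in $\mathcal{K}(M,N,\varphi)$; your treatment of the tensor-stability step is in turn more explicit than the paper's one-line ``tensor the approximating maps with $\mathrm{Id}_{\mathcal{B}(\mathcal{K})}$''.
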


\begin{proof}
We may assume that $\mathcal{K}_{i} = \mathcal{K}$ for a single (separable) Hilbert space $\mathcal{K}$. 
The inclusion $N \subseteq \mathcal{B}(\mathcal{H})$ is then 
isomorphic to the inclusion  $\ell^\infty(I) \otimes \mathcal{B}(  \mathcal{K}  ) \subseteq \mathcal{B}(\ell^2( I )) \otimes   \mathcal{B}( \mathcal{K}  )$. In the case where $I$ is finite $\ell^\infty(I) \subseteq \mathcal{B}(\ell^2(I))$ is a finite dimensional inclusion which clearly has (rHAP). In the case where $I$ is infinite we may assume that $I = \mathbb{Z}$ and the inclusion $\ell^\infty(\mathbb{Z} ) \subseteq \mathcal{B}( \ell^2( \mathbb{Z} ) )$ has the (rHAP) with   approximating maps given by the (Fej\'er-)Herz-Schur multipliers $T_n$ with
\[
T_n(  (x_{i,j})_{i,j \in \mathbb{Z}}  ) = (   W(i - j)    x_{i,j})_{i,j \in \mathbb{Z}}, \qquad W(k) := \max(1 - \frac{\vert k \vert}{n} , 0  ).
\]
Since $W = \frac{1}{n} (\chi_{[0,n]})^\ast \ast \chi_{[0,n]}$ is positive definite and converges to the identity pointwise it follows that $T_n$ is completely positive and $T_n^{(2)}$ converges to the identity strongly. Further $T_n^{(2)}$ is finite rank relative to $\ell^\infty( \mathbb{Z} )$, so certainly compact.   In both cases ($I$ being finite or infinite), we tensor the approximating maps with ${\rm Id}_{ \mathcal{B}(  \mathcal{K}  ) }$ and find that  $\ell^\infty( I  ) \otimes \mathcal{B}(  \mathcal{K}  ) \subseteq \mathcal{B}(\ell^2(I)) \otimes \mathcal{B}(  \mathcal{K}  )$ has (rHAP).

\end{proof}

With a bit more work Theorem \ref{Thm=BHcase} could be proved in larger generality by relaxing  the assumption that the multiplicities are trivial and that the dimension is constant (as opposed to say for example  uniformly bounded). However, we cannot admit just any subalgebra $N$ as the following counterexample shows.

\begin{thm}\label{Thm=NonRHap}
Let $\mathcal{H} = \mathcal{K}_1 \oplus \mathcal{K}_2$, where $\mathcal{K}_1, \mathcal{K}_2$ are Hilbert spaces such that  $\dim( \mathcal{K}_1 ) < \infty$ and $\dim( \mathcal{K}_2 ) = \infty$. Set $N= \mathcal{B}(\mathcal{K}_1) \oplus \mathcal{B}(\mathcal{K}_2)$.  Then  the triple $(\mathcal{B}(\mathcal{H}), N, \mathbb{E}_N)$ does not have the property (rHAP).
\end{thm}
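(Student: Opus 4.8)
The plan is to derive a contradiction from the structure of the relative finite-rank operators in this particular inclusion. Write $\mathcal{H} = \mathcal{K}_1 \oplus \mathcal{K}_2$ with $\dim(\mathcal{K}_1) = d < \infty$ and $\dim(\mathcal{K}_2) = \infty$, let $p_1, p_2$ be the corresponding projections and recall that the conditional expectation is $\mathbb{E}_N(x) = p_1 x p_1 + p_2 x p_2$. The natural reference state to work with is $\varphi = \tau \circ \mathbb{E}_N$ where $\tau$ is a faithful normal state on $N$; since $N$ is of type I with an infinite summand, $\varphi$ is necessarily non-tracial, but that will not matter for the obstruction. The key observation is to understand $\mathcal{K}(\mathcal{B}(\mathcal{H}), N, \varphi)$: a finite-rank operator relative to $N$ has the form $a e_N b$ with $a, b \in \mathcal{B}(\mathcal{H})$, and $e_N$ is the Jones projection onto $L^2(N,\varphi) = L^2(\mathcal{B}(\mathcal{K}_1),\tau_1) \oplus L^2(\mathcal{B}(\mathcal{K}_2),\tau_2)$. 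The first summand is finite-dimensional, but the second is infinite-dimensional, so $e_N$ is \emph{not} a compact operator on $L^2(\mathcal{B}(\mathcal{H}),\varphi)$; however, the restriction $p_1 e_N p_1$ (acting in the appropriate sense on the $(1,1)$-corner) \emph{is} finite rank.

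The heart of the argument is to look at what the relative-Haagerup approximating maps $\Phi_i$ must do on the corner $p_1 \mathcal{B}(\mathcal{H}) p_2$, i.e.\ on operators of the form $p_1 x p_2$. Because each $\Phi_i$ is an $N$-$N$-bimodule map, $\Phi_i(p_1 x p_2) = p_1 \Phi_i(p_1 x p_2) p_2$, so $\Phi_i$ restricts to a map on the $(1,2)$-corner. I would argue that on this corner the $L^2$-implementation $\Phi_i^{(2)}$ must actually be \emph{compact as an operator between the relevant Hilbert spaces} in the ordinary sense, since $e_N$ kills the $(1,2)$ part entirely (the range of $e_N$ lies in $L^2(N,\varphi)$, which sits inside the "diagonal" corners) — more precisely, for $a e_N b$ the map $x \mapsto a\mathbb{E}_N(bx)$ sends $p_1 x p_2$ to $a\, p_1 (\text{something}) \cdots$; one checks that the compression of any relative-finite-rank map to the $(1,2)$-corner is genuinely finite rank because it factors through $L^2$ of the \emph{finite} algebra $\mathcal{B}(\mathcal{K}_1)$ (the $b$-side expectation lands in $L^2(N)$, but composing with $p_1$ on the left forces it into $L^2(\mathcal{B}(\mathcal{K}_1))$). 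Hence every $\Phi_i^{(2)}$ compressed to the $(1,2)$-corner lies in the norm-closure of finite-rank operators, i.e.\ is compact. But $\Phi_i^{(2)} \to \mathrm{Id}$ strongly, and on the $(1,2)$-corner the identity operator on $L^2$ of $p_1 \mathcal{B}(\mathcal{H}) p_2 \cong \mathcal{K}_1 \otimes \overline{\mathcal{K}_2}$ (a genuine Hilbert-Schmidt space of infinite dimension since $\mathcal{K}_2$ is infinite-dimensional and $\mathcal{K}_1 \ne 0$) is \emph{not} a limit of compact operators in any topology making the uniformly bounded net converge — the standard argument: a bounded net of compact operators converging strongly to the identity on an infinite-dimensional Hilbert space is impossible, as can be seen by testing against an infinite orthonormal sequence and using uniform boundedness together with $\|(\Phi_i^{(2)} - 1)\xi\| \to 0$ to contradict compactness via a diagonal/$\varepsilon$ argument.

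Concretely, I would carry out the following steps. \emph{Step 1:} Fix $\varphi = \tau \circ \mathbb{E}_N$ and identify $L^2(\mathcal{B}(\mathcal{H}),\varphi)$ with the Hilbert-Schmidt-type space decomposing as a direct sum of four corner pieces indexed by $(i,j) \in \{1,2\}^2$; observe that $e_N$ is the projection onto the $(1,1) \oplus (2,2)$ diagonal intersected with $L^2(N,\varphi)$, hence $e_N$ annihilates the $(1,2)$ and $(2,1)$ corners. \emph{Step 2:} Show that for $a, b \in \mathcal{B}(\mathcal{H})$ the operator $(a e_N b)^{(2)}$ compressed to the $(1,2)$-corner has finite rank: it factors as $L^2$-corner $\to L^2(\mathcal{B}(\mathcal{K}_1), \tau_1) \to L^2$-corner, and $L^2(\mathcal{B}(\mathcal{K}_1),\tau_1)$ is finite-dimensional. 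Conclude the same for any element of $\mathcal{K}(\mathcal{B}(\mathcal{H}), N, \varphi)$ by norm-approximation. \emph{Step 3:} Given a (rHAP) net $(\Phi_i)$, each $\Phi_i^{(2)}$ is in $\mathcal{K}(\mathcal{B}(\mathcal{H}),N,\varphi)$, hence its compression $C_i$ to the $(1,2)$-corner is compact, with $\sup_i \|C_i\| < \infty$ and $C_i \to \mathrm{Id}_{(1,2)}$ strongly (using $\Phi_i(x) \to x$ strongly together with Lemma~\ref{Lem=SOTversusL2}). \emph{Step 4:} Derive a contradiction: pick an orthonormal sequence $(\xi_n)$ in the infinite-dimensional Hilbert space corresponding to the $(1,2)$-corner; strong convergence gives $\|C_i \xi_n - \xi_n\| \to 0$ for each $n$, so we may choose $i$ with $\|C_i\xi_1 - \xi_1\|, \dots, \|C_i\xi_m - \xi_m\|$ all small for $m$ as large as we like, but $C_i$ being compact forces $\|C_i\xi_n\| \to 0$ as $n \to \infty$, which is incompatible with $\|C_i\xi_n\| \to \|\xi_n\| = 1$ when $m$ is chosen large enough. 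This contradiction shows $(\mathcal{B}(\mathcal{H}), N, \mathbb{E}_N)$ cannot have (rHAP).

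The main obstacle I anticipate is \emph{Step 2}: one must be careful with the non-tracial modular structure of $\varphi$ and confirm that the identification $a e_N b \leftrightarrow a \mathbb{E}_N(b \,\cdot\,)$ behaves as expected on the $(1,2)$-corner of $L^2(\mathcal{B}(\mathcal{H}),\varphi)$, in particular that the relevant compressions really do factor through the finite-dimensional space $L^2(\mathcal{B}(\mathcal{K}_1),\tau_1)$ rather than through the whole of $L^2(N,\varphi)$. The point is that left-multiplication by $p_1$ projects $L^2(N,\varphi)$ onto $L^2(\mathcal{B}(\mathcal{K}_1),\tau_1)$, which is finite-dimensional precisely because $\mathcal{K}_1$ is; once this is nailed down, everything else is the routine "no bounded net of compacts converges strongly to the identity on an infinite-dimensional space" argument. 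It is worth remarking that this is exactly where the hypothesis $\dim \mathcal{K}_1 < \infty$, $\dim \mathcal{K}_2 = \infty$ bites: if both were infinite (or both finite) the relevant corner compressions of $e_N$ would not be finite rank, and indeed Theorem~\ref{Thm=BHcase} shows (rHAP) does hold in the equidimensional case.
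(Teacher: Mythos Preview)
Your approach has two genuine gaps. First, Step 2 fails on the $(1,2)$-corner you chose: for $x\in p_1 M p_2$ one computes $\mathbb{E}_N(bx)=p_2 b p_1 x p_2\in\mathcal{B}(\mathcal{K}_2)$, so the factoring goes through the \emph{infinite}-dimensional piece $L^2(\mathcal{B}(\mathcal{K}_2))$, not $L^2(\mathcal{B}(\mathcal{K}_1))$. Concretely, the compression of $a e_N b$ to the $(1,2)$-corner is left multiplication by $p_1 a p_2 b p_1\in\mathcal{B}(\mathcal{K}_1)$, which is in general not finite rank (take $a=b^\ast$ an isometry $\mathcal{K}_1\to\mathcal{K}_2$, giving the identity). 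This is repairable: on the $(2,1)$-corner $p_2 M p_1$ (equivalently on $Mp_1$, which is what the paper uses) one has $\mathbb{E}_N(bx)=p_1 b p_2 x p_1\in\mathcal{B}(\mathcal{K}_1)$ and your factoring works.

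The fatal problem is Step 4: the assertion that a uniformly bounded net of compact operators cannot converge strongly to the identity on an infinite-dimensional Hilbert space is simply false --- the finite-rank projections $P_n$ onto the first $n$ basis vectors of $\ell^2(\mathbb{N})$ do exactly this. Your diagonal argument mixes up quantifiers: for each fixed $i$ you get $\|C_i\xi_n\|\to 0$ as $n\to\infty$, but you only control $\|C_i\xi_k-\xi_k\|$ for finitely many $k$, so there is no contradiction. What the paper does to close this gap is to bring in the $N$-$N$-bimodularity in a much sharper way (its Claim 1): since $N$ contains full matrix units for each block, any $N$-bimodular $\Phi$ must act on each corner $p_i M p_j$ as a \emph{scalar} multiple $\lambda_{ij}$ of the identity. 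Combined with the (correct) compactness on $M p_1$, this forces $\lambda_{21}=0$ exactly, because a nonzero scalar multiple of the identity on the infinite-dimensional $(2,1)$-corner is never compact. Then no such $\Phi$ can approximate the identity on $p_2 M p_1$. You need this rigidity of bimodular maps; relative compactness alone is not enough.
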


\begin{proof}
Let $p$ be the projection of $\mathcal{H}$ onto $\mathcal{K}_1$. Let $\Phi: \mathcal{B}(\mathcal{H}) \rightarrow \mathcal{B}(\mathcal{H})$ be a normal linear map. The proof is based on two claims.

\vspace{0.3cm}

\noindent {\bf Claim 1:} If $\Phi$ is an $N$-$N$ bimodule map then  $\mathcal{B}(\mathcal{H})p$ is an invariant subspace. Moreover, the restriction of $\Phi$ to $\mathcal{B}(\mathcal{H})p$ lies in the linear span of the two maps $ x p \mapsto p xp$ and $xp \mapsto (1-p) x p$.

\vspace{0.3cm}

\noindent {\it Proof of Claim 1.} Note that $p$ is contained in $N$ from which the first statement follows. For the second part let $E^{i}_{k,l}$ be matrix units with respect to some basis of $\mathcal{K}_i$. Then for $x \in \mathcal{B}(\mathcal{H})$ we have  $\Phi(E^{i}_{k,k} xE^{i}_{l,l}) = E^{i}_{k,k} \Phi( x ) E^{i}_{k,k}$ so that $E^{i}_{k,k} \mathcal{B}(\mathcal{H})E^{i}_{k,k}$ is an eigenspace of $\Phi$ (i.e. $\Phi$ is a Herz-Schur multiplier). Moreover $\Phi(E^{i}_{k',k'} xE^{i}_{l',l'}) =  E^{i}_{k',k}  \Phi(E^{i}_{k,k'} x E^{i}_{l',l} ) E^{i}_{l,l'}$ so that the eigenvalues of these spaces only depend on $i$. This in particular implies the claim.

\vspace{0.3cm}

\noindent {\bf Claim 2:} If $\Phi$ is compact relative to the inclusion $N \subseteq \mathcal{B}(\mathcal{H})$ then  $\mathcal{B}(\mathcal{H})p$ is an invariant subspace. Moreover, the restriction of $\Phi$ to $\mathcal{B}(\mathcal{H})p$ is compact (in the non-relative sense).

\vspace{0.3cm}

\noindent {\it Proof of Claim 2.} By approximation it suffices to prove Claim 2 with `compact' replaced by `finite rank'. So assume that $\Phi = a \mathbb{E}_N b$ with $a,b \in \mathcal{B}(\mathcal{H})$. Note that $p \in N \cap N'$ and therefore $a \mathbb{E}_N( b x p) = a p \mathbb{E}_N( b x)p = a  \mathbb{E}_N(p  b x p )$. The first of these equalities shows that $\mathcal{B}(\mathcal{H})p$ is   invariant. Further $x \mapsto (p  x  p )$ is finite rank as $p$ projects onto a finite dimensional space. This proves the claim.

\vspace{0.3cm}

\noindent {\it Remainder of the proof.} Suppose that $\Phi$ is both $N$-$N$ bimodular and compact relative to $N$. By Claim 1 we know that there are scalars $\lambda_1, \lambda_2 \in \mathbb{C}$ such that $\Phi(  x p) = \lambda_1 pxp + \lambda_2 (1-p) x p$. If $\lambda_2 \not = 0$ then the associated $L^2$-map is not compact (in the non-relative sense) since $(1-p)$ projects onto an infinite dimensional Hilbert space. This contradicts Claim 2 because the restriction of $\Phi$ to  $\mathcal{B}(\mathcal{H})p$  is compact.  We conclude that $\lambda_2 =0$ for any normal map $\Phi: M \rightarrow M$ that is $N$-$N$-bimodular and compact relative to $N$. But then we can never find a net of such maps that approximates the identity map on $\mathcal{B}(\mathcal{H})$ in the point-strong topology. Hence the inclusion $N \subseteq \mathcal{B}(\mathcal{H})$ fails to have (rHAP).
\end{proof}

\begin{rmk}
 Recall that a unital inclusion of von Neumann algebras
 $N \subseteq M$ is said to be \emph{co-amenable} if there exists a (not necessarily normal) conditional expectation from  $N'$ onto $M'$, where the commutants are taken with respect to any Hilbert space realization of $M$.  Theorem  \ref{Thm=NonRHap} shows -- surprisingly -- that a co-amenable inclusion in general need not have (rHAP).

 Note that this also means that a naive extension of the definition of relative Haagerup property in terms of correspondences, modelled on the notion of \emph{strictly mixing bimodules} \cite[Theorem 9]{RuiTakaReiji} valid for the non-relative Haagerup property, cannot be equivalent to the definition studied in our paper. Indeed, the last fact, together with the examples above, would contradict \cite[Theorem 2.4]{BannonMarrakchiOzawa}.

\end{rmk}

\section{Property \text{(RHAP)} for finite-dimensional subalgebras}\label{Sect=FD}

In this section we consider  the case of finite-dimensional subalgebras and show equivalence of the relative Haagerup property and the non-relative Haagerup
property. For this, we fix a unital inclusion $N\subseteq M$
of von Neumann algebras and assume that it admits a faithful normal
conditional expectation $\mathbb{E}_{N}$. Assume that $N$ is finite-dimensional
and let $\tau\in N_{\ast}$ be a faithful normal tracial
state on $N$ that we extend to a state $\varphi:=\tau\circ\mathbb{E}_{N}$
on $M$. We will prove that the triple $(M,N,\varphi)$ has property
$\text{(rHAP)}$ if and only if $(M,\mathbb{C},\varphi)$ does. Recall
that by Theorem \ref{StateIndependence} the Haagerup
property of $(M,N,\varphi)$ does not depend on the choice of the
 state $\tau$.

Denote by $z_{1},...,z_{n}\in\mathcal{Z}(N)$ the minimal central projections
of $N$. There exist natural numbers $n_{1},...,n_{k}\in\mathbb{N}$
such that $z_{k}N\cong M_{n_{k}}(\mathbb{C})$ for $k=1,...,n$. Let
$(f_{i}^{k})_{1\leq i\leq n_{k}}$ be an orthonormal basis of $\mathbb{C}^{n_{k}}$,
write $E_{i,j}^{k}$, $1\leq i,j\leq n_{k}$ for the matrix units
with respect to this basis and set $E_{i}^{k}:=E_{i,i}^{k}$ for the
diagonal projections. We have that $E_{i,j}^{k}f_{l}^{k}=\delta_{j,l}f_{i}^{k}$
for all $k\in\mathbb{N}$, $1\leq i,j,l\leq n_{k}$ and $\sum_{k=1}^{n}\sum_{i=1}^{n_{k}}E_{i}^{k}=1$.
Set $d:=\sum_{k=1}^{n}n_{k}$, choose an orthonormal basis $(f_{k,i})_{1\leq k\leq n\text{, }1\leq i\leq n_{k}}$
of $\mathbb{C}^{d}$ with corresponding matrix units $e_{(k,i),(l,j)}\in M_{d}(\mathbb{C})$
where $1\leq k,l\leq n$, $1\leq i\leq n_{k}$, $1\leq j\leq n_{l}$
and define
\begin{equation}\label{Eqn=PDefinition}
p:=\sum_{k=1}^{n}E_{1}^{k}.
\end{equation}

For a general linear map $\Phi\text{: }pMp\rightarrow pMp$ we may define
a  linear map $\widetilde{\Phi}\text{: }M\rightarrow M$
by
\begin{equation}\label{Eqn=PhiTilde}
\widetilde{\Phi}(E_{i}^{k}xE_{j}^{l}):=E_{i,1}^{k}\Phi(E_{1,i}^{k}xE_{j,1}^{l})E_{1,j}^{l}
\end{equation}
 for all $1\leq k,l\leq n$, $1\leq i\leq n_{k}$, $1\leq j\leq n_{l}$
and $x\in M$.

Let us study the properties of $\widetilde{\Phi}$.

\begin{lem} \label{Lem=Stinespring}
Let $\Phi:pMp\rightarrow pMp$
be a linear map. Define
\[
U:=\sum_{k=1}^{n}\sum_{i=1}^{n_{k}}e_{(1,1),(k,i)}\otimes E_{i,1}^{k}\in M_{d}(\mathbb{C})\otimes M,\qquad V:=\sum_{k=1}^{n}\sum_{i=1}^{n_{k}}f_{k,i}\otimes E_{1,i}^{k}\in\mathbb{C}^{d}\otimes M.
\]
 Then,
\[
\widetilde{\Phi}(x)=V^{\ast}(\text{id}_{\mathcal{B}(L^{2}(N,\tau))}\otimes\Phi)\left(U^{\ast}(1\otimes x)U\right)V\text{.}
\]
\end{lem}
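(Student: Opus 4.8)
The plan is to verify the identity by a direct, if slightly index-heavy, computation with the matrix units; there is no conceptual obstacle here, only bookkeeping. First I would note that, since $\sum_{k=1}^n\sum_{i=1}^{n_k}E_i^k=1$ and both sides of the asserted formula are linear in $x$, it suffices to compare them on the "blocks'' $E_i^k x E_j^l$; for such a block one has $E_{1,i}^k\bigl(E_i^k x E_j^l\bigr)E_{j,1}^l=E_{1,i}^k x E_{j,1}^l$, and this element lies in $pMp$ because $E_{1,i}^k=E_1^kE_{1,i}^k$, $E_{j,1}^l=E_{j,1}^lE_1^l$ and $p\ge E_1^k,E_1^l$ by \eqref{Eqn=PDefinition}. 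Hence $\Phi$ is applicable to all the relevant arguments and $\widetilde\Phi$ in \eqref{Eqn=PhiTilde} is well defined. I would also remark that in the expression $\mathrm{id}_{\mathcal B(L^2(N,\tau))}\otimes\Phi$ only the $M_d(\mathbb C)$-part of the first tensor leg (spanned by the $e_{(k,i),(l,j)}$, indexed by the $d$ pairs $(k,i)$) is used, so one may harmlessly read it as $\mathrm{id}_{M_d(\mathbb C)}\otimes\Phi$.

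Next I would compute $U^\ast(1\otimes x)U$. Using $U^\ast=\sum_{k,i}e_{(k,i),(1,1)}\otimes E_{1,i}^k$ together with $e_{(k,i),(1,1)}e_{(1,1),(l,j)}=e_{(k,i),(l,j)}$ one gets
\[
U^\ast(1\otimes x)U=\sum_{k,i,l,j}e_{(k,i),(l,j)}\otimes E_{1,i}^k\,x\,E_{j,1}^l ,
\]
and therefore
\[
(\mathrm{id}\otimes\Phi)\bigl(U^\ast(1\otimes x)U\bigr)=\sum_{k,i,l,j}e_{(k,i),(l,j)}\otimes\Phi\bigl(E_{1,i}^k\,x\,E_{j,1}^l\bigr).
\]

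Finally I would sandwich with $V^\ast(\,\cdot\,)V$. Writing $V^\ast=\sum_{m,q}f_{m,q}^\ast\otimes E_{q,1}^m$ and using the orthonormality of the $f_{m,q}$, namely $f_{m,q}^\ast\,e_{(k,i),(l,j)}\,f_{m',q'}=\delta_{(m,q),(k,i)}\,\delta_{(l,j),(m',q')}$, all the sums collapse and one obtains
\[
V^\ast(\mathrm{id}\otimes\Phi)\bigl(U^\ast(1\otimes x)U\bigr)V=\sum_{k,i,l,j}E_{i,1}^k\,\Phi\bigl(E_{1,i}^k\,x\,E_{j,1}^l\bigr)\,E_{1,j}^l,
\]
which by the first step is exactly $\sum_{k,i,l,j}\widetilde\Phi(E_i^k x E_j^l)=\widetilde\Phi(x)$. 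The only mild points of care are that products of matrix units from different central summands $z_k$ vanish (so the intermediate sums are over genuinely independent quadruples of indices), and that one keeps the four-index bookkeeping straight; beyond that the argument is purely mechanical. (This form is precisely what will later let us read off that $\widetilde\Phi$ inherits complete positivity, normality, and the relevant relative-compactness properties from $\Phi$, being a compression of an amplification of it.)
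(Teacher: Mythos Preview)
Your proof is correct and follows essentially the same approach as the paper: a direct computation with the matrix units $e_{(k,i),(l,j)}$ and $E_{i,j}^k$. The only cosmetic difference is that the paper first restricts to $x\in z_kMz_l$ (so the sums run only over $i,j$ for fixed $k,l$), whereas you keep $x$ general and sum over all four indices at once; the underlying computation is identical.
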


\begin{proof}
We have for $x\in z_{k}Mz_{l}$ with $1\leq k,l\leq n$ that
\[
U^{\ast}(1\otimes x)U=\sum_{i=1}^{n_{k}}\sum_{j=1}^{n_{l}}e_{(k,i),(l,j)}\otimes E_{1,i}^{k}xE_{j,1}^{l}
\]
so that
\[
V^{\ast}(\text{id}_{\mathcal{B}(L^{2}(N,\tau))}\otimes\Phi)\left(U^{\ast}(1\otimes x)U\right)V=\sum_{i=1}^{n_{k}}\sum_{j=1}^{n_{l}}E_{i,1}^{k}\Phi(E_{1,i}^{k}xE_{j,1}^{l})E_{1,j}^{l}.
\]
By definition this expression coincides with $\widetilde{\Phi}(x)$.
The claim follows.
\end{proof}

\begin{lem} \label{Lem=NormalUcp}
If $\Phi\text{: }pMp\rightarrow pMp$ is a unital normal completely positive map, then $\widetilde{\Phi}$
is contractive, normal and completely positive.
\end{lem}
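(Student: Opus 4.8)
The plan is to deduce everything from the Stinespring-type formula established in Lemma~\ref{Lem=Stinespring}, namely
\[
\widetilde{\Phi}(x)=V^{\ast}\bigl(\textrm{id}_{\mathcal{B}(L^{2}(N,\tau))}\otimes\Phi\bigr)\bigl(U^{\ast}(1\otimes x)U\bigr)V,\qquad x\in M .
\]
This exhibits $\widetilde{\Phi}$ as a composition $\gamma\circ(\textrm{id}\otimes\Phi)\circ\beta\circ\alpha$, where $\alpha\colon M\to\mathcal{B}(L^{2}(N,\tau))\otimes M$ is the ampliation $\alpha(x)=1\otimes x$, $\beta(y)=U^{\ast}yU$, $\gamma(z)=V^{\ast}zV$, and the third arrow is the ampliation of $\Phi$ (well defined since $U^{\ast}(1\otimes x)U$ lies in $\mathcal{B}(L^{2}(N,\tau))\otimes pMp$, as $E_{1,i}^{k}=pE_{1,i}^{k}$). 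I would then argue that each of the four maps is normal and completely positive, and that each is contractive -- the first even isometric -- so that the composite $\widetilde{\Phi}$ is normal, completely positive and contractive.

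The individual checks are routine. The ampliation $\alpha$ is a normal unital $\ast$-homomorphism, hence normal, completely positive and isometric. Since $N$, and therefore $\mathcal{B}(L^{2}(N,\tau))$, is finite-dimensional, $\mathcal{B}(L^{2}(N,\tau))\otimes pMp$ is a finite direct sum of matrix amplifications of $pMp$, and $\textrm{id}_{\mathcal{B}(L^{2}(N,\tau))}\otimes\Phi$ acts on it by applying $\Phi$ entrywise; hence it is normal, completely positive and unital (because $\Phi$ is unital, $\Phi(p)=p$ being the unit of $pMp$), in particular contractive. For $\beta$ and $\gamma$ I would use the general facts that a map of the form $z\mapsto a^{\ast}za$, with $a$ a fixed bounded operator, is completely positive and $\sigma$-weakly continuous (hence normal), with norm at most $\|a\|^{2}$; so it remains to see that $\|U\|\le 1$ and $\|V\|\le 1$. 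Here a short computation with the matrix units, using $E_{1,i}^{k}E_{j,1}^{l}=\delta_{k,l}\delta_{i,j}E_{1}^{k}$ and $\sum_{k}z_{k}=1$, gives
\[
U^{\ast}U=\sum_{k=1}^{n}\Bigl(\sum_{i=1}^{n_{k}}e_{(k,i),(k,i)}\Bigr)\otimes E_{1}^{k},\qquad V^{\ast}V=\sum_{k=1}^{n}\sum_{i=1}^{n_{k}}E_{i,i}^{k}=\sum_{k=1}^{n}z_{k}=1 ,
\]
the first of which is a projection; thus $\|U\|\le 1$ and $\|V\|=1$.

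There is no genuine obstacle in this argument; the only points requiring a little attention are the two identities for $U^{\ast}U$ and $V^{\ast}V$ (pure matrix-unit bookkeeping) and the standard observation that ampliation by a finite-dimensional von Neumann algebra and conjugation by a bounded operator both preserve normality and complete positivity. Once these are in place, assembling the composition immediately yields that $\widetilde{\Phi}$ is contractive, normal and completely positive.
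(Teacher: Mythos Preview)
Your proof is correct and follows essentially the same route as the paper: both derive normality and complete positivity directly from the Stinespring-type formula in Lemma~\ref{Lem=Stinespring}. The only cosmetic difference is in the contractivity step: the paper computes $\widetilde{\Phi}(1)=\sum_{k,i}E_{i,1}^{k}\Phi(E_{1}^{k})E_{1,i}^{k}\le\sum_{k,i}E_{i}^{k}=1$ directly from the defining formula~\eqref{Eqn=PhiTilde}, whereas you bound the norm via $\|U\|\le 1$ and $\|V\|=1$ in the factorisation; both are equally elementary and valid.
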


\begin{proof}
The normality and the complete positivity follow from
Lemma \ref{Lem=Stinespring}. We further have
\begin{eqnarray}
\nonumber
\Vert \widetilde{\Phi}\Vert &=& \widetilde{\Phi}(1)=\widetilde{\Phi}\left(\sum_{k=1}^{n}\sum_{i=1}^{n_{k}}E_{i}^{k}\right)=\sum_{k=1}^{n}\sum_{i=1}^{n_{k}}E_{i,1}^{k}\Phi(E_{1}^{k})E_{1,i}^{k} \\
\nonumber
&\leq& \sum_{k=1}^{n}\sum_{i=1}^{n_{k}}E_{i,1}^{k}E_{1,i}^{k}=\sum_{k=1}^{n}\sum_{i=1}^{n_{k}}E_{i}^{k}=1\text{,}
\end{eqnarray}
i.e. $\widetilde{\Phi}$ is contractive.
\end{proof}

\begin{lem} \label{Lem=NNBimodule}
Let $\Phi\text{: }pMp\rightarrow pMp$
be a linear map. Then $\widetilde{\Phi}$ is an $N$-$N$-bimodule
map.
\end{lem}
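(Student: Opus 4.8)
The statement to be proved is Lemma \ref{Lem=NNBimodule}: for any linear map $\Phi: pMp \to pMp$, the induced map $\widetilde{\Phi}: M \to M$ defined by \eqref{Eqn=PhiTilde} is an $N$-$N$-bimodule map. Since $N = \bigoplus_{k=1}^n z_k N$ with $z_k N \cong M_{n_k}(\mathbb{C})$, the algebra $N$ is linearly spanned by the matrix units $E_{i,j}^k$, so by linearity it suffices to check that $\widetilde{\Phi}(E_{a,b}^m \, x \, E_{c,d}^r) = E_{a,b}^m \, \widetilde{\Phi}(x) \, E_{c,d}^r$ for all admissible indices and all $x \in M$. By linearity again we may further assume $x = E_i^k x E_j^l$ for a single choice of indices (using $\sum_{k,i} E_i^k = 1$ and that $E_i^k x E_j^l$ already lies in $z_k M z_l$).

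\textbf{Key steps.} First I would reduce, as above, to computing $\widetilde{\Phi}$ on an element of the form $y = E_i^k x E_j^l$ and then left/right multiplying by matrix units from $N$. Observe that $E_{a,b}^m E_i^k \neq 0$ only if $m = k$ and $b = i$, in which case $E_{a,b}^m E_i^k = E_{a,i}^k$ is again of the form needed to push through the defining formula; similarly on the right. The cleanest route is to use Lemma \ref{Lem=Stinespring}, which expresses $\widetilde{\Phi}(x) = V^\ast(\id \otimes \Phi)(U^\ast(1\otimes x)U)V$ with $U, V$ as defined there. Then for $n \in N$ one checks that $U^\ast(1 \otimes nx)U = (1\otimes_? \, ?)\,U^\ast(1\otimes x)U$ — more precisely, one verifies the intertwining relations $U(1 \otimes n) = (\beta(n) \otimes 1)U$ and $V(n\,?) = \dots$, where $\beta: N \to M_d(\mathbb{C})$ is the obvious block-diagonal embedding sending $E_{i,j}^k \mapsto e_{(k,i),(k,j)}$. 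Concretely, a direct computation gives $U(1 \otimes E_{i,j}^k) = (e_{(1,1),(1,1)} \otimes 1)(\text{something})$; it is more transparent to compute $(1 \otimes E_{a,b}^m) V = (f_{m,?} \text{-adjusted}) \dots$ Since these are all finite matrix-unit manipulations, I would simply verify $U^\ast(1 \otimes n x n') U = (\beta(n)^\ast \otimes 1) \, U^\ast(1\otimes x)U \, (\beta(n') \otimes 1)$ and then $V^\ast(\beta(n)^\ast \otimes 1) = n^\ast V^\ast$ wait — one must be careful with adjoints; rather $(\beta(n) \otimes 1) V = ?$. The correct bookkeeping: $V = \sum_{k,i} f_{k,i} \otimes E_{1,i}^k$, and $(\id \otimes \Phi)$ only touches the second leg which lives in $pMp$, while the $N$-bimodularity must come entirely from the $U, V$ conjugation. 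I expect the clean identities to be $(\beta(n) \otimes 1) V = $ (reindexed $V$ times $n$ in second leg in a way that commutes with $\Phi$ since it stays inside $p N p = \mathbb{C}p$ — no). Let me instead plan to verify it \emph{directly} from \eqref{Eqn=PhiTilde} without Stinespring, which is shorter.

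\textbf{Direct verification (the route I would actually take).} For $n, n' \in N$ write $n = \sum_{m} \sum_{a,b} \mu^m_{a,b} E^m_{a,b}$, $n' = \sum_r \sum_{c,d} \nu^r_{c,d} E^r_{c,d}$, and $x = \sum_{k,l}\sum_{i,j} E_i^k x E_j^l$. By multilinearity it suffices to treat $n = E^m_{a,b}$, $n' = E^r_{c,d}$, $x = E^k_i x E^l_j$. Then $n x n' = E^m_{a,b} E^k_i x E^l_j E^r_{c,d} = \delta_{m,k}\delta_{b,i}\delta_{l,r}\delta_{j,c}\, E^k_{a,i} x E^l_{j,d}$, and the only possibly-nonzero case reduces to $n x n' = \delta_{m,k}\delta_{b,i}\delta_{l,r}\delta_{j,c}\, E^k_a (E^k_{a,i} x E^l_{j,d}) E^l_d$, which by \eqref{Eqn=PhiTilde} gives $\widetilde{\Phi}(nxn') = \delta_{m,k}\delta_{b,i}\delta_{l,r}\delta_{j,c}\, E^k_{a,1}\Phi(E^k_{1,a} E^k_{a,i} x E^l_{j,d} E^l_{d,1})E^l_{1,d} = \delta_{\dots}\, E^k_{a,1}\Phi(E^k_{1,i} x E^l_{j,1})E^l_{1,d}$, using $E^k_{1,a}E^k_{a,i} = E^k_{1,i}$ and $E^l_{j,d}E^l_{d,1} = E^l_{j,1}$. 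On the other hand $n \widetilde{\Phi}(x) n' = E^m_{a,b}\, E^k_{i,1}\Phi(E^k_{1,i}x E^l_{j,1})E^l_{1,j}\, E^r_{c,d}$; now $E^m_{a,b} E^k_{i,1} = \delta_{m,k}\delta_{b,i} E^k_{a,1}$ and $E^l_{1,j} E^r_{c,d} = \delta_{l,r}\delta_{j,c} E^l_{1,d}$, giving exactly $\delta_{\dots}\, E^k_{a,1}\Phi(E^k_{1,i} x E^l_{j,1})E^l_{1,d}$. The two expressions agree, so $\widetilde{\Phi}(nxn') = n\widetilde{\Phi}(x)n'$ for all $x \in M$ and all $n, n' \in N$, which is precisely $N$-$N$-bimodularity. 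The ``main obstacle'' here is purely notational — keeping the Kronecker-delta bookkeeping straight across the four index pairs — and there is no conceptual difficulty; I would present it compactly using the identities $E^k_{1,a}E^k_{a,i}=E^k_{1,i}$, $E^m_{a,b}E^k_{i,1}=\delta_{mk}\delta_{bi}E^k_{a,1}$, and their right-handed analogues.
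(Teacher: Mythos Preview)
Your direct verification is correct and is essentially the same approach as the paper's own proof: both reduce to checking the bimodularity on matrix units $E^m_{a,b}$ (and $E^r_{c,d}$) against elements of the form $E^k_i x E^l_j$, then use the elementary identities $E^m_{a,b}E^k_{i,1}=\delta_{mk}\delta_{bi}E^k_{a,1}$ and $E^k_{1,a}E^k_{a,i}=E^k_{1,i}$ (and their right-handed analogues) to match the two sides. The paper simply treats the left and right module structures separately rather than simultaneously; your initial detour via the Stinespring formula of Lemma~\ref{Lem=Stinespring} was unnecessary, as you yourself realised.
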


\begin{proof}
Let $x\in M$. For $1\leq l,k,m\leq n$ and $1\leq r,s\leq n_{l}$,
$1\leq i\leq n_{k}$, $1\leq j\leq n_{m}$ we have
\begin{eqnarray}
\nonumber
E_{r,s}^{l}\widetilde{\Phi}(E_{i}^{k}xE_{j}^{m}) &=&E_{r,s}^{l}E_{i,1}^{k}\Phi(E_{1,i}^{k}xE_{j,1}^{m})E_{1,j}^{m}\\
\nonumber
&=& \delta_{s,i}\delta_{l,k}E_{r,1}^{k}\Phi(E_{1,i}^{k}xE_{j,1}^{m})E_{1,j}^{m}\\
\nonumber
&=& E_{r,1}^{k}\Phi(E_{1,r}^{k}E_{r,s}^{l}E_{i}^{k}xE_{j}^{m}E_{j,1}^{m})E_{1,j}^{m}.
\end{eqnarray}
We hence find that for $y\in E_{i}^{k}xE_{j}^{m}$, $E_{r,s}^{l}\widetilde{\Phi}(y)=\widetilde{\Phi}(E_{r,s}^{l}y)$.
The linearity of $\widetilde{\Phi}$ then implies that it is a left
$N$-module map. A similar argument applies to the right-handed case.
\end{proof}

\begin{prop} \label{Prop=Expect}
Define
the map
\[
\text{Diag: }pMp\rightarrow pMp\text{, }x\mapsto\sum_{k=1}^{n}\frac{\varphi(E_{1}^{k}xE_{1}^{k})}{\varphi(E_{1}^{k})}E_{1}^{k}\text{.}
\]
Then $\widetilde{\text{Diag}}=\mathbb{E}_{N}$.
\end{prop}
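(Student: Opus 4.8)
The plan is to prove the identity $\widetilde{\text{Diag}} = \mathbb{E}_{N}$ by a direct computation on a convenient spanning set. Since both sides are linear and $\sum_{k=1}^{n}\sum_{i=1}^{n_{k}} E_{i}^{k} = 1$, every element of $M$ is a linear combination of elements of the form $E_{i}^{k} x E_{j}^{l}$ with $x \in M$, $1 \leq k,l \leq n$, $1 \leq i \leq n_{k}$, $1 \leq j \leq n_{l}$; hence it suffices to check $\widetilde{\text{Diag}}(E_{i}^{k} x E_{j}^{l}) = \mathbb{E}_{N}(E_{i}^{k} x E_{j}^{l})$ for all such elements.

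First I would dispose of the off-diagonal case $k \neq l$. Unwinding \eqref{Eqn=PhiTilde}, $\widetilde{\text{Diag}}(E_{i}^{k} x E_{j}^{l}) = E_{i,1}^{k}\,\text{Diag}(E_{1,i}^{k} x E_{j,1}^{l})\,E_{1,j}^{l}$, and $\text{Diag}(E_{1,i}^{k} x E_{j,1}^{l}) = 0$ because in each summand the factors $E_{1}^{m} E_{1,i}^{k} = \delta_{m,k} E_{1,i}^{k}$ and $E_{j,1}^{l} E_{1}^{m} = \delta_{l,m} E_{j,1}^{l}$ cannot both be nonzero when $k \neq l$. On the other side, $E_{i}^{k} x E_{j}^{l} = z_{k}(E_{i}^{k} x E_{j}^{l})z_{l}$ and $\mathbb{E}_{N}(z_{k}\,\cdot\,z_{l}) = z_{k}\mathbb{E}_{N}(\,\cdot\,)z_{l}$ takes values in $z_{k}Nz_{l} = 0$. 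So both sides vanish.

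For the diagonal case $l = k$ I would compute, again from \eqref{Eqn=PhiTilde} and the definition of $\text{Diag}$,
\[
\widetilde{\text{Diag}}(E_{i}^{k} x E_{j}^{k}) = E_{i,1}^{k}\,\frac{\varphi(E_{1,i}^{k} x E_{j,1}^{k})}{\varphi(E_{1}^{k})}\,E_{1}^{k}\,E_{1,j}^{k} = \frac{\varphi(E_{1,i}^{k} x E_{j,1}^{k})}{\varphi(E_{1}^{k})}\,E_{i,j}^{k}.
\]
For the right-hand side, $N$-bimodularity of the conditional expectation gives $\mathbb{E}_{N}(E_{i}^{k} x E_{j}^{k}) = E_{i,i}^{k}\,\mathbb{E}_{N}(x)\,E_{j,j}^{k}$, which lies in $E_{i,i}^{k}(z_{k}N)E_{j,j}^{k} = \mathbb{C}E_{i,j}^{k}$, so $\mathbb{E}_{N}(E_{i}^{k} x E_{j}^{k}) = \mu\, E_{i,j}^{k}$ for some scalar $\mu$. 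To identify $\mu$ I would pair with $\varphi(E_{j,i}^{k}\,\cdot\,)$: from $\varphi(E_{j,i}^{k}\mu E_{i,j}^{k}) = \mu\,\varphi(E_{j,j}^{k})$ on one hand, and on the other hand, using $E_{j,j}^{k} \in N \subseteq M^{\varphi}$, then $\varphi \circ \mathbb{E}_{N} = \varphi$ together with $N$-bimodularity of $\mathbb{E}_{N}$, one gets $\varphi(E_{j,i}^{k}E_{i,i}^{k}\mathbb{E}_{N}(x)E_{j,j}^{k}) = \varphi(E_{j,i}^{k} x)$; combined with $\varphi(E_{j,j}^{k}) = \tau(E_{j}^{k}) = \tau(E_{1}^{k}) = \varphi(E_{1}^{k})$ (traciality of $\tau$ on $N$) this yields $\mu\,\varphi(E_{1}^{k}) = \varphi(E_{j,i}^{k} x)$. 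It remains to reconcile the two numerators: since $N \subseteq M^{\varphi}$, both $E_{1,i}^{k}$ and $E_{j,1}^{k}$ lie in the centralizer of $\varphi$, so $\varphi(E_{1,i}^{k} x E_{j,1}^{k}) = \varphi(E_{j,1}^{k} E_{1,i}^{k} x) = \varphi(E_{j,i}^{k} x)$. Comparing, $\widetilde{\text{Diag}}(E_{i}^{k} x E_{j}^{k}) = \mu\,E_{i,j}^{k} = \mathbb{E}_{N}(E_{i}^{k} x E_{j}^{k})$, which together with the off-diagonal case gives $\widetilde{\text{Diag}} = \mathbb{E}_{N}$.

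No step here is genuinely hard; the only thing requiring attention is the bookkeeping with the matrix units and the repeated — but always legitimate — use of $N \subseteq M^{\varphi}$ to move projections and matrix units through $\varphi$. A more conceptual route is also available: $\text{Diag}$ is precisely the unique $\varphi|_{pMp}$-preserving conditional expectation of $pMp$ onto $\bigoplus_{k} \mathbb{C}E_{1}^{k}$, Lemmas \ref{Lem=NormalUcp} and \ref{Lem=NNBimodule} show that $\widetilde{\text{Diag}}$ is a normal unital completely positive $N$-bimodular map of $M$ onto $N$, and one can check it preserves $\varphi$ and hence, by uniqueness, equals $\mathbb{E}_{N}$; but verifying $\varphi$-invariance of $\widetilde{\text{Diag}}$ amounts to essentially the same computation, so the direct argument is the cleanest.
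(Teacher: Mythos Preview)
Your proof is correct. The approach differs from the paper's: the paper argues structurally, first using Lemmas \ref{Lem=NormalUcp} and \ref{Lem=NNBimodule} to show that $\widetilde{\text{Diag}}$ is a faithful normal conditional expectation of $M$ onto $N$, then verifying by a matrix-unit computation that $\varphi\circ\widetilde{\text{Diag}}=\varphi$, and concluding $\widetilde{\text{Diag}}=\mathbb{E}_N$ by uniqueness of the $\varphi$-preserving expectation. You instead compare $\widetilde{\text{Diag}}$ and $\mathbb{E}_N$ directly on the spanning elements $E_i^k x E_j^l$, which is more elementary in that it avoids both the preliminary lemmas and the appeal to uniqueness. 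The underlying computations are essentially the same (traciality of $\tau$ on $N$ and $N\subseteq M^\varphi$), just packaged differently; as you yourself note at the end, the conceptual route you sketch there is exactly what the paper does.
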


\begin{proof}
It is clear that the map $\text{Diag}$ is linear
unital normal and completely positive. Hence, by Lemma \ref{Lem=NormalUcp}
and Lemma \ref{Lem=NNBimodule}, $\widetilde{\text{Diag}}$
is contractive normal completely positive and $N$-$N$-bimodular. It is easy to check that $\widetilde{\text{Diag}}$ is even unital. In particular,
$\widetilde{\text{Diag}}$ restricts to the identity on $N$. It is
further clear that $\widetilde{\text{Diag}}$ is faithful and that
it maps $M$ onto $N$, so $\widetilde{\text{Diag}}$ is a faithful
normal conditional expectation. For $x\in M$ and $1\leq k,l\leq n$,
$1\leq i\leq n_{k}$, $1\leq j\leq n_{l}$ we have
\begin{eqnarray}
\nonumber
\varphi\circ\widetilde{\text{Diag}}(E_{i}^{k}xE_{j}^{l}) &=& \varphi\left(E_{i,1}^{k}\text{Diag}(E_{1,i}^{k}xE_{j,1}^{l})E_{1,j}^{l}\right)\\
\nonumber
&=& \sum_{m=1}^{n}\frac{\varphi(E_{1}^{m}E_{1,i}^{k}xE_{j,1}^{l}E_{1}^{m})}{\varphi(E_{1}^{m})}\varphi\left(E_{i,1}^{k}E_{1}^{m}E_{1,j}^{l}\right)\\
\nonumber
&=& \frac{\varphi(E_{1}^{l}E_{1,i}^{k}xE_{j,1}^{l}E_{1}^{l})}{\varphi(E_{1}^{l})}\varphi(E_{i,1}^{k}E_{1}^{l}E_{1,j}^{l})\\
\nonumber
&=& \delta_{k,l}\frac{\varphi(E_{1,i}^{l}xE_{j,1}^{l})}{\varphi(E_{1}^{l})}\varphi(E_{i,1}^{l}E_{1,j}^{l})\\
\nonumber
&=& \delta_{k,l}\frac{\varphi(E_{1,i}^{l}xE_{j,1}^{l})}{\tau(E_{1}^{l})}\tau(E_{i,j}^{l})\text{.}
\end{eqnarray}
But then, since $\tau$ is tracial,
\begin{eqnarray}
\nonumber
\varphi\circ\widetilde{\text{Diag}}(E_{i}^{k}xE_{j}^{l}) &=& \delta_{i,j}\delta_{k,l}\varphi(E_{1,i}^{l}xE_{i,1}^{l})=\delta_{i,j}\delta_{k,l}\tau(\mathbb{E}_{N}(E_{1,i}^{l}xE_{i,1}^{l})) \\
\nonumber
&=& \delta_{i,j}\delta_{k,l}\tau(E_{1,i}^{l}\mathbb{E}_{N}(x)E_{i,1}^{l})=\tau(E_{i}^{k}\mathbb{E}_{N}(x)E_{j}^{l})\\
\nonumber
&=& \varphi(E_{i}^{k}xE_{j}^{l})\text{,}
\end{eqnarray}
i.e. $\widetilde{\text{Diag}}$ is $\varphi$-preserving. Since $\mathbb{E}_{N}$
is the unique faithful normal $\varphi$-preserving conditional expectation
onto $N$, we get that $\widetilde{\text{Diag}}=\mathbb{E}_{N}$.
\end{proof}

\begin{lem} \label{Lem=FiniteTilde}
Let $\Phi\text{: }pMp\rightarrow pMp$
be a normal completely positive map with $\varphi\circ\Phi\leq\varphi$
and assume that the $L^{2}$-implementation $\Phi^{(2)}$ of $\Phi$
with respect to $\varphi|_{pMp}$ is a compact operator. Then $\widetilde{\Phi}$
satisfies $\varphi\circ\widetilde{\Phi}\leq\varphi$ and $(\widetilde{\Phi})^{(2)}\in\mathcal{K}(M,N,\varphi)$.
\end{lem}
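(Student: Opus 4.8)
The plan is to verify the two claims separately, using the Stinespring-type formula of Lemma~\ref{Lem=Stinespring} together with the conditional expectation identity of Proposition~\ref{Prop=Expect}.

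\textbf{Step 1: the inequality $\varphi\circ\widetilde\Phi\le\varphi$.} Here I would observe that $\widetilde\Phi$ is ``supported'' on the matrix-unit decomposition, so that $\varphi\circ\widetilde\Phi$ and $\varphi$ agree off the ``diagonal'' blocks $E_i^k\,\cdot\,E_j^l$ with $(k,i)=(l,j)$, exactly as in the computation in the proof of Proposition~\ref{Prop=Expect}. For a positive $x\in M$ write $x=\sum_{k,l,i,j}E_i^k x E_j^l$ and, using $\varphi = \varphi\circ\widetilde{\mathrm{Diag}}$, reduce to the diagonal terms: for those, $\varphi\circ\widetilde\Phi(E_i^k x E_i^k) = \varphi\bigl(E_{i,1}^k\Phi(E_{1,i}^k x E_{i,1}^k)E_{1,i}^k\bigr)$, which by the traciality of $\tau$ on $N$ and the relation $E_{i,1}^k E_{1,i}^k = E_i^k$ equals $\tfrac{\tau(E_i^k)}{\tau(E_1^k)}\,\varphi\bigl(\Phi(E_{1,i}^k x E_{i,1}^k)\bigr)\le \tfrac{\tau(E_i^k)}{\tau(E_1^k)}\,\varphi(E_{1,i}^k x E_{i,1}^k)=\varphi(E_i^k x E_i^k)$, using $\varphi\circ\Phi\le\varphi$ on $pMp$. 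Summing over the diagonal blocks and adding back the (unchanged) off-diagonal contributions gives $\varphi\circ\widetilde\Phi\le\varphi$. The only care needed is to argue that the off-diagonal behaviour of $\varphi\circ\widetilde\Phi$ really coincides with that of $\varphi$, which follows because $\varphi(E_{i,1}^k z E_{1,j}^l)=0$ unless the relevant indices match, just as in Proposition~\ref{Prop=Expect}.

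\textbf{Step 2: relative compactness of $(\widetilde\Phi)^{(2)}$.} This is the main point. Since $\Phi^{(2)}$ is compact on $L^2(pMp,\varphi|_{pMp})$, approximate it in norm by finite-rank operators of the form $\sum a_m e_{pNp} b_m$ (note $pNp$ is itself finite-dimensional, so ``compact relative to $pNp$'' inside $pMp$ is automatic and matches ordinary compactness up to the finite-dimensional corner; alternatively one approximates $\Phi$ directly by maps $a\,\mathbb{E}_{pNp}(b\,\cdot\,)$). It then suffices to show the passage $\Phi\mapsto\widetilde\Phi$ sends such a map to an element of $\mathcal{K}_{00}(M,N,\varphi)$, and that $\|(\widetilde\Phi)^{(2)}\|$ is controlled by $\|\Phi^{(2)}\|$ so that the approximation survives. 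For the norm control I would use Lemma~\ref{Lem=Stinespring}: $\widetilde\Phi(x)=V^\ast(\mathrm{id}\otimes\Phi)(U^\ast(1\otimes x)U)V$ with $U$ a (partial) isometry and $V$ a partial isometry, so at the $L^2$-level $(\widetilde\Phi)^{(2)}$ factors through $\mathrm{id}\otimes\Phi^{(2)}$ composed with bounded maps implemented by $U,V$ (here one uses that left/right multiplication by the matrix units $E_{i,j}^k\in N$ acts boundedly on $L^2(M,\varphi)$, which holds since the $E_{i,j}^k$ are analytic — indeed fixed — for $\sigma^\varphi$), giving $\|(\widetilde\Phi)^{(2)}\|\le C\|\Phi^{(2)}\|$ with $C$ depending only on $N$. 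For the finite-rank statement: if $\Phi = a\,\mathbb{E}_{pNp}(b\,\cdot\,)$ then, unwinding \eqref{Eqn=PhiTilde}, $\widetilde\Phi(E_i^k x E_j^l) = E_{i,1}^k a\, \mathbb{E}_{pNp}(b E_{1,i}^k x E_{j,1}^l) E_{1,j}^l$, and since $\mathbb{E}_{pNp}$ is a (finite-dimensional) corner of $\mathbb{E}_N$ one rewrites this, using $N$-bimodularity of $\mathbb{E}_N$ and the commutation of the $E_{i,j}^k$ with $e_N$, as a finite sum of terms $a'\,\mathbb{E}_N(b'\,\cdot\,)$ with $a',b'\in M$; hence $(\widetilde\Phi)^{(2)}\in\mathcal{K}_{00}(M,N,\varphi)$. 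A clean way to organise this is to invoke Proposition~\ref{Prop=Expect}: $\mathbb{E}_N = \widetilde{\mathrm{Diag}}$, so applying the tilde construction to $\Phi = a\,\mathrm{Diag}(b\,\cdot\,)$-type maps and to their finite-rank analogues produces exactly relative-finite-rank maps on $M$.

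\textbf{Step 3: assembling.} Combining Steps 1 and 2: given $\varepsilon>0$, pick $\Phi_0 = \sum_m a_m\,\mathbb{E}_{pNp}(b_m\,\cdot\,)$ with $\|\Phi_0^{(2)}-\Phi^{(2)}\|<\varepsilon/C$. Then $\widetilde\Phi - \widetilde{\Phi_0}$ is the tilde of $\Phi-\Phi_0$ (the construction is linear in $\Phi$), so by the norm bound $\|(\widetilde\Phi)^{(2)}-(\widetilde{\Phi_0})^{(2)}\| \le C\|\Phi^{(2)}-\Phi_0^{(2)}\| < \varepsilon$, while $(\widetilde{\Phi_0})^{(2)}\in\mathcal{K}_{00}(M,N,\varphi)$. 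Letting $\varepsilon\to0$ shows $(\widetilde\Phi)^{(2)}\in\mathcal{K}(M,N,\varphi)$. I expect the genuinely fiddly part to be Step 2 — specifically keeping track of the matrix units when rewriting $\widetilde{(a\,\mathbb{E}_{pNp}b)}$ as a relative-finite-rank operator on all of $M$, and making the norm estimate $\|(\widetilde\Phi)^{(2)}\|\le C\|\Phi^{(2)}\|$ precise via the factorisation in Lemma~\ref{Lem=Stinespring}; the inequality in Step 1 is routine given the template already present in the proof of Proposition~\ref{Prop=Expect}.
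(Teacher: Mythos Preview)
Your approach matches the paper's. One correction in Step~1: the claimed equality
\[
\varphi\bigl(E_{i,1}^{k}\Phi(E_{1,i}^{k}xE_{i,1}^{k})E_{1,i}^{k}\bigr)=\tfrac{\tau(E_i^k)}{\tau(E_1^k)}\,\varphi\bigl(\Phi(E_{1,i}^{k}xE_{i,1}^{k})\bigr)
\]
fails in general, because $\Phi$ need not map $E_1^kME_1^k$ into itself. Writing $\mathbb{E}_N(\Phi(E_{1,i}^k x E_{i,1}^k))=\sum_m c_m E_1^m$ with all $c_m\ge0$, the left side equals $c_k\tau(E_i^k)=c_k\tau(E_1^k)$, which is only $\le\sum_m c_m\tau(E_1^m)=\varphi(\Phi(\,\cdot\,))$. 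This inequality is exactly what the paper records as $\tau(E_1^k\mathbb{E}_N(\Phi(\cdot)))\le\tau(\mathbb{E}_N(\Phi(\cdot)))$, and it is all your argument needs; with ``$=$'' replaced by ``$\le$'' your Step~1 goes through verbatim.

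In Step~2 the paper works with the rank-one building blocks $ae_{\mathbb{C}}b$ (where $e_{\mathbb{C}}$ projects onto $\mathbb{C}p\Omega$) rather than your $ae_{pNp}b$, and then invokes Proposition~\ref{Prop=Expect} to rewrite the scalar $\varphi(bE_{1,i}^kxE_{j,1}^l)$ as a multiple of an $\mathbb{E}_N$-value, producing an explicit finite-sum formula for $(\widetilde\Psi)^{(2)}$. Your choice is equivalent since $pNp$ is finite-dimensional; and your explicit norm control $\|(\widetilde\Theta)^{(2)}\|\le C\|\Theta^{(2)}\|$ via the factorisation of Lemma~\ref{Lem=Stinespring} makes precise what the paper leaves implicit in the phrase ``via approximation''.
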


\begin{proof}
For $1\leq k,l\leq n$, $1\leq i\leq n_{k}$, $1\leq j\leq n_{l}$
and $x\in M$ positive we have by the traciality of $\tau$,
\begin{eqnarray}
\nonumber
\varphi\circ\widetilde{\Phi}(E_{i}^{k}xE_{j}^{l}) &=& \varphi\left(E_{i,1}^{k}\Phi(E_{1,i}^{k}xE_{j,1}^{l})E_{1,j}^{l}\right)\\
\nonumber
&=& \tau\left(E_{i,1}^{k}\mathbb{E}_{N}\left(\Phi(E_{1,i}^{k}xE_{j,1}^{l})\right)E_{1,j}^{l}\right)\\
\nonumber
&=& \delta_{i,j}\delta_{k,l}\tau\left(E_{1}^{k}\mathbb{E}_{N}(\Phi(E_{1,i}^{k}xE_{i,1}^{k}))\right)\text{,}
\end{eqnarray}
so in particular $\varphi\circ\widetilde{\Phi}(E_{i}^{k}xE_{j}^{l})\geq0$.
We get (as $N$ is contained in the centralizer $M^\varphi$)
\begin{eqnarray}
\nonumber
\varphi\circ\widetilde{\Phi}(E_{i}^{k}xE_{j}^{l})&=&\delta_{i,j}\delta_{k,l}\tau\left(E_{1}^{k}\mathbb{E}_{N}(\Phi(E_{1,i}^{k}xE_{i,1}^{k}))\right) \\
\nonumber
&\leq& \delta_{i,j}\delta_{k,l}\tau\left(\mathbb{E}_{N}(\Phi(E_{1,i}^{k}xE_{i,1}^{k}))\right) \\
\nonumber
&\leq& \delta_{i,j}\delta_{k,l}\varphi(E_{1,i}^{k}xE_{i,1}^{k})\\
\nonumber
&=& \varphi(E_{i}^{k}xE_{j}^{l})\text{.}
\end{eqnarray}
This implies that $\widetilde{\Phi}$ indeed satisfies $\varphi\circ\widetilde{\Phi}\leq\varphi$.
In particular, the $L^{2}$-implementation of $\widetilde{\Phi}$
with respect to $\varphi$ exists.

It remains to show that $(\widetilde{\Phi})^{(2)}\in\mathcal{K}(M,N,\varphi)$.
For this, let $\Psi\text{: }pMp\rightarrow pMp$ be a map with $\Psi^{(2)}=ae_{\mathbb{C}}b$
where $a,b\in pMp$ and $e_{\mathbb{C}}$ denotes the rank one projection $(\varphi|_{pMp}(\cdot)p)^{(2)}\in\mathcal{B}(L^{2}(pMp,\varphi|_{pMp}))$.
For $1\leq k,l\leq n$, $1\leq i\leq n_{k}$, $1\leq j\leq n_{l}$
and $x\in M$ we then have
\begin{eqnarray}
\nonumber
\widetilde{\Psi}(E_{i}^{k}xE_{j}^{l})=E_{i,1}^{k}a\varphi(bE_{1,i}^{k}xE_{j,1}^{l})E_{1,j}^{l}=E_{i,1}^{k}a\varphi(bE_{1,i}^{k}xE_{j,1}^{l})E_{1}^{l}E_{1,j}^{l}\text{.}
\end{eqnarray}
Note that by Proposition \ref{Prop=Expect},

\begin{eqnarray}
\nonumber
\mathbb{E}_{N}(bE_{1,i}^{k}xE_{j,1}^{l}) &=& \sum_{r=1}^{n}\mathbb{E}_{N}(E_{1}^{r}bE_{1,i}^{k}xE_{j,1}^{l}) = \sum_{r=1}^{n}\widetilde{\text{Diag}}(E_{1}^{r}bE_{1,i}^{k}xE_{j,1}^{l}) \\
\nonumber
&=& \sum_{r=1}^{n}E_{1}^{r}\text{Diag}(E_{1}^{r}bE_{1,i}^{k}xE_{j,1}^{l})E_{1}^{l} = \sum_{r=1}^{n}\sum_{m=1}^{n}\frac{\varphi(E_{1}^{m}E_{1}^{r}bE_{1,i}^{k}xE_{j,1}^{l}E_{1}^{m})}{\varphi(E_{1}^{m})}E_{1}^{r}E_{1}^{m}E_{1}^{l}  \\
\nonumber
&=& \frac{\varphi(E_{1}^{l}bE_{1,i}^{k}xE_{j,1}^{l})}{\varphi(E_{1}^{l})}E_{1}^{l}= \frac{\varphi(bE_{1,i}^{k}xE_{j,1}^{l})}{\varphi(E_{1}^{l})}E_{1}^{l}\text{,}
\end{eqnarray}
where in the last equality we again used that $\tau$ is tracial. Hence
\begin{equation} \label{scalar}
\widetilde{\Psi}(E_{i}^{k}xE_{j}^{l})=\varphi(E_{1}^{l})E_{i,1}^{k}a\mathbb{E}_{N}(bE_{1,i}^{k}xE_{j,1}^{l})E_{1,j}^{l}
=\varphi(E_{1}^{l})E_{i,1}^{k}a\mathbb{E}_{N}(bE_{1,i}^{k}E_{i}^{k}xE_{j}^{l})\text{.}
\end{equation}
Fix now suitable $t_0,j_0,k_0, l_0$ and $x \in M$ and compute the following expression:
\begin{align*} \left(\sum_{k=1}^{n}\sum_{l=1}^{n}\sum_{r=1}^{n}\sum_{t=1}^{n_{k}}\right.&\left.\varphi(E_{1}^{r})E_{t,1}^{k}aE_{1}^{l}e_{N}E_{1}^{r}bE_{1,t}^{k} \right)(E_{t_0}^{k_0} x E_{j_0}^{l_0} \Omega_\varphi)  \\&=
\sum_{l=1}^{n}\sum_{r=1}^{n} \varphi(E_{1}^{r})
E_{t_0,1}^{k_0} a E_1^l \mathbb{E}_N (E_1^r b E_{1,t_0}^{k_0} x E_{j_0}^{l_0} )\Omega_\varphi
\\&= \sum_{l=1}^{n} \varphi(E_{1}^{l}) E_{t_0,1}^{k_0} a E_1^l
\mathbb{E}_N (b E_{1,t_0}^{k_0} x E_{j_0,1}^{l_0} )E_{1,j_0}^{l_0}\Omega_\varphi
 \end{align*}
Now the equality \eqref{scalar} implies that the value of the conditional expectation appearing in the last formula is a scalar multiple of $E_1^{l_0}$, so the whole expression equals
\begin{align*}
\varphi(E_{1}^{l_0}) E_{t_0,1}^{k_0} a
\mathbb{E}_N (b E_{1,t_0}^{k_0} x E_{j_0}^{l_0} )\Omega_\varphi
= \widetilde{\Psi}(E_{t_0}^{k_0} x E_{j_0}^{l_0}) \Omega_\varphi.
\end{align*}
Hence we arrive at
\[
(\widetilde{\Psi})^{(2)}=\sum_{k=1}^{n}\sum_{l=1}^{n}\sum_{r=1}^{n}\sum_{t=1}^{n_{k}}\varphi(E_{1}^{r})E_{t,1}^{k}aE_{1}^{l}e_{N}E_{1}^{r}bE_{1,t}^{k} \in\mathcal{K}_{00}(M,N,\varphi)\text{.}
\]

By taking linear combinations this implies that for every map $\Psi$
with $\Psi^{(2)}\in\mathcal{K}_{00}(pMp,\mathbb{C},\varphi)$ the
$L^{2}$-implementation of $\widetilde{\Psi}$ is contained in $\mathcal{K}_{00}(M,N,\varphi)$.
Via approximation we then see that $(\widetilde{\Phi})^{(2)}\in\mathcal{K}(M,N,\varphi)$.
\end{proof}

We are now ready to prove the main theorem of this section.

\begin{thm} \label{Thm=HAPrelHAP} Let $ N\subseteq M$
be a unital inclusion of von Neumann algebras and assume that it admits
a faithful normal conditional expectation $\mathbb{E}_{N}\text{: }M\rightarrow N$.
Assume that $N$ is finite-dimensional and let $\tau\in N_{\ast}$
be a faithful  state on $N$ that we extend
to a state $\varphi:=\tau\circ\mathbb{E}_{N}$ on $M$. Then $M$ has
the Haagerup property (in the sense that the triple $(M,\mathbb{C},\varphi)$
has the relative Haagerup property) if and only if the triple $(M,N,\varphi)$
has the relative Haagerup property.
\end{thm}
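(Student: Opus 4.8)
The plan is to prove the two implications separately, after a preliminary reduction to the tracial case, and then to assemble the technical Lemmas \ref{Lem=Stinespring}, \ref{Lem=NNBimodule}, \ref{Lem=NormalUcp} and \ref{Lem=FiniteTilde}. Since $N$ is finite-dimensional it admits a faithful normal tracial state, and by Theorem \ref{StateIndependence} (for property $\text{(rHAP)}$ of $(M,N,\varphi)$) together with the independence of the Haagerup property of $M$ of the reference functional (\cite[Theorem 1.3]{CS-IMRN}), it suffices to treat the case where $\tau$ is tracial. Then $N\subseteq M^{\varphi}$; in particular the projection $p$ of \eqref{Eqn=PDefinition} lies in $M^{\varphi}$, and $\varphi|_{pMp}$ is a faithful normal positive functional with modular group $\sigma^{\varphi}|_{pMp}$.

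For the direction ``$\text{(rHAP)}\Rightarrow$ Haagerup property'' I would use finite-dimensionality only through the observation that $L^{2}(N,\varphi)$ is finite-dimensional, so the Jones projection $e_{N}$ has finite rank. Hence every element of $\mathcal{K}_{00}(M,N,\varphi)$, being of the form $a e_{N}b$, is of finite rank, and therefore $\mathcal{K}(M,N,\varphi)\subseteq\mathcal{K}(M,\mathbb{C},\varphi)$, i.e.\ it consists of compact operators on $L^{2}(M,\varphi)$. Consequently any net of maps witnessing $\text{(rHAP)}$ for $(M,N,\varphi)$ satisfies, verbatim, all the conditions of Definition \ref{Dfn=RHAPState} for $(M,\mathbb{C},\varphi)$, so $M$ has the Haagerup property.

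For the converse I would argue as follows. First, since $p\in M^{\varphi}$, compressing any net of approximating maps $(\Phi_{i})$ of $M$ to $\Psi_{i}:=p\,\Phi_{i}(p\,\cdot\,p)\,p$ shows that the corner $(pMp,\varphi|_{pMp})$ also has the Haagerup property; this is exactly the ``$\Rightarrow$'' argument in the proof of Proposition \ref{Prop=ToCorner} with $N=\mathbb{C}$, using that $\varphi(pbp)\le\varphi(b)$ for $b\ge0$ (because $\varphi(b)=\varphi(pbp)+\varphi((1-p)b(1-p))$ when $p\in M^{\varphi}$, the mixed terms vanishing by traciality of $\varphi$ against $p$ and $1-p$) and that $\Psi_{i}^{(2)}$ is the compression of the compact operator $\Phi_{i}^{(2)}$ to $p(J_{\varphi}pJ_{\varphi})L^{2}(M,\varphi)$, hence compact. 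By \cite[Theorem 5.1]{CS-CMP} (or Theorem \ref{Unitality+State-Preservation} with $N=\mathbb{C}$) I may take the resulting approximating maps $(\Phi_{i})_{i\in I}$ on $pMp$ to be unital. Then I would form $\widetilde{\Phi}_{i}\colon M\to M$ via \eqref{Eqn=PhiTilde}: Lemma \ref{Lem=NormalUcp} gives that each $\widetilde{\Phi}_{i}$ is normal, completely positive and contractive; Lemma \ref{Lem=NNBimodule} gives $N$-$N$-bimodularity; and Lemma \ref{Lem=FiniteTilde} gives $\varphi\circ\widetilde{\Phi}_{i}\le\varphi$ and $\widetilde{\Phi}_{i}^{(2)}\in\mathcal{K}(M,N,\varphi)$. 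The remaining point is the point-strong convergence $\widetilde{\Phi}_{i}\to\mathrm{id}_{M}$, and here finite-dimensionality of $N$ re-enters: every $x\in M$ is the \emph{finite} sum $x=\sum_{k,l}\sum_{i,j}E_{i}^{k}xE_{j}^{l}$, on each summand $\widetilde{\Phi}_{i}(E_{i}^{k}xE_{j}^{l})=E_{i,1}^{k}\Phi_{i}(E_{1,i}^{k}xE_{j,1}^{l})E_{1,j}^{l}$ with $E_{1,i}^{k}xE_{j,1}^{l}\in pMp$, so $\Phi_{i}(E_{1,i}^{k}xE_{j,1}^{l})\to E_{1,i}^{k}xE_{j,1}^{l}$ strongly, and since left and right multiplication by the fixed bounded operators $E_{i,1}^{k},E_{1,j}^{l}$ is strongly continuous, $\widetilde{\Phi}_{i}(E_{i}^{k}xE_{j}^{l})\to E_{i}^{k}xE_{j}^{l}$ strongly; summing the finitely many terms yields $\widetilde{\Phi}_{i}(x)\to x$ strongly. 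Hence $(\widetilde{\Phi}_{i})_{i\in I}$ witnesses $\text{(rHAP)}$ for $(M,N,\varphi)$.

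The technical heart — the construction of $\widetilde{\Phi}$ and the verification that $\widetilde{\Phi}^{(2)}$ is compact relative to $N$ — has already been done in Lemmas \ref{Lem=Stinespring}–\ref{Lem=FiniteTilde}, so what is left is mostly bookkeeping; the one step requiring a little independent care is the inheritance of the Haagerup property by the corner $pMp$. I expect no serious obstacle beyond making sure the reduction to tracial $\tau$ is legitimate and the two essential uses of $\dim N<\infty$ (finiteness of the rank of $e_{N}$ and finiteness of the matrix-unit expansion) are spelled out.
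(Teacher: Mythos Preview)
Your proposal is correct and follows essentially the same approach as the paper: reduce to tracial $\tau$ via Theorem \ref{StateIndependence}, use finite rank of $e_{N}$ for the ``$\Leftarrow$'' direction, and for ``$\Rightarrow$'' pass to the corner $pMp$, upgrade the approximating maps to be unital and state-preserving, and apply Lemmas \ref{Lem=NormalUcp}, \ref{Lem=NNBimodule}, \ref{Lem=FiniteTilde}. The only cosmetic difference is that the paper cites \cite[Lemma 4.1]{CS-IMRN} for the passage to the corner, whereas you spell out that argument directly (which is fine, since $p\in M^{\varphi}$), and the paper is terser about the point-strong convergence of $\widetilde{\Phi}_{i}$, which you make explicit via the finite matrix-unit decomposition.
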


\begin{proof}
By Theorem \ref{StateIndependence} we may assume without loss of generality that $\tau$ is tracial.

``$\Leftarrow$'' Assume that the triple $(M,N,\varphi)$
has the relative Haagerup property and let $(\Phi_{i})_{i\in I}$
be a net of normal completely positive maps witnessing it. Since $N$
is finite dimensional, $e_{N}$ is a finite rank projection. In particular,
$\mathcal{K}_{00}(M,N,\varphi)$ consists of finite rank operators
and hence $\mathcal{K}(M,N,\varphi)\subseteq\mathcal{K}(M,\mathbb{C},\varphi)$.
In particular, $\Phi_{i}^{(2)}\in\mathcal{K}(M,\mathbb{C},\varphi)$
for every $i\in I$. Further, $\Phi_{i}(x)\rightarrow x$ strongly
for every $x\in M$. This implies that the net $(\Phi_{i})_{i\in I}$
also witnesses the relative Haagerup property of the triple $(M,\mathbb{C},\varphi)$.

``$\Rightarrow$'' Assume that $M$ has the Haagerup property. Recall that the projection $p$ was defined in \eqref{Eqn=PDefinition}.  By
\cite[Lemma 4.1]{CS-IMRN} the triple
$(pMp,\mathbb{C},\varphi|_{pMp})$ also has the relative Haagerup
property and by Theorem \ref{Unitality+State-Preservation}
we find a net $(\Phi_{i})_{i\in I}$ of unital normal completely positive
$\varphi$-preserving maps witnessing it. By Lemma \ref{Lem=NormalUcp},
Lemma \ref{Lem=NNBimodule} and Lemma \ref{Lem=FiniteTilde}
we find that $\widetilde{\Phi}_{i}$ is a contractive normal completely
positive $N$-$N$-bimodule map with $\varphi\circ\widetilde{\Phi}\leq\varphi$
and $(\widetilde{\Phi})^{(2)}\in\mathcal{K}(M,N,\varphi)$ for every
$i\in I$. It follows directly from the prescription \eqref{Eqn=PhiTilde} that $\widetilde{\Phi}_{i}(x)\rightarrow x$
strongly for every $x\in M$.
It follows that the net $(\widetilde{\Phi}_i)_{i\in I}$ witnesses the
relative Haagerup property of the triple $(M,N,\varphi)$.
\end{proof}

\vspace{1mm}


\section{The relative Haagerup property for amalgamated free products} \label{amfreeprod}

In this section we study the notion of the relative Haagerup property
in the context of amalgamated free products of von Neumann algebras.
We will further apply our results to the class of virtually free Hecke-von
Neumann algebras.


\subsection{Preservation under amalgamated free products}

The following theorem demonstrates that in the setting of Section
\ref{MainResults} the relative Haagerup property is preserved under
taking amalgamated free products (for details on operator algebraic
amalgamated free products see \cite{Voiculescu} or also \cite{NateTaka}).
For finite inclusions of von Neumann algebras this has been proved
in \cite[Proposition 3.9]{Boca}.

\begin{thm} \label{Thm=amalgamated}
Let $ N\subseteq M_{1}$
and $N\subseteq M_{2}$ be unital embeddings of
von Neumann algebras which admit faithful normal conditional expectations
$\mathbb{E}_{1}:M_{1}\rightarrow N$, $\mathbb{E}_{2}:M_{2}\rightarrow N$
and for which $N$ is finite. Denote by $M:=(M_{1},\mathbb{E}_{1})\ast_{N}(M_{2},\mathbb{E}_{2})$
the amalgamated free product von Neumann algebra of $M_{1}$ and $M_{2}$
with respect to the expectations $\mathbb{E}_{1}$, $\mathbb{E}_{2}$
and let $\mathbb{E}_{N}$ be the corresponding conditional expectation
of $M$ onto $N$. Then $(M_{1},N,\mathbb{E}_{1})$ and $(M_{2},N,\mathbb{E}_{2})$
have the relative Haagerup property if and only if the triple $(M,N,\mathbb{E}_{N})$
has the relative Haagerup property.
\end{thm}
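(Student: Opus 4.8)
The plan is to prove the equivalence in two directions, both of which rely on the reduction to unital, state-preserving approximating maps (Theorem~\ref{Unitality+State-Preservation}) available since $N$ is finite.

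\textbf{The easy direction.} Suppose $(M,N,\mathbb{E}_N)$ has property $\text{(rHAP)}$. Fix a faithful normal tracial state $\tau\in N_\ast$ and set $\varphi=\tau\circ\mathbb{E}_N$; write $\varphi_i=\tau\circ\mathbb{E}_i=\varphi|_{M_i}$. Since $\mathbb{E}_N$ restricts to $\mathbb{E}_i$ on $M_i$ in the sense that there is a faithful normal conditional expectation $\mathbb{F}_i\colon M\to M_i$ with $\mathbb{E}_i\circ\mathbb{F}_i=\mathbb{E}_N$ (this is a standard feature of the amalgamated free product), I would simply invoke Lemma~\ref{lem:condexp} with $M_1$ replaced by $M_i$ to conclude that $(M_i,N,\varphi_i)$ has property $\text{(rHAP)}$ for $i=1,2$. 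That finishes the ``only if'' part.

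\textbf{The hard direction.} Assume $(M_1,N,\mathbb{E}_1)$ and $(M_2,N,\mathbb{E}_2)$ both have property $\text{(rHAP)}$. By Theorem~\ref{Unitality+State-Preservation} choose nets $(\Phi^{(1)}_i)_{i\in I}$ and $(\Phi^{(2)}_j)_{j\in J}$ of unital, $N$-$N$-bimodular, $\varphi_1$- resp.\ $\varphi_2$-preserving normal completely positive maps with $L^2$-implementations in $\mathcal{K}(M_1,N,\varphi_1)$, $\mathcal{K}(M_2,N,\varphi_2)$, converging point-strongly to the identity. The core construction is the amalgamated free product of completely positive maps: for a pair $(i,j)$ one forms $\Phi^{(1)}_i \ast_N \Phi^{(2)}_j\colon M\to M$, the unique unital normal completely positive $N$-bimodular map which restricts to $\Phi^{(1)}_i$ on $M_1$, to $\Phi^{(2)}_j$ on $M_2$, and which is multiplicative on reduced words in the obvious sense; existence and normality of this map follow from the standard theory of free products of $N$-bimodular u.c.p.\ maps (cf.\ \cite{Boca}, \cite{NateTaka}). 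Being built from $N$-bimodular, $\varphi_i$-preserving maps, $\Phi^{(1)}_i\ast_N\Phi^{(2)}_j$ is automatically $N$-$N$-bimodular and $\varphi$-preserving, and unital; point-strong convergence to the identity on the $\ast$-algebra generated by $M_1$ and $M_2$ (hence, by uniform boundedness and Lemma~\ref{Lem=SOTversusL2}, on all of $M$) follows from the fact that both factor nets converge to the respective identities, after passing to a common directed set as in Lemma~\ref{Lem=ApproxSOT}.

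\textbf{The main obstacle: relative compactness.} The genuinely hard point is to show that the $L^2$-implementation $(\Phi^{(1)}_i\ast_N\Phi^{(2)}_j)^{(2)}$ lies in $\mathcal{K}(M,N,\varphi)$. Here I would decompose $L^2(M,\varphi)\ominus L^2(N,\varphi)$ along the standard orthogonal direct sum over reduced words, $\bigoplus_{n\ge1}\bigoplus_{\iota_1\neq\cdots\neq\iota_n}\mathring{L}^2(M_{\iota_1})\otimes_N\cdots\otimes_N\mathring{L}^2(M_{\iota_n})$, where $\mathring{L}^2(M_\iota)=L^2(M_\iota,\varphi_\iota)\ominus L^2(N,\varphi_\iota)$. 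On the summand of word-length $n$ the operator $(\Phi^{(1)}_i\ast_N\Phi^{(2)}_j)^{(2)}$ acts as the $N$-relative tensor product $\mathring{\Phi}^{(\iota_1),(2)}\otimes_N\cdots\otimes_N\mathring{\Phi}^{(\iota_n),(2)}$ of the compressions of the factor implementations to the off-diagonal parts. Each $\mathring{\Phi}^{(\iota),(2)}$ is $N$-relatively compact by hypothesis. The key lemma to establish (this is the analogue, in the relative setting, of the classical fact that a tensor product of compact operators is compact, combined with an $\ell^2$-summability estimate controlling the contribution of large word-lengths) is that an $N$-relative tensor product of finitely many $N$-relatively compact, $N$-bimodular maps is again $N$-relatively compact, with a norm bound that is submultiplicative; then, since each $\Phi^{(\iota)}$ is unital (so the off-diagonal compressions are contractive) and the approximations can additionally be arranged so that $\|\mathring{\Phi}^{(\iota),(2)}\|<1$ on a cofinal set (by the standard trick of passing to $(1-\delta)\Phi+\delta\,\text{(rank-$N$ correction)}$ as in Proposition~\ref{Prop=BannonFang}, or simply by rescaling), the series over $n$ converges in norm, exhibiting $(\Phi^{(1)}_i\ast_N\Phi^{(2)}_j)^{(2)}$ as a norm limit of elements of $\mathcal{K}(M,N,\varphi)$. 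I expect the bookkeeping in this relative-compactness lemma, in particular checking that the relevant tensor products of relatively finite-rank operators are again relatively finite-rank over $N$ and that the sum of their norms can be controlled, to be the most delicate part of the argument; everything else reduces to assembling the ingredients already developed in Sections~\ref{relhap}--\ref{MainResults} and invoking Lemma~\ref{Lem=ApproxSOT} to pass to a single net.
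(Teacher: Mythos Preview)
Your proposal is correct and follows essentially the same route as the paper. The paper implements your ``$\|\mathring{\Phi}^{(\iota),(2)}\|<1$'' step exactly via the mixture $\Phi'_{i,\iota}:=\tfrac{1}{1+\epsilon_i}(\Phi_{i,\iota}+\epsilon_i\mathbb{E}_\iota)$ (so the restriction to $\ker\mathbb{E}_\iota$ is $\tfrac{1}{1+\epsilon_i}\Phi_{i,\iota}$, giving the bound $(1+\epsilon_i)^{-n}$ on length-$n$ words), and it carries out your ``key lemma'' by an explicit induction showing that on $\mathcal{H}_{\bj}$ the free-product $L^2$-implementation is a norm limit of operators of the form $a_{k_1,l_1}^{(i,j_1)}\cdots a_{k_n,l_n}^{(i,j_n)}\,e_N\,b_{k_n,l_n}^{(i,j_n)}\cdots b_{k_1,l_1}^{(i,j_1)}$; the easy direction is handled via Lemma~\ref{lem:condexp} just as you suggest.
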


\begin{proof}
``$\Rightarrow$'' Assume that both $(M_{1},N, \mathbb{E}_{1})$
and $(M_{2},N,\mathbb{E}_{2})$ have the relative Haagerup property, let $\tau\in N_{\ast}$ be a faithful normal tracial state and set $\varphi_{1}:=\tau\circ\mathbb{E}_{1}$, $\varphi_{2}:=\tau\circ\mathbb{E}_{2}$.
Then the triples $(M_{1},N,\varphi_{1})$
and $(M_{2},N,\varphi_{2})$ have the relative Haagerup property.
Without loss of generality we can assume that the corresponding nets $(\Phi_{i,1})_{i\in I}$
and $(\Phi_{i,2})_{i\in I}$ witnessing the relative Haagerup property
are indexed by the same set $I$. By Theorem \ref{Unitality+State-Preservation}
we can also assume that the maps are unital with $\varphi_{1}\circ\Phi_{i,1}=\varphi_{1}$,
$\varphi_{2}\circ\Phi_{i,2}=\varphi_{2}$ for all $i\in I$, which then
implies that $\Phi_{i,1}|_{N}=\Phi_{i,2}|_{N}=\text{id}_{N}$ and
that $\mathbb{E}_{1}\circ\Phi_{i,1}=\mathbb{E}_{1}$, $\mathbb{E}_{2}\circ\Phi_{i,2}=\mathbb{E}_{2}$.
Choose a net $(\epsilon_{i})_{i\in I}$ (we can use the same indexing set, modifying it  if necessary) with $\epsilon_{i}\rightarrow0$
and define unital normal completely positive $N$-$N$-bimodular maps
$\Phi_{i,1}^{\prime}:=\frac{1}{1+\varepsilon_{i}}(\Phi_{i,1}+\epsilon_{i}\mathbb{E}_{1})$,
$\Phi_{i,2}^{\prime}:=\frac{1}{1+\varepsilon_{i}}(\Phi_{i,2}+\epsilon_{i}\mathbb{E}_{2})$.

In the following we will need to work with certain sets of multi-indices: for each $n \in \mathbb{N}$ set $\mathcal{J}_n=\{\bj=(j_1, \ldots, j_n):j_k\in \{1,2\} \textup{ and } j_k \neq j_{k+1} \textup{ for } k=1,\ldots,n-1\}$; put also $\mathcal{J} = \bigcup_{n \in \N} \mathcal{J}_n$.

Set $\varphi:=\tau\circ\mathbb{E}_{N}$, let $\Psi_{i}:=\Phi_{i,1}\ast\Phi_{i,2}\text{: }M\rightarrow M$
be the unital normal completely positive map with $\Psi_{i}|_{N}=\text{id}_{N}$
and $\Psi_{i}(x_{1}...x_{n})=\Phi_{i,j_{1}}^{\prime}(x_{1})...\Phi_{i,j_{n}}^{\prime}(x_{n})$
for $\bj \in \mathcal{J}_n$ and $ x_{k} \in M_{j_{k}}\cap\ker(\mathbb{E}_{j_{k}})$ 
for $k=1, \ldots, n$ (see \cite[Theorem 3.8]{BlanchardDykema})
and define $\Psi_{i}^{\prime}:=\Phi_{i,1}^{\prime}\ast\Phi_{i,2}^{\prime}$
analogously. We claim that the net $(\Psi_{i}^{\prime})_{i\in I}$
witnesses the relative Haagerup property of the triple $(M,N,\varphi)$.
Indeed, it is clear that the maps satisfy the conditions \eqref{Item=RHAP1},
\eqref{Eqn=RHAP2} and \eqref{Eqn=RHAP4} of Definition \ref{Dfn=RHAPState}.
It remains to show that $\Psi_{i}^{\prime}(x)\rightarrow x$ strongly
for every $x\in M$ and that the $L^{2}$-implementations $(\Psi_{i}^{\prime})^{(2)}$
are contained in $\mathcal{K}(M,N,\varphi)$.\\

Define for $n \in \N$ and $\bj \in \mathcal{J}_n$
the Hilbert subspace
\[
\mathcal{H}_{\bj}:=\overline{\text{Span}}\left\{ x_{1}...x_{n}\Omega_{\varphi}\mid x_{1}\in\ker(\mathbb{E}_{j_{1}}),...,x_{n}\in\ker(\mathbb{E}_{j_{n}})\right\} \subseteq L^{2}(M,\varphi)
\]
and let $P_{\bj}\in\mathcal{B}(L^{2}(M,\varphi))$ be
the orthogonal projection onto $\mathcal{H}_{\bj}$. Note
that these Hilbert subspaces are pairwise orthogonal for different multi-indices $\bj_1, \bj_2 \in \mathcal{J}$, orthogonal to
$N\Omega_{\varphi}\subseteq L^{2}(M,\varphi)$, one has inclusions
$\Psi_{i}^{(2)}\mathcal{H}_{\bj}\subseteq\mathcal{H}_{\bj}$,
$(\Psi_{i}^{\prime})^{(2)}\mathcal{H}_{\bj}\subseteq\mathcal{H}_{\bj}$
and the span of the union of all $\mathcal{H}_\bj$ ($\bj \in \mathcal{J}$) with $N\Omega_{\varphi}$ is dense in
$L^{2}(M,\varphi)$.\\

For the strong convergence it suffices to show that $\Vert\Psi_{i}^{(2)}\xi-\xi\Vert_{2}\rightarrow0$
for all $\xi\in\mathcal{H}_{\bj}$, $\bj \in \mathcal{J}$. So let $n \in \N$, $\bj \in \mathcal{J}_n$, $x_{1}\in\ker(\mathbb{E}_{j_{1}})$,
..., $x_{n}\in\ker(\mathbb{E}_{j_{n}})$. Then,
\begin{eqnarray}
\nonumber
& & \Vert  (\Psi_{i}')^{(2)}(x_{1}...x_{n}\Omega_{\varphi})-x_{1}...x_{n}\Omega_{\varphi}\Vert_{2}=\Vert\Phi_{i,j_{1}}^{\prime}(x_{1})...\Phi_{i,j_{n}}^{\prime}(x_{n})\Omega_{\varphi}-x_{1}...x_{n}\Omega_{\varphi}\Vert_{2}\\
\nonumber
&\leq& \Vert(\Phi_{i,j_{1}}^{\prime}(x_{1})-x_{1})\Phi_{i,j_{2}}^{\prime}(x_{2})...\Phi_{i,j_{n}}^{\prime}(x_{n})\Omega_{\varphi}\Vert_{2}+\Vert x_{1}\Vert\Vert\Phi_{i,j_{2}}^{\prime}(x_{2})...\Phi_{i,j_{n}}^{\prime}(x_{n})\Omega_{\varphi}-x_{2}...x_{n}\Omega_{\varphi}\Vert_{2}\\
\nonumber
&\leq& ... \: \leq\Vert(\Phi_{i,j_{1}}^{\prime}(x_{1})-x_{1})\Phi_{i,j_{2}}^{\prime}(x_{2})...\Phi_{i,j_{n}}^{\prime}(x_{n})\Omega_{\varphi}\Vert_{2}+\Vert x_{1}\Vert\Vert(\Phi_{i,j_{2}}^{\prime}(x_{2})-x_{2})\Phi_{i,j_{3}}^{\prime}(x_3)...\Phi_{i,j_{n}}^{\prime}(x_{n})\Omega_{\varphi}\Vert_{2}\\
\nonumber
& & \qquad \qquad +...+\Vert x_{1}\Vert...\Vert x_{n-1}\Vert\Vert\Phi_{i,j_{n}}^{\prime}(x_{n})\Omega_{\varphi}-x_{n}\Omega_{\varphi}\Vert_{2} \rightarrow 0\text{.}
\end{eqnarray}
This implies that indeed $\Psi_{i}(x)\rightarrow x$ strongly for
every $x\in M$.\\

To treat the relative compactness, express the operators $(\Phi_{i,1}^{\prime})^{(2)}\in\mathcal{K}(M_{1},N,\varphi_{1})$,
$(\Phi_{i,2}^{\prime})^{(2)}\in\mathcal{K}(M_{2},N,\varphi_{2})$
as norm-limits
\[
(\Phi_{i,1}^{\prime})^{(2)}=\lim_{l\rightarrow\infty}\sum_{k=1}^{N_{l}^{(i,1)}}a_{k,l}^{(i,1)}e_{N}^{\varphi_{1}}b_{k,j}^{(i,1)} \: \: \text{ and } \: \: (\Phi_{i,2}^{\prime})^{(2)}=\lim_{l\rightarrow\infty}\sum_{k=1}^{N_{j}^{(i,2)}}a_{k,l}^{(i,2)}e_{N}^{\varphi_{2}}b_{k,l}^{(i,2)}
\]
for suitable $N_{l}^{(i,1)},N_{j}^{(i,2)}\in\mathbb{N}$, $a_{k,l}^{(i,1)},b_{k,l}^{(i,1)}\in M_{1}$
and $a_{k,l}^{(i,2)},b_{k,l}^{(i,2)}\in M_{2}$ .\\

\noindent \emph{Claim}. For $n \in \N$, $\bj \in \mathcal{J}_n$,
we have
\begin{eqnarray} \label{AmalgamatedInequality}
\left\Vert (\Psi_{i}^{\prime})^{(2)}P_\bj\right\Vert \leq\left(\frac{1}{1+\epsilon_{i}}\right)^{n}
\end{eqnarray}
and
\begin{eqnarray} \label{AmalgamatedEquality}
(\Psi_{i}^{\prime})^{(2)}P_\bj=\lim_{l_{1},...,l_{n}\rightarrow\infty}\sum_{k_{1},...,k_{n}}a_{k_{1},l_{1}}^{(i,j_{1})}...a_{k_{n},l_{n}}^{(i,j_{n})}e_{N}b_{k_{n},l_{n}}^{(i,j_{n})}...b_{k_{1},l_{1}}^{(i,j_{1})},
\end{eqnarray}
where the convergence is in norm.

\noindent \emph{Proof of the claim}. For $x_{1}\in\ker(\mathbb{E}_{j_{1}})$,
..., $x_{n}\in\ker(\mathbb{E}_{j_{n}})$ one calculates
\begin{eqnarray}
\nonumber
(\Psi_{i}^{\prime})^{(2)}P_{\bj}(x_{1}...x_{n}\Omega_{\varphi}) &=& \Phi_{i,j_{1}}^{\prime}(x_{1})...\Phi_{i,j_{n}}^{\prime}(x_{n})\Omega_{\varphi}\\
\nonumber
&=& \left(\frac{1}{1+\epsilon_{i}}\right)^{n}\Phi_{i,j_{1}}(x_{1})...\Phi_{i,j_{n}}(x_{n})\Omega_{\varphi}\\
\nonumber
&=& \left(\frac{1}{1+\epsilon_{i}}\right)^{n}\Psi_{i}^{(2)}(x_{1}...x_{n}\Omega_{\varphi})
\end{eqnarray}
and hence $(\Psi_{i}^{\prime})^{(2)}P_{\bj}=(1+\epsilon_{i})^{-n}\Psi_{i}^{(2)} P_{\bj}$.
By the unitality of $\Phi_{i,1}$ and $\Phi_{i,2}$ the inequality \eqref{AmalgamatedInequality} then follows from
\[
\left\Vert (\Psi_{i}^{\prime})^{(2)}P_{\bj}\right\Vert =\left(\frac{1}{1+\epsilon_{i}}\right)^{n}\left\Vert \Psi_{i}^{(2)}P_{\bj}\right\Vert \leq \left(\frac{1}{1+\epsilon_{i}}\right)^{n}\left\Vert \Psi_{i}^{(2)}\right\Vert \leq \left(\frac{1}{1+\epsilon_{i}}\right)^{n}\left\Vert \Psi_{i}\right\Vert =\left(\frac{1}{1+\epsilon_{i}}\right)^{n}\text{.}
\]
We proceed by induction over $n$. For $n=1$ the equality \eqref{AmalgamatedEquality}
is clear. Assume that the equality \eqref{AmalgamatedEquality} holds
for $\bj \in \mathcal{J}_{n-1}$
and let $j_{n}\in\left\{ 1,2\right\} $ with $j_{n}\neq i_{n-1}$, $\bj' = (\bj,j_n)$.
One easily checks that the left- and right-hand side of \eqref{AmalgamatedEquality}
both vanish on the orthogonal complement of $\mathcal{H}_{\bj'}$.
Further, for $x_{1}\in\ker(\mathbb{E}_{j_{1}})$, ..., $x_{n}\in\ker(\mathbb{E}_{j_{n}})$,
we get by the assumption
\begin{eqnarray}
\nonumber
& & (\Psi_{i}^{\prime})^{(2)}(x_{1}...x_{n}\Omega_{\varphi})=\Psi_{i}^{\prime}(x_{1}...x_{n-1})\Phi_{i,j_{n}}^{\prime}(x_{n})\Omega_{\varphi}=\Psi_{i}^{\prime}(x_{1}...x_{n-1})(\Phi_{i,j_{n}}^{\prime})^{(2)}(x_{n}\Omega_{\varphi})\\
\nonumber
& & =\lim_{l_{1},...,l_{n}\rightarrow\infty}\sum_{k_{1},...,k_{n-1}}a_{k_{1},l_{1}}^{(i,j_{1})}...a_{k_{n-1},l_{n-1}}^{(i,j_{n-1})}\mathbb{E}_{N}\left(b_{k_{n-1},l_{n-1}}^{(i,j_{n-1})}...b_{k_{1},l_{1}}^{(i,j_{1})}x_{1}...x_{n-1}\right)\left(\sum_{k_{n}}a_{k_{n},l_{n}}^{(i,j_{n})}e_{N}b_{k_{n},l_{n}}^{(i,j_{n})}\right)x_{n}\Omega_{\varphi}\text{.}
\end{eqnarray}
Since the $\Phi_{i,1}^{\prime}$ and $\Phi_{i,2}^{\prime}$ are $N$-$N$-bimodular,
we have $(\Phi_{i,j_{n}}^{\prime})^{(2)}\in N^{\prime}\cap\left\langle N,M\right\rangle $
and hence
\[
(\Psi_{i}^{\prime})^{(2)}(x_{1}...x_{n}\Omega_{\varphi})=\lim_{l_{1},...,l_{n}\rightarrow\infty}\sum_{k_{1},...,k_{n}}a_{k_{1},l_{1}}^{(i,j_{1})}...a_{k_{n},l_{n}}^{(i,j_{n})}\mathbb{E}_{N}\left(b_{k_{n},l_{n}}^{(i,j_{n})}...b_{k_{1},l_{1}}^{(i,j_{1})}x_{1}...x_{n}\right)\Omega_{\varphi},
\]
i.e.
\[
\sum_{k_{1},...,k_{n}}a_{k_{1},l_{1}}^{(i,j_{1})}...a_{k_{n},l_{n}}^{(i,j_{n})}e_{N}b_{k_{n},l_{n}}^{(i,j_{n})}...b_{k_{1},l_{1}}^{(i,j_{1})}\rightarrow(\Psi_{i}^{\prime})^{(2)}
\]
 strongly in $l_{1},...,l_{n}$. The second part  of the claim, i.e. \eqref{AmalgamatedEquality},  then
follows from noticing that
\[
\left(\sum_{k_{1},...,k_{n}}a_{k_{1},l_{1}}^{(i,j_{1})}...a_{k_{n},l_{n}}^{(i,j_{n})}e_{N}b_{k_{n},l_{n}}^{(i,j_{n})}...b_{k_{1},l_{1}}^{(i,j_{1})}\right)_{l_{1},...,l_{n}}
\]
is a Cauchy sequence (compare with \cite[Section 3]{Boca}).\\

The (in)equalities \eqref{AmalgamatedInequality} and \eqref{AmalgamatedEquality} in particular
imply that $(\Psi_{i}^{\prime})^{(2)}$ can be expressed as a norm
limit
\[
(\Psi_{i}^{\prime})^{(2)}=e_{N}+\lim_{n\rightarrow\infty}\sum_{\bj \in \mathcal{J}_n}\Psi_{i}^{(2)}P_{\bj}
\]
 and hence $(\Psi_{i}^{\prime})^{(2)}\in\mathcal{K}(M,N,\varphi)$
for all $i\in I$. This finishes the direction ``$\Rightarrow$''.\\

``$\Leftarrow$''  It suffices to prove the result for $M_1$. Note first that  \cite[Lemma 3.5]{BlanchardDykema} shows that we have a normal conditional expectation $\mathbb{F}_1:M \to M_1$ such that $\mathbb{E}_1 \circ \mathbb{F}_1 = \mathbb{E}_N$. Hence Lemma \ref{lem:condexp} ends the proof. 
\end{proof}

In combination with Theorem \ref{Thm=HAPrelHAP}, Theorem \ref{Thm=amalgamated}
leads to the following corollary. This generalizes a result by Freslon \cite[Theorem 2.3.19]{Freslon} who showed this corollary in the realm of von Neumann algebras of discrete quantum groups, and the analogous property for classical groups was first shown in \cite{Jolissaint00} (see also \cite[Section 6]{HaagerupProperty}).
 To the authors' best knowledge even for inclusions of finite von Neumann algebras the statement of the following corollary
is new.

\begin{cor} \label{Cor=amalgamated}
Let $ N\subseteq M_{1}$
and $ N\subseteq M_{2}$ be unital embeddings of
von Neumann algebras which admit faithful normal conditional expectations
$\mathbb{E}_{1}:M_{1}\rightarrow N$, $\mathbb{E}_{2}:M_{2}\rightarrow N$
and assume that $N$ is finite-dimensional. Assume moreover that $M_{1}$
and $M_{2}$ have the Haagerup property. Then the amalgamated free product
von Neumann algebra $(M_{1}, \mathbb{E}_1) \ast_{N} (M_{2}, \mathbb{E}_2)$ has the Haagerup property as
well.
\end{cor}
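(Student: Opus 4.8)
The corollary follows by combining the two main structural results of the paper, so the plan is essentially to chain them together. Given unital inclusions $N \subseteq M_1$ and $N \subseteq M_2$ with faithful normal conditional expectations $\mathbb{E}_1, \mathbb{E}_2$ onto the finite-dimensional algebra $N$, and assuming $M_1, M_2$ have the Haagerup property, I want to conclude that $M := (M_1, \mathbb{E}_1) \ast_N (M_2, \mathbb{E}_2)$ has the Haagerup property. Since $N$ is finite-dimensional it is in particular finite, so all the finite-$N$ machinery of Sections \ref{MainResults}--\ref{amfreeprod} applies.

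First I would fix a faithful normal tracial state $\tau \in N_\ast$ (which exists as $N$ is finite-dimensional) and set $\varphi_i := \tau \circ \mathbb{E}_i$ on $M_i$. By Theorem \ref{Thm=HAPrelHAP} (applied with the finite-dimensional subalgebra $N$ inside $M_i$), the Haagerup property of $M_i$ is equivalent to the relative Haagerup property of the triple $(M_i, N, \varphi_i)$, equivalently of $(M_i, N, \mathbb{E}_i)$ by Theorem \ref{StateIndependence}. So both $(M_1, N, \mathbb{E}_1)$ and $(M_2, N, \mathbb{E}_2)$ have property (rHAP). Then Theorem \ref{Thm=amalgamated} — whose ``$\Rightarrow$'' direction is exactly the preservation of (rHAP) under amalgamated free products over finite $N$ — gives that $(M, N, \mathbb{E}_N)$ has the relative Haagerup property, where $\mathbb{E}_N$ is the canonical conditional expectation of the amalgamated free product onto $N$. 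Finally, since $N$ is again finite-dimensional and sits unitally inside $M$ with the faithful normal conditional expectation $\mathbb{E}_N$, another application of Theorem \ref{Thm=HAPrelHAP} (this time with the inclusion $N \subseteq M$) translates the relative Haagerup property of $(M, N, \mathbb{E}_N)$ back into the (non-relative) Haagerup property of $M$. This completes the argument.

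There is essentially no obstacle here since all the work has been done: the only points to check are bookkeeping ones, namely that $N$ being finite-dimensional qualifies as ``finite'' for Theorem \ref{Thm=amalgamated} and as ``finite-dimensional'' for the two invocations of Theorem \ref{Thm=HAPrelHAP}, and that the state $\varphi$ on $M$ used in the second application of Theorem \ref{Thm=HAPrelHAP} is precisely $\tau \circ \mathbb{E}_N$, which is consistent with the faithful normal tracial state $\tau$ on $N$ chosen at the outset. One should also note that by Theorem \ref{StateIndependence} none of these equivalences depends on the particular choice of $\tau$, so the statement is genuinely about the inclusions and expectations themselves. If one wanted to be slightly more careful about the indexing sets of the approximating nets in Theorem \ref{Thm=amalgamated}, that is already handled inside the proof of that theorem, so nothing further is needed at the level of the corollary.
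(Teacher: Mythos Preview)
Your proposal is correct and matches the paper's approach exactly: the paper states the corollary immediately after Theorem \ref{Thm=amalgamated} with the remark that it follows ``in combination with Theorem \ref{Thm=HAPrelHAP}'', and your argument spells out precisely this combination (HAP $\Rightarrow$ rHAP for each $M_i$, then rHAP passes to the amalgamated free product, then rHAP $\Rightarrow$ HAP for $M$).
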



\subsection{ Haagerup property for Hecke-von Neumann algebras of
virtually free Coxeter groups}

Let us now demonstrate the application of Corollary \ref{Cor=amalgamated} in the context of virtually free Hecke-von Neumann algebras.

A \emph{Coxeter system} $(W,S)$ consists of a set $S$ and a group
$W$ freely generated by $S$ with respect to relations of the form
$(st)^{m_{st}}=e$ where $m_{st}\in\{1,2,...,\infty\}$ with $m_{ss}=1$,
$m_{st}\geq2$ for all $s\neq t$ and $m_{st}=m_{ts}$. By $m_{st}=\infty$
we mean that no relation of the form $(st)^{m}=e$ with $m\in\mathbb{N}$
is imposed, i.e.\ $s$ and $t$ are free with respect to each other;  and the system is said to be \emph{right-angled} if $m_{st}\in \{2, \infty\}$ for all $s,t \in S$, $s \neq t$.
The system is of \emph{finite rank} if the generating set $S$ is
finite. A subgroup of $(W,S)$ is called {\it special} if it is generated by a subset of $S$.

With every Coxeter system one can associate a family of von Neumann
algebras, its \emph{Hecke-von Neumann algebras}, which can be viewed
as  $q$-deformations of the group von Neumann algebra $\mathcal{L}(W)$
of the Coxeter group $W$. For this, fix a multi-parameter $q:=(q_{s})_{s\in S}\in\mathbb{R}_{>0}^{S}$
with $q_{s}=q_{t}$ for all $s,t\in S$ which are conjugate in $W$.
Further, write $p_{s}(q):=(q_{s}-1)/\sqrt{q_{s}}$ for $s \in S$. Then the corresponding
Hecke-von Neumann algebra $\mathcal{N}_{q}(W)$ is the von Neumann
subalgebra of $\mathcal{B}(\ell^{2}(W))$   generated by the
operators $T_{s}^{(q)}$, $s\in S$ where $T_{s}^{(q)}:\mathcal{B}(\ell^{2}(W))\rightarrow\mathcal{B}(\ell^{2}(W))$
is defined by
\begin{eqnarray} \nonumber
T_{s}^{\left(q\right)}\delta_{\mathbf{w}}=\begin{cases}
\delta_{s\mathbf{w}} & \text{, if } |s \mathbf{w}|>|\mathbf{w}|\\
\delta_{s\mathbf{w}}+p_{s}(q)\delta_{\mathbf{w}} & \text{, if } |s \mathbf{w}|<|\mathbf{w}|
\end{cases}.
\end{eqnarray}
Here $\left|\cdot\right|$ denotes the word length function with respect
to the generating set $S$ and $(\delta_{\mathbf{w}})_{\mathbf{w}\in W}\subseteq\ell^{2}(W)$
is the canonical orthonormal basis of $\ell^{2}(W)$. For a group
element $\mathbf{w}\in W$ which can be expressed by a reduced expression
of the form $\mathbf{w}=s_{1}...s_{n}$ with $s_{1},...,s_{n}\in S$
we set $T_{\mathbf{w}}^{(q)}:=T_{s_{1}}^{(q)}...T_{s_{n}}^{(q)}\in\mathcal{N}_{q}(W)$.
This operator does not depend on the choice of the expression $s_{1}...s_{n}$
and the span of such operators is dense in $\mathcal{N}_{q}(W)$.
Further, the von Neumann algebra $\mathcal{N}_{q}(W)$ carries a canonical
faithful normal tracial state $\tau_{q}$ defined by $\tau_{q}(x):=\left\langle x\delta_{e},\delta_{e}\right\rangle $
for $x\in\mathcal{N}_{q}(W)$. For more details on Hecke-von Neumann
algebras see \cite[Chapter 20]{Davis}.

The aim of this subsection is to study the Haagerup property of Hecke-von
Neumann algebras of virtually free Coxeter groups. We will approach
this by decomposing these Hecke-von Neumann algebras as suitable amalgamated
free products over finite-dimensional subalgebras. In the case of
right-angled Hecke-von Neumann algebras the Haagerup property has been
obtained in \cite[Theorem 3.9]{Caspers}.

Fix a finite rank Coxeter system $(W,S)$. A subset $T\subseteq S$
is called \emph{spherical} if the \emph{special subgroup} $W_{T}\subseteq W$
generated by $T$ is finite. $(W,S)$ is called spherical if $S$ is a spherical subset.

If $W$ is an arbitrary group which decomposes as an amalgamated
free product $W=W_{1}\ast_{W_{0}}W_{2}$ where $(W_{1},S_{1})$, $(W_{2},S_{2})$
are Coxeter systems with $W_{0}=W_{1}\cap W_{2}$ and $S_{0}:=S_{1}\cap S_{2}$
generates $W_{0}$, then $(W,S_{1}\cup S_{2})$ is a Coxeter system
as well.  We may now define the class of {\it virtually free} Coxeter systems as the smallest class of Coxeter groups containing all spherical Coxeter groups and which is stable under taking amalgamated free products over special spherical subgroups. Note that the original definition of virtually free Coxeter systems is different, but by \cite[Proposition 8.8.5]{Davis} equivalent to the one used here.

Now, for a multi-parameter $q:=(q_{s})_{s\in S}\in\mathbb{R}_{>0}^{S}$
as above we have a natural unital embedding $\mathcal{N}_{q}(W_{0})\subseteq\mathcal{N}_{q}(W)$
(see \cite[19.2.2]{Davis}). Let $\mathbb{E}_{\mathcal{N}_{q}(W_{0})}:\mathcal{N}_{q}(W)\rightarrow\mathcal{N}_{q}(W_{0})$
be the unique faithful normal trace-preserving conditional expectation
onto $\mathcal{N}_{q}(W_{0})$. Then, for $\mathbf{w}\in W$ the equality
\[
\mathbb{E}_{\mathcal{N}_{q}(W_{0})}(T_{\mathbf{w}}^{(q)})=\left\{ \begin{array}{cc}
T_{\mathbf{w}}^{(q)}, & \text{ if }\mathbf{w}\in W_{0}\\
0, & \text{ if }\mathbf{w}\notin W_{0}
\end{array}\right.
\]
holds.

Let us show that the amalgamated free product decomposition of a Coxeter
group translates into the Hecke-von Neumann algebra setting. Note that the arguments using the (iterated) amalgamated free product description of Hecke-deformed Coxeter group $C^*$-algebras appear for example in \cite{SvenAdamK}, exploiting the earlier work on operator algebraic graph products in \cite{MartijnPierre}.

\begin{prop} \label{decomposition}
Let $(W,S)$ be a finite rank Coxeter system that decomposes as $W=W_{1}\ast_{W_{0}}W_{2}$
where $(W_{1},S_{1})$, $(W_{2},S_{2})$ are Coxeter systems with
$S=S_{1}\cup S_{2}$, $W_{0}=W_{1}\cap W_{2}$ such that $S_{0}:=S_{1}\cap S_{2}$
generates $W_{0}$. For a multi-parameter $q=(q_{s})_{s\in S}$ with
$q_{s}=q_{t}$ for all $s,t\in S$ which are conjugate in $W$ the
corresponding Hecke-von Neumann algebra $\mathcal{N}_{q}(W)$ decomposes
as an amalgamated free product of the form
\[
\mathcal{N}_{q}(W)=\mathcal{N}_{q_{1}}(W_{1})\ast_{\mathcal{N}_{q_{0}}(W_{0})}\mathcal{N}_{q_{2}}(W_{2})\text{,}
\]
where $q_{0}:=(q_{s})_{s\in S_{0}}$, $q_{1}:=(q_{s})_{s\in S_{1}}$
and $q_{2}:=(q_{s})_{s\in S_{2}}$. Here the decomposition is taken
with respect to the restricted conditional expectations $(\mathbb{E}_{\mathcal{N}_{q}(W_{0})})|_{\mathcal{N}_{q_{1}}(W_{1})}$
and $(\mathbb{E}_{\mathcal{N}_{q}(W_{0})})|_{\mathcal{N}_{q_{2}}(W_{2})}$.
\end{prop}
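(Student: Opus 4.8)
The plan is to verify that $\mathcal{N}_q(W)$ together with the canonical trace and the two subalgebras $\mathcal{N}_{q_1}(W_1)$, $\mathcal{N}_{q_2}(W_2)$ satisfies the defining universal property (or rather, the concrete characterization) of the amalgamated free product von Neumann algebra over $\mathcal{N}_{q_0}(W_0)$. Recall that the amalgamated free product $(\mathcal{N}_{q_1}(W_1), \mathbb{E}_1) \ast_{\mathcal{N}_{q_0}(W_0)} (\mathcal{N}_{q_2}(W_2), \mathbb{E}_2)$ is characterized up to isomorphism (for tracial states) by: a von Neumann algebra $P$ generated by copies of $\mathcal{N}_{q_1}(W_1)$ and $\mathcal{N}_{q_2}(W_2)$ sharing the common subalgebra $\mathcal{N}_{q_0}(W_0)$, together with a faithful normal conditional expectation $\mathbb{E}: P \to \mathcal{N}_{q_0}(W_0)$ restricting to $\mathbb{E}_1$ and $\mathbb{E}_2$, such that the \emph{freeness with amalgamation} condition holds: whenever $x_1, \ldots, x_n \in P$ with $x_k \in \mathcal{N}_{q_{j_k}}(W_{j_k}) \cap \ker \mathbb{E}$ for an alternating word $\mathbf{j} \in \mathcal{J}_n$, then $\mathbb{E}(x_1 \cdots x_n) = 0$.

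First I would check the easy structural points. The inclusions $\mathcal{N}_{q_1}(W_1), \mathcal{N}_{q_2}(W_2) \subseteq \mathcal{N}_q(W)$ are the natural ones coming from the inclusions of generating sets $S_1, S_2 \subseteq S$, and they share the subalgebra $\mathcal{N}_{q_0}(W_0)$ since $S_0 = S_1 \cap S_2$ generates $W_0 = W_1 \cap W_2$; that $\{T_{\mathbf{w}}^{(q)} : \mathbf{w} \in W_i\}'' = \mathcal{N}_{q_i}(W_i)$ inside $\mathcal{N}_q(W)$ follows from the standard fact that $T_{\mathbf{w}}^{(q)}$ does not depend on the chosen reduced word. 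The two subalgebras generate $\mathcal{N}_q(W)$ because $S = S_1 \cup S_2$ generates $W$. The conditional expectation $\mathbb{E}_{\mathcal{N}_q(W_0)}$ restricts to $\mathbb{E}_{\mathcal{N}_{q_i}(W_i)}$ on $\mathcal{N}_{q_i}(W_i)$ by the explicit formula recalled above (both vanish off $W_0$ and are the identity on elements of $W_0$), and it is faithful, normal and trace-preserving.

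The main obstacle — and the real content of the proof — is verifying the freeness with amalgamation, and the key combinatorial ingredient is the normal form for elements of an amalgamated free product of groups together with the behaviour of the Hecke operators $T_{\mathbf{w}}^{(q)}$ under reduced multiplication. For this I would first reduce to linear combinations: each $x_k \in \mathcal{N}_{q_{j_k}}(W_{j_k}) \cap \ker \mathbb{E}$ can be $\sigma$-weakly approximated by finite linear combinations $\sum c_{\mathbf{w}} T_{\mathbf{w}}^{(q_{j_k})}$ with $\mathbf{w} \in W_{j_k} \setminus W_0$, and $\mathbb{E}$ is normal, so it suffices to treat $x_k = T_{\mathbf{w}_k}^{(q)}$ with $\mathbf{w}_k \in W_{j_k} \setminus W_0$ in an alternating pattern. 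The crucial point is then that in an amalgamated free product $W_1 \ast_{W_0} W_2$ a product $\mathbf{w}_1 \cdots \mathbf{w}_n$ with $\mathbf{w}_k \in W_{j_k} \setminus W_0$ alternating is never in $W_0$ (indeed it is reduced as an amalgam word of length $n \geq 1$), so that $\langle T_{\mathbf{w}_1}^{(q)} \cdots T_{\mathbf{w}_n}^{(q)} \delta_e, \delta_e \rangle = 0$ once I know $T_{\mathbf{w}_1}^{(q)} \cdots T_{\mathbf{w}_n}^{(q)} \delta_e$ is supported away from $e$; equivalently $\mathbb{E}_{\mathcal{N}_q(W_0)}(T_{\mathbf{w}_1}^{(q)} \cdots T_{\mathbf{w}_n}^{(q)}) = 0$. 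The latter requires a little care because the product of $T$-operators is not simply $T$ of the product unless the word is reduced, so I would instead argue directly: each $T_{\mathbf{w}_k}^{(q)}$, being in $\mathcal{N}_{q_{j_k}}(W_{j_k})$ and in the kernel of the $W_0$-expectation, has the property that it maps $\ell^2$ of any coset $W_0 \mathbf{u}$ (with the amalgam length of $\mathbf{u}$ ending in $W_{j}$, $j \ne j_k$, or $\mathbf{u} \in W_0$) into $\ell^2$ of cosets of strictly larger amalgam length whose last syllable lies in $W_{j_k}$ — this is the Hecke-algebra analogue of the classical coset-geometry argument, and it can be made precise by induction on the amalgam length using the defining action of $T_s^{(q)}$ on basis vectors together with the word-length geometry of the amalgam. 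Iterating, $T_{\mathbf{w}_1}^{(q)} \cdots T_{\mathbf{w}_n}^{(q)} \delta_e$ lies in the span of $\delta_{\mathbf{v}}$ with $\mathbf{v}$ of amalgam length $n$, in particular $\mathbf{v} \notin W_0$, whence $\mathbb{E}_{\mathcal{N}_q(W_0)}$ kills it. Alternatively, and perhaps more cleanly, one can invoke the graph-product / iterated amalgam description of Hecke $C^*$-algebras from \cite{SvenAdamK} and \cite{MartijnPierre}, which already establishes exactly this reduced-word behaviour at the $C^*$-level, and then pass to the von Neumann algebra completion by uniqueness of the GNS construction for $\tau_q$ — this would let me quote the freeness condition rather than re-derive it. Once freeness with amalgamation is in hand, the uniqueness of the amalgamated free product (for tracial states, via the GNS representation on $L^2(\mathcal{N}_q(W), \tau_q)$, which decomposes as the free-product Hilbert space over $L^2(\mathcal{N}_{q_0}(W_0))$ indexed by alternating words) yields the claimed isomorphism $\mathcal{N}_q(W) \cong \mathcal{N}_{q_1}(W_1) \ast_{\mathcal{N}_{q_0}(W_0)} \mathcal{N}_{q_2}(W_2)$ with respect to the restricted expectations.
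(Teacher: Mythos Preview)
Your strategy coincides with the paper's: verify freeness with amalgamation for alternating products of elements in $\ker \mathbb{E}_{j_k}$ and then invoke uniqueness of the amalgamated free product. One point to tighten: when you reduce to $x_k = T_{\mathbf{w}_k}^{(q)}$ by $\sigma$-weak approximation, normality of $\mathbb{E}$ alone does not suffice, since you must first approximate the \emph{product} $x_1 \cdots x_n$; the paper handles this via Kaplansky's density theorem, producing \emph{bounded} approximating nets in each $\mathcal{N}_{q_{j_k}}(W_{j_k}) \cap \ker \mathbb{E}$ so that the $n$-fold product still converges strongly. Conversely, you are more careful than the paper at the subsequent step: the paper simply asserts $T_{\mathbf{w}_1}^{(q)} \cdots T_{\mathbf{w}_n}^{(q)} = T_{\mathbf{w}_1 \cdots \mathbf{w}_n}^{(q)}$, which is not literally true when the concatenation is not Coxeter-reduced (e.g.\ $\mathbf{w}_1 = s_1 t$, $\mathbf{w}_2 = t s_2$ with $t \in S_0$, $s_i \in S_i \setminus S_0$), whereas you correctly flag this and propose either a direct amalgam-length/coset argument or a citation to \cite{MartijnPierre}. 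Either route closes the gap, since any expansion of the product through the Hecke relations involves only $T_{\mathbf{v}}^{(q)}$ with $\mathbf{v}$ of amalgam length $n$, hence $\mathbf{v} \notin W_0$.
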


\begin{proof}
	We will use the multi-index notation from the proof of Theorem \ref{Thm=amalgamated}.
By the uniqueness of the amalgamated free product
construction in combination with our previous discussion, it suffices
to show that $\mathbb{E}_{\mathcal{N}_{q}(W_{0})}(a_{1}...a_{n})=0$
for all $n \in \N$, $\bj \in \mathcal{J}_n$ and   $a_{k}\in\mathcal{N}_{q_{j_{k}}}(W_{j_k}))\cap\ker(\mathbb{E}_{\mathcal{N}_{q}(W_{0})})$.  Let $\mathcal{N}_{q_{i}}(W_{i})_1$ denote the unit ball of $\mathcal{N}_{q_{i}}(W_{i})$.   Let  $\overline{\text{Span}}$ denote the strong closure of the linear span. By Kaplansky's density theorem,
\[
\mathcal{N}_{q_{1}}(W_{1})_1 \cap\ker(\mathbb{E}_{\mathcal{N}_{q}(W_{0})})= \mathcal{N}_{q_{1}}(W_{1})_1 \cap \overline{\text{Span}}\{ T_{\mathbf{w}}^{(q)}\mid\mathbf{w}\in W_{1}\setminus W_{0}\},
\]
and
\[
\mathcal{N}_{q_{2}}(W_{2})_1 \cap\ker(\mathbb{E}_{\mathcal{N}_{q}(W_{0})})= \mathcal{N}_{q_{2}}(W_{2})_1 \cap \overline{\text{Span}}\{ T_{\mathbf{w}}^{(q)}\mid\mathbf{w}\in W_{2}\setminus W_{0}\}\text{.}
\]
By \cite[Remark 4.3.1]{Murphy} the element $a_{1}...a_{n}$ can hence be approximated strongly by a bounded net of  linear combinations
of reduced expressions of the form $T_{\mathbf{w}_{1}}^{(q)}...T_{\mathbf{w}_{n}}^{(q)}$
with $\mathbf{w}_{k}\in W_{j_k}\setminus W_{0}$. 
But this expression coincides with $T_{\mathbf{w}_{1}...\mathbf{w}_{n}}^{(q)}$
where $\mathbf{w}_{1}...\mathbf{w}_{n}\in W\setminus W_{0}$ is non-trivial,
so $\mathbb{E}_{\mathcal{N}_{q}(W_{0})}(a_{1}...a_{n})=0$  since $\mathbb{E}_{\mathcal{N}_{q}}$ is normal and hence weakly continuous on bounded sets.
\end{proof}

The following corollary is an example of an application of Corollary
\ref{Cor=amalgamated} in a setting which is not covered by the results in \cite{Freslon}.

\begin{cor} \label{HeckeHaagerup}
Let $(W,S)$ be a finite rank Coxeter system, let $q=(q_{s})_{s\in S}\in\mathbb{R}_{>0}^{S}$
be a multi-parameter with $q_{s}=q_{t}$ if $s,t\in S$ are conjugate
in $W$ and assume that $W$ is virtually free. Then the corresponding
Hecke-von Neumann algebra $\mathcal{N}_{q}(W)$ has the Haagerup property.
\end{cor}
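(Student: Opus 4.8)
The plan is to prove Corollary \ref{HeckeHaagerup} by induction on the way in which the virtually free Coxeter group $W$ is built up, using the recursive description of the class of virtually free Coxeter systems given just before Proposition \ref{decomposition}: this class is the smallest class containing all spherical (finite) Coxeter groups and stable under amalgamated free products over special spherical subgroups. So the induction will be on the number of amalgamation steps needed to produce $(W,S)$.

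For the base case, suppose $(W,S)$ is spherical, i.e.\ $W$ is finite. Then $\mathcal{N}_q(W)$ is a finite-dimensional von Neumann algebra, being generated by finitely many operators $T_s^{(q)}$ inside $\mathcal{B}(\ell^2(W))$ with $\ell^2(W)$ finite-dimensional. Every finite-dimensional von Neumann algebra is injective, hence has the Haagerup property (one may take the approximating net to be the identity map, or appeal to the fact that injectivity implies the Haagerup property in the sense used here). This settles the base of the induction.

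For the inductive step, assume $(W,S)$ is obtained as $W = W_1 \ast_{W_0} W_2$ where $(W_1, S_1)$ and $(W_2,S_2)$ are virtually free Coxeter systems (built by fewer amalgamation steps), $S = S_1 \cup S_2$, $W_0 = W_1 \cap W_2$ is a finite special subgroup and $S_0 = S_1 \cap S_2$ generates $W_0$. Let $q$ be a compatible multi-parameter and set $q_i := (q_s)_{s \in S_i}$ for $i=0,1,2$; note that the compatibility of $q$ on $W$ restricts to compatibility of $q_i$ on $W_i$ (conjugacy in $W_i$ implies conjugacy in $W$), so the smaller Hecke-von Neumann algebras $\mathcal{N}_{q_1}(W_1)$, $\mathcal{N}_{q_2}(W_2)$, $\mathcal{N}_{q_0}(W_0)$ are well-defined. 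By the inductive hypothesis $\mathcal{N}_{q_1}(W_1)$ and $\mathcal{N}_{q_2}(W_2)$ have the Haagerup property. Since $W_0$ is finite, $\mathcal{N}_{q_0}(W_0)$ is finite-dimensional, and it sits as a unital subalgebra of each $\mathcal{N}_{q_i}(W_i)$ with the (trace-preserving, hence faithful normal) conditional expectation $(\mathbb{E}_{\mathcal{N}_q(W_0)})|_{\mathcal{N}_{q_i}(W_i)}$ onto it. By Proposition \ref{decomposition} we have the amalgamated free product decomposition
\[
\mathcal{N}_q(W) = \mathcal{N}_{q_1}(W_1) \ast_{\mathcal{N}_{q_0}(W_0)} \mathcal{N}_{q_2}(W_2)
\]
with respect to these expectations. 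Now Corollary \ref{Cor=amalgamated}, applied with $N = \mathcal{N}_{q_0}(W_0)$ finite-dimensional and $M_i = \mathcal{N}_{q_i}(W_i)$ having the Haagerup property, yields that $\mathcal{N}_q(W)$ has the Haagerup property, completing the induction.

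I do not expect serious obstacles here: the real content has already been extracted into Corollary \ref{Cor=amalgamated} and Proposition \ref{decomposition}. The only mild point worth checking is that the recursive structure of the virtually free class (as defined in the excerpt via iterated amalgamation over special spherical subgroups) matches the hypotheses of Proposition \ref{decomposition} — in particular that at each stage $S_0 = S_1 \cap S_2$ genuinely generates $W_0 = W_1 \cap W_2$ and that $W_0$ is a \emph{special} (hence in the finite-rank spherical case, finite-dimensional Hecke algebra) subgroup — and that the parameter compatibility descends to subsystems; both are routine. One should also remark that when $W$ itself is spherical no induction step is needed, and that the class being "smallest stable under these operations'' is exactly what makes the induction on the number of amalgamation steps legitimate.
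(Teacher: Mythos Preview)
Your proof is correct and follows essentially the same approach as the paper: induct on the recursive description of virtually free Coxeter systems, using Proposition~\ref{decomposition} for the decomposition and Corollary~\ref{Cor=amalgamated} for the amalgamation step. The only cosmetic difference is that the paper's one-line proof additionally cites \cite[Theorem 3.9]{Caspers}, whereas your base case (spherical $W$ gives a finite-dimensional $\mathcal{N}_q(W)$, hence trivially Haagerup) is self-contained and entirely adequate given the definition of virtually free stated in the paper.
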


\begin{proof}
This follows from a combination of \cite[Theorem 3.9]{Caspers}, Proposition
\ref{decomposition} and Corollary \ref{Cor=amalgamated}.
\end{proof}

\section{Inclusions of finite index}\label{Sect=Quantum}
In this section we will discuss finite index inclusions for not necessarily tracial von Neumann algebras defined in \cite{finiteindex}. We will pick one of the (possibly nonequivalent) definitions, which is most suitable in our context, and then we will illustrate this notion using certain compact quantum groups, namely free orthogonal quantum groups.
\begin{dfn}
Let $N \subseteq M$ be an inclusion of von Neumann algebras with a faithful normal conditional expectation $\mathbb{E}_{N}: M \to N$. We say that a family of elements $(m_i)_{i\in I}$ is an \emph{orthonormal basis} of the right $N$-module $L^{2}(M)_{N}$ if
\begin{enumerate}
\item for each $i,j \in I$ we have $\mathbb{E}_{N}(m_{i}^{\ast} m_j) = \delta_{ij} p_{j}$, where $p_{j}$ is a projection in $N$;
\item $\overline{\sum_{i\in I} m_i N} = L^{2}(M)$.
\end{enumerate}
We say that the inclusion $N\subseteq M$ is strongly of finite index if it admits a finite orthonormal basis.
\end{dfn}

\begin{lem}
If an inclusion $N\subseteq M$ is strongly of finite index then it has the Haagerup property.
\end{lem}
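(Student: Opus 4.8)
The plan is to exploit the fact that strong finiteness of the index forces the identity operator on $L^2(M,\varphi)$ to \emph{already} be compact relative to $N$, so that property $\text{(rHAP)}$ is witnessed by the trivial, constant net $(\mathrm{id}_M)$. First I would fix a faithful normal state $\omega$ on $N$ (which exists by $\sigma$-finiteness) and set $\varphi:=\omega\circ\mathbb{E}_N$; then $\mathbb{E}_N$ is $\varphi$-preserving, so $\sigma_t^\varphi(N)\subseteq N$ for all $t\in\mathbb{R}$, the triple $(M,N,\varphi)$ fits the framework of Definition \ref{Dfn=RHAPState}, and it suffices to verify property $\text{(rHAP)}$ for this particular $\varphi$ (cf. Theorem \ref{StateIndependence}). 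Write $e_N=e_N^\varphi$ for the Jones projection, so that $e_N(x\Omega_\varphi)=\mathbb{E}_N(x)\Omega_\varphi$ and $e_N x e_N=\mathbb{E}_N(x)e_N$ for $x\in M$.

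The key step is the Pimsner--Popa reconstruction identity $\sum_{i\in I}m_i e_N m_i^*=1_{L^2(M,\varphi)}$, a \emph{finite} sum. To prove it I would set $q:=\sum_{i\in I}m_i e_N m_i^*$. From faithfulness of $\mathbb{E}_N$ and $\mathbb{E}_N\bigl((m_j(1-p_j))^*m_j(1-p_j)\bigr)=(1-p_j)p_j(1-p_j)=0$ one gets $m_j p_j=m_j$ for each $j\in I$. Combining this with $e_N x e_N=\mathbb{E}_N(x)e_N$ and the orthonormality $\mathbb{E}_N(m_i^*m_j)=\delta_{ij}p_j$, a direct computation gives $q^*=q$ and $q^2=\sum_j m_j p_j e_N m_j^*=\sum_j m_j e_N m_j^*=q$, so $q$ is a projection. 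Moreover, for every $j\in I$ and $n\in N$ we have $q(m_j n\Omega_\varphi)=\sum_i m_i\mathbb{E}_N(m_i^*m_j)n\Omega_\varphi=m_j p_j n\Omega_\varphi=m_j n\Omega_\varphi$; since $\overline{\sum_{j\in I}m_j N}=L^2(M,\varphi)$ by condition (2) of the definition, the range of $q$ is the whole of $L^2(M,\varphi)$, hence $q=1_{L^2(M,\varphi)}$.

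With this identity the conclusion is essentially immediate. Each summand $m_i e_N m_i^*$ is of the form $a e_N b$ with $a=m_i$, $b=m_i^*\in M$, hence lies in $\mathcal{K}_{00}(M,N,\varphi)\subseteq\mathcal{K}(M,N,\varphi)$, so $1_{L^2(M,\varphi)}\in\mathcal{K}(M,N,\varphi)$; but $1_{L^2(M,\varphi)}$ is exactly the $L^2$-implementation $(\mathrm{id}_M)^{(2)}$. Therefore the constant net $\Phi_i\equiv\mathrm{id}_M$ satisfies all requirements of Definition \ref{Dfn=RHAPState}: the maps are unital completely positive and $N$-$N$-bimodular, they converge (trivially) point-strongly to the identity, $\varphi\circ\Phi_i=\varphi\leq\varphi$, and $\Phi_i^{(2)}=1_{L^2(M,\varphi)}\in\mathcal{K}(M,N,\varphi)$. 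Hence $(M,N,\mathbb{E}_N)$ has the relative Haagerup property.

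The one point requiring a little care — and essentially the only place the argument is more than formal — is the reconstruction identity itself, together with the precise reading of the definition of an orthonormal basis. If one allows the $m_i$ to be bounded vectors of $L^2(M)$ rather than elements of $M$ proper, then the condition $\mathbb{E}_N(m_i^*m_i)=p_i\in N$ still makes each $m_i$ a bounded right-$N$-module vector, the operators $m_i e_N m_i^*$ remain bounded and ``relatively finite rank'', and approximating $m_i$ by elements of $M\Omega_\varphi$ places them in the norm-closure $\mathcal{K}(M,N,\varphi)$ of $\mathcal{K}_{00}(M,N,\varphi)$; the rest goes through verbatim. No further approximation machinery is needed, precisely because finiteness of $I$ turns the reconstruction sum into an honest finite sum equal to the identity.
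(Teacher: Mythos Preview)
Your proof is correct and follows essentially the same approach as the paper's: both show that a finite orthonormal basis forces the identity on $L^2(M,\varphi)$ to be relatively finite rank via the reconstruction identity $\sum_i m_i e_N m_i^*=1$ (equivalently $x=\sum_i m_i\mathbb{E}_N(m_i^*x)$), so the constant net $\Phi_i=\mathrm{id}_M$ witnesses $\text{(rHAP)}$. The paper simply cites Popa for the reconstruction identity, whereas you supply the direct projection argument; otherwise the two proofs coincide.
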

\begin{proof}
Let $m_1,\dots, m_n$ be a finite orthonormal basis for our inclusion. It suffices to show that $x = \sum_{i=1}^{n} m_i \mathbb{E}_N( m_i^{\ast} x)$ for each $x\in M$. Indeed, this would show that the identity map on $L^{2}(M)$ is relatively compact with respect to $N$, so clearly the triple $(M,N, \mathbb{E}_N)$ satisfies the relative Haagerup property. The equality $x = \sum_{i=1}^{n} m_i \mathbb{E}_N( m_i^{\ast} x)$ has been already observed by Popa (see \cite[Section 1]{Popafiniteindex}) in a more general context.
\end{proof}
\subsection{Free orthogonal quantum groups}
We will now present a certain inclusion arising in the theory of compact quantum groups that has the relative Haagerup property. For information about compact quantum groups we refer the reader to the excellent book \cite{NeshveyevTuset}.

\begin{dfn}[\cite{OnF}]
Let $n\geqslant 2$ be an integer and let $F \in M_n(\mathbb{C})$ be a matrix such that $F \overline{F} = c\mathds{1}$ for some $c\in \mathbb{R}\setminus\{0\}$. Let $\Pol(O_F^{+})$ be the universal $\ast$-algebra generated by the entries of a unitary matrix $U\in M_{n}(\Pol(O_{F}^{+}))$, denoted $u_{ij}$, subject to the condition $U= F \overline{U} F^{-1}$, where $(\overline{U})_{ij}:= (u_{ij})^{\ast}$ for all $i,j=1,\ldots,n$.
Then the unique $\ast$-homomorphic extension of the map $\Delta(u_{ij}) := \sum_{k=1}^{n} u_{ik} \otimes u_{kj}$ makes $\Pol(O_{F}^{+})$ into a Hopf $\ast$-algebra, whose universal $C^{\ast}$-algebra completion yields a compact quantum group.
\end{dfn}
\begin{rem}
As every compact quantum group admits a Haar state, we can use the GNS construction to construct a von Neumann algebra $L^{\infty}(O_{F}^{+})$.
\end{rem}
In \cite{Banica} Banica classified irreducible representations of the compact quantum group $O_{F}^{+}$. He showed that they are indexed by natural numbers, $U^{k}$, where   $U^{0}$ is the trivial representation and $U^{1}=U$ is the fundamental representation $U$. Moreover, the fusion rules satisfied by these representations are the following:
\[
U^{k}\otimes U^{l} \simeq U^{k+l} \oplus U^{k+l-2} \oplus \dots \oplus U^{|k-l|},\;\;\; k, l \in \N,
\]
just like for the classical compact group $SU(2)$. From the fusion rules one can infer that the coefficients of representations indexed by even numbers form a subalgebra. Further, one can use the defining relation $U= F\overline{U} F^{-1}$ to show that they form a $\ast$-subalgebra.
\begin{dfn}
Let $M:= L^{\infty}(O_{F}^{+})$. We define the \emph{even subalgebra} $N$ to be the von Neumann subalgebra of $M$ generated by the elements $(u_{ij} u_{kl})_{1\leqslant i,j,k,l \leqslant n}$. It is equal to the von Neumann algebra generated by the coefficients of the even representations;  in fact it is related to the \emph{projective version} of $O_{F}^{+}$, usually denoted $PO_{F}^{+}$.
\end{dfn}
\begin{rem}
It has been shown by Brannan in \cite{Brannanthesis} that $N\subseteq M$ is a subfactor of index $2$ in case that $F=\mathds{1}$ (it is then an inclusion of finite von Neumann algebras).
\end{rem}
We now roughly outline Brannan's argument and then mention why it cannot immediately be translated into our setting. There is an automorphism $\Phi$ of $M$ such that $\Phi(u_{ij}) = -u_{ij}$; $\Phi$ can be first defined on $\Pol(O_{F}^+)$ by the universal property but it also preserves the Haar state, so can be extended to an automorphism of $L^{\infty}(O_{F}^+)$. The fixed point subalgebra of $\Phi$ is equal to the even subalgebra $N$ and therefore $\mathbb{E}_{N}:= \frac{1}{2}(\Id + \Phi)$ is a conditional expectation onto $N$ that preserves the Haar state. As a consequence $\mathbb{E}_{N} - \frac{1}{2} \Id $ is a completely positive map, so one can use the Pimsner-Popa inequality, which works for $\rm{II}_{1}$-factors, to conclude that the index of $N \subseteq M$ is at most $2$. On the other hand, any proper inclusion has index at least $2$, so the result follows. Unfortunately in the non-tracial case it is not clear if the condition that $\mathbb{E}_{N} - \frac{1}{2} \Id $ is completely positive implies that the inclusion $N\subseteq M$ is strongly of finite index; so far it is only known that it implies being of finite index in a weaker sense (see \cite[Th\'{e}or\`{e}me 3.5]{finiteindex}). Fortunately in our case it is possible to explicitly define a finite orthonormal basis.
\begin{prop}
	Let $n\geqslant 2$ be an integer and let $F \in M_n(\mathbb{C})$ be a matrix such that $F \overline{F} = c\mathds{1}$ for some $c\in \mathbb{R}\setminus\{0\}$.
Let $M:= L^{\infty}(O_{F}^{+})$ and let $N$ be the even von Neumann subalgebra of $M$. Then the inclusion $N\subseteq M$ is strongly of finite index. Moreover, one can find an orthonormal basis consisting of at most $n^{2}+1$ elements.
\end{prop}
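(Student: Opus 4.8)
The plan is to exhibit an explicit orthonormal basis of the right Hilbert $N$-module $L^2(M)_N$ consisting of $1_M$ together with at most $n^2$ further elements built from the entries of $U$. First I would recall that, since $O_F^+$ is compact, the $\mathbb{Z}/2$-grading automorphism $\Phi$ of $M$ determined by $\Phi(u_{ij})=-u_{ij}$ preserves the Haar state $h$ (it is a Hopf $\ast$-automorphism and $h$ is unique), so $\mathbb{E}_N=\tfrac12(\mathrm{Id}_M+\Phi)$ is the unique $h$-preserving faithful normal conditional expectation onto $N=M^\Phi$, and $L^2(M)=L^2(N)\oplus H_{\mathrm{odd}}$, where $H_{\mathrm{odd}}$ is the closure of $M_{\mathrm{odd}}\Omega$ (with $\Omega$ the GNS vector of $h$ and $M_{\mathrm{odd}}$ the $(-1)$-eigenspace of $\Phi$); the orthogonality $L^2(N)\perp H_{\mathrm{odd}}$ follows because $h$ vanishes on odd elements. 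The crucial observation is that for $x,y\in M_{\mathrm{odd}}$ the product $x^\ast y$ is even, so the $N$-valued inner product on $M_{\mathrm{odd}}$ is simply $\langle x,y\rangle=\mathbb{E}_N(x^\ast y)=x^\ast y$. In particular $m_0:=1_M$ already satisfies $\mathbb{E}_N(m_0^\ast m_0)=1_N$, $\mathbb{E}_N(m_0^\ast m)=0$ for all $m\in M_{\mathrm{odd}}$, and $\overline{m_0 N}=L^2(N)$, so it only remains to find a finite orthonormal basis of $H_{\mathrm{odd}}$ as a right Hilbert $N$-module, with at most $n^2$ elements.

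The key step is to identify $H_{\mathrm{odd}}$ with a complemented submodule of the free module $N^{n^2}$. Unitarity of $U$ gives $\sum_j u_{ij}u_{ij}^\ast=(UU^\ast)_{ii}=1$ for each $i$, hence $\sum_{i,j}u_{ij}u_{ij}^\ast=n$. I would then consider the right $N$-module map $\iota\colon M_{\mathrm{odd}}\to N^{n^2}$, $\iota(x)=(u_{ij}^\ast x)_{i,j}$ (the entries lie in $N$ since each $u_{ij}^\ast x$ is even), and compute $\langle\iota(x),\iota(y)\rangle=x^\ast\big(\sum_{i,j}u_{ij}u_{ij}^\ast\big)y=n\,\langle x,y\rangle$, so that $n^{-1/2}\iota$ is isometric and is injective because $\iota(x)=0$ forces $nx^\ast x=0$. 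Passing to self-dual completions over the von Neumann algebra $N$ — so that every bounded module map into $N^{n^2}$ is adjointable — $n^{-1/2}\iota$ extends to an isometry of $H_{\mathrm{odd}}$ onto $qN^{n^2}$ for the projection $q:=n^{-1}\iota\iota^\ast\in M_{n^2}(N)$, where $\iota^\ast\big((a_{ij})_{i,j}\big)=\sum_{i,j}u_{ij}a_{ij}$ and one checks $\iota^\ast\iota=n\cdot\mathrm{id}$, $q^2=q=q^\ast$. Since $qN^{n^2}$ is spanned over $N$ by the vectors $q\epsilon_{ij}=n^{-1}\iota(u_{ij})$ (with $\epsilon_{ij}$ the standard basis of $N^{n^2}$), one in fact gets $\iota(M_{\mathrm{odd}})=qN^{n^2}$ already at the level of bounded elements, and $\iota^{-1}$ sends a bounded vector $\sum_{i,j}q\epsilon_{ij}a_{ij}$ back to $n^{-1}\sum_{i,j}u_{ij}a_{ij}\in M_{\mathrm{odd}}$.

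Finally I would invoke the standard structure theory of projections over a von Neumann algebra: any projection $q\in M_{n^2}(N)$ is Murray--von Neumann equivalent, inside $M_{n^2}(N)$, to a diagonal projection $\mathrm{diag}(p_1,\dots,p_{n^2})$ with $p_1,\dots,p_{n^2}$ projections in $N$; hence $qN^{n^2}$ admits an orthonormal basis $(g_l)_{l=1}^{n^2}$ with $\langle g_l,g_m\rangle=\delta_{lm}p_l$, each $g_l$ of the form $\sum_{i,j}q\epsilon_{ij}a_{ij}^{(l)}$. Transporting through $\iota^{-1}$ yields $m_l:=n^{1/2}\iota^{-1}(g_l)=n^{-1/2}\sum_{i,j}u_{ij}a_{ij}^{(l)}\in M_{\mathrm{odd}}$ with $\mathbb{E}_N(m_l^\ast m_m)=\delta_{lm}p_l$, and $\sum_l m_l N=\iota^{-1}(qN^{n^2})=M_{\mathrm{odd}}$, whose $L^2$-closure is $H_{\mathrm{odd}}$. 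Discarding the (possibly) zero $m_l$, the family $m_0=1_M,m_1,\dots,m_{n^2}$ is an orthonormal basis of $L^2(M)_N$ with at most $n^2+1$ elements, so $N\subseteq M$ is strongly of finite index (which, by the preceding lemma, also reproves that the inclusion has the relative Haagerup property). The main obstacle is the Hilbert-module bookkeeping in the middle step: setting up the self-dual completion so that $\iota$ is genuinely adjointable with range projection $q\in M_{n^2}(N)$, and checking that the abstract orthonormal basis of $qN^{n^2}$ can be chosen with entries in $M$ rather than merely in $L^2(M)$. The remaining points — the inner-product identity, injectivity of $\iota$, and the diagonalisation of $q$ — are routine, and Banica's fusion rules for $O_F^+$ are needed, if at all, only as an alternative way to see that the $u_{ij}$ generate $M_{\mathrm{odd}}$ over $N$.
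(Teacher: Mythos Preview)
Your argument is correct, but the paper takes a more hands-on route. Like you, the paper starts from $1_M$ together with the $n^2$ odd elements $u_{ij}$, and uses Banica's fusion rules to see that $N\oplus\sum_{i,j}u_{ij}N$ is dense in $L^2(M)_N$. Then, instead of embedding $M_{\mathrm{odd}}$ into a free module and invoking an abstract diagonalisation of projections, the paper simply runs a Gram--Schmidt procedure directly on the list $(u_{ij})$: for an odd element $x$ one has $|x|$ even, so the partial isometry $v$ in $x=v|x|$ is again odd with $v^\ast v\in N$; one replaces $x_1$ by $v_1$, then replaces $x_2$ by the partial isometry of $x_2-v_1v_1^\ast x_2$ (which is orthogonal to $v_1$), and so on. This yields odd partial isometries $v_k$ with $v_k^\ast v_l=\delta_{kl}\,v_k^\ast v_k\in N$ and $\sum_k v_k N\supseteq\sum_{i,j}u_{ij}N$, hence a finite orthonormal basis.

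The two approaches are close in spirit: your ``diagonalisation of $q\in M_{n^2}(N)$'', when unwound column by column via polar decomposition, \emph{is} the paper's Gram--Schmidt. Your packaging has the merit of making the bound $n^2+1$ completely transparent (you land inside $N^{n^2}$), and of avoiding any appeal to the fusion rules since $\iota(M_{\mathrm{odd}})=qN^{n^2}$ follows directly from $\iota^\ast\iota=n\cdot\mathrm{id}$. On the other hand, the paper's version stays entirely inside $M$ and sidesteps the Hilbert-module bookkeeping you flag as the main obstacle --- in fact that bookkeeping is unnecessary even in your approach: because $\iota^\ast\iota=n\cdot\mathrm{id}$ holds algebraically and $\iota$ surjects onto $qN^{n^2}$ already at the level of bounded elements, no self-dual completion is needed and the basis automatically consists of elements of $M$.
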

\begin{proof}
	One can verify by an explicit computation  that $N$ is left globally invariant by the  modular automorphism group of the Haar state $h$ of $L^\infty(O_F^+)$, so we do have a faithful normal $h$-preserving conditional expectation $\mathbb{E}_N:M \to N$.
We start with $n^2+1$ elements  of $M$, namely $\mathds{1}$ and all the $u_{ij}$'s. Since we have all the coefficients of the fundamental representation, it follows from the fusion rules of $O_{F}^{+}$ that $N \oplus \sum_{i,j=1}^{n} u_{ij} N$ is a dense submodule of $L^{2}(M)_{N}$.

Note that all the elements $u_{ij}$ are odd, i.e. $\Phi(u_{ij}) = -u_{ij}$ for $i,j=1, \ldots,n$. Suppose that we have a family $x_1,\dots, x_k$ of odd elements. Then we can perform a Gram-Schmidt process to make this set orthonormal. To do it, first notice that $x_i^{\ast} x_i$ is an even element, hence so is $|x_i|$ -- we conclude that the partial isometry in the polar decomposition $x_i = v_i |x_i|$ is odd as well. Our process works as follows: we first replace $x_1$ by the corresponding partial isometry $v_1$. Then we define $\tilde{x}_2:= x_2 - v_1 v_1^{\ast} x_2$. Because $v_1$ is a partial isometry, we get $v_1^{\ast} \tilde{x}_2 = v_{1}^{\ast}x_2 - v_{1}^{\ast} v_1 v_{1}^{\ast} x_2 = 0$. We then define $v_2$ to be the partial isometry appearing in the polar decomposition of $\tilde{x}_2$; it still holds that $v_2$ is odd and $v_1^{\ast} v_2=0$. We can continue this process just like the usual Gram-Schmidt process and obtain an orthonormal set of odd partial isometries $v_{i}$ such that $\sum_{i=1}^{k} x_i N \subset \sum_{i=1}^{k} v_i N$; note that the projections $v_i^*v_i$ belong to $N$. If we apply this procedure to the family $(u_{ij})_{1\leqslant i,j\leqslant n}$, we obtain a finite orthonormal basis for the inclusion $N\subseteq M$.
\end{proof}

\begin{cor}
The inclusion $N\subseteq M:= L^{\infty}(O_{F}^{+})$ has the relative Haagerup property.
\end{cor}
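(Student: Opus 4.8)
The plan is to combine the previous proposition with the general results of the paper. First, recall what we have just established: the inclusion $N \subseteq M = L^\infty(O_F^+)$ admits a faithful normal conditional expectation $\mathbb{E}_N \colon M \to N$ (preserving the Haar state $h$) and, moreover, a finite orthonormal basis $v_1, \dots, v_k$ of the right $N$-module $L^2(M)_N$, so that $N \subseteq M$ is strongly of finite index in the sense defined above. By the Lemma immediately following that definition, every inclusion which is strongly of finite index has the relative Haagerup property; indeed, writing $x = \sum_{i=1}^{k} v_i \mathbb{E}_N(v_i^\ast x)$ for all $x \in M$, one sees that the identity map on $M$ is itself relatively compact with respect to $N$, so the constant net $(\Phi_i)_i$ with $\Phi_i = \id_M$ already witnesses property $\text{(rHAP)}$ for the triple $(M, N, \mathbb{E}_N)$.

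So the proof is essentially a one-line deduction: the inclusion $N \subseteq M$ is strongly of finite index by the preceding proposition, hence has the relative Haagerup property by the Lemma. One should double-check the hypotheses of that Lemma are met — namely that $\mathbb{E}_N$ is a faithful normal conditional expectation — which was verified in the proof of the proposition (the even subalgebra is globally invariant under the modular automorphism group of $h$, so Takesaki's theorem gives the desired expectation). No further appeal to the heavy machinery of Sections 4 and 5 is needed here, since the relative Haagerup property follows directly from the finite-index structure rather than from any approximation argument.

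\begin{proof}
By the previous proposition the inclusion $N \subseteq M = L^\infty(O_F^+)$ is strongly of finite index, with $\mathbb{E}_N$ the (faithful, normal, Haar-state-preserving) conditional expectation onto the even subalgebra. By the Lemma above, any unital expected inclusion which is strongly of finite index has the relative Haagerup property. Hence the triple $(N \subseteq M, \mathbb{E}_N)$ has property $\text{(rHAP)}$.
\end{proof}

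The only potential subtlety — and the one step worth pausing over — is making sure that ``strongly of finite index'' in the sense used here (a \emph{finite} orthonormal basis of $L^2(M)_N$, with the partial-isometry-valued basis elements and $N$-valued inner products $\mathbb{E}_N(v_i^\ast v_j) = \delta_{ij} p_j$) is exactly the hypothesis consumed by the Lemma; since both are phrased identically in the excerpt, this is immediate, and no adaptation of the argument is required.
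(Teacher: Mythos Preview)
Your proof is correct and is exactly the argument the paper intends: the corollary is stated without proof in the paper, as it follows immediately from the preceding proposition (the inclusion is strongly of finite index) combined with the earlier lemma (strongly of finite index implies the relative Haagerup property). Your write-up makes this explicit and correctly identifies the one point worth checking, namely that $\mathbb{E}_N$ is a faithful normal conditional expectation.
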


\vspace{1mm}

\end{document}